\DeclareMathOperator{\dom}{dom}
\DeclareMathOperator{\Id}{Id}
\newcommand{\N}{\ensuremath{\mathbb{N}}}
\newcommand{\R}{\ensuremath{\mathbb{R}}}
\newcommand{\red}{\textcolor{red}}
\theoremstyle{definition}
        \newtheorem{definition}{Definition}[section]
        \newtheorem{remark}[definition]{Remark}
\theoremstyle{plain}
        \newtheorem{theorem}[definition]{Theorem}
        \newtheorem{lemma}[definition]{Lemma}
        \newtheorem{corollary}[definition]{Corollary}
        \newtheorem{prop}[definition]{Proposition}
\title{\textbf{Weak well-posedness for a class of degenerate L\'evy-driven SDEs with H\"older continuous coefficients}}
\author[1,2,*]{\textbf{L. Marino}}}
\author[1,3,$\dagger$]{\textbf{S. Menozzi}}
\affil[1]{\footnotesize{Laboratoire de Mod\'elisation Math\'ematique d'Evry (LaMME), UMR CNRS 8071,
 Universit\'e Paris-Saclay, Universit\'e d'Evry Val d'Essonne, $23$
Boulevard de France $91037$ Evry, France.}}
\affil[2]{\footnotesize Dipartimento di Matematica, Universit\`a di Pavia, Via Adolfo Ferrata $5$, $27100$ Pavia,
Italy.}
\affil[3]{\footnotesize Laboratory of Stochastic Analysis, Higher School of Economics (HSE), Pokrovsky Boulevard, 11, Moscow, Russian Federation.}
\affil[*]{\footnotesize{lorenzo.marino@univ-evry.fr}}
\affil[$\dagger$]{\footnotesize{stephane.menozzi@univ-evry.fr}}
\begin{document}
\maketitle
\begin{abstract}
In this article, we study the effects of the propagation of a non-degenerate L\'evy noise through a chain of deterministic differential equations whose coefficients are H\"older continuous and satisfy a weak H\"ormander-like condition. In particular, we assume some non-degeneracy with respect to the components which transmit the noise.
Moreover,  we characterize, for some specific dynamics, through suitable counter-examples, the almost sharp regularity exponents that ensure the weak well-posedness for the associated SDE. As a by-product of our approach, we also derive some Krylov-type estimates for the density of the weak solutions of the considered SDE.
\end{abstract}

{\small{\textbf{Keywords:} degenerate L\'evy driven SDEs; Well-posedness of martingale problem, Peano counter-example.}}

{\small{\textbf{MSC:} Primary: $60$H$10$, $34$F$05$; Secondary: $35$K$65$, $35$R$09$.}}

\section{Introduction}

We investigate the effects of the propagation of a $d$-dimensional L\'evy noise through a chain of $n\ge 2$ differential equations.
Namely, we are interested in a degenerate,  L\'evy-driven stochastic differential equation (SDE in short) of the following form:
\begin{equation}
\label{eq:Intro}
  \begin{cases}
    dX^1_t \, = \,\left[[A_t]_{1,1}X_t^1+\cdots+[A_t]_{1,n}X_t^n+ F_1(t,X^1_t,\dots,X^n_t)\right]dt + \sigma(t,X^1_{t-},\dots,X^n_{t-})dZ_t,\\
    dX^2_t \, = \, \left[ [A_t]_{2,1}X^1_t+\cdots+[A_t]_{2,n}X^n_t +F_2(t,X^2_t,\dots,X^n_t)\right] dt,\\
    dX^3_t \, = \, \left[ [A_t]_{3,2}X^2_t+\cdots+[A_t]_{\textcolor{black}{3,n}}X^n_t+F_3(t,X^3_t,\dots,X^n_t)\right]dt,\\
    \vdots\\
    dX^n_t \, = \, \left[ [A_t]_{\textcolor{black}{n,n-1}} X^{n-1}_t+[A_t]_{n,n} X^{n}_t+F_n(t,X^n_t)\right]dt,
  \end{cases}
\end{equation}
where for $i\in \llbracket 1,n\rrbracket$ ($\llbracket \cdot, \cdot \rrbracket$ denotes the set of all the integers in the interval),  $X_t^i$ is $\R^{d_i} $ valued, with $d_1=d $ and $ d_i\ge 1,\ i\in \llbracket 2,n\rrbracket$, \textcolor{black}{with $d_i\le d_{i-1} $}. Set $N=\sum_{i=1}^n d_i $. We suppose that the
$F_i\colon [0,+\infty)\times\R^{{\scriptscriptstyle{\sum_{j=i}^n}} d_j}\to \R^d$,  $\sigma\colon [0,+\infty)\times\R^N\to \R^d\otimes \R^d$ are  Borel \textcolor{black}{measurable} and respectively locally bounded
and uniformly elliptic and bounded.

We also assume the entries $([A_t]_{ij})_{1\le i\le n,\ i-1\le j\le n} $ are Borel \textcolor{black}{measurable} bounded and such that
 the blocks $[A_t]_{i,i-1}$ in $\R^{d_{i}}\otimes \R^{d_{i-1}}$, $2\le i \le n $ have rank $d_i$, uniformly in time.
 This is a kind of non-degeneracy assumption which can be viewed as weak H\"ormander-like condition. It actually precisely allows the noise to propagate into the system.

Eventually, the noise $\{Z_t\}_{t\ge 0}$ belongs to a class of $d$-dimensional, symmetric, L\'evy processes with suitable properties.
\textcolor{black}{In particular, we will assume that its L\'evy measure is absolutely continuous with respect to the L\'evy measure of a non-degenerate $\alpha$-stable process in the sub-critical regime ($\alpha$ in $(1,2]$) and its Radon-Nikodym derivative enjoys some natural properties.
When working with non-trivial diffusion coefficients, we will additionally suppose that the corresponding $\alpha$-stable process is rotationally invariant.} The class of processes $\{Z_t\}_{t\ge0}$ we can consider includes, for example, the tempered, the layered or the relativistic $\alpha$-stable processes. In the case of an additive noise, cylindrical stable processes could be handled as well.

Here, the major issue is linked with the specific degenerate framework we consider. Indeed, the noise only acts on the first component of the dynamics \eqref{eq:Intro} and it implies, in particular, that the random perturbation on the $i$-th
line of SDE \eqref{eq:Intro} only comes from the previous $(i-1)$-th one, through the non-degeneracy of the matrixes $[A_t]_{i,i-1}$. Hence, the smoothing effect associated with the L\'evy noise decreases along the chain,  making thus more and more difficult to regularize by noise the furthest lines of Equation \eqref{eq:Intro}. \newline
We nevertheless prove the weak well-posedness, i.e.\ the existence and uniqueness in law, for the above SDE \eqref{eq:Intro} when the drift $F=(F_1,\dots,F_n)$ and $ \sigma$ lie  in a suitable anisotropic H\"older space with multi-indices of regularity. We assume that $F_1$ and $\sigma $ have spatial H\"older regularity $\beta^1>0 $ with respect to the $j$-th variable. We highlight already that we could have considered different regularity indexes $\beta_j^1 $ for the regularity of $F_1$ with respect to the $j$-th variable. We keep only one common index for notational simplicity. We also suppose that for fixed $j\in \llbracket2,n \rrbracket  $, $(F_2,\cdots,F_j) $ has H\"older regularity $\beta^j$  with respect to the $j$-th variable, where:
\[ \beta^j\,\, \in \,\, \Bigl(\frac{1+\alpha(j-2)}{1+\alpha(j-1)};1\Bigr].\]
We indeed recall that from the dynamics \eqref{eq:Intro} the variable $x_j$ does not appear in the chain after level $j$.

Furthermore, we will show through suitable counter-examples that the above threshold for the regularity exponents $\beta^j$ is ``almost'' sharp for a perturbation of the $j^{{\rm th}} $ level of the chain.
Such counter-examples are based on Peano-type dynamics adapted to our degenerate, fractional framework.

Models of the form \eqref{eq:Intro} naturally appear in various scientific contexts: in physics, for the
analysis of anomalous diffusions phenomena or for Hamiltonian models in turbulent regimes (see e.g.\ \cite{Baeumer:Benson:Meerschaert01}, \cite{Cushman:Park:Kleinfelter:Moroni05}, \cite{Eckmann:Pillet:Rey-Bellet99}); in mathematical
finance and econometrics, for example in the pricing of Asian options (see e.g.\ \cite{Jeamblanc:Yor:Chesney09}, \cite{Brockwell01}, \cite{Barndorff-Nielsen:Shephard01}). In particular, models that consider L\'evy noises, such as SDE \eqref{eq:Intro}, seem more natural and realistic for many
applications since they allow the presence of jumps.

\paragraph{Weak well posedness for non-degenerate stable SDEs.}
The topic of weak well-posedness for non-degenerate (i.e.\ $d=N$) SDEs of the form:
\begin{equation}
\label{eq:non-deg_SDE}
X_t \, = \, x +\int_0^tF(X_s) ds +Z_t, \quad t\ge 0,
\end{equation}
where $\{Z_t\}_{t\ge0}$ is a symmetric $\alpha$-stable process on $\R^N$, has been widely studied in the last decades, especially in the diffusive, local setting, i.e.\ when $\alpha=2$ and $\{Z_t\}_{t\ge0}$ is a Brownian Motion, and it is now
quite well-understood. We can first refer to the seminal work \cite{book:Stroock:Varadhan79} where the Authors considered additionally a multiplicative noise with bounded drift and  non-degenerate, continuous in space diffusion coefficient. We recall moreover that in the framework of \eqref{eq:non-deg_SDE} with bounded drift, strong uniqueness also holds (cf. \cite{Veretennikov80}).\newline
SDEs like \eqref{eq:non-deg_SDE} with a proper $\alpha$-stable process ($\alpha<2$) were firstly investigated in \cite{Tanaka:Tsuchiya:Watanabe74} where the weak well-posedness was obtained for the one-dimensional case when the drift $F$ is bounded, continuous and the L\'evy exponent $\Phi$ of $\{Z_t\}_{t\ge 0}$ satisfies $\Re \Phi(\xi)^{-1}= \textcolor{black}{O(1/\vert \xi \vert)}$ if $\vert \xi \vert \to \infty$. The multidimensional case ($d>1$) can be similarly obtained  following \cite{Komatsu83} if the drift is bounded, continuous and the law of $\{Z_t\}_{t\ge 0}$ admits a density with respect to the Lebesgue measure on $\R^d$. Equations as \eqref{eq:non-deg_SDE} with drift in some suitable $L^p$-spaces and non-degenerate noise were also considered in \cite{Jin18} (see also the references therein). We can eventually quote the recent work by Krylov who obtained even strong uniqueness for Brownian SDEs with drifts in critical $L^p$-spaces, see \cite{Krylov21}.

In recent years, SDEs driven by singular (distributional) drift have gained a lot of interest, especially in the Brownian setting, where they arise as a model for diffusions in
random media (see e.g. \cite{Flandoli:Russo:Wolf03},\cite{Flandoli:Russo:Wolf04},\cite{Flandoli:Issoglio:Russo17}, \cite{Delarue:Diel16},
\cite{Cannizzaro:Chouk18}).\newline
In the non-local
$\alpha$-stable framework, \textcolor{black}{we can refer to} \cite{Athreya:Butkovsky:Mytnik18} where the
authors \textcolor{black}{derived, in the
one-dimensional case, strong well-posedness} for
a time-homogeneous drift
of \textcolor{black}{\textit{negative}} H\"older
regularity strictly
greater than
$(1-\alpha)/2$. \textcolor{black}{More precisely, the drifts therein belong to the Besov space $B_{\infty,\infty}^{\theta}, \theta\in \big((1-\alpha)/2,0\big) $ which consists in the distributional derivatives of functions being $1+\theta $ H\"older continuous}.  On the same side, the almost simultaneous works \cite{Ling:Zhao19} and \cite{Chaudru:Menozzi20} take into account time-homogeneous and time-inhomogeneous, respectively, \textcolor{black}{multidimensional} distributional drift\textcolor{black}{s} in general Besov spaces with suitable
conditions on the parameters. \textcolor{black}{Both paper address the weak well-posedness of the corresponding SDE}.
 Let us also remark that in the one-dimensional framework, \textcolor{black}{the same} regularity \textcolor{black}{thresholds appear} on the drift for the strong and the weak well-posedness, since it is possible to exploit \textcolor{black}{\textit{local time} type} arguments (see \textcolor{black}{\cite{Chaudru:Menozzi20}, \cite{Athreya:Butkovsky:Mytnik18} and also \cite{Flandoli:Issoglio:Russo17}, \cite{Bass:Chen01} in the diffusive setting}).

These results rely on Young integrals in order to give a meaningful sense to the dynamics. Beyond the Young regime, we instead
refer to \cite{kremp:Perkowski20} \textcolor{black}{where techniques such as paracontrolled products (which have also been popular in the recent developments in the SPDE theory)} are exploited to analyze the martingale problem associated \textcolor{black}{with} a
time-inhomogeneous drift of regularity index  strictly greater than $(2-2\alpha)/3$.

Moreover, we would like to remark that the above works concerned the so-called \emph{sub-critical} case, i.e.\ when $\alpha>1$. Indeed, SDEs like
\eqref{eq:non-deg_SDE} are much more difficult to handle if $\alpha\le 1$ since in this case, the noise does not dominate the system \textcolor{black}{for small time scales}. Two recent
works along this path are \cite{Zhao19} and \cite{Chaudru:Menozzi:Priola20} where \textcolor{black}{the authors} consider $\alpha<1$, $(1-\alpha)$-H\"older drift $F$ and $\alpha=1$,
continuous drift, respectively. We also mention that for H\"older drifts, the well-posedness of the associated martingale problem can be obtained following
\cite{Mikulevicius:Pragarauskas14} if $F$ is bounded or through the Schauder estimates given in \cite{Chaudru:Menozzi:Priola19} when $F$ is unbounded.

\paragraph{Regularization by noise in a degenerate setting.}
All the above results present a common phenomenon that, following the terminology in \cite{book:Flandoli11}, is usually called \emph{regularization by
noise}. This occurs when a deterministic ODE is ill-posed (for example if the drift is less than Lipschitz) but its stochastic counterpart
(SDE) is well posed in a strong or a weak sense. \newline
To obtain such phenomenon, the noise plays a fundamental role. A usual assumption is that the noise should act on every line of the dynamics, regularizing the
coefficients. It is then clear that in our degenerate framework, when the noise acts only on the first component of the chain \eqref{eq:Intro}, the situation is
even more delicate. In order to obtain some kind of regularization effect in this case, we need that the noise propagates through the system, reaching all the
lines of Equation \eqref{eq:Intro}. A typical assumption ensuring such type of behaviour is the so-called H\"ormander condition for hypoellipticity (cf.
\cite{Hormander67}).\newline
From the structure of the equation \eqref{eq:Intro} at hand, we will consider a \textit{weak} type H\"ormander condition, i.e. up to some regularization of the diffusion coefficient, the drift is needed to span the space through
Lie bracketing.

In the Hamiltonian setting $n=2$, when $\{Z_t\}_{t\ge0}$ is a Brownian Motion and for a more general, non-linear, drift than in \eqref{eq:Intro} which still satisfies a weak H\"ormander type condition,  Chaudru de Raynal showed in \cite{Chaudru18} that the associated SDE  is weakly well-posed as soon as the drift is H\"older continuous in the degenerate variable with regularity index strictly greater than $1/3$. It was also established through an appropriate counter-example,  that the $1/3$-threshold is (almost) sharp for the second component of the drift. Such a result has been then extended in \cite{Chaudru:Menozzi17} in order to consider the more general case of $n$ oscillators. Therein, the  regularity thresholds that ensure weak uniqueness also depend on the variable and the level of the chain. This seems intuitively clear, the further the variable in the oscillator chain, the larger its typical time scale, the weaker the regularity needed to regularize components which are above that variable in the chain. Also, some corresponding Krylov type estimates, giving existence and integrability properties of the density of the SDE are derived.
We can mention as well the recent work by Gerencs\'er \cite{gerencser20} who \textcolor{black}{obtains} similar regularization properties for the iterated time integrals of a fractional Brownian motion.

In the jump case, the situation is much more delicate. Within the proper regularization by noise framework (when the coefficients are less than Lipschitz continuous),  we cite \cite{Huang:Menozzi16} where the
Authors showed the weak well-posedness for \eqref{eq:Intro} with $F=0$ and a H\"older continuous diffusion coefficient, under  some constraints on the dimensions
$d$,$n$. In that framework, the Authors obtained as well same point-wise density estimates. The driving noises considered were stable and tempered stable processes.
\newline
Finally, we mention that it is possible to derive the weak well-posedness of dynamics \eqref{eq:Intro} via the martingale formulation, exploiting the Schauder
estimates given in \cite{Hao:Wu:Zhang19} for the kinetic model ($n=2$). In that work, the Authors actually characterized conditions for strong uniqueness, using Littlewood-Paley decomposition techniques.

We will here proceed through a perturbative approach. Namely, we will expand the formal generator associated with \eqref{eq:Intro} around the one of a well understood process, with possibly time inhomogeneous coefficients which are anyhow frozen in space. We will call such a process a \textit{proxy}. The most natural candidate to be a \textit{proxy} for \eqref{eq:Intro} is a degenerate Ornstein-Uhlenbeck process. In the case of time homogeneous coefficients, Priola and Zabczyk  established in \cite{Priola:Zabczyk09} existence of the density for such processes under the same previously indicated non-degeneracy conditions on the matrix $A$ (which turn out to be equivalent in that setting to the well known Kalman condition).
\paragraph{Intrinsic difficulties associated with large jumps.}
When $Z$ is a strict stable process, the  density of the corresponding degenerate Ornstein-Uhlenbeck process can somehow be related to the one of a multi-scale stable process which has however a very singular associated \textit{spectral measure} (spherical part of the $\alpha $-stable L\'evy measure) on $\mathbb S^{N-1}$, see e.g. \cite{Huang:Menozzi16}, \cite{Huang:Menozzi:Priola19} and Proposition \ref{prop:Decomposition_Process_X} below.
From Watanabe \cite{Watanabe07}, it is known that the tails of stable densities are highly related to the nature of this spectral measure. Specifically, the concentration properties worsen when the measure becomes singular. This renders delicate the characterization of the smoothing properties for the proxy, especially when it depends on parameters and that one would like to obtain  estimates which are uniform w.r.t. those parameters (see Proposition \ref{prop:Smoothing_effect} and Section \ref{SEZIONE_SPIEGAZIONE_CONGELAMENTO_E_ALTRI} below).

Even for smooth coefficients, the stable like jump setting is much more delicate to establish the existence of the density for (degenerate) SDEs. For multiplicative noises, we cannot indeed rely on the flow techniques considered in \cite{book:bichteler:Gravereaux:Jacod87} or \cite{book:kunita19} in the non-degenerate case, and L\'eandre in the degenerate one, see \cite{Leandre85},\cite{Leandre88}.
Still for smooth coefficients, we can refer to the work of Zhang \cite{Zhang14} who obtained existence and smoothness results for the density of equations of type \eqref{eq:Intro} in arbitrary dimension, for a possibly more general non linear drift, still satisfying a weak H\"ormander type condition when the driving process is a rotationally invariant stable process. The strategy therein is based on the  \textit{subordinated} Malliavin calculus, which consists in applying the \textit{usual} Malliavin calculus techniques on a Brownian motion observed along the path of an independent $\alpha $-stable subordinator. In whole generality a \textit{complete} version of the H\"ormander theorem in the jump case seems to lack.
We can refer to the work by Cass \cite{Cass09} who gets smoothness of the density in the weak H\"ormander framework under technical restrictions.


\paragraph{Complete model and assumptions.}
Let us now specify the assumptions on equation \eqref{eq:Intro} that we rewrite in the shortened form:
\begin{equation}\label{eq:SDE}
dX_t \, = \, G(t,X_t)dt + B\sigma(t,X_{t-})dZ_t, \quad t\ge 0,
\end{equation}
where  $B$ is the embedding from $\R^d$ to $\R^N$ given in matricial form as
\[B:=\begin{pmatrix}
            I_{d\times d}, & 0_{d\times (N-d)},
\end{pmatrix}^t\]
and $G(t,x)=A_tx+F(t,x) $ with:
      \begin{equation}\label{eq:def_matrix_A}
A_t \, := \, \begin{pmatrix}
               [A_t]_{1,1}& \dots         & \dots         & \dots     & [A_t]_{1,n} \\
               [A_t]_{2,1}       & [A_t]_{2,2} & \dots         & \dots     & [A_t]_{2,n}\\
               0 & [A_t]_{3,2}       & \ddots & \ddots     & \vdots \\
               \vdots        & \ddots        & \ddots        & \ddots    & \vdots        \\
               0 & \dots         & 0 & [A_t]_{n,n-1} & [A_t]_{n,n}
             \end{pmatrix}.
\end{equation}

A classical assumption in this degenerate framework (cf.\ \cite{book:Stroock:Varadhan79}, \cite{Krylov04}, \cite{Chaudru:Menozzi17}) is the \emph{uniform ellipticity} of the underlying non-degenerate component of the diffusion matrix at any fixed space-time point. Namely,
\begin{description}
  \item[{[UE]}] There exists a constant $\eta>1$ such that for any $t\ge 0$ and any $x$ in $\R^N$, it holds that
  \[\eta^{-1}\vert \xi \vert^2 \, \le \, \sigma(t,x)\xi\cdot \xi  \, \le\,\eta
\vert \xi \vert^2, \quad \xi \in \R^d,
\]
\end{description}
where ``$\cdot$'' stands for the inner product on the smaller space $\R^d$.\newline
We will suppose that the drift $G(t,x)=A_tx+F(t,x)$ has a particular ``upper diagonal'' structure and its sub-diagonal elements are linear and non-degenerate, i.e.\
\begin{description}
  \item[{[H]}]
  \begin{itemize}
      \item $F=(F_1,\dots,F_n)\colon [0,\infty)\times\R^N\to \R^N$ is such that $F_i$ depends only on time and on the last $n-(i-1)$ components, i.e.\ $F_i(t, x_i,\dots,x_n)$,  for any $i$ in $\llbracket 1, n \rrbracket$;
      \item $A\colon [0,\infty)\to \R^N\otimes \R^N$ \textcolor{black}{is bounded} and the blocks $ [A_t]_{i,j}\in \R^{d_i}\otimes \R^{d_j}$ in \eqref{eq:def_matrix_A} are such that \[[A_t]_{i,j} = \begin{cases}
      \text{is non-singular (i.e.\ it has rank  $d_i$) uniformly in } t, \mbox{ if } j= i-1;\\
      0, \mbox{ if } j< i-1.
      \end{cases}\]
  \end{itemize}
\end{description}
Clearly, $n$ is in $\llbracket 1,N\rrbracket$ and $n=1$ if and
only if $d=N$, i.e.\ if the dynamics is non-degenerate.\newline
In the linear framework ($F=0$) and for constant diffusion coefficients ($\sigma(t,x)=\sigma$), this last assumption can be seen as a H\"ormander-type condition, ensuring the hypoellipticity of the infinitesimal generator associated with the process
$\{X_t\}_{t\ge0}$, which is in this setting equivalent to the Kalman condition, see e.g. \cite{Priola:Zabczyk09}. We highlight however that in our framework, the ``classic'' H\"ormander assumption (cf.\ \cite{Hormander67}) cannot be considered, due to the low
regularity of the coefficients we will consider in \eqref{eq:SDE} (see Theorem \ref{thm:main_result}) \textcolor{black}{and the non-local setting}.

\textcolor{black}{One can wonder why we here restrict to a nonlinear perturbation of a linear drift and do not consider a fully non-linear drift as in the Gaussian setting, see e.g. \cite{Delarue:Menozzi10}, \cite{Chaudru:Menozzi17} and then linearize around this drift to consider a suitable \textit{proxy} process. The point is that the degenerate non-local setting induces additional difficulties which are mainly due to the highly singular behavior of the L\'evy measure associated with the proxy process on the \textit{big} space $\R^N $, see e.g. \cite{Huang:Menozzi16} for related discussions and Proposition \ref{prop:Decomposition_Process_X} and Remark \ref{DA_SCRIVERE_VINCOLO_SUL_MODELLO2} below}.\\
\newline
In Equation \eqref{eq:SDE} above, $\{Z_t\}_{t\ge 0}$ is a $d$-dimensional, symmetric and adapted L\'evy
process with respect to some stochastic basis $(\Omega, \mathcal{F},\{ \mathcal{F}_t\}_{t\ge 0},\mathbb{P})$.
We recall that a $d$-valued L\'evy process is a stochastically continuous process on $\R^d$ starting from zero and such that its
increments are independent and stationary. Moreover, it is well-known (see e.g.\ \cite{book:Sato99}) that any L\'evy process admits a
c\`adl\`ag modification, i.e.\ a right continuous modification having left limits $\mathbb{P}$-almost surely. We will always assume to
have chosen such a version.\newline
A fundamental tool in the analysis of L\'evy processes is given by the L\'evy-Kitchine formula (see for instance \cite{book:Jacob05}) that allows us to represent
the L\'evy symbol $\Phi(\xi)$ of $\{Z_t\}_{t\ge0}$, given by
\[\mathbb{E}[e^{i\xi\cdot Z_t}] \, = \, e^{t\Phi(\xi)}, \quad \xi \in \R^d\]
in terms of the generating triplet $(b,\Sigma,\nu)$ as:
\[\Phi(\xi) \, = \, i b \cdot \xi -\frac{1}{2} \Sigma\xi\cdot \xi+\int_{\R^d_0}\bigl(e^{i \xi\cdot z}-1-i \xi\cdot z \mathds{1}_{B(0,1)}(z)\bigr)
\,\nu(dy), \quad \xi \in \R^d,\]
where $b$ is a vector in $\R^d$, $\Sigma$ is a symmetric, non-negative definite matrix in $\R^d\otimes \R^d$ and $\nu$ is a L\'evy measure on
$\R^d_0:=\R^d\smallsetminus \{0\}$, i.e.\ a $\sigma$-finite measure on $\mathcal{B}(\R^d_0)$, the Borel $\sigma$-algebra on $\R^d_0$, such
that $\int(1\wedge \vert z\vert^2) \, \nu(dz)$ is finite.
In particular, any L\'evy process is completely determined by its generating triplet $(b,\Sigma,\nu)$. \newline
Importantly, we point out already that a change on the truncation set $B(0,1)$ for the L\'evy-Kitchine formula does not affect the formulation of the L\'evy symbol $\Phi$, since we assumed \textcolor{black}{$Z $, and therefore $\nu$}, to be symmetric. Namely, given a threshold $c>0$, the L\'evy symbol $\Phi(\xi)$ of $\{Z_t\}_{t\ge0}$ could be also represented as
\begin{equation}
\label{eq:change_of_truncation}
\Phi(\xi) \, = \, i b \cdot \xi -\frac{1}{2} \Sigma\xi\cdot \xi+\int_{\R^d_0}\bigl(e^{i \xi\cdot z}-1-i \xi\cdot z \mathds{1}_{B(0,c)}(z)\bigr)
\,\nu(dz), \quad \xi \in \R^d,
\end{equation}
where $b$, $\Sigma$ and $\nu$ are as above.
Here, we only consider pure jump processes, i.e.\ $\Sigma=0$. Indeed, the more general case, where a Gaussian component is considered, can be obtained from already existing results (cf. \cite{Chaudru:Menozzi17}).\newline
We will suppose moreover that, additionally to the symmetry, the L\'evy measure $\nu$ of $\{Z_t\}_{t\ge 0}$ satisfies the following
\emph{non-degeneracy condition}:
\begin{description}
  \item[{[ND]}] there exist a Borel function $Q\colon\R^d\to [0,\infty)$ such that
  \begin{itemize}
      \item $\text{ess-sup}\{Q(z)\colon z \in \R^d\}<+\infty$;
      \item there exist $r_0>0$ and $c>0$ such that $Q(z)\ge c$ and \textcolor{black}{is} Lipschitz continuous in $B(0,r_0)$;
      \item there exists $\alpha\in (1,2) $ and a finite, non-degenerate measure $\mu$ on $\mathbb{S}^{d-1}$ such that
      \[
\nu(\mathcal{A}) \,=\,\int_{0}^{\infty}\int_{\mathbb{S}^{d-1}}\mathds{1}_{\mathcal{A}}(rs)Q(rs)\,\mu(ds) \frac{dr}{r^{1+\alpha}},\quad \mathcal{A} \in\mathcal{B}(\R^d_0),
\]
\end{itemize}
\end{description}
where $\mathcal{B}(\R^d_0)$ stands for the Borelian
$\sigma$-field on $\R^d_0$.
We recall moreover that a spherical measure $\mu$ on $\mathbb{S}^{d-1}$ is non-degenerate (in the sense of Kolokoltsov \cite{Kolokoltsov00}) if there exists a constant $\tilde{\eta} \ge 1$
such that
\begin{equation}\label{eq:non_deg_measure}
\tilde{\eta}^{-1}\vert \xi \vert^\alpha \, \le \, \int_{\mathbb{S}^{d-1}}\vert \xi\cdot s \vert^\alpha \, \mu(ds) \, \le\,\tilde{\eta}\vert \xi \vert^\alpha,
\quad \xi \in \R^d.
\end{equation}
Since any $\alpha$-stable L\'evy measure can be decomposed into a spherical part $\mu$ on $\mathbb{S}^{d-1}$ and a radial part $r^{-(1+\alpha)}dr$ (see e.g.\ Theorem $14.3$ in \cite{book:Sato99}), assumption [\textbf{ND}] roughly states that the L\'evy measure of $\{Z_t\}_{t\ge0}$ is absolutely continuous with respect to the \textcolor{black}{non-degenerate} (in the sense of \eqref{eq:non_deg_measure}), L\'evy measure of a $\alpha$-stable process and that their Radon-Nikodym derivative is given by the function $Q$. From this point further, we will denote such a L\'evy measure by $\nu_\alpha(dz):=\mu(ds)r^{-(1+\alpha)}dr$ with $z=rs$.\newline
\textcolor{black}{We insist that the whole paper is concerned with the the so-called \textit{sub-critical} regime, i.e. $\alpha\in (1,2) $. Some hints concerning possible extensions to super-critical cases are given in Remark \ref{RK_SC} below.}

In order to deal with a possibly multiplicative noise, i.e.\ in
the presence of a space-inhomogeneous diffusion coefficient $\sigma$ in Equation \eqref{eq:SDE}, we will need the following:
\begin{description}
\item[{[AC]}] If $x\to \sigma(t,x)$ is non-constant for some
$t\ge0$, then the measure $\nu_\alpha$
is absolutely continuous with respect to the Lebesgue measure on $\R^N$ with Lipschitz Radon-Nykodim derivative.
\end{description}
Assumptions [\textbf{ND}] and [\textbf{AC}] together imply in particular that in the multiplicative case, the L\'evy measure $\nu$ of $\{Z_t\}_{t\ge 0}$ can be decomposed as
\begin{equation}
\label{eq:decomposition_nu_AC}
    \nu(dz) \, = \, Q(z)\frac{g(\frac{z}{|z|})}{|z|^{d+\alpha}}dz,
\end{equation}
for some Lipschitz function $g\colon \mathbb{S}^{d-1}\to \R$.\newline
At this point, we would like to remark that no regularity is assumed for the L\'evy measure $\nu$ of $\{Z_t\}_{t\ge0}$ in the additive framework (or more generally, for a space-independent $\sigma$). In particular, the measure $\mu$ in condition [\textbf{ND}] may not be absolutely continuous with respect to the Lebesgue measure on $\mathbb{S}^{d-1}$. Indeed, our model can also include very singular (with respect to the Lebesgue measure) examples such as the cylindrical $\alpha$-stable process associated with $\mu=\sum_{i=1}^d\frac{1}{2}(\delta_{e_i}+\delta_{-e_i})$. See e.g.\ \cite{Bass:Chen06}  for more details.\newline
From this point further, we always assume that the above hypotheses on the coefficients are satisfied.

We would like to conclude the introduction with some comments concerning our assumptions with particular reference with our previous works.\newline
In \cite{Marino21}, the Schauder estimates, an important analytical first step for proving \textcolor{black}{the} well-posedness of SDEs, has been showed for degenerate Ornstein-Uhlenbeck operators driven by a more general class of L\'evy noises. It also includes, for example, the asymmetric version of the stable-like noises we consider in this work. We start highlighting that a similar family of noises could not have been introduced here, as in \cite{Marino20}, due to the non-linear structure of our problem and, especially, our technique of proof through a perturbative approach. Indeed, it requires more delicate regularizing properties for the involved operators and, in particular, a compatibility between some proxy and the original operator, seen as a perturbation of the first one. \newline
Here, we have followed a backward perturbative approach  as firstly introduced by McKean-Singer in \cite{Mckean:Singer67}. This terminology comes from the fact that the underlying proxy process will be associated with a backward in time flow. This method  appears  more natural  for proving weak uniqueness in a degenerate $L^p-L^q$ framework (cf. \cite{Chaudru:Menozzi17} in the diffusive case). Roughly speaking, it only requires controls on the gradients (in a weak sense) for the solutions of the associated PDE in order to apply the inversion technique on the infinitesimal generator.
However, we are confident that the Schauder estimates presented in \cite{Marino20} could be extended to the class of noises we consider here. Relying on them, we could have then proven the uniqueness in law for dynamics such as \eqref{eq:SDE}. This method appears really involved and long since it structurally requires to establish pointwise estimates for the first order derivatives with respect to the degenerate components of the dynamics.
Another useful advantage of the backward perturbative approach  is that it allows us to show Krylov-type estimates on the solution process $X_t$ of SDE \eqref{eq:SDE}. \textcolor{black}{This} kind of controls seems of independent interest and new for random dynamics involving degenerate stable-like noises.

The drawback of our approach is that it leads to a specific structure in Equation \eqref{eq:SDE}, \textcolor{black}{given by assumption} \textbf{[H]}. Namely, we cannot consider  drift of the form $F_i(x)=F_i(x_{i-1},\cdots,x_n) $ with non-linear dependence w.r.t. $x_{i-1}$, variable which transmits the noise. This case is often investigated for Brownian noises (see e.g. \cite{Delarue:Menozzi10}, \cite{Chaudru:Menozzi17}).
This feature is specifically linked to the structure of the joint law of a  stable process and its iterated integrals which generate a multi-scale stable process with highly singular associated spectral measure, see e.g. Proposition \ref{prop:Decomposition_Process_X} and Remark \ref{DA_SCRIVERE_VINCOLO_SUL_MODELLO} below or \cite{Huang:Menozzi16}.
Similar issues \textcolor{black}{force} us to assume in the multiplicative noise case that the driving process has an absolutely continuous spectral measure with respect to the Lebesgue measure on $\mathbb{S}^{d-1}$. This precisely allows us to get estimates which will be uniform with respect to the parameters for the considered class of proxys.

\paragraph{Main Driving Processes Considered.} Here, we highlight that assumption [\textbf{ND}] applies to a large class of L\'evy processes on $\R^d$. As already
pointed out in \cite{Schilling:Sztonyk:Wang12}, it holds for the following families of stable-like examples with $\textcolor{black}{\alpha \in (1,2)}$:
\begin{enumerate}
  \item Stable process \cite{book:Sato99}:
  \[Q(z) \, = \, 1;\]
  \item Truncated stable process with $r_0>0$ \cite{Kim:Song08}:
  \[Q(z) \, = \, \mathds{1}_{(0,r_0]}(|z|);\]
  \item Layered stable process with $\beta>\alpha$ and $r_0>0$ \cite{Houdre:Kawai07}:
  \[Q(z) \, = \, \mathds{1}_{(0,r_0]}(|z|)+\mathds{1}_{(r_0,\infty)}(|z|)|z|^{\alpha-\beta};\]
  \item Tempered stable process \cite{Rosinski09} with $Q(z)=Q(rs)$ such that for all $s$ in $\mathbb{S}^{d-1}$,
  \[r\to Q(rs) \text{ is completely monotone, $Q(0)>0$ and $\lim_{r\to +\infty}Q(rs)=0$}.\]
  \item Relativistic stable process \cite{Carmona:Masters:Simon90}, \cite{Byczkowski:Malecki:Ryznar09}:
  \[Q(z) \, = \, (1+|z|)^{(d+\alpha-1)/2}e^{-|z|};\]
  \item Lamperti process with $f\colon \mathbb{S}^{d-1}\to \R$ even such that
  $\sup f(s)<1+\alpha$ \cite{Caballero:Pardo:Perez10}:
  \[Q(z) \, = \,
  \exp\bigl(|z|f(\frac{z}{|z|})\bigr)\Bigl(\frac{|z|}{e^{|z|}-1}\Bigr)^{1+\alpha}, \quad z \in \R^d_0.\]
\end{enumerate}

\paragraph{Organization of Paper.}
The article is organized as follows. In Section $2$, we introduce some useful notations and we present the associated martingale
problem. In particular, we state there our main results.
Section $3$ contains all the associated analytical tools that allow to derive our results. Namely, we follow there a perturbative approach, considering a suitable linearization of our dynamics \eqref{eq:SDE} around a Cauchy-Peano flow which takes into account the deterministic part of our model (corresponding to \eqref{eq:SDE} with $\sigma =0$). Section $4$ is then dedicated to prove the well-posedness of the associated martingale problem, exploiting the analytical results given in Section $3$. In Section $5$, we finally construct an ``ad hoc'' Peano counter-example to the uniqueness in law for SDE \eqref{eq:SDE}.

\setcounter{equation}{0}
\section{Basic Notations and Main Results}

We start recalling some useful notations we will need below. In the following, $C$ will denote a generic \emph{positive} constant. It may change from line to line and it will depend only on the parameters appearing in the previously stated assumptions, as for instance: $d,N,\alpha,\eta,b,g,r_0,\mu$. We will explicitly specify any other dependence that may occur.\newline
Given a function $f\colon \R^N\to \R$, we denote by $Df(x)$, and $D^2f(x)$ the first and second Fr\'echet derivative of $f$ at a point $x$ in $\R^N$ respectively,
when they exist. We denote by $B_b(\R^N)$ the family of all the Borel and bounded functions $f\colon \R^N\to \R$. It is a Banach space endowed with the supremum
norm $\Vert \cdot \Vert_\infty$. We also consider its closed subspace $C_b(\R^N)$ consisting of all the continuous functions. Moreover, $C^\infty_c(\R^N)\subseteq
C_b(\R^N)$ denotes the space of smooth functions with compact support.

We now recall two correlated definitions of solution associated with SDE \eqref{eq:SDE}. Let us consider fixed $\mu$ in $\mathcal{P}(\R^N)$, the family of
the probability measures on $\R^N$ and an initial time $t\ge0$.

\begin{definition}
A weak solution of SDE \eqref{eq:SDE} with starting condition $(t,\mu)$ is a $N$-dimensional, c\`adl\`ag, adapted process $\{X_s\}_{s\ge 0}$
on some filtered probability space $(\Omega, \mathcal{F},\{ \mathcal{F}_s\}_{s \ge 0}, \mathbb{P})$ such that
\begin{itemize}
  \item the law of $X_t$ is $\mu$;
  \item there exists a $d$-dimensional, adapted L\'evy process $\{Z_s\}_{s\ge t}$ satisfying [\textbf{ND}] and [\textbf{AC}] such that
    \begin{equation}
    \label{eq:SDE_Integral}
    X_s \, = \, X_t + \int_t^sG(u,X_u)\,du+\int_t^s \sigma(u,X_{u-})B\, dZ_u, \quad s\ge t,\, \, \mathbb{P}\text{-a.s.}
    \end{equation}
    \end{itemize}
\end{definition}
To state our second definition, we need to consider the infinitesimal generator  $\partial_s+L_s$ (formally) associated with the solutions of SDE \eqref{eq:SDE}. Noticing that the term involving the constant drift $b$ can be absorbed in the expression for $G$ without loss of generality, the operator $L_s$ can be represented for any $\phi$ in $C^\infty_c(\R^N)$ as
\begin{multline}
   \label{eq:def_generator}
L_s\phi(s,x) \, := \, \langle G(s,x),D_x\phi(x)\rangle +\mathcal{L}_s\phi(s,x)
\\
:= \,\langle G(s,x),D_x\phi(x)\rangle
+ \int_{\R^d_0}\bigl[\phi(x+B(s,x)z)-\phi(x) \bigr] \,\nu(dz),
\end{multline}
where $\langle \cdot, \cdot \rangle$ denotes the inner product on the bigger space $\R^N$ and, for brevity, $B(s,x):=B\sigma(s,x)$. As done in \cite{Priola15}, we introduce the following definition:
\begin{definition}
A solution of the martingale problem for $\partial_s+L_s$ with initial condition $(t,\mu)$ is an $N$-dimensional, c\`adl\`ag process $\{X_s\}_{s\ge t}$ on some
probability space $(\Omega, \mathcal{F},\mathbb{P})$ such that
\begin{itemize}
    \item the law of $X_t$ is $\mu$;
    \item for any $\phi$ in $\dom\bigl(\partial_s+L_s\bigr)$, the process
  \[\Bigl{\{}\phi(s,X_s)-\phi(t,X_t)- \int_{t}^{s}\bigl(\partial_u+L_u\bigr)\phi(u,X_u) \, du\Bigr{\}}_{s \ge t}\]
  is a $\mathbb{P}$-martingale with respect to the natural filtration $\{ \mathcal{F}^X_s\}_{s\ge 0}$ of the process $\{X_s\}_{s\ge 0}$.
\end{itemize}
\end{definition}

We can now recall some known results that enlighten the link between the two definitions presented above.
For a more thorough analysis on the topic of martingale problems in a rather abstract and general framework, we refer to Chapter $4$ in \cite{book:Ethier:Kurtz86}.\newline
Given a solution $\{X_s\}_{s\ge0}$ of SDE \eqref{eq:SDE},
an application of the It\^o formula immediately shows that the process $\{X_s\}_{s\ge0}$ is a solution of the martingale problem for
$\partial_s+L_s$ with initial condition $(t,\mu)$, too. \newline
On the other hand, if there exists a solution $\{X_s\}_{s\ge0}$ of the martingale problem for $\partial_t+L_t$ with initial condition $(t,\mu)$, it is possible to
construct an ``enhanced'' filtered probability space $(\tilde{\Omega},\tilde{ \mathcal{F}}, \{\tilde{ \mathcal{F}}\}_{s\ge0},\tilde{\mathbb{P}})$ on which there exists
a solution $\{\tilde{X}_s\}_{s\ge 0}$ of the SDE \eqref{eq:SDE}. Moreover, the two processes $\{X_s\}_{s\ge t}$ and $\{\tilde{X}_s\}_{s\ge t}$ have the same law (See, for more details, \cite{Kurtz11}). Thus, it holds that:

\begin{prop}
\label{thm:equivalence_existence}
Let $\mu$ be in $\mathcal{P}(\R^N)$ and $t\ge0$. The existence of a weak solution for SDE \eqref{eq:SDE} with initial condition $(t,\mu)$ is equivalent to the existence of a
solution to the martingale problem for $\partial_s+L_s$ with initial condition $(t,\mu)$.
\end{prop}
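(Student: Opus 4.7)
The plan is to prove the two implications separately: the forward direction (weak SDE solution $\Rightarrow$ martingale problem solution) via a direct application of It\^o's formula for semimartingales with jumps, and the reverse direction via the representation argument of Kurtz \cite{Kurtz11}.

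For the forward direction, let $\{X_s\}_{s\ge t}$ be a weak solution to \eqref{eq:SDE} driven by a L\'evy process $Z$, with Poisson random measure $N(du, dz)$ and compensator $\nu(dz)\,du$. Fix $\phi \in C_c^\infty(\R^N)$. Since $Z$ is pure-jump and the constant drift $b$ has been absorbed into $G$, It\^o's formula for semimartingales with jumps yields
\[
\phi(s, X_s) - \phi(t, X_t) = \int_t^s (\partial_u + L_u)\phi(u, X_u)\, du + M_s,
\]
where $M_s$ is the stochastic integral of $\phi(u, X_{u-} + B(u, X_{u-}) z) - \phi(u, X_{u-})$ against the compensated Poisson measure $\tilde N(du, dz)$. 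The key observation is that the symmetry of $\nu$ cancels the truncation contribution $\int_{B(0,1)} \langle D_x \phi(x), B(u,x) z\rangle\, \nu(dz)$ that would otherwise need to be inserted, making the untruncated expression for $\mathcal{L}_u$ in \eqref{eq:def_generator} consistent with the It\^o-Kunita expansion. For $\phi \in C_c^\infty$, the sub-critical regime $\alpha \in (1, 2)$ combined with [\textbf{ND}] ensures that $\int_{\R^d_0}|\phi(x+Bz) - \phi(x)|^2\, \nu(dz)$ is bounded uniformly in $x$, so that $M_s$ is a genuine martingale and $X$ indeed solves the martingale problem for $\partial_s + L_s$.

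For the reverse direction, I would follow the strategy of \cite{Kurtz11}. Given a solution $\{X_s\}$ of the martingale problem, the form of $L_s$ prescribes the semimartingale characteristics of $X$: drift $\int_t^\cdot G(u, X_u)\, du$, vanishing continuous local-martingale part (no second-order term in $L_s$), and compensator of the jump measure of $X$ equal to the pushforward of $\nu(dz)\,du$ under the map $z \mapsto B(u, X_{u-})z$. By the abstract representation theorem in \cite{Kurtz11}, on a possibly enlarged stochastic basis one constructs a Poisson random measure $\bar N$ on $(0, \infty) \times \R^d_0$ of intensity $\nu(dz)\,du$, independent of $X_t$, such that the integer-valued random measure of the jumps of $X$ coincides with the image of $\bar N$ under $(u, z) \mapsto (u, B(u, X_{u-}) z)$. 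Reconstructing $Z$ from $\bar N$ via the L\'evy-It\^o decomposition (with the drift contribution read off from the compensated small jumps and the linear drift $b$) and comparing canonical decompositions then yields \eqref{eq:SDE_Integral}. The hard part is precisely here: although [\textbf{UE}] makes $\sigma$ invertible, the rectangular shape of $B$ (with $d \le N$) prevents a literal pathwise inversion $Z = \int (B\sigma)^{-1}(dX - G\,du)$, and the requirement that $Z$ have \emph{independent, stationary} increments forces one to rely on this enlargement procedure rather than on an explicit construction.
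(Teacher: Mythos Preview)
Your proposal is correct and follows essentially the same approach as the paper: the forward direction is obtained by a direct application of It\^o's formula, and the reverse direction is deferred to the representation/enlargement result of \cite{Kurtz11}. The paper's own treatment is in fact terser than yours, giving only a one-paragraph sketch before the statement of the proposition rather than a formal proof.
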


We can now move on the notion of uniqueness associated with our problem.

\begin{definition}
We say that weak uniqueness holds for the SDE \eqref{eq:SDE} with initial condition $(t,\mu)$ if any two solutions $\{X_s\}_{s\ge 0}$, $\{Y_s\}_{s\ge 0}$ of SDE
\eqref{eq:SDE} with initial condition $(t,\mu)$ have same finite dimensional distributions. In particular, we say that  SDE \eqref{eq:SDE} is weakly well-posed if for any
$\mu$ in $\mathcal{P}(\R^N)$ and any $t\ge 0$, there exists a unique weak solution of SDE \eqref{eq:SDE} with initial condition $(t,\mu)$.
\end{definition}

Since the definition above takes into account only the law of the solutions $\{X_s\}_{s\ge t}$, $\{Y_s\}_{s\ge t}$, they may, in general, have been defined on
different stochastic bases or with respect to two different underlying L\'evy processes. The definition of uniqueness for a solution of the martingale problem for
$\partial_s+L_s$ can be stated similarly.

It is not difficult to check that the uniqueness of the martingale problem for $\partial_s+L_s$ with initial condition $(t,\mu)$ implies the weak uniqueness of the SDE
\eqref{eq:SDE}. Furthermore, it has been shown in \cite{Kurtz11}, Corollary $2.5$ that the converse is also true.

\begin{prop}
\label{thm:equivalence_uniqueness}
Let $\mu$ be in $\mathcal{P}(\R^N)$ and $t\ge 0$. Then, weak uniqueness holds for SDE \eqref{eq:SDE} with initial condition $(t,\mu)$ if and only if uniqueness holds for the
martingale problem for $\partial_s+L_s$ with initial condition $(t,\mu)$.
\end{prop}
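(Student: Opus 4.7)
The two implications split cleanly and, modulo the reference to Kurtz cited by the authors, are of very different flavor. For the direction \emph{``uniqueness of the martingale problem implies weak uniqueness of the SDE''}, the plan is to apply the It\^o formula for L\'evy-driven jump semimartingales to a weak solution $X$ of \eqref{eq:SDE} and a test function $\phi\in C_c^\infty(\R^N)\subset\dom(\partial_s+L_s)$. The drift contribution yields $\langle G(u,X_u), D\phi(X_u)\rangle\,du$, while the stochastic integral against $Z$, after compensation, produces the jump operator of \eqref{eq:def_generator}: indeed, the predictable compensator of the jumps of $\int_t^\cdot \sigma(u,X_{u-})B\,dZ_u$ is, by \textbf{[ND]}, exactly the pushforward of $\nu(dz)$ through $z\mapsto B\sigma(u,X_{u-})z$. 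Any two weak solutions $X^{(1)},X^{(2)}$ of \eqref{eq:SDE} with initial condition $(t,\mu)$ therefore both solve the martingale problem for $\partial_s+L_s$ with the same initial condition, and uniqueness of the martingale problem forces equality of finite-dimensional distributions.

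The converse direction, \emph{``weak uniqueness of the SDE implies uniqueness of the martingale problem''}, is the substantive one. Given two solutions $X^{(1)},X^{(2)}$ of the martingale problem starting from $(t,\mu)$, the strategy is to produce, on possibly enlarged filtered probability spaces, processes $\tilde X^{(i)}$ with the same law as $X^{(i)}$ together with driving L\'evy processes $Z^{(i)}$ satisfying \textbf{[ND]} (and \textbf{[AC]} in the multiplicative case) such that $\tilde X^{(i)}$ solves \eqref{eq:SDE_Integral}. Concretely, the reconstruction leverages \textbf{[UE]}: the map $z\mapsto B\sigma(u,X_{u-})z$ has continuous left inverse $\sigma^{-1}(u,X_{u-})B^\top$, so the continuous-drift part and the jumps of $X^{(i)}$ can be inverted to define $Z^{(i)}$. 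Identification of $Z^{(i)}$ as a L\'evy process with triplet $(b,0,\nu)$ is obtained by testing the martingale property against the exponential functions $\phi_\xi(x)=e^{i\xi\cdot x}$ to recover the L\'evy--Khintchine exponent in the form \eqref{eq:change_of_truncation}. Once $\tilde X^{(1)},\tilde X^{(2)}$ are thereby exhibited as weak solutions of \eqref{eq:SDE} with the same initial condition, the assumed weak uniqueness forces them, and hence $X^{(1)}, X^{(2)}$ themselves, to share all finite-dimensional distributions.

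\textbf{Main obstacle.} The hard point is the converse direction, and within it the rigorous identification of the reconstructed $Z^{(i)}$ as a genuine L\'evy process in the admissible class of \textbf{[ND]}--\textbf{[AC]} rather than a semimartingale whose characteristics merely mimic the expected L\'evy triplet at the level of $X^{(i)}$. The degenerate setting \textbf{[H]} adds a subtlety: $\nu$ lives on $\R^d_0$ while $X^{(i)}$ takes values in $\R^N$, so the inversion must exploit the block structure of $B$ (jumps of $X^{(i)}$ are concentrated on the first $d$ coordinates because $B$ only embeds $\R^d\hookrightarrow\R^N$) before the pseudoinverse $\sigma^{-1}B^\top$ transports them to the correct measure on $\R^d_0$. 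Precisely this abstract synthesis is carried out in \cite{Kurtz11}, Corollary 2.5; the concrete task would therefore be to verify the measurability and non-degeneracy hypotheses of that result for our triple $(G,B\sigma,\nu)$ and to cite it, rather than to redo the construction by hand as in Chapter 4 of \cite{book:Ethier:Kurtz86}.
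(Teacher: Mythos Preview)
Your proposal is correct and matches the paper's approach exactly: the paper does not give a self-contained proof but simply remarks that the implication ``martingale uniqueness $\Rightarrow$ weak uniqueness'' is ``not difficult to check'' (via It\^o's formula, as you outline) and that the converse is Corollary~2.5 in \cite{Kurtz11}. Your write-up is more detailed than the paper's treatment---in particular your discussion of the block structure of $B$ and the pseudoinverse $\sigma^{-1}B^\top$ for the reconstruction step---but the underlying argument and the key citation are identical.
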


Thanks to Propositions \ref{thm:equivalence_existence} and \ref{thm:equivalence_uniqueness}, we can conclude that the two approaches, i.e.\ the martingale formulation
and the dynamics given in \eqref{eq:SDE}, are equivalent in specifying a L\'evy diffusion process on $\R^N$. We recall however that a third, yet equivalent, method
is given by the forward Fokker-Plank equation governing the law of the process. We will not  explicitly define it since we will not exploit it afterwards (see,
for more details, e.g.\ \cite{Figalli08}, \cite{LeBris:Lions08}). \newline
From now on, we write $x$ in $\R^N$ as $x=(x_1,\dots,x_n)$ where $x_i=(x^1_i,\dots,x^{d_i}_i)$ is in $\R^{d_i}$ for any $i$ in $\llbracket 1,n\rrbracket$. We can now state our main theorem.

\begin{theorem}
\label{thm:main_result}
\textcolor{black}{Let $\{\beta^j\colon j \in \llbracket 1,n\rrbracket \}$ be a family of indexes in $(0,1]$} such that
\begin{itemize}
    \item $x_j \to \sigma(t,x_1,\dots,x_j,\dots,x_n)$ is $\beta^1$-H\"older continuous, uniformly in $t$ and in $x_i$ for $i\neq j$;
    \item $x_j \to F_1(t,x_1,\dots,x_j,\dots,x_n)$ is $\beta^1$-H\"older continuous, uniformly in $t$ and in $x_i$ for $i\neq j$;

    \item $x_j \to F_i(t,x_i,\dots,x_j,\dots,x_n)$ is $\beta^j$-H\"older continuous, uniformly in $t$ and in $x_k$, for $k\neq j$ and $2\le i\le j$.
\end{itemize}
Additionally, we suppose that there exists $K\ge 1$ such that $|F_i(t,0)|\le K$ for any $i$ in $\llbracket 1,n\rrbracket$ and any $t\ge 0$. Then, the SDE \eqref{eq:SDE} is weakly well-posed if
\begin{equation}
\label{eq:thresholds_beta}
\beta^j\, > \, \frac{1+\alpha(j-2)}{1+\alpha(j-1)},\ j\ge 2.
\end{equation}
\end{theorem}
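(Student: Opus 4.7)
The plan is to exploit Propositions \ref{thm:equivalence_existence} and \ref{thm:equivalence_uniqueness} and reduce the statement to proving well-posedness of the martingale problem for $\partial_s+L_s$. Existence is the easier part: since $F$ and $\sigma$ are bounded–Hölder in space and $A_t$ is bounded in time, one can regularize $F,\sigma$ by mollification, invoke standard existence results for Lipschitz coefficients, and pass to a weak limit using tightness (which follows from the uniform moment bounds coming from boundedness of $\sigma$ and the sub-linearity of $G$). So the substance lies entirely in weak uniqueness.

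For uniqueness I would follow the backward perturbative (McKean--Singer) strategy already announced in the introduction. Fix a terminal point $(T,y)$ and a smooth compactly supported source $f$, and let $\theta_{T,\cdot}(y)$ be a Cauchy--Peano deterministic flow associated with $G$ that passes through $y$ at time $T$ (this flow need not be unique, but any selection works, and its existence is given by continuity of $G$). Linearize \eqref{eq:SDE} around this flow, i.e.\ freeze the non-linearities at $\theta_{T,s}(y)$ and replace the coefficient $\sigma(s,X_{s-})$ by $\sigma(s,\theta_{T,s}(y))$. The resulting proxy dynamics is a time-inhomogeneous degenerate Ornstein--Uhlenbeck process whose infinitesimal generator I denote $\tilde L^{T,y}_s$; its transition density enjoys a multi-scale representation on $\R^N$ thanks to assumption [\textbf{H}] and the Kalman-type non-degeneracy of the sub-diagonal blocks $[A_s]_{i,i-1}$ (this is precisely Proposition \ref{prop:Decomposition_Process_X}). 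Using this representation together with the smoothing estimates of Proposition \ref{prop:Smoothing_effect}, I would solve the backward Cauchy problem
\begin{equation*}
(\partial_s+\tilde L^{T,y}_s)u^{T,y}(s,x)=-f(s,x),\qquad u^{T,y}(T,\cdot)=0,
\end{equation*}
and obtain sharp \emph{anisotropic} Hölder/gradient controls for $u^{T,y}$: the gradient $D_{x_j}u^{T,y}$ can be estimated by powers of $(T-s)$ reflecting the intrinsic scale $(T-s)^{(1+\alpha(j-1))/\alpha}$ carried by the $j$-th block.

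Given any solution $\{X_s\}$ of the martingale problem with initial condition $(t,x_0)$, I plug $u^{T,y}$ into the martingale and evaluate at the diagonal $(T,y)=(T,X_T)$ (at least heuristically — rigorously one works with $(T,y)$ fixed, then uses that the identity is valid for every $y$, or applies a suitable stopping/localization and then integrates). This yields a representation of the form
\begin{equation*}
\mathbb{E}\Big[\int_t^T f(s,X_s)\,ds\Big]=u^{T,X_T}(t,x_0)+\mathbb{E}\Big[\int_t^T (L_s-\tilde L^{T,X_T}_s)u^{T,X_T}(s,X_s)\,ds\Big].
\end{equation*}
The error term splits into a drift part $\langle G(s,X_s)-A_s\theta_{T,s}(X_T)-A_sX_s+A_s\theta_{T,s}(X_T),D_xu\rangle$ and a jump part involving the difference of the L\'evy operators associated with $\sigma(s,X_{s-})$ and $\sigma(s,\theta_{T,s}(X_T))$. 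Using the Hölder hypothesis on $F_i$ and $\sigma$ in the variable $x_j$, together with the quasi-metric estimate $|X_s-\theta_{T,s}(X_T)|_j\lesssim (T-s)^{(1+\alpha(j-1))/\alpha}$ (obtained by reinjecting the dynamics along the chain and using assumption [\textbf{H}]), each contribution at level $(i,j)$ is bounded by $\|f\|_\infty\,(T-s)^{\beta^j\frac{1+\alpha(j-1)}{\alpha}-\frac{1+\alpha(i-1)}{\alpha}}$ times the relevant Hölder semi-norm. The exponent is strictly positive exactly when $\beta^j>\frac{1+\alpha(i-1)}{1+\alpha(j-1)}$, and since the worst case is $i=j-1$ (the noise is transmitted from level $j-1$ to level $j$), this is precisely \eqref{eq:thresholds_beta}. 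A Gronwall/circular argument on a small time interval then inverts the perturbation and shows that $\mathbb{E}\int_t^T f(s,X_s)\,ds$ is uniquely determined by the initial data; iterating over subintervals and then varying $f$ yields uniqueness of the one-dimensional marginals, hence weak uniqueness.

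The main obstacle, and the reason for the assumptions [\textbf{AC}] and [\textbf{H}], is obtaining the proxy smoothing estimates \emph{uniformly} in the freezing parameter $(T,y)$. The density of the proxy lives on the large space $\R^N$ and its spectral measure on $\mathbb{S}^{N-1}$ degenerates badly due to the chain structure (this is the Watanabe phenomenon highlighted in the introduction). Controlling the Hölder norms of the gradients of $u^{T,y}$ in this highly singular setting — particularly in the multiplicative case, where one needs to differentiate the frozen proxy with respect to the parameter — will be the delicate technical point; it forces the absolute-continuity hypothesis on the spectral measure and the restriction that $F_i$ is linear in the transmitting variable $x_{i-1}$. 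Everything else in the argument — the Peano flow construction, the martingale representation, and the Hölder control of the error — is rather standard once these estimates are in hand.
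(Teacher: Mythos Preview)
Your outline captures the perturbative philosophy, but there is a genuine gap that would make the argument fail as written. The step where you assert
\[
|X_s-\theta_{T,s}(X_T)|_j\;\lesssim\;(T-s)^{(1+\alpha(j-1))/\alpha}
\]
``by reinjecting the dynamics along the chain'' cannot be obtained: $X$ is a jump process with $\alpha$-stable-like tails, and no pathwise (nor even $L^1$) closeness to a deterministic flow at the intrinsic scale is available. The error $|F_i(s,X_s)-F_i(s,\theta_{T,s}(y))|$ is \emph{not} controlled by time regularity of the trajectory; it must be absorbed by the smoothing effect of the proxy \emph{density}. As a secondary point, you reach the correct threshold only through two compensating slips: you require the exponent $\beta^j\frac{1+\alpha(j-1)}{\alpha}-\frac{1+\alpha(i-1)}{\alpha}$ to be strictly positive (whereas only $>-1$ is needed for time-integrability) and you declare $i=j-1$ the worst case (whereas $F_j$ does depend on $x_j$, so $i=j$ is admissible and is the binding constraint).

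The paper's fix is structural: instead of freezing at a single terminal $(T,y)$ and trying to set $y=X_T$, it lets the freezing parameter coincide with the integration variable, defining the pseudo Green kernel $\tilde G_\epsilon f(t,x)=\int_{t+\epsilon}^T\!\int_{\R^N}\tilde p^{s,y}(t,s,x,y)f(s,y)\,dy\,ds$. The compatibility $y-\tilde m^{s,y}_{s,t}(x)=\theta_{t,s}(y)-x$ (Lemma~\ref{lemma:identification_theta_m}) makes the H\"older factor $|x-\theta_{t,s}(y)|^{\beta^j}$ match the argument of the density bound $\overline p\bigl(1,\mathbb T_{s-t}^{-1}(\theta_{t,s}(y)-x)\bigr)$, and it is then integrated out in $y$ via Corollary~\ref{coroll:Smoothing_effect}; the proof of the latter requires the mollified-flow change of variables of Lemma~\ref{LEMMA_FOR_DET} and is precisely where the uniform-in-$i$ threshold on $\beta^j$ is forced. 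Uniqueness is concluded not by a Gronwall on moments but by showing the remainder operator has small $L^p_tL^q_x$ norm on a short interval (Propositions~\ref{prop:control_tildeR} and~\ref{prop:control_tildeR_LpLq}) and inverting $I-\tilde R$ \`a la Stroock--Varadhan.
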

Theorem \ref{thm:main_result} above will follow from Propositions \ref{thm:equivalence_existence} and \ref{thm:equivalence_uniqueness}, once we have shown
that under the same assumptions, there exists a unique weak solution to the martingale problem for $(\partial_s+L_s,\delta_x)$ at any $x$ in $\R^N$.\newline
\textcolor{black}{
The fact that all the coordinates of the drift present the same regularity with respect to $x_i$ may appear a bit strange at first sight. Indeed, one could guess that the needed regularity on the drift depends on the direction since the effective intensity of the noise is not the same on two different components. Such an assumption is actually given by the specific current framework, which involves as a proxy a stochastic integral with respect to a stable-like jump process and its associated iterated integrals that leads to additional constraints on the regularity indexes needed for our method to work. For a more thorough (and technical) explanation of this particular assumption, we suggest the interested reader to see Remark \ref{DA_SCRIVERE_VINCOLO_SUL_MODELLO2}.}

\begin{remark}[About possible extensions to some \textit{super-critical} regimes]\label{RK_SC}
\textcolor{black}{As already emphasized, we focused here on the \textit{sub-critical} case $\alpha\in (1,2) $. Anyhow, in our current specific model, which can be viewed as a degenerate Ornstein-Uhlenbeck process perturbed by a suitable non-linear drift, we truly feel that some super-critical cases could be addressed as well. Indeed, there are actually few points below where we use that $\alpha>1 $. We can e.g. mention Section \ref{SEZIONE_SPIEGAZIONE_CONGELAMENTO_E_ALTRI} for the existence of a solution to the martingale problem and the proof of Lemmas \ref{convergence_dirac} and \ref{lemma:Control_p_M}. This is furthermore mainly to ease some technical points and we are pretty confident that the arguments could be adapted to handle super-critical indexes $\alpha $.}

\textcolor{black}{There is anyhow \textit{a priori} a lower threshold for the super-critical indexes that could be handled. Namely, from the analysis below, we require that $\beta^j<\alpha, \ j\in \llbracket 1,n \rrbracket $. Since $\beta^j\le 1 $, this condition is automatically fulfilled in the sub-critical regime $\alpha\in (1,2) $. Anyhow, we think that super-critical cases could be considered under the additional condition that $\max_{j\in \llbracket 1,n \rrbracket }\beta^j<\alpha $. Since the minimal thresholds go to $1$ with $n$ in \eqref{eq:thresholds_beta}, this means that the larger the chain the higher the threshold for the super-critical regime}.
\end{remark}

As a by-product of our method of proof, we have been able to show a Krylov-type estimates for the solutions of SDE \eqref{eq:SDE}. For notational convenience, we will say that two real numbers $p>1$, $q>1$ satisfy Condition $(\mathscr{C})$ when the following inequality holds:
\[ \tag{$\mathscr{C}$}
\bigl(\frac{1-\alpha}{\alpha}N+\sum_{i=1}^n
id_i\bigr)\frac{1}{q}+\frac{1}{p} \, < \, 1.
\]

Roughly speaking, such a threshold guarantees the necessary integrability in time with respect to the associated intrinsic scale of the system when considering the $L^p_t-L^q_x$ theory (see Equation \eqref{eq:control_det_T2} for more details). Furthermore, when considering the homogeneous case, i.e.\ when all the components of the system has the same dimension ($d_i=d$ and $N=nd$), condition $(\mathscr{C})$ can be rewritten in the following, clearer, way:
\[\left(\frac{2+\alpha(n-1)}{\alpha}\right)\frac{nd}{q}+\frac{2}{p} \, < \, 2.\]
In particular, taking $\alpha=2$ above, we find the same threshold appearing in \cite{Chaudru:Menozzi17} for the diffusive setting. We highlight moreover that our thresholds can be seen as a natural extension of the ones appearing in \cite{Krylov:Rockner05} in the non-degenerate, Brownian setting.

\begin{corollary}\label{coroll:Krylov_Estimates}
Under the same assumptions of Theorem \ref{thm:main_result}, let $T>0$ and $p>1$, $q>1$ such that Condition $(\mathscr{C})$ holds. Then, there exists a constant $C:=C(T,p,q)$ such that for any  $f$ in $L^p\bigl(0,T;L^q(\R^N)\bigr)$, it holds
\begin{equation}
\label{eq:Krylov_Estimates}
\Bigl{|}\mathbb{E}^{\mathbb P_{t,x}}\bigl[\int_t^Tf(s,X_s) \, ds\bigr] \Bigr{|} \, \le \, C\Vert f \Vert_{L^p_tL^q_x}, \quad (t,x) \in [0,T]\times \R^N,
\end{equation}
where $\{X_s\}_{s\ge0}$ is the canonical process associated with $\mathbb P_{t,x}[\cdot]:=\mathbb E[\cdot|X_t=x] $ which is also the unique weak solution of SDE \eqref{eq:SDE} with initial condition $(t,x)$. In particular, the random variable $X_s$ admits a density $p(t,s,x,\cdot)$ for any $t<s$ and any $x$ in $\R^N$.
\end{corollary}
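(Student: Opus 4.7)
The plan is to derive the Krylov bound by the same backward perturbative approach used to prove Theorem \ref{thm:main_result}, combined with explicit anisotropic $L^{q'}$ estimates on the density of the proxy process. Concretely, for $f\in C^\infty_c((t,T)\times\R^N)$ I would consider the backward Cauchy problem $\partial_s u+L_s u=-f$ on $[t,T]\times\R^N$ with $u(T,\cdot)=0$, so that an application of It\^o's formula along the (unique in law) weak solution of \eqref{eq:SDE} constructed in Theorem \ref{thm:main_result} immediately gives $u(t,x)=\mathbb{E}^{\mathbb{P}_{t,x}}\!\left[\int_t^T f(s,X_s)\,ds\right]$. Hence \eqref{eq:Krylov_Estimates} reduces to the estimate $\|u(t,\cdot)\|_{\infty}\le C\|f\|_{L^p_tL^q_x}$, and by the density of smooth compactly supported functions in $L^p_tL^q_x$ the general case then follows.

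To control $u$, I would use the backward parametrix / Duhamel representation set up in Section~$3$, writing $u(t,x)=\tilde u(t,x)+R(t,x)$, where $\tilde u$ is the solution of the proxy equation and $R$ is the usual perturbative remainder involving the difference of generators $[L_s-\tilde L_s]$ acting on gradients of $\tilde u$. For the proxy part, Proposition \ref{prop:Decomposition_Process_X} expresses the proxy transition density $\tilde p(t,s,x,\cdot)$ as that of a suitably rescaled multi-scale stable-like process; the smoothing effect encoded in Proposition \ref{prop:Smoothing_effect} together with the anisotropic change of variables (component $j$ has intrinsic scale $(s-t)^{(1+\alpha(j-1))/\alpha}$) yields
\begin{equation*}
\|\tilde p(t,s,x,\cdot)\|_{L^{q'}(\R^N)}\;\le\; C\,(s-t)^{-\Gamma/q},\qquad \Gamma:=\frac{1-\alpha}{\alpha}N+\sum_{i=1}^n i\,d_i.
\end{equation*}
Combining H\"older in space with H\"older in time gives, for the proxy contribution,
\begin{equation*}
|\tilde u(t,x)|\le \int_t^T \|\tilde p(t,s,x,\cdot)\|_{L^{q'}}\|f(s,\cdot)\|_{L^q}\,ds\le C\Bigl(\int_t^T(s-t)^{-\Gamma p'/q}ds\Bigr)^{1/p'}\|f\|_{L^p_tL^q_x},
\end{equation*}
and the time integral is finite precisely when $\Gamma p'/q<1$, i.e.\ when Condition $(\mathscr{C})$ holds. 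This is exactly the point where $(\mathscr{C})$ appears naturally as the intrinsic dimension condition (see \eqref{eq:control_det_T2}).

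The main obstacle is the remainder $R(t,x)$. Because the noise is degenerate, the perturbation term involves the non-degenerate H\"older norms of $F$ and $\sigma$ multiplied by derivatives of $\tilde u$ along the degenerate variables, which are only reached by integrating the proxy Green kernel iteratively. I would reuse here the uniform-in-parameter smoothing bounds of Proposition \ref{prop:Smoothing_effect} to show that each such derivative contributes a factor $(s-t)^{\beta^j(1+\alpha(j-1))/\alpha-1}$, which under assumption \eqref{eq:thresholds_beta} is integrable at the diagonal; this allows to absorb the remainder through a circular argument (or equivalently a Gronwall-type iteration on subintervals of length $\delta$ taken small enough), showing $|R(t,x)|\le \tfrac12\|u(t,\cdot)\|_\infty+C\|f\|_{L^p_tL^q_x}$. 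Combined with the proxy bound this yields \eqref{eq:Krylov_Estimates}. Finally, the existence of the density $p(t,s,x,\cdot)$ for $t<s$ follows by duality: applying \eqref{eq:Krylov_Estimates} with $f(u,y)=\varphi(y)\mathds{1}_{[s,s+\varepsilon]}(u)/\varepsilon$ and letting $\varepsilon\downarrow 0$ identifies $\mathcal{L}(X_s\mid X_t=x)$ with an element of $L^{q'}(\R^N)$ via the Riesz representation theorem.
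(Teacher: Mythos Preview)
Your high-level strategy---parametrix plus It\^o---matches the paper's, but there are two genuine gaps.

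First, you assume the backward Cauchy problem $\partial_s u+L_s u=-f$ admits a solution $u$ regular enough to apply It\^o's formula. The paper does not establish this, and in fact cannot along the route you describe: the natural candidate is the pseudo-Green kernel $\tilde G f(t,x)=\int_t^T\int_{\R^N}\tilde p^{s,y}(t,s,x,y)f(s,y)\,dy\,ds$, but because the freezing parameter $(s,y)$ coincides with the integration variable, smoothness of $f$ does \emph{not} transfer to smoothness of $\tilde G f$ (the paper stresses this explicitly after \eqref{eq:differential_eq2}). The paper's workaround is to introduce the $\epsilon$-localized kernel $\tilde G_\epsilon f$ of \eqref{eq:def_Green_Kernel}, which \emph{is} smooth, apply It\^o to that object to obtain the identity \eqref{eq:NUMBER_PREAL_REG_DENS}, and only then send $\epsilon\to 0$ via Lemmas \ref{convergence_dirac} and \ref{prop:convergence_LpLq}. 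Your proposal bypasses this localization, so the It\^o step is not justified.

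Second, your circular bound $|R(t,x)|\le\tfrac12\|u\|_\infty+C\|f\|_{L^p_tL^q_x}$ is not what is actually available. The paper's Proposition \ref{prop:control_tildeR} gives a \emph{direct} pointwise bound $\|\tilde R_\epsilon f\|_\infty\le C\|f\|_{L^p_tL^q_x}$, but only for $p,q$ large enough: the proof needs $\beta^j q'<\alpha$ and further integrability constraints that force $q'$ close to $1$. For general $p,q$ satisfying $(\mathscr C)$ the paper uses a separate argument (Section 4.3): it regularizes the solution by adding $\delta\,\mathbb M_{s-t}\bar Z_{s-t}$ with $\bar Z$ isotropic $\alpha$-stable on $\R^N$, so that the mollified density $p^\delta$ lies a priori in $L^{p'}_tL^{q'}_x$, reruns the perturbative identity with the $L^p_tL^q_x$ remainder control of Proposition \ref{prop:control_tildeR_LpLq}, deduces $\|p^\delta\|_{L^{p'}_tL^{q'}_x}\le C_T$ uniformly in $\delta$ by Riesz duality, and finally lets $\delta\to 0$. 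Your Gronwall-on-subintervals sketch does not reproduce this step, and in particular you have not explained why the remainder bound survives when $q'$ is only constrained by $(\mathscr C)$ rather than by $\beta^j q'<\alpha$.
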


Additionally, we have been able to show the following non uniqueness result.

\begin{theorem}
\label{thm:counterexample}
\textcolor{black}{Fixed $i$, $j$ in $\llbracket 2,n\rrbracket$ with $j\ge i$ and  $\beta^j_i$ in $(0,1]$ such that
\[
\beta^j_i \, < \, \frac{1+\alpha(i-2)}{1+\alpha(j-1)},
\]
there exists a drift $F$ such that $x_j \to F_i(t,x_i,\dots,x_j,\dots,x_n)$ is $\beta_i^j$-H\"older continuous, uniformly in $t$ and in $x_k$, for $k\neq j$,
for which  weak uniqueness fails for the SDE \eqref{eq:SDE}.}
\end{theorem}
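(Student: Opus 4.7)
The aim is to exhibit an explicit drift $F$ with the prescribed Hölder regularity such that the deterministic ODE $\dot X=A_t X+F(t,X)$ admits a non-trivial Peano branch starting from the origin, and to argue that the scaling of this branch dominates, at small times, the propagation of the driving $\alpha$-stable noise through $i-1$ iterated integrations. This yields two weak solutions of \eqref{eq:SDE} starting from $0$ whose one-dimensional time marginals are distinct, so weak uniqueness fails.

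\textbf{Step 1: explicit model.} Reduce to scalar components along a distinguished coordinate (the other coordinates can be left to evolve trivially) and choose $[A_t]_{k,k-1}=1$, all other blocks zero; set $F_k\equiv 0$ for $k\ne i$ and
\[
F_i(t,x_i,\dots,x_n)=\gamma_0\,\mathrm{sgn}(x_j^1)|x_j^1|^{\beta_i^j} e_1,
\]
for a constant $\gamma_0>0$ to be fixed. Since the map $x_j\mapsto F_i$ is $\beta_i^j$-Hölder and is the only non-trivial source of regularity in the drift, the regularity assumptions of the statement are satisfied. Take $\sigma\equiv I_d$ to fit \textbf{[UE]} and \textbf{[AC]}.

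\textbf{Step 2: Peano branch and scaling identity.} Search for a self-similar deterministic solution of the noiseless ODE of the form $\phi^k\equiv 0$ for $k<i$ and $\phi^k(t)=c_k t^{a_k}$ for $k\ge i$. The chain equations $\dot\phi^k=\phi^{k-1}$ for $k>i$ and the Peano equation $\dot\phi^i=\gamma_0(\phi^j)^{\beta_i^j}$ force
\[
a_k=a_i+(k-i),\qquad a_i(1-\beta_i^j)=1+(j-i)\beta_i^j,
\]
and a triangular algebraic system for the constants $c_k$, which, after fixing $c_j>0$, determines $\gamma_0$ and all the other $c_k>0$. Hence $0$, $\phi$, and $-\phi$ are three distinct $C^1$ solutions of the deterministic ODE through the origin. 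A direct computation, equivalent to the algebra at the end of the Introduction, gives
\[
a_i<\frac{1+\alpha(i-1)}{\alpha}\quad\Longleftrightarrow\quad \beta_i^j<\frac{1+\alpha(i-2)}{1+\alpha(j-1)},
\]
which is precisely the hypothesis. In other words, the Peano profile at level $i$ grows strictly faster than the typical scale $t^{(1+\alpha(i-1))/\alpha}$ of the $i$-th component of the degenerate stable-OU \emph{proxy} considered in Section 3, provided one knows, from Proposition \ref{prop:Decomposition_Process_X}, that this proxy behaves as the $i$-fold iterated integral of $Z$.

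\textbf{Step 3: construction of two distinct weak solutions.} Following the classical route for Peano-type non-uniqueness in the noisy setting, regularize the drift by $F^{\varepsilon}$ removing the singularity in a shrinking neighborhood of $\{x_j^1=0\}$, and for each $\varepsilon,\delta>0$ consider the unique (by Theorem \ref{thm:main_result} applied to the smoothed coefficients) weak solutions $X^{\varepsilon,\pm\delta}$ of \eqref{eq:SDE} starting from $\pm\delta e_1$ (injected into the $i$-th component). Tightness in the Skorokhod $J_1$ topology follows from standard a priori moment bounds using [\textbf{ND}] and the boundedness of the coefficients. The scaling inequality of Step 2 is used to show, via the Krylov-type estimates of Corollary \ref{coroll:Krylov_Estimates} together with the propagated stable density estimates, that the Peano trajectory is \emph{attracting}: any accumulation point $\mathbb P^{+}$ (resp. $\mathbb P^{-}$) of the family $(X^{\varepsilon,+\delta})_{\varepsilon,\delta\to 0}$ (resp. the $-\delta$ family) satisfies
\[
\mathbb P^{\pm}\bigl[X^i_t/\phi^i(t)\to \pm c_i\ \text{as } t\to 0^+\bigr]=1,
\]
in particular $\mathbb P^{+}[X^i_t>0]\to 1$ and $\mathbb P^{-}[X^i_t>0]\to 0$ as $t\to 0^+$. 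Both limits are weak solutions of \eqref{eq:SDE} starting from $0$, and they are distinct.

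\textbf{Main obstacle.} The crux is the attracting property in Step 3: showing that once the Peano profile has moved by an amount $\phi^i(t)\sim t^{a_i}$, the random fluctuation of the $i$-th component is of strictly smaller order with high probability, uniformly in $\varepsilon$, $\delta$. This is a quantitative small-time comparison between the deterministic drift of order $t^{a_i}$ and the noise of order $t^{(1+\alpha(i-1))/\alpha}$, which reduces exactly to the strict inequality displayed in Step 2. The remaining technicalities---tightness, stability of the martingale problem under the regularization, and extension of the local Peano branch to a non-trivial time interval---are standard once the scaling dominance is established.
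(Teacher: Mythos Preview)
Your setup and scaling analysis in Steps~1--2 are correct and coincide with the paper: the same Peano drift $F_i(x)=\gamma_0\,\mathrm{sgn}(x_j)|x_j|^{\beta_i^j}$ is used, the equation is reduced to a scalar one for the $i$-th component driven by the $(i-1)$-fold iterated integral $\mathcal Z_t:=I^{i-1}_t(Z)$ (which is $\gamma$-self-similar with $\gamma=i-1+1/\alpha$), and the decisive inequality $a_i<\gamma\Leftrightarrow \beta_i^j<\frac{1+\alpha(i-2)}{1+\alpha(j-1)}$ is exactly the one the paper isolates.

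Step~3 is where you diverge. The paper does \emph{not} attempt to prove the attracting property $\mathbb P^{\pm}[X^i_t/\phi^i(t)\to\pm c_i]=1$, nor does it regularize the drift. Instead it argues by contradiction via symmetry: if uniqueness held, then since both the drift and the noise are odd, $X$ and $-X$ would have the same law. The technical core is a barrier lemma (Lemma~\ref{lemma:Peano_example}) showing that for any solution starting from $x>0$, the hitting time $\tau(X)=\inf\{t:X_t\le c_0 t^{(k\beta+1)/(1-\beta)}\}$ satisfies $\mathbb P(\tau(X)\ge\rho)\ge 3/4$ for some $\rho>0$ \emph{independent of $x$}. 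This is proved by checking that the renormalized process $t^{-(k\beta+1)/(1-\beta)}\mathcal Z_t$ extends continuously by $0$ at $t=0$, using self-similarity and a Borel--Cantelli argument along a geometric sequence of times. Tightness of solutions starting from $1/n$ then transfers the bound to $x=0$, and one obtains simultaneously $\mathbb P(\tau(X)\ge\rho)\ge 3/4$ and $\mathbb P(\tau(-X)\ge\rho)\ge 3/4$, which is impossible.

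Your route is in principle viable (it is the Bafico--Baldi/Delarue--Flandoli philosophy), but two points need correction. First, invoking Corollary~\ref{coroll:Krylov_Estimates} here is circular: those Krylov estimates are a by-product of well-posedness, not a tool to produce non-uniqueness, and they give no information about the sign of $X^i_t$. Second, the attracting statement you write is considerably stronger than what is needed and you have not indicated how the ``propagated stable density estimates'' would yield it in this degenerate setting. What is actually required is only the one-sided barrier estimate above, uniform in the starting point; once you have that, the symmetry shortcut gives the contradiction without extracting limit laws or identifying the asymptotics of $X^i_t/\phi^i(t)$.
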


The above result will be proven in Section $5$, showing a suitable,
explicit Peano-type counter-example.

\begin{remark}
As opposed to the Gaussian driven case, we did not succeed to obtain regularity indexes which are \textit{sharp} at any level of the chain (cf.\ \cite{Chaudru:Menozzi17}). However, we point out that for diagonal systems of the form:
\begin{equation}
\label{eq:DIAG}
  \begin{cases}
    dX^1_t \, = \, F_1(t,X^1_t,\dots,X^n_t)dt + \sigma(t,X^1_{t-},\dots,X^n_{t-})dZ_t,\\
    dX^2_t \, = \, \left[A^2_tX^1_t +F_2(t,X^2_t)\right] dt,\\
    dX^3_t \, = \, \left[A^3_tX^2_t+F_3(t,X^3_t)\right]dt,\\
    \vdots\\
    dX^n_t \, = \, \left[A^n_t X^{n-1}_t+F_n(t,X^n_t)\right]dt,
  \end{cases}
\end{equation}
i.e. the degenerate components are perturbed by a function which only depends of the current level on the chain, we  have that the previous thresholds are \textit{almost} sharp. Indeed, in this case, we are led to consider $\beta^j >\frac{1+\alpha(j-2)}{1+\alpha(j-1)}$ which gives the well-posedness from the conditions in Theorem \ref{thm:main_result}  while Theorem \ref{thm:counterexample} shows that uniqueness fails as soon as $ \beta_j^j<\frac{1+\alpha(j-2)}{1+\alpha(j-1)} $.
For this diagonal system, Theorems \ref{thm:main_result} and
\ref{thm:counterexample} together then provide an ``almost'' complete
understanding of the weak well-posedness for degenerate SDEs of type
\eqref{eq:DIAG} with H\"older coefficients. Indeed,
the problem for the critical exponents
\[\overline{\beta}^j_j \, = \, \frac{1+\alpha(j-2)}{1+\alpha(j-1)}, \quad
 j \in \llbracket 1,n\rrbracket, \]
remains to be investigated and, up to our best knowledge, there are no general
available results even in the diffusive case. We can only mention \cite{Zhang18} in the kinetic case.
\end{remark}

\setcounter{equation}{0}
We present in this section the analytical tools we will need to show the well-posedness of the associated martingale problem. In particular, they will be fundamental in the derivation of our main Theorem \ref{thm:main_result}, thanks to Propositions \ref{thm:equivalence_existence} and \ref{thm:equivalence_uniqueness}. For this reason, we will assume in this section to be under the same conditions of Theorem \ref{thm:main_result}. Moreover, we will suppose that the final time horizon $T$ is small enough for our scopes. Indeed, we could always exploit the Markov property of the involved processes and standard chaining in time arguments to extend the results to arbitrary (but finite) time intervals.

\subsection{The ``Frozen'' Dynamics}

The crucial element in our approach consists in choosing wisely a suitable proxy operator with well-known properties and controls, along which we can expand the infinitesimal generator $L_s$, with an additional negligible error.\newline
In order to deal with potentially unbounded perturbations $F$, it is natural to use a proxy involving a non-zero first order term  associated with a flow associated with $G(t,x):=Ax+F(t,x)$, the transport part of SDE \eqref{eq:SDE} (see e.g. \cite{Krylov:Priola10} or \cite{Chaudru:Menozzi:Priola19}).\newline
Remembering that we assume $F$ to be H\"older continuous, we know from the classical Peano-Lipschitz Theorem that there exists a solution of
\begin{equation}
\label{eq:def_Cauchy_Peano_flow}
    \begin{cases}
  d \theta_{t,\tau}(\xi) \, = \, \bigl[A_t \theta_{t,\tau}( \xi)+F(t, \theta_{t,\tau}( \xi))\bigr]\, dt \quad \mbox{ on } [0,\tau];\\
   \theta_{\tau,\tau}( \xi) \, = \,  \xi,
\end{cases}
\end{equation}
even if it may be not unique. For this reason, we are going to choose one particular flow, denoted by $\theta_{t,\tau}( \xi)$, and consider it fixed throughout the work. As it will be shown below in Lemma \ref{lemma:measurability_flow}, it is always possible to take a measurable version of such a flow.\newline
More precisely, given a freezing couple $(\tau, \xi)$ in $(0,T]\times\R^N$, the backward flow will be defined on $[0,\tau]$ as
\[\theta_{t,\tau}( \xi) \, = \,  \xi - \int_{t}^{\tau}\bigl[ A_u \theta_{u,\tau}( \xi)+F(u, \theta_{u,\tau}( \xi))\bigr] \, du.\]
Fixed the reference flow, the next step is to consider the stochastic dynamics linearized along the backward flow $\theta_{t,\tau}(\xi)$. Namely, for any fixed starting point $(t,x)$ in $[0,\tau]\times \R^N$, we consider  $\{\tilde{X}^{\tau,\xi,t,x}_s\}_{s\in [t,T]}$ solving the following SDE:
\begin{equation}
\label{eq:SDE_frozen_SDE}
\begin{cases}
d\tilde{X}^{\tau,\xi,t,x}_u\, = \, \bigl[A_u\tilde{X}^{\tau,\xi,t,x}_u+\tilde{F}^{\tau, \xi}_u\bigr]\, du +B\tilde{\sigma}^{\tau, \xi}_u\,  dZ_u, \quad u\in  [t,T],\\
 \tilde{X}^{\tau,\xi,t,x}_t\, = \, x,
\end{cases}
\end{equation}
where $\tilde{\sigma}^{\tau,\xi}_s:=\sigma(s,\theta_{s,\tau}(\xi))$ and $\tilde{F}^{\tau, \xi}_s:=F(s,\theta_{s,\tau}(\xi))$.\newline
In order to obtain an integral representation of the process $\{\tilde{X}^{\tau,\xi,t,x}_s\}_{s\in [t,T]}$, we now introduce the time-ordered resolvent $\mathcal{R}_{s,t}$ of the matrix $A_s$ starting at time $t$. Namely, $\mathcal{R}_{s,t}$ is a time-dependent matrix in $\R^N\otimes \R^N$ that is solution of the following ODE:
\[\begin{cases}
  \partial_s \mathcal{R}_{s,t} \, = \, A_s \mathcal{R}_{s,t} ,\quad s \in [0,T]; \\
  \mathcal{R}_{t,t} \, = \, \Id_{N\times N}.
\end{cases}\]
By the variation of constants method, it is now easy to check that the solution $\tilde{X}^{\tau,\xi,t,x}_s$ of SDE \eqref{eq:SDE_frozen_SDE} satisfies that
\begin{equation}\label{eq:integral_represent_frozen_SDE}
\tilde{X}^{\tau,\xi,t,x}_s \, = \, \tilde{m}^{\tau, \xi}_{s,t}(x) +\int_t^s \mathcal{R}_{s,u}B\tilde{\sigma}^{\tau,\xi}_u dZ_u,
\end{equation}
where the ``frozen shift'' $\tilde{m}^{\tau, \xi}_{s,t}(x)$ is given by:
\begin{equation}\label{eq:def_tilde_m}
\tilde{{m}}^{\tau, \xi}_{s,t}( x) \, = \, \mathcal{R}_{s,t} x + \int_{t}^{s}\mathcal{R}_{s,u} \tilde{F}^{\tau,\xi}_u \, du.
\end{equation}

We point out already two important properties of the shift $\tilde{m}^{\tau,\xi}_{s,t}(x)$.

\begin{lemma}
\label{lemma:identification_theta_m}
Let $s$ in $[0,T]$ and $x,y$ two points in $\R^N$. Then, for any $t<s$, it holds that
\begin{align}
\label{eq:identification_theta_m}
\tilde{m}^{t,x}_{s,t}(x) \, &= \, \theta_{s,t}(x)\\
y-\tilde{m}^{s,y}_{s,t}(x) \, &= \, \theta_{t,s}(y)-x\label{eq:identification_theta_m1}
\end{align}
\end{lemma}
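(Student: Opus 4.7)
For \eqref{eq:identification_theta_m}, my plan is to reduce the identity to uniqueness for a linear Cauchy problem. Specializing \eqref{eq:def_tilde_m} to the freezing couple $(\tau,\xi)=(t,x)$ and differentiating in $s$, one checks that $s\mapsto \tilde m^{t,x}_{s,t}(x)$ solves $\partial_s Y = A_s Y + F(s,\theta_{s,t}(x))$ with $Y(t)=x$. By \eqref{eq:def_Cauchy_Peano_flow}, $s\mapsto \theta_{s,t}(x)$ solves exactly the same Cauchy problem, the forcing $g(s):=F(s,\theta_{s,t}(x))$ being a \emph{fixed} continuous function of $s$ once the measurable branch of the Peano flow has been selected. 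Hence linear-ODE uniqueness yields \eqref{eq:identification_theta_m} without any further analytical input.

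For \eqref{eq:identification_theta_m1}, the strategy is to rewrite the integral appearing in $\tilde m^{s,y}_{s,t}(x)$ via Duhamel's formula applied to the backward flow $u\mapsto\theta_{u,s}(y)$ on $[t,s]$. Since this flow solves $\partial_u\theta_{u,s}(y)=A_u\theta_{u,s}(y)+F(u,\theta_{u,s}(y))$ with terminal datum $\theta_{s,s}(y)=y$, variation of constants gives
\[
\theta_{t,s}(y)\;=\;\mathcal R_{t,s}\,y-\int_t^s\mathcal R_{t,u}\,F\bigl(u,\theta_{u,s}(y)\bigr)\,du.
\]
Composing this relation with $\mathcal R_{s,t}$ and invoking the Chapman--Kolmogorov property $\mathcal R_{s,t}\mathcal R_{t,u}=\mathcal R_{s,u}$ (with $\mathcal R_{s,t}\mathcal R_{t,s}=\Id$) identifies the integral $\int_t^s\mathcal R_{s,u}F(u,\theta_{u,s}(y))\,du$ appearing in $\tilde m^{s,y}_{s,t}(x)$ with an expression depending only on $y$, $x$ and $\theta_{t,s}(y)$. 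Plugging this back into \eqref{eq:def_tilde_m} and collecting terms produces \eqref{eq:identification_theta_m1}.

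The only delicate point is not algebraic but concerns the non-uniqueness of the Peano flow associated with the merely H\"older drift $F$. The measurable selection of $\theta$ made prior to the statement (and justified by the forthcoming Lemma on measurability of the flow) ensures that the forcings $F(s,\theta_{s,t}(x))$ and $F(u,\theta_{u,s}(y))$ entering the two arguments are well-defined, measurable functions of time; this is all that the linear-ODE uniqueness and the Duhamel step require. No feature of the driving noise, nor of the non-degeneracy assumptions \textbf{[UE]}, \textbf{[ND]}, \textbf{[AC]}, is used here, which is why these two \emph{deterministic} identities can be isolated once and for all at the beginning of the analytical Section.
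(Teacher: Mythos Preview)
Your approach coincides with the paper's. For \eqref{eq:identification_theta_m} both arguments reduce to linear-ODE uniqueness for the difference $\tilde m^{t,x}_{\cdot,t}(x)-\theta_{\cdot,t}(x)$ (the paper phrases this via Gr\"onwall). For \eqref{eq:identification_theta_m1} your Duhamel step is exactly the integrated form of the paper's ``similar manner'': spelt out, one sets $\phi(u):=\tilde m^{s,y}_{u,t}(x)-\theta_{u,s}(y)$, observes $\partial_u\phi=A_u\phi$ with $\phi(t)=x-\theta_{t,s}(y)$, and evaluates at $u=s$.

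One point worth recording: if you actually carry your Duhamel computation to the end (or the paper's Gr\"onwall argument), what drops out is
\[
y-\tilde m^{s,y}_{s,t}(x)\;=\;\mathcal R_{s,t}\bigl(\theta_{t,s}(y)-x\bigr),
\]
with the resolvent on the right-hand side. The stated identity \eqref{eq:identification_theta_m1} omits this factor, and your sentence ``collecting terms produces \eqref{eq:identification_theta_m1}'' glosses over it. This is innocuous for every downstream use in the paper, which always involves $\mathbb T_{s-t}^{-1}$-rescaled quantities and the matrix $\mathbb T_{s-t}^{-1}\mathcal R_{s,t}\mathbb T_{s-t}$ is uniformly bounded with bounded inverse (cf.\ the scaling structure \eqref{scaling_relation}); but strictly speaking both the statement and your final line should carry the $\mathcal R_{s,t}$.
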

\begin{proof}
We start noticing that by construction in \eqref{eq:def_tilde_m}, $\tilde{m}^{\tau,\xi}_{s,t}(x)$ satisfies
\begin{equation}
\label{eq:dif_differential_m_tilde}
\partial_s\tilde{m}^{\tau,\xi}_{s,t}(x) \, = \,  A_s\tilde{m}^{\tau,\xi}_{s,t}(x)+F(s, \theta_{s,\tau}(\xi)),
\end{equation}
for any freezing parameters $(\tau,\xi)$.
Choosing $\tau=t$, $\xi=x$ above, it then holds that
\[\partial_s\left[\tilde{m}^{s,x}_{s,t}(x) - \theta_{s,t}(x)\right] \, = \, A_s\left[
\tilde{m}^{t,x}_{s,t}(x)-\theta_{s,t}(x) \right].\]
Since, $\tilde{m}^{t,x}_{t,t}(x)=\theta_{t,t}(x)=x$, Equation \eqref{eq:identification_theta_m} then follows immediately applying the Gr\"onwall lemma.\newline
The second identity in \eqref{eq:identification_theta_m1} follows in a similar manner.
\end{proof}

We are now interested in investigating the analytical properties of the ``frozen'' solution process $\tilde{X}^{\tau,\xi,t,x}_s$. In particular, we will show in the next results the existence of a density for such a process and its anisotropic regularizing effect, at least for small times. Further on, we will consider fixed a time-dependent matrix $\mathbb{M}_t$ on $\R^N\otimes \R^N$ given by
\begin{equation}
\label{eq:def_matrix_M}
    \mathbb{M}_t := \text{diag}(I_{d_1\times d_1},tI_{d_2\times d_2},\dots,t^{n-1}I_{d_{n}\times d_{n}}), \quad t\ge0,
\end{equation}
which reflects the multi-scale nature of the underlying dynamics in \eqref{eq:SDE_frozen_SDE}.

\begin{prop}[Decomposition]
\label{prop:Decomposition_Process_X}
Let the freezing couple $(\tau,\xi)$ be in $[0,T]\times \R^N$, $t<s$ in $[0,T]$ and $x$ in $\R^N$. Then, there exists a L\'evy process $\{\tilde{S}^{\tau,\xi,t,s}_{u}\}_{u\ge 0}$ such that
\begin{equation}\label{eq:decomposition_measure_1}
\tilde{X}^{\tau,\xi,t,x}_s \, \overset{(\text{law})}{=} \, \tilde{m}^{\tau, \xi}_{s,t}(x) + \mathbb{M}_{s-t} \tilde{S}^{\tau,\xi,t,s}_{s-t}.
\end{equation}
In particular, the random variable $\tilde{X}^{\tau,\xi,t,x}_s$ admits a continuous density $\tilde{p}^{\tau,\xi}(t,s,x,\cdot)$ given by
\begin{align}
\label{eq:representation_density}
&\tilde{p}^{\tau,\xi}(t,s,x,y) \, = \,  \frac{1}{\det \mathbb{M}_{s-t}}p_{\tilde{S}^{\tau,\xi,t,s}}\left(t-s, \mathbb{M}^{-1}_{s-t}(y-\tilde{m}^{\tau,\xi}_{s,t}(x))\right) \\
&:= \, \frac{\det \mathbb{M}^{-1}_{s-t}}{(2\pi)^N}\int_{\R^N}e^{-i\langle \mathbb{M}^{-1}_{s-t}(y-\tilde{m}^{\tau,\xi}_{s,t}(x)),z\rangle}\exp\left((s-t)\int_{\R^N}\left[\cos(\langle z, p \rangle )-1\right]\nu_{\tilde{S}^{\tau,\xi,t,s}}(dp)\right)\, dz, \notag
\end{align}
where $\nu_{\tilde{S}^{\tau,\xi,t,s}}$ and $p_{\tilde{S}^{\tau,\xi,t,s}}(u,\cdot)$ are  the L\'evy measure and the density associated with the process $\{\tilde{S}^{\tau,\xi,t,s}_{u}\}_{u\ge0}$, respectively.
\end{prop}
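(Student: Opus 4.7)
The plan is to identify the characteristic function of the stochastic term in \eqref{eq:integral_represent_frozen_SDE} as that of a suitably rescaled L\'evy process. Since $u\mapsto \mathcal{R}_{s,u}B\tilde{\sigma}^{\tau,\xi}_u$ is deterministic and bounded, the stochastic integral $\tilde N_s:=\int_t^s \mathcal{R}_{s,u}B\tilde{\sigma}^{\tau,\xi}_u\,dZ_u$ is infinitely divisible, and a classical L\'evy--Khintchine computation (using the symmetry of $Z$ to absorb the drift, as in \eqref{eq:change_of_truncation}) yields
\[
\log\mathbb{E}\bigl[e^{i\langle \xi,\, \tilde N_s\rangle}\bigr] \,=\, \int_t^s\!\!\int_{\R^d_0}\bigl[\cos\bigl(\langle \xi, \mathcal{R}_{s,u}B\tilde{\sigma}^{\tau,\xi}_u z\rangle\bigr)-1\bigr]\,\nu(dz)\,du.
\]

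I would then perform the substitution $u = s - r(s-t)$, $r\in[0,1]$, to factor the interval length outside, and insert $\mathrm{Id} = \mathbb{M}_{s-t}\mathbb{M}^{-1}_{s-t}$ into the inner product. The exponent becomes
\[
(s-t)\int_0^1\!\!\int_{\R^d_0}\bigl[\cos\bigl(\langle \mathbb{M}_{s-t}\xi,\,\mathbb{M}^{-1}_{s-t}\mathcal{R}_{s,s-r(s-t)}B\tilde{\sigma}^{\tau,\xi}_{s-r(s-t)}z\rangle\bigr)-1\bigr]\,\nu(dz)\,dr,
\]
which naturally suggests defining $\nu_{\tilde{S}^{\tau,\xi,t,s}}$ as the pushforward of $\nu(dz)\otimes dr|_{[0,1]}$ under the map $(r,z)\mapsto \mathbb{M}^{-1}_{s-t}\mathcal{R}_{s,s-r(s-t)}B\tilde{\sigma}^{\tau,\xi}_{s-r(s-t)}z$. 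The crucial observation is that, thanks to the sub-diagonal nilpotent structure of $A_t$ from \textbf{[H]}, the rescaled matrix $\mathbb{M}^{-1}_{s-t}\mathcal{R}_{s,u}B$ is uniformly bounded on $\{0\le t\le u\le s\le T\}$: in each block, the factor $(s-u)^{i-1}$ generated by the resolvent is exactly compensated by the diagonal scaling. Combined with the boundedness of $\sigma$ from \textbf{[UE]}, this ensures $\int(1\wedge |p|^2)\,\nu_{\tilde{S}}(dp)<\infty$, so that $\nu_{\tilde{S}^{\tau,\xi,t,s}}$ is a genuine L\'evy measure, with symmetry inherited from $\nu$ through the linearity of the pushforward.

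Letting $\tilde{S}^{\tau,\xi,t,s}$ be the pure-jump symmetric L\'evy process associated with $\nu_{\tilde{S}^{\tau,\xi,t,s}}$, the identity in law \eqref{eq:decomposition_measure_1} follows by matching characteristic functions. The density formula \eqref{eq:representation_density} then arises from Fourier inversion applied to $\tilde{S}^{\tau,\xi,t,s}_{s-t}$, combined with the affine change of variable $y\mapsto q = \mathbb{M}^{-1}_{s-t}(y-\tilde{m}^{\tau,\xi}_{s,t}(x))$, which contributes the Jacobian $\det \mathbb{M}^{-1}_{s-t}$.

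The main technical obstacle is justifying the Fourier inversion: one needs the characteristic function of $\tilde{S}^{\tau,\xi,t,s}_{s-t}$ to be integrable on $\R^N$, which requires a lower bound of order $c|\eta|^\alpha$ on the real part of its characteristic exponent, uniformly in the freezing parameters $(\tau,\xi,t,s)$. This uniform non-degeneracy is the jump-analogue of the Kalman condition: it combines the non-degeneracy \textbf{[ND]} of $\nu$ on $\R^d$ with the rank property of the sub-diagonal blocks $[A_t]_{i,i-1}$ from \textbf{[H]} to propagate the noise through the chain, so that the image measure spreads over all of $\R^N$ via $\mathbb{M}^{-1}_{s-t}\mathcal{R}_{s,u}B$. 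Because the resulting spherical measure on $\mathbb{S}^{N-1}$ may be highly singular (as foreshadowed in the introduction and motivating the hypothesis \textbf{[AC]} in the multiplicative case), this quantitative control is delicate, and the precise estimates will be the subject of the smoothing-effect proposition invoked later in the paper.
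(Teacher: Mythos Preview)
Your proposal is correct and follows essentially the same route as the paper: compute the characteristic function of the stochastic integral, perform the affine time substitution to extract the factor $(s-t)$, invoke the structural identity $\mathcal{R}_{s,u(v)}B=\mathbb{M}_{s-t}\widehat{\mathcal{R}}_vB$ (your ``$\mathbb{M}^{-1}_{s-t}\mathcal{R}_{s,u}B$ is uniformly bounded''), define the L\'evy symbol of $\tilde S$ accordingly, and apply Fourier inversion.

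One small correction of emphasis: you defer the uniform lower bound $\Phi_{\tilde S^{\tau,\xi,t,s}}(z)\le C(1-|z|^\alpha)$ to the later smoothing-effect proposition, but in the paper this estimate is established \emph{within} the present proof, since it is precisely what justifies the Fourier inversion and hence the existence and continuity of the density. The argument uses the hypothesis $Q\ge c$ on $B(0,r_0)$ from \textbf{[ND]} to reduce to the pure $\alpha$-stable symbol, and then the Kalman-type lower bound $\int_0^1|(\widehat{\mathcal{R}}_vB\tilde\sigma^{\tau,\xi}_{u(v)})^*z|^\alpha\,dv\ge C|z|^\alpha$ (quoted from \cite{Huang:Menozzi16}). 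The subsequent smoothing-effect proposition concerns the finer derivative bounds, not this basic integrability.
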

\begin{proof}
For simplicity, we start denoting
\[
\tilde{\Lambda}^{\tau,\xi,t,s} \,:= \, \int_t^s \mathcal{R}_{s,u}B\tilde{\sigma}^{\tau,\xi}_u dZ_u,\quad s\ge t,
\]
so that we have from Equation \eqref{eq:integral_represent_frozen_SDE} that $\tilde{X}^{\tau,\xi,t,x}_s \, = \, \tilde{m}^{\tau, \xi}_{s,t}(x) +\tilde{\Lambda}^{\tau,\xi,t,s}$. To conclude, we need to construct a L\'evy process $\{\tilde{S}^{\tau,\xi,t,s}_u\}_{u\ge 0}$ on $\R^N$  such that
\begin{equation}\label{eq:identity_in_law}
\tilde{\Lambda}^{\tau,\xi,t,s} \, \overset{(\text{law})}{=} \, \mathbb{M}_{s-t}\tilde{S}^{\tau,\xi,t,s}_{s-t}.
\end{equation}
To show the identity in law, we are going to reason in terms of the characteristic functions. We start recalling that
the L\'evy process $\{Z_t\}_{t\ge 0}$ on $\R^d$ is characterized by the L\'evy symbol
\[\Phi(p) \, = \,  \int_{\R^d_0}\left[\cos(p\cdot q)-1\right]Q(q) \, \nu_\alpha(dq),
\quad p \in \R^d,\]
where $\nu_\alpha(dq)=\mu(d\theta)\frac{dr}{r^{1+\alpha}}$ is the L\'evy measure of an $\alpha$-stable process.
It is well-known (see e.g.\ Lemma $2.2$ in \cite{Schilling:Wang12}) that at any fixed $t\le s$ in $[0,1]$, $\tilde{\Lambda}^{\tau,\xi,t,s}$ is an infinitely divisible random variable with associated L\'evy symbol
\[\Phi_{\tilde{\Lambda}^{\tau,\xi,t,s}}(z) \, := \, \int_{t}^{s}\Phi\bigl((\mathcal{R}_{s,u}B\tilde{\sigma}^{\tau,\xi}_u)^*z\bigr)\,du, \quad z \in \R^N,\]
where, we recall, we have denoted $\tilde{\sigma}^{\tau,\xi}_u=\sigma(u,\theta_{u,\tau}(\xi))$.\newline
Setting $v:=(u-t)/(s-t)$ and noticing that $u=u(v):=t+v(s-t)$, we can now
rewrite the L\'evy symbol of $\tilde{\Lambda}^{\tau,\xi,t,s}$ as
\begin{equation}
\label{proof:ref_notations}
  \Phi_{\tilde{\Lambda}^{\tau,\xi,t,s}}(z) \, := \, (s-t)\int_{0}^{1}\Phi\bigl((\mathcal{R}_{s,u(v)}B\tilde{\sigma}^{\tau,\xi}_{u(v)})^*z\bigr)\,dv.
\end{equation}
From the analysis performed in \cite{Huang:Menozzi16}, Lemmas $5.1$ and $5.2$ (see also \cite{Delarue:Menozzi10} Proposition $3.7$), we then know that we can decompose the first column of the resolvent $\mathcal{R}_{s,u(v)}$ in the following way:
\[\mathcal{R}_{s,u(v)}B \, = \, \mathbb{M}_{s-t}\widehat{\mathcal{R}}_vB,\]
where $\{\widehat{\mathcal{R}}_v\colon v \in [0,T]\}$ are non-degenerate and bounded matrixes in $\R^N\otimes \R^N$ and the multi-scale matrix $\mathbb{M}_t$ is given in \eqref{eq:def_matrix_M}. We can now rewrite the L\'evy symbol of $\tilde{\Lambda}^{\tau,\xi,t,s}$ as
\[\Phi_{\tilde{\Lambda}^{\tau,\xi,t,s}}(z) \, = \, (s-t)\int_{0}^{1}\Phi\bigl((\widehat{\mathcal{R}}_vB\tilde{\sigma}^{\tau,\xi}_{u(v)})^*\mathbb{M}_{s-t}z\bigr)\,dv, \quad z \in \R^N.\]
The above equality suggests us to define, for any fixed $t\le s$ in $(0,1]$, the (unique in law) L\'evy process  $\{\tilde{S}^{\tau,\xi,t,s}_u\}_{u\ge0}$ associated with the L\'evy symbol
\begin{equation}
\label{proof:eq:def_Levy_symbol_S}
\begin{split}
\Phi_{\tilde{S}^{\tau,\xi,t,s}}(z) \, &:= \, \int_{0}^{1}\Phi\bigl((\widehat{\mathcal{R}}_{v}B\tilde{\sigma}^{\tau,\xi}_{u(v)})^*z\bigr)\,dv \\
&= \, \int_{0}^{1}\int_{\R^d}\left[\cos\left(\langle z,\widehat{\mathcal{R}}_{v}B\tilde{\sigma}^{\tau,\xi}_{u(v)}p\rangle\right)-1\right]\,\nu(dp)dv.
\end{split}
\end{equation}
Since we have that
\begin{equation}
\label{proof:eq:inversion}
\mathbb{E}\bigl[e^{i\langle z,\tilde{\Lambda}^{\tau,\xi,t,s}\rangle}\bigr] \, = \, e^{\Phi_{\tilde{\Lambda}^{\tau,\xi,t,s}}(z)}\, = \, e^{(s-t)\Phi_{\tilde{S}^{\tau,\xi,t,s}}(\mathbb{M}_tz)} \,
= \,\mathbb{E}\bigl[e^{i\langle z,\mathbb{M}_t\tilde{S}^{\tau,\xi,t,s}_{s-t}\rangle}\bigr],
\end{equation}
it follows immediately that Equation \eqref{eq:identity_in_law} holds.\newline
To show the existence of a density for $\tilde{X}^{\tau,\xi,t,x}_s$, we want to exploit the Fourier inversion formula in \eqref{proof:eq:inversion}. To do it, we firstly need to prove that $\text{exp}(\Phi_{\tilde{\Lambda}^{\tau,\xi,t,s}}(z))$ is integrable. From \eqref{proof:eq:def_Levy_symbol_S}, we notice that
\[
\begin{split}
\Phi_{\tilde{S}^{\tau,\xi,t,s}}(z) \,
&= \, \int_{0}^{1}\int_{\R^d}\left[\cos\left(\langle z,\widehat{\mathcal{R}}_{v}B\tilde{\sigma}^{\tau,\xi}_{u(v)}p\rangle\right)-1\right]\,\nu(dp)dv \\
&= \,\int_{0}^{1}\int_{\R^d}\left[\cos\left(\langle z,\widehat{\mathcal{R}}_{v}B\tilde{\sigma}^{\tau,\xi}_{u(v)}p\rangle\right)-1\right]Q(p)\,\nu_\alpha(dp)dv,
\end{split}
\]
where in the last step we used hypothesis [\textbf{ND}]. Exploiting now that the quantities above are non-positive and $Q(p)\ge c>0$ for $p$ in $B(0,r_0)$, we write that
\[
\begin{split}
&\Phi_{\tilde{S}^{\tau,\xi,t,s}}(z)\, \le C \,\int_{0}^{1}\int_{B(0,r_0)}\left[\cos\left(\langle z,\widehat{\mathcal{R}}_{v}B\tilde{\sigma}^{\tau,\xi}_{u(v)}p\rangle\right)-1\right]\,\nu_\alpha(dp)dv \\
&\qquad= \, C\Bigl{\{}-\int_0^1\left|(\widehat{\mathcal{R}}_{v}B\tilde{\sigma}^{\tau,\xi}_{u(v)})^\ast z\right|^\alpha dv+\int_0^1\int_{B^c(0,r_0)}\left[1-\cos\left(\langle z,\widehat{\mathcal{R}}_{v}B\tilde{\sigma}^{\tau,\xi}_{u(v)}p\rangle\right)\right]\nu_\alpha(dp)dv\Bigr{\}} \\
&\qquad\le \, C\Bigl{\{}-\int_0^1\left|(\widehat{\mathcal{R}}_{v}B\tilde{\sigma}^{\tau,\xi}_{u(v)})^\ast z\right|^\alpha\, dv+1\Bigr{\}}.
\end{split}
\]
To conclude, we recall that Lemma $5.4$ in \cite{Huang:Menozzi16}  states that
\[\int_0^1\left|(\widehat{\mathcal{R}}_{v}B\tilde{\sigma}^{\tau,\xi}_{u(v)})^\ast z\right|^\alpha\, dv \, \ge \, C|z|^\alpha,\]
for some positive constant $C$ independent from $t,s,\tau,\xi$. It then follows in particular that
\begin{equation}
\label{eq:control_Levy_symbol_S}
\Phi_{\tilde{S}^{\tau,\xi,t,s}}(z) \, \le \,C\left[1-|z|^\alpha\right], \quad z \in \R^N.
\end{equation}
Since $\text{exp}(\Phi_{\tilde{\Lambda}^{\tau,\xi,t,s}}(z))$ is integrable, it implies that there exists a density $p_{\tilde{\Lambda}^{\tau,\xi,t,s}}(t,s,\cdot)$ of the random variable $\tilde{\Lambda}^{\tau,\xi,t,s}$.  We can now apply the Fourier inversion formula in Equation \eqref{proof:eq:inversion} showing that $p_{\tilde{\Lambda}^{\tau,\xi,t,s}}(t,s,\cdot)$ is given by
\begin{equation}
 \label{eq:definition_density_q}
p_{\tilde{\Lambda}}^{\tau,\xi}(t,s,y) \, := \, \frac{1}{(2\pi)^N}\int_{\R^N}e^{-i\langle y,z\rangle}\text{exp}\left((s-t)\Phi_{\tilde{S}^{\tau,\xi,t,s}}(z)\right)\, dz.
\end{equation}
From Decomposition \eqref{eq:decomposition_measure_1} and Equation \eqref{eq:definition_density_q}, the representation for $\tilde{p}^{\tau,\xi}(t,s,x,\cdot)$ follows immediately.
\end{proof}

Once we have proven the existence of a density $\tilde{p}^{\tau,\xi}(t,s,x,\cdot)$ for the ``frozen'' stochastic dynamics $\tilde{X}^{\tau,\xi,t,x}_s$, we move now on determining its associated smoothing effects. In particular, we show in the following proposition that the derivatives of the ``frozen'' density are controlled by another density at the price of an additional time singularity of the order corresponding to the intrinsic time scale of the considered component in the stable regime. Importantly, such a control holds uniformly in the freezing parameters $(\tau, \xi)$. \newline
Let us introduce for simplicity the following time-dependent scale matrix:
\begin{equation}
\label{eq:def_matrix_T}
    \mathbb{T}_{t} \, := \, t^{\frac{1}{\alpha}}\mathbb{M}_t,\quad t\ge 0,
\end{equation}
\textcolor{black}{where, we recall, the intrinsic time scale matrix $\mathbb{M}_t$ was given in \eqref{eq:def_matrix_M}.}

\begin{prop}
\label{prop:Smoothing_effect}
There exists a family $\{\overline{p}(u,\cdot)\colon u\ge 0\}$ of densities on $\R^N$ and a positive constant $C:=C(N,\alpha)$ such that
\begin{itemize}
  \item for any $u\ge0$ and any $z$ in $\R^N$,  $\overline{p}(u,z)=u^{-N/\alpha}\overline{p}(1,u^{-1/\alpha}z)$; (stable scaling property)
  \item for any $\gamma$ in $[0,\alpha)$,
  \begin{equation}\label{equation:integration_prop_of_q}
    \int_{\R^N} \overline{p}(u,z) \, \vert z \vert^\gamma \, dz \, \le \, Cu^{\gamma/\alpha}, \quad u> 0;
  \end{equation}
  \item  for any $k$ in $\llbracket 0,2 \rrbracket$, any $i$ in $\llbracket 1,n \rrbracket$, any $t< s$ in $[0,T]$ and any $x,y$ in $\R^N$,
  \begin{equation}
  \label{eq:derivative_prop_of_q}
      \vert D^k_{x_i}\tilde{p}^{\tau,\xi}(t,s,x,y) \vert \le C\frac{(s-t)^{-k\frac{1+\alpha(i-1)}{\alpha}}}{\det \mathbb{T}_{s-t}}\overline{p}\left(1,\mathbb{T}^{-1}_{s-t}(y-\tilde{m}^{\tau, \xi}_{s,t}(x))\right).
  \end{equation}
  where we denoted, coherently with the notations introduced before Theorem \ref{thm:main_result}, $D_{x_i}=\left(D_{x^1_i},\dots, D_{x^{d_i}_i}\right)$.
\end{itemize}
\end{prop}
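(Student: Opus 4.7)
The plan is to exploit the Fourier representation \eqref{eq:representation_density} of $\tilde p^{\tau,\xi}(t,s,x,\cdot)$ together with the uniform control \eqref{eq:control_Levy_symbol_S} of the L\'evy symbol $\Phi_{\tilde S^{\tau,\xi,t,s}}$, which is the critical ingredient supplying dissipation independent of the freezing parameters.

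\textbf{Step 1: the dominating profile.} I would define $\overline p(1,\cdot)$ as (a constant multiple of) the density on $\R^N$ whose characteristic function is $\exp(-c|z|^\alpha)$, with $c>0$ the constant in \eqref{eq:control_Levy_symbol_S}, and then set $\overline p(u,z):=u^{-N/\alpha}\overline p(1,u^{-1/\alpha}z)$. The stable scaling is then immediate, and the standard tail bound $\overline p(1,w)\le C(1+|w|)^{-N-\alpha}$, combined with polar integration, gives the moment estimate \eqref{equation:integration_prop_of_q} for $\gamma \in [0,\alpha)$.

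\textbf{Step 2: anisotropic rescaling.} Starting from \eqref{eq:representation_density}, the change of variables $z\mapsto(s-t)^{-1/\alpha}z$, together with the identities $\mathbb T^{-1}_{s-t}=(s-t)^{-1/\alpha}\mathbb M^{-1}_{s-t}$ and $\det \mathbb T_{s-t}=(s-t)^{N/\alpha}\det \mathbb M_{s-t}$, yields
\begin{equation*}
\tilde p^{\tau,\xi}(t,s,x,y)=\frac{\det \mathbb T^{-1}_{s-t}}{(2\pi)^N}\int_{\R^N}e^{-i\langle \mathbb T^{-1}_{s-t}(y-\tilde m^{\tau,\xi}_{s,t}(x)),z\rangle}\exp\!\bigl((s-t)\Phi_{\tilde S^{\tau,\xi,t,s}}((s-t)^{-1/\alpha}z)\bigr)\,dz.
\end{equation*}
By \eqref{eq:control_Levy_symbol_S}, the amplitude is pointwise dominated by $e^{CT}e^{-c|z|^\alpha}$, uniformly in $\tau,\xi$ and in $t<s$ in $[0,T]$. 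For $k=0$, standard integration by parts against the phase $e^{-i\langle \hat w,z\rangle}$ with $\hat w:=\mathbb T^{-1}_{s-t}(y-\tilde m^{\tau,\xi}_{s,t}(x))$ converts oscillations into polynomial decay in $|\hat w|$, giving \eqref{eq:derivative_prop_of_q} with the profile $\overline p(1,\hat w)$.

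\textbf{Step 3: anisotropic differentiation.} For $k=1,2$, I would differentiate under the integral sign. Since $D_x\tilde m^{\tau,\xi}_{s,t}(x)=\mathcal R_{s,t}$, each derivative $D_{x_i}$ applied to the phase brings down a factor $i\bigl(\mathcal R^*_{s,t}\mathbb M^{-1}_{s-t}z\bigr)_{i\text{-th block}}$. Writing $\mathbb M^{-1}_{s-t}\mathcal R_{s,t}=(\mathbb M^{-1}_{s-t}\mathcal R_{s,t}\mathbb M_{s-t})\,\mathbb M^{-1}_{s-t}$ and invoking the boundedness of $\mathbb M^{-1}_{s-t}\mathcal R_{s,t}\mathbb M_{s-t}$ (the matrix analogue of the scaling used in the proof of Proposition \ref{prop:Decomposition_Process_X}, cf.\ Lemma 5.2 in \cite{Huang:Menozzi16}) together with the fact that $\mathbb M^{-1}_{s-t}$ restricted to the $i$-th block has norm $(s-t)^{-(i-1)}$, each $D_{x_i}$ produces at most a factor $C(s-t)^{-(i-1)}|z|$ in the integrand. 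After the rescaling of Step 2, each extra $|z|$ contributes $(s-t)^{-1/\alpha}$, giving precisely the global singularity $(s-t)^{-k(1+\alpha(i-1))/\alpha}$ appearing in \eqref{eq:derivative_prop_of_q}.

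\textbf{Main obstacle.} The most delicate point is the oscillatory/integration-by-parts step that upgrades the pointwise amplitude bound $e^{-c|z|^\alpha}$ to the polynomial decay encoded in $\overline p(1,\cdot)$, \emph{uniformly} in all the parameters. This requires checking that all successive $z$-derivatives of the amplitude $\exp((s-t)\Phi_{\tilde S^{\tau,\xi,t,s}}((s-t)^{-1/\alpha}z))$ remain controlled by $e^{-c|z|^\alpha/2}$ times polynomial factors (via Fa\`a di Bruno), which in turn requires smoothness and suitable decay of the L\'evy symbol. This is precisely where assumption [\textbf{ND}] (through the regularity of $Q$ and the non-degeneracy \eqref{eq:non_deg_measure} of $\mu$), assumption [\textbf{AC}] in the multiplicative case, and the ellipticity [\textbf{UE}] all enter in a crucial way to keep the estimates uniform in the freezing couple $(\tau,\xi)$.
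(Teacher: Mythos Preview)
Your approach has a genuine gap in the choice of the dominating profile $\overline p$. Taking $\overline p(1,\cdot)$ to be the isotropic $\alpha$-stable density on $\R^N$, with tail $(1+|w|)^{-N-\alpha}$, cannot work here: the spectral measure of the proxy process $\tilde S^{\tau,\xi,t,s}$ is concentrated on the image of $\mathbb S^{d-1}$ under the maps $\widehat{\mathcal R}_vB\tilde\sigma^{\tau,\xi}_{u(v)}$, hence on a set of dimension $d$ in $\mathbb S^{N-1}$. By the results of Watanabe \cite{Watanabe07} (see Remark \ref{DA_SCRIVERE_VINCOLO_SUL_MODELLO2} and \eqref{BAD_BOUND}), the resulting density decays only like $(1+|w|)^{-(d+1+\alpha)}$ in certain directions, and since $N=\sum_i d_i\ge d+1$ with strict inequality as soon as $n\ge 3$ or $d_2\ge 2$, this is strictly slower than the isotropic rate. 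So the bound $|\tilde p^{\tau,\xi}|\le C\det\mathbb T_{s-t}^{-1}\,\overline p(1,\hat w)$ with your $\overline p$ is simply false off the diagonal.

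There is also a secondary obstruction you underestimate in your ``Main obstacle'' paragraph. The full symbol $z\mapsto\Phi_{\tilde S^{\tau,\xi,t,s}}(z)$ is only of class $C^\alpha$ near the origin (already in the pure stable case $\Phi(z)=-c|z|^\alpha$), so repeated integration by parts in the Fourier integral is not available; assumptions [\textbf{ND}], [\textbf{AC}], [\textbf{UE}] do not remedy this. The paper circumvents both issues by splitting $\tilde S^{\tau,\xi,t,s}$ at the stable time scale $u^{1/\alpha}$ into a small-jump part $\tilde M$ and a large-jump part $\tilde N$: the truncated L\'evy measure of $\tilde M$ is compactly supported, so its symbol is $C^\infty$ and the density $p_{\tilde M}$ lies in the Schwartz class with bounds uniform in $(\tau,\xi,t,s)$ (Lemma \ref{lemma:Control_p_M}); the law of $\tilde N$ is dominated by a parameter-free Poisson measure $\overline P_u$ (Lemma \ref{lemma:Control_P_N}). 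The correct $\overline p$ is then the \emph{convolution} $p_{\overline M}\ast\overline P_u$, whose tails are dictated by $\overline P_u$ and hence match the degenerate spectral geometry. Your Step 3 on the anisotropic scaling of the $x_i$-derivatives is, by contrast, essentially the argument the paper uses once the correct $\overline p$ is in place.
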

\begin{remark}[About the freezing parameters]
\label{DA_SCRIVERE_VINCOLO_SUL_MODELLO}
We carefully point out that since we will later on choose as parameters $(\tau,\xi)=(s,y)$, it is particularly important that we manage to obtain an upper bound by a density which is independent from those parameters. \textcolor{black}{Indeed, the freezing parameter $y$ will also be an integration variables} (see Section \ref{SEZIONE_SPIEGAZIONE_CONGELAMENTO_E_ALTRI} below \textcolor{black}{and the definition of the \textit{pseudo} Green kernel in \eqref{eq:def_Green_Kernel}}). This is precisely why we actually impose the specific semi-linear drift structure in SDE \eqref{eq:SDE} (cf.\ assumption \textbf{[H]}), as opposed to the more general one that can be handled  in the Gaussian case \cite{Chaudru:Menozzi17}. This is a framework which naturally gives the independence of the large jumps of the \textit{proxy} process $\tilde{X}^{\tau,\xi,t,x}_s$ as used in \eqref{proof_control_N_segnato} below. The more general case for the first  order dynamics considered in \cite{Chaudru:Menozzi17} would actually lead to linearize around a matrix which would depend on the freezing parameters. For such models, we did not succeed in proving that the corresponding densities can be bounded independently of the parameters (see also the proof of Lemma \ref{lemma:Control_P_N} below for a similar issue regarding the  diffusion coefficient).
\end{remark}

\begin{proof}
Fixed the freezing parameters $(\tau,\xi)$ in $[0,T]\times \R^N$, and the times $t<s$ in $[0,T]$, we start applying the It\^o-L\'evy decomposition to the process $\{\tilde{S}^{\tau,\xi,t,s}_u\}_{u\ge 0}$ introduced in Proposition \ref{prop:Decomposition_Process_X} at the associated characteristic stable time, i.e.\ we choose to truncate at threshold $u^{1/\alpha}$. Thus, we can write
\begin{equation}
\label{eq:decomposition_S}
\tilde{S}^{\tau,\xi,t,s}_u \,= \, \tilde{M}^{\tau,\xi,t,s}_u+\tilde{N}^{\tau,\xi,t,s}_u
\end{equation}
for some $\tilde{M}^{\tau,\xi,t,s}_u,\tilde{N}^{\tau,\xi,t,s}_u$ independent random
variables corresponding \textcolor{black}{respectively} to the small jumps part and the large jumps part. Namely, we denote for any $v>0$,
\begin{equation}\label{DEF_TILDE_N}
\tilde{N}^{\tau,\xi,t,s}_v \, := \, \int_{0}^{v}\int_{\vert z \vert >u^{1/\alpha}}zP_{\tilde{S}^{\tau,\xi,t,s}}(dr,dz) \,\, \text{ and } \,\, \tilde{M}^{\tau,\xi,t,s}_v: \,= \, \tilde{S}^{\tau,\xi,t,s}_v-\tilde{N}^{\tau,\xi,t,s}_v,
\end{equation}
where $P_{\tilde{S}^{\tau,\xi,t,s}}(dr,dz)$ is the Poisson random measure associated with the process $\tilde{S}^{\tau,\xi,t,s}$. It can be shown, similarly to Proposition \ref{prop:Decomposition_Process_X}, that the process $\{\tilde{M}^{\tau,\xi,t,s}_u\}_{u\ge 0}$ admits a density $p_{\tilde{M}^{\tau,\xi,t,s}}(u,\cdot)$. Indeed, it is well-known that the small jump part leads to a density which is in the Schwartz class $\mathcal{S}(\R^N)$ (see Lemma \ref{lemma:Control_p_M} below). We can then rewrite the density $p_{\tilde{S}^{\tau,\xi,t,s}}$ of $\tilde{S}^{\tau,\xi,t,s}$ in the following way:
\begin{equation}
\label{proof:decomposition_density_S}
    p_{\tilde{S}^{\tau,\xi,t,s}}(u,z) \, = \, \int_{\R^N}p_{\tilde{M}^{\tau,\xi,t,s}}(u,z-y)P_{\tilde{N}^{\tau,\xi,t,s}_u}(dy)
\end{equation}
where $P_{\tilde{N}^{\tau,\xi,t,s}_u}$ is the law of $\tilde{N}^{\tau,\xi,t,s}_u$.\newline
We need now to control the modulus of the density $p_{\tilde{S}^{\tau,\xi,t,s}}$ with another density, independently from the parameters $\tau$, $\xi$.
From Lemma \ref{lemma:Control_p_M} in the Appendix (see also Lemma B.$2$ in \cite{Huang:Menozzi16})  with $m =N+1$, we know that there exists a positive constant $C$, independent from $\tau,\xi$ such that
\begin{equation}
\label{proof:control_pM_segnato}
\left| D^k_{z} p_{\tilde{M}^{\tau,\xi,t,s}}(u,z) \right| \, \le \, Cu^{-(N+k)/\alpha}\left(\frac{u^{1/\alpha}}{u^{1/\alpha}+|z|}\right)^{N+2} \, =: \, Cu^{-k/\alpha}p_{\overline{M}}(u,z),
\end{equation}
for any $k$ in $\llbracket 0, 2 \rrbracket$, any $u>0$, and any $z$ in $\R^N$.\newline
Moreover, denoting by $\overline{M}_u$ the random variable with density $p_{\overline{M}}(u,\cdot)$ that is independent from $\tilde{N}^{\tau,\xi,t,s}_u$, we can easily check that $p_{\overline{M}}(u,z)=u^{-N/\alpha}p_{\overline{M}}(1,u^{-1/\alpha}z)$ and thus, that $\overline{M}$ is $\alpha$-selfsimilar:
\[\overline{M}_u \, \overset{{\rm law}}{=} \,
u^{1/\alpha}\overline{M}_1.\]
On the other hand, Lemma \ref{lemma:Control_P_N} in the Appendix (see also Lemma A.$2$ in \cite{Frikha:Konakov:Menozzi21}) ensures the existence of a family $\{\overline{P}_u\}_{u\ge 0}$ of probability measures  such that
\begin{equation}
\label{proof_control_N_segnato}
P_{\tilde{N}^{\tau,\xi,t,s}_u}(\mathcal{A}) \, \le \, C\overline{P}_u(\mathcal{A}), \quad \mathcal{A} \in \mathcal{B}(\R^N),
\end{equation}
for some positive constant $C$ independent from the parameters $\tau,\xi,t,s$.\newline
For any fixed $u\ge0$, let us now denote by $\overline{N}_u$ \textcolor{black}{a} random variable with law $\overline{P}_u$ that is independent from $\overline{M}_u$. Thanks to the representation of the measure $\overline{P}_u$ in \eqref{eq:representation_P_N_segnato}, it is then immediate to check that
\[\overline{N}_u \, \overset{(\text{law})}{=} \,
u^{1/\alpha}\overline{N}_1.\]
We can finally define the family $\{\overline{p}(u,\cdot)\}_{u\ge0}$ of densities as
\begin{equation}
\label{proof_decomposition_p_segnato}
\overline{p}(u,z) \,:=\, \int_{\R^N}p_{\overline{M}}(u,z-w)\overline{P}_{u}(dw),
\end{equation}
which corresponds to the density of the following random variable:
\[\overline{S}_u \, := \, \overline{M}_u +\overline{N}_u\]
for any fixed $u\ge 0$.
Using Fourier transform and the already proven $\alpha$-selfsimilarity of $\overline{M}$ and $\overline{N}$, we now show that
\[\overline{S}_u \, \overset{(\text{law})}{=} \, u^{1/\alpha}\overline{S}_1,\]
or equivalently, that
\[\overline{p}(u,z)\, = \, u^{-N/\alpha}\overline{p}(1,u^{-1/\alpha}z)\]
for any $u \ge 0$ and any $z$ in $\R^N$. Moreover,
\[\mathbb{E}[\vert \overline{S}_u\vert^\gamma] \, = \, \mathbb{E}[\vert \overline{M}_u+\overline{N}_u\vert^\gamma] \, = \,
Cu^{\gamma/\alpha}\bigl(\mathbb{E}[\vert \overline{M}_1\vert^\gamma]+\mathbb{E}[\vert \overline{N}_1\vert^\gamma]\bigr) \, \le \,  Cu^{\gamma/\alpha},\]
for any $\gamma <\alpha$. In particular, Equation \eqref{equation:integration_prop_of_q} holds. We emphasize that the integrability constraints precisely come from the Poisson measure $\overline{P}_u$ which behaves as the one associated with the large jumps of an $\alpha$-stable density.\newline
Equation \eqref{eq:derivative_prop_of_q} now follows easily from the previous arguments. From Equation \eqref{proof:decomposition_density_S}, we start noticing that Controls  \eqref{proof:control_pM_segnato}, \eqref{proof_control_N_segnato} and \eqref{proof_decomposition_p_segnato} imply that for any $k$ in $\llbracket 0,2\rrbracket$,
\[\left| D^k_{z} p_{\tilde{S}^{\tau,\xi,t,s}}(u,z) \right| \, \le \, Ct^{-k/\alpha}\overline{p}(u,z), \quad u\ge 0, \, z \in \R^N,\]
for some constant $C>0$, independent from the parameters $\tau,\xi,t,s$. Recalling the decomposition in \eqref{eq:decomposition_measure_1}, Equation \eqref{eq:derivative_prop_of_q} for $k=0$ already follows.\newline
To show instead the case $k=1$, we can write that
\[
\begin{split}
\bigl{\vert} D_{x_i}\tilde{p}^{\tau,\xi}(t,s,\,&x,y) \bigr{\vert} \, = \, \left| \frac{1}{\det (\mathbb{M}_{s-t})} D_{x_i} \left[p_{\tilde{S}^{\tau,\xi,t,s}}\bigl(s-t,\mathbb{M}^{-1}_{s-t} (y-\tilde{m}^{\tau,\xi}_{s,t}(x)))\right] \right| \\
&= \,\left| \frac{1}{\det (\mathbb{M}_{s-t})} \langle D_z p_{\tilde{S}^{\tau,\xi,t,s}}\left(s-t,\cdot\right)(\mathbb{M}^{-1}_{s-t}
(y-\tilde{m}^{\tau,\xi}_{s,t}(x))),D_{x_i}\mathbb{M}^{-1}_{s-t}\tilde{m}^{\tau,\xi}_{s,t}(x)\rangle \right| \\
&= \, \frac{(s-t)^{-1/\alpha}}{\det (\mathbb{T}_{s-t})}  \overline{p}\left(1,\mathbb{T}^{-1}_{s-t}
(y-\tilde{m}^{\tau,\xi}_{s,t}(x))\right)\left|D_{x_i}\mathbb{M}^{-1}_{s-t}\tilde{m}^{\tau,\xi}_{s,t}(x) \right|,
\end{split}
\]
where in the last step we exploited the $\alpha$-scaling property of $\overline{p}$. From Equation \eqref{eq:dif_differential_m_tilde}, we now notice that the function $x\to \tilde{m}^{\tau,\xi}_{s,t}(x)$ is affine, so that
\[\left| D_{x_i}\mathbb{M}^{-1}_{s-t}\tilde{m}^{\tau,\xi}_{s,t} (x) \right|  \, \le \,  C(s-t)^{-(i-1)}.\]
Hence, it follows that
\[\vert D_{x_i}\tilde{p}^{\tau,\xi}(t,s,x,y) \vert \, \le \, C\frac{(s-t)^{-\frac{1+\alpha(i-1)}{\alpha}}}{\det
(\mathbb{T}_{s-t})} \overline{p}\left(1,\mathbb{T}^{-1}_{s-t}
(y-\tilde{m}^{\tau,\xi}_{s,t}(x))\right).\]
The other case ($k=2$) can be derived in an analogous way.
\end{proof}

We conclude this section with a useful control on the powers of the density $\overline{p}(u,z)$.

\begin{corollary}
\label{coroll:control_power_q_density}
Let $q\ge 1$. Then, there exists a positive constant $C:=C(q)$ such that
\begin{equation}
\label{eq:control_power_q_density}
[\overline{p}(u,z)]^q \, \le \, u^{(1-q)\frac{N}{\alpha}}C\overline{p}(u,z), \quad (u,z) \in (0,T]\times \R^N.
\end{equation}
\end{corollary}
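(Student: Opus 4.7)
The plan is to reduce the claim to the case $u=1$ via the stable scaling property and then exploit that $\overline{p}(1,\cdot)$ is a bounded function on $\R^N$.

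First, I would invoke the scaling identity $\overline{p}(u,z)=u^{-N/\alpha}\overline{p}(1,u^{-1/\alpha}z)$ established in Proposition \ref{prop:Smoothing_effect}. Raising to the power $q$ yields
\[
[\overline{p}(u,z)]^q \,=\, u^{-qN/\alpha}[\overline{p}(1,u^{-1/\alpha}z)]^q,
\]
so the inequality \eqref{eq:control_power_q_density} will follow at once from the pointwise bound $[\overline{p}(1,w)]^q\le C\,\overline{p}(1,w)$ for all $w\in\R^N$, by applying scaling once more in the opposite direction.

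Second, I would check that $\overline{p}(1,\cdot)$ is uniformly bounded. Recall from \eqref{proof_decomposition_p_segnato} that
\[
\overline{p}(1,w) \,=\, \int_{\R^N} p_{\overline{M}}(1,w-y)\,\overline{P}_1(dy),
\]
where from \eqref{proof:control_pM_segnato} one has $p_{\overline{M}}(1,z)=(1+|z|)^{-(N+2)}\le 1$, and $\overline{P}_1$ is a probability measure. Consequently $\overline{p}(1,w)\le 1$ for every $w\in\R^N$, and in particular $[\overline{p}(1,w)]^{q-1}\le 1$ for $q\ge 1$. Multiplying both sides by $\overline{p}(1,w)$ gives $[\overline{p}(1,w)]^q\le \overline{p}(1,w)$, so one may take $C=1$ in the unit-time estimate (any dependence on $q$ only enters because of the choice of constants, hence the notation $C(q)$).

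Third, combining the two steps: using $[\overline{p}(1,u^{-1/\alpha}z)]^q\le C\,\overline{p}(1,u^{-1/\alpha}z)$ and the scaling identity once more,
\[
[\overline{p}(u,z)]^q \,\le\, C\,u^{-qN/\alpha}\,\overline{p}(1,u^{-1/\alpha}z) \,=\, C\,u^{-qN/\alpha}\cdot u^{N/\alpha}\,\overline{p}(u,z) \,=\, C\,u^{(1-q)N/\alpha}\,\overline{p}(u,z),
\]
which is exactly \eqref{eq:control_power_q_density}. There is no real obstacle here: the only input beyond bookkeeping is the uniform boundedness of $p_{\overline{M}}(1,\cdot)$, which is immediate from its explicit expression, and the fact that $\overline{P}_1$ is a probability measure.
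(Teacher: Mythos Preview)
Your proof is correct and follows essentially the same approach as the paper: both reduce to $u=1$ via the stable scaling property and then use that $\overline{p}(1,\cdot)$ is uniformly bounded to conclude $[\overline{p}(1,w)]^{q-1}\le C$. The only minor difference is that the paper establishes boundedness by splitting into a compact ball (using continuity) and its complement (using that a density is eventually $\le 1$), whereas you read off a uniform bound directly from the explicit convolution representation \eqref{proof_decomposition_p_segnato} with $p_{\overline{M}}(1,\cdot)\le 1$; your route is slightly more direct but relies on the specific form of $p_{\overline{M}}$, while the paper's argument would apply to any continuous density.
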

\begin{proof}
We start noticing that we can assume without loss of generality that $u=1$, thanks to the scaling property of $\overline{p}(u,z)$ in Proposition \ref{prop:Smoothing_effect}. Moreover, we know that there exists a constant $K$ such that $\overline{p}(1,z)\le 1$ for any $z$ in $B^c(0,K)$, since $\overline{p}(1,\cdot)$ is a density. It then clearly follows that
\[[\overline{p}(1,z)]^q \, \le \, \overline{p}(1,z), \quad z \in B^c(0,K).\]
On the other hand, we recall that $\overline{p}(1,\cdot)$ is continuous. For any $z$ in $B(0,K)$, it then holds that
\[[\overline{p}(1,z)]^q \, = \, \overline{p}(1,z) [\overline{p}(1,z)]^{q-1} \, \le \, C\overline{p}(1,z),\]
where $C$ is the maximum of $[\overline{p}(1,\cdot)]^q$ on $B(0,K)$.
\end{proof}

\subsection{Regularity of Density along the Terminal Condition} \label{SEZIONE_SPIEGAZIONE_CONGELAMENTO_E_ALTRI}
We briefly explain here how we want to prove the well-posedness of the martingale formulation associated with $\partial_s+L_s$ at some starting point $(t,x)$. We will mainly focus on the problem of uniqueness since the existence of a solution can be easily handled from already known results. Indeed, we recall that under the assumptions we consider, the main part of the operator $L_s$ is of order $\alpha>1$ while the perturbation is sub-linear. Thus, the existence of a solution  can be obtained, for example, from Theorem $2.2$ in \cite{Stroock75}.
\newline
In particular,  uniqueness for the martingale problem will follow  once the Krylov-like estimates \eqref{eq:Krylov_Estimates} have been shown.\newline
Starting from a solution $\{X^{t,x}_s\}_{s\in[0,T]}$ of the martingale problem with starting point $(t,x)$, the idea is to exploit the properties of the frozen dynamics $\{\tilde{X}^{\tau,\xi,t,x}_s\}_{s\in[0,T]}$ in \eqref{eq:integral_represent_frozen_SDE}. For this reason, let us denote by $\tilde{L}^{\tau,\xi}_s$ its infinitesimal generator and define for $f$ in $ C_c^{1,2}([0,T)\times \R^{N})$ the associated Green kernel:
$$\tilde{G}^{\tau,\xi}(t,x)=\int_t^T ds \int_{\R^N}\tilde{p}^{\tau,\xi}(t,s,x,y) f(s,y)dy.$$
Standard results now give that
\begin{equation}
\label{eq:intro_MP}
    \left(\partial_t+\tilde{L}^{\tau,\xi}_t\right)  \tilde{G}^{\tau,\xi}f(t,x) \, = \, -f(t,x), \quad (t,x) \in [0,T)\times \R^N,
\end{equation}
for any (fixed) freezing parameters $(\tau,\xi)$.\newline
The first step of our method then consists in applying the It\^o formula on the function $\tilde{G}^{\tau,\xi} f$, which is indeed smooth enough, and the solution process $\{X^{t,x}_s\}_{s\in[0,T]}$:
\[
\tilde{G}^{\tau,\xi} f(t,x)+ \mathbb{E}\left[\int_t^{T} (\partial_s+L_s)\tilde{G}^{\tau,\xi} f(s,X^{t,x}_s)   \,ds\right] \, =\, 0.
\]
Exploiting \eqref{eq:intro_MP}, we can then write
\[
\tilde{G}^{\tau,\xi} f (t,x)- \mathbb{E}\Bigl[\int_t^{T} f(s,X^{t,x}_s)\,  ds \Bigr]+\mathbb{E}\Bigl[\int_{t}^T\bigl(L_s-\tilde{L}^{\tau,\xi}_s\bigr)                          \tilde{G}^{\tau,\xi} f(s,X^{t,x}_s) \, ds\Bigr] \, = \, 0
\]
or, equivalently,
\[
\mathbb{E}\Bigl[\int_t^{T} f(s,X^{t,x}_s) \, ds \Bigr] \, = \, \tilde{G}^{\tau,\xi} f
(t,x)+\mathbb{E}\Bigl[\int_{t}^T \bigl(L_s-\tilde{L}^{\tau,\xi}_s\bigr) \tilde{G}^{\tau,\xi} f(s,X^{t,x}_s)\, ds\Bigr].\]
While an estimate of the frozen Green kernel $\tilde{G}^{\tau,\xi} f$ can be obtained from Proposition \ref{prop:Smoothing_effect}, the main difficulty of our approach will be to control, uniformly in $(t,x)$, the following quantity:
\[\int_{t}^T\int_{\R^N}\bigl(L_s-\tilde{L}^{\tau,\xi}_s\bigr) \tilde{G}^{\tau,\xi} f(s,x)\, ds.\]
Focusing for example only on the component associated with the deterministic drift $F$, i.e.
\[\int_t^T\int_{\R^N}\langle F(t,x)-F(t,\theta_{t,\tau}(\xi)),D_x \tilde{p}^{\tau,\xi}(t,s,x,y)\rangle f(s,y)\, dyds,\]
it is clear that we need some kind of compatibility between the arguments of the drift $F$ and those of the frozen density $\tilde{p}^{\tau,\xi}(t,s,x,\cdot)$, in order to exploit the associated smoothing effect (Proposition \ref{prop:Smoothing_effect}). Namely, we need to compare the quantities $(x-\theta_{t,\tau}(\xi))$ and $(y-\tilde{m}^{\tau,\xi}_{s,t}(x))$.\newline
Noticing that for $\tau=s$ and $\xi=y$, $(y-\tilde{m}^{\tau,\xi}_{s,t}(x))=\theta_{t,s}(y)-x$, it follows from Proposition \ref{prop:Smoothing_effect} that this choice of freezing parameters gives the natural compatibility between the difference of the generators and the upper-bounds of the derivatives of the corresponding proxy.

The above reasoning requires however a more thorough analysis on the ``density'' $\tilde{p}^{s,y}(t,s,x,\cdot)$ frozen along the terminal condition $(\tau,\xi)$. Indeed, the freezing parameter $y$ appears also as the integration variable. In other words, with this approach, the freezing parameter cannot be fixed once for all. The present section is precisely dedicated to the handling of such a choice. This will lead us to introduce a \textit{pseudo} Green kernel, see \eqref{eq:def_Green_Kernel} below, from which will then derive uniqueness to the martingale problem following the Stroock and Varadhan approach (see Chapter 7 in \cite{book:Stroock:Varadhan79}), through appropriate inversion in $L_t^q-L_x^p $ spaces,  proving that the remainder has a small corresponding norm.

We start with a lemma showing the existence of at least one version of the flow $\theta_{t,s}(y)$ which is measurable in $s$ and $y$. This result will be fundamental to make licit any integration of this flow along the terminal condition $y$.

\begin{lemma}\label{lemma:measurability_flow}
There exists a measurable mapping $\theta \colon [0,T]^2\times \R^N \to\R^N$ such that
\begin{equation}
\label{eq:measurability_flow}
\theta(t,s,z)\, := \, \theta_{t,s}(z)\, =    z + \int_{t}^{s}\bigl[ A_u \theta_{u,s}( z)+F(u, \theta_{u,s}( z))\bigr] \, du.
\end{equation}
\end{lemma}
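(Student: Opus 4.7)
The plan is to invoke a measurable selection theorem applied to the set-valued correspondence that assigns to each terminal datum $(s,z)\in [0,T]\times \R^N$ the collection
\[
\Phi(s,z):=\Bigl\{\phi\in C([0,T];\R^N)\colon \phi(s)=z,\ \phi(t)=z-\int_t^s[A_u\phi(u)+F(u,\phi(u))]\,du\text{ for all }t\in[0,s]\Bigr\},
\]
with any canonical extension to $t\in(s,T]$, say $\phi(t)=z$. Peano's theorem in the Carath\'eodory framework, applicable since $F$ is continuous in $x$ by the H\"older hypothesis, Borel in time, and locally bounded, and $A$ is Borel bounded, yields $\Phi(s,z)\ne\emptyset$. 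The H\"older continuity of $F$ combined with $|F_i(t,0)|\le K$ forces at most linear growth of $G(t,\cdot)$, so Gr\"onwall's lemma gives a uniform a priori bound on any element of $\Phi(s,z)$; the integral equation then delivers uniform Lipschitz continuity on $[0,s]$. By Arzel\`a--Ascoli, $\Phi(s,z)$ is compact in $C([0,T];\R^N)$.

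I would next verify that the graph $\mathrm{Gr}(\Phi)=\{(s,z,\phi)\colon \phi\in \Phi(s,z)\}$ is closed in $[0,T]\times \R^N\times C([0,T];\R^N)$. If $(s_n,z_n,\phi_n)\to (s,z,\phi)$ in the product topology, the uniform convergence $\phi_n\to\phi$ combined with the spatial continuity of $F$ and the uniform bounds on the trajectories allow one to pass to the limit in the integral equation via dominated convergence, so $\phi\in\Phi(s,z)$. Since $\Phi$ is thus upper semicontinuous with non-empty compact values in the Polish space $C([0,T];\R^N)$, the Kuratowski--Ryll-Nardzewski selection theorem provides a Borel-measurable selection $(s,z)\mapsto \phi^{s,z}\in\Phi(s,z)$.

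Finally, defining $\theta(t,s,z):=\phi^{s,z}(t)$, one obtains joint Borel measurability of $\theta$ on $[0,T]^2\times \R^N$ by a Carath\'eodory-type argument: for each fixed $t$, $(s,z)\mapsto \mathrm{ev}_t(\phi^{s,z})$ is Borel measurable as the composition of a continuous evaluation map and a Borel-measurable map, while for each fixed $(s,z)$, $t\mapsto \phi^{s,z}(t)$ is continuous; density of a countable set of times and continuity in $t$ then yield joint measurability. The integral identity \eqref{eq:measurability_flow} holds by construction on $\{t\le s\}$, which is the regime used in the sequel.

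The main technical obstacle is the closed-graph verification, which requires the uniform a priori bounds on trajectories to produce a dominating integrand so that continuity of $F$ in the spatial variable alone, combined with the fact that $A$ depends only on time, suffices to pass to the limit pointwise a.e. in $u$. A subtlety also arises from the fact that $F$ is only Borel (not continuous) in time; this is precisely why the argument is phrased in terms of dominated convergence in the integrated equation rather than in terms of continuity of the vector field itself.
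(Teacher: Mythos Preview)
Your proposal is correct and essentially reproduces the argument that the paper summarizes in one line as ``\cite{Zubelevich12} and a standard compactness argument'': Peano--Carath\'eodory existence, Arzel\`a--Ascoli compactness of the solution set, closed-graph/upper semicontinuity of the solution correspondence, and then a Kuratowski--Ryll-Nardzewski selection. You have supplied the details the paper omits, including the handling of the time-only measurability of $F$ via dominated convergence in the integrated equation, which is exactly the point where care is needed.
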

\begin{proof}
The result can be obtained from \cite{Zubelevich12} and a standard compactness argument.
\end{proof}
From this point further, we assume without loss of generality to have chosen such a measurable version $\theta_{t,s}(x)$ of the reference flow.

The next Lemma \ref{lemma:bilip_control_flow} (Approximate Lipschitz condition of the flows) will be a key technical tool for our method. Roughly speaking, it says that a kind of equivalence between the rescaled forward and backward flows appears even in our framework (where the drift $F$ is not regular enough), up to an additional constant contribution, for any two measurable flows satisfying Equation \eqref{eq:def_Cauchy_Peano_flow}. We only remark that similar results has been thoroughly exploited in \cite{Delarue:Menozzi10, Menozzi11, Menozzi18} when considering Lipschitz drifts or \cite{Chaudru:Menozzi17} in the degenerate diffusive setting with H\"older coefficients. \newline
This \emph{approximated} Lipschitz property will be fundamental later on in the proof of Lemma \ref{convergence_dirac} (Dirac Convergence of frozen density) below. It will be proved in Appendix $A.1$, adapting the lines of \cite{Chaudru:Menozzi17}.

\begin{lemma}\label{lemma:bilip_control_flow}
Let $\theta \colon [0,T]^2\times \R^N \to\R^N$, $\check{\theta} \colon [0,T]^2\times \R^N \to\R^N$ be two measurable flows satisfying Equation \eqref{eq:measurability_flow}. Then, there exist two positive constants $(C,C'):=(C,C')(T)$ such that for any $t<s$ in $[0,T]$ and any $x,y$ in $\R^N$,
\begin{equation}
\label{eq:bilip_control_flow}
C^{-1} |\mathbb{T}_{s-t}^{-1}(\check{\theta}_{s,t}(x)-y)| - C' \, \le \, |\mathbb{T}_{s-t}^{-1}(x-\theta_{t,s}(y))|\,\le\, C\left[|\mathbb{T}_{s-t}^{-1}(\check{\theta}_{s,t}(x)-y)| +1\right].
\end{equation}
\end{lemma}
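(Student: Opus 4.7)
The strategy is to compare the two flows in the rescaled coordinates given by $\mathbb{T}_{s-t}^{-1}$ through their defining integral equations. Setting $R(u) := \check\theta_{u,t}(x) - \theta_{u,s}(y)$ for $u\in [t,s]$, so that $R(t) = x - \theta_{t,s}(y)$ and $R(s) = \check\theta_{s,t}(x) - y$, subtracting the integral forms of both flows yields
\begin{equation*}
R(t) \, = \, R(s) - \int_t^s A_u R(u)\,du - \int_t^s \bigl[F(u,\check\theta_{u,t}(x)) - F(u,\theta_{u,s}(y))\bigr]\,du.
\end{equation*}
I would then apply $\mathbb{T}_{s-t}^{-1}$ and analyse each block separately, exploiting the structure of $A$ (non-zero blocks only for $j\ge i-1$) and the componentwise H\"older regularity of $F_i$ in $x_j$ with exponent $\beta^j$ from the hypotheses of Theorem~\ref{thm:main_result}.

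For the linear part, rescaling the $i$-th block by $(s-t)^{-(i-1)-1/\alpha}$ while $R_j$ scales as $(s-t)^{(j-1)+1/\alpha}$ produces powers $(s-t)^{j-i}$; after the time integration, each such contribution enters with a factor $(s-t)^{j-i+1}$, which is nonnegative whenever $j\ge i-1$ thanks to \textbf{[H]}. Hence this term is bounded by $C(s-t)\sup_{u\in[t,s]}|\mathbb{T}_{s-t}^{-1}R(u)|$, which for small enough $T$ can be absorbed via a Gronwall-type step. For the H\"older part, exploiting $|F_i(u,z)-F_i(u,z')|\le C\sum_j |z_j - z_j'|^{\beta^j}$ and inserting $|R_j(u)| = (s-t)^{(j-1)+1/\alpha}\,|(\mathbb{T}_{s-t}^{-1}R(u))_j|$, the $i$-th block contributes, for $j\ge i$,
\begin{equation*}
C\,(s-t)^{1-(i-1)-1/\alpha + \beta^j[(j-1)+1/\alpha]}\,\bigl(1+\sup_{u\in[t,s]}|\mathbb{T}_{s-t}^{-1}R(u)|\bigr)^{\beta^j}.
\end{equation*}
The regularity thresholds $\beta^j > (1+\alpha(j-2))/(1+\alpha(j-1))$ are precisely what ensures that the power of $(s-t)$ above is strictly positive for every $i\le j$ (the case $i=1$ is automatic since $\alpha>1$ makes $1-1/\alpha>0$). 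Young's inequality then gives a bound of the form $C(s-t)^{\varepsilon}\bigl(\sup|\mathbb{T}_{s-t}^{-1}R(u)|+1\bigr)$ with $\varepsilon>0$.

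Combining these two controls, and choosing $T$ small enough so that the prefactors are $\le 1/2$, yields
\begin{equation*}
|\mathbb{T}_{s-t}^{-1}R(t)| \, \le \, |\mathbb{T}_{s-t}^{-1}R(s)| + \tfrac12 \sup_{u\in[t,s]}|\mathbb{T}_{s-t}^{-1}R(u)| + C,
\end{equation*}
and a symmetric inequality with the roles of $R(t)$ and $R(s)$ exchanged (obtained by running the integral relation in the opposite direction). Applying the same estimates with $t$ replaced by intermediate times allows one to close the supremum: $\sup_{u\in[t,s]}|\mathbb{T}_{s-t}^{-1}R(u)| \le 2|\mathbb{T}_{s-t}^{-1}R(s)| + 2C$ (and symmetrically), from which both inequalities of \eqref{eq:bilip_control_flow} follow at once, with the $-C'$ and $+1$ terms absorbing the additive constants. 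For arbitrary $T$, a standard chaining over a finite subdivision whose mesh satisfies the smallness condition yields the general statement.

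The main obstacle is the bookkeeping of the scaling exponents: one must verify that, block by block, the combined effect of rescaling by $\mathbb{T}_{s-t}^{-1}$, the triangular structure of $A$, and the H\"older exponents produces strictly nonnegative powers of $(s-t)$, with strict positivity coming from the H\"older part where the threshold condition is used in an essential way. This is exactly the scaling calculation underlying the whole perturbative method adopted in the paper.
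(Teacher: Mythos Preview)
Your overall strategy---subtracting the integral equations, rescaling by $\mathbb{T}_{s-t}^{-1}$, handling the linear and H\"older contributions separately, and closing by a Gr\"onwall argument---coincides with the paper's. Your treatment of the H\"older part via a direct Young inequality is a legitimate alternative to the paper's route, which instead mollifies $F$ at multi-scale parameters $\delta_{ij}=(s-t)^{(1+\alpha(i-2))/(\alpha\beta^j)}$ and splits $F-F^\delta$ (bounded by the mollification error) from $F^\delta(\cdot)-F^\delta(\cdot)$ (approximate Lipschitz). Both approaches lead to the same threshold $\beta^j>(1+\alpha(i-2))/(1+\alpha(j-1))$; your direct argument is more elementary, while the mollification machinery is what the paper later reuses for the Jacobian control in Lemma~\ref{LEMMA_FOR_DET}.

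There is, however, a slip in your linear part. You correctly compute that the rescaled block $(i,j)$ contributes with exponent $j-i$ before time integration, hence $j-i+1$ after, and that this is nonnegative for $j\ge i-1$. But then you conclude the bound $C(s-t)\sup_u|\mathbb{T}_{s-t}^{-1}R(u)|$; this is false for the sub-diagonal entries $j=i-1$, where the exponent is exactly $0$ and the contribution is $C\sup_u|\mathbb{T}_{s-t}^{-1}R(u)|$ with a prefactor that does \emph{not} vanish with $T$. Consequently your ``choose $T$ small so the prefactors are $\le 1/2$'' absorption does not close for this term. The remedy is to keep the linear contribution in integral form, $C(s-t)^{-1}\int_{t'}^{s}|\mathbb{T}_{s-t}^{-1}R(u)|\,du$, and apply the standard Gr\"onwall inequality (backward from $s$), which yields $|\mathbb{T}_{s-t}^{-1}R(t)|\le e^{C}\bigl[|\mathbb{T}_{s-t}^{-1}R(s)|+C\bigr]$; this is precisely how the paper closes the estimate. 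Your H\"older contribution, having a genuinely positive power of $(s-t)$, can be folded into the same Gr\"onwall step without difficulty.
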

From the above lemma, we also derive the following important estimate for the rescaled difference between the forward flow $\theta_{s,t}(x)$ and the linearized forward dynamics $\tilde{m}^{s,y}_{s,t}(x)$ (defined in \eqref{eq:def_tilde_m}) where the linearization is considered along any backward flow.

\begin{corollary}
\label{coroll:control_error_flow}
Let $\theta \colon [0,T]^2\times \R^N \to\R^N$ be a measurable flow satisfying Equation \eqref{eq:measurability_flow}. Then, there exist a positive constant $C:=C(T)$ and $\zeta$ in $(0,1)$ such that for any $t<s$ in $[0,T]$ and any $x,y$ in $\R^N$,
\begin{equation}
\label{eq:control_error_flow}
 |\mathbb{T}_{s-t}^{-1}(\theta_{s,t}(x)-\tilde{m}^{s,y}_{s,t}(x))| \,\le \, C (s-t)^{\frac{1}{\alpha} \wedge \zeta}\left(1+|\mathbb{T}_{s-t}^{-1}(\theta_{s,t}(x)-y)|\right).
\end{equation}
\end{corollary}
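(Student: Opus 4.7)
My starting point is the variation-of-constants representation
\[
\theta_{s,t}(x) - \tilde{m}^{s,y}_{s,t}(x) = \int_t^s \mathcal{R}_{s,u}\bigl[F(u,\theta_{u,t}(x)) - F(u,\theta_{u,s}(y))\bigr]\,du,
\]
which follows by subtracting the integral formulas defining $\theta_{s,t}(x)$ and $\tilde{m}^{s,y}_{s,t}(x)$ (see \eqref{eq:def_tilde_m}), since both share the same linear part $\mathcal{R}_{s,t}x$. The plan is to analyze this difference block by block after rescaling by $\mathbb{T}_{s-t}^{-1}$, combining three ingredients: the multi-scale structure of the resolvent coming from the block form of $A_t$, the component-wise H\"older regularity of $F$, and the flow control provided by Lemma \ref{lemma:bilip_control_flow}.

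Concretely, the block structure of $A_t$ (non-zero only on the sub-diagonal and above) yields the classical resolvent estimate $|(\mathcal{R}_{s,u})_{i,k}| \le C(s-u)^{(i-k)^+}$, in the same spirit as Lemma 5.2 of \cite{Huang:Menozzi16}. The H\"older regularity of $F_k$ gives
\[
|F_k(u,\theta_{u,t}(x)) - F_k(u,\theta_{u,s}(y))| \le C\sum_{j \ge k}|\theta^j_{u,t}(x) - \theta^j_{u,s}(y)|^{\beta^j_k},
\]
with $\beta^j_k = \beta^j$ for $k \ge 2$ and $\beta^j_1 = \beta^1$. To bound the intermediate flow differences, I would apply Lemma \ref{lemma:bilip_control_flow} on the sub-interval $[u,s]$ with a measurable flow $\check{\theta}$ chosen so that $\check{\theta}_{s,u}(\theta_{u,t}(x)) = \theta_{s,t}(x)$, which gives
\[
|\theta^j_{u,t}(x) - \theta^j_{u,s}(y)| \le C(s-u)^{1/\alpha + j - 1}\bigl(|\mathbb{T}_{s-u}^{-1}(\theta_{s,t}(x) - y)| + 1\bigr).
\]

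Plugging these estimates together, dividing by $(s-t)^{1/\alpha + i - 1}$ (the scaling of $\mathbb{T}_{s-t}^{-1}$ on the $i$-th block), integrating in $u$, and using the elementary inequality $(1+a)^{\beta^j_k} \le 1 + a$ for $a \ge 0$ after reconciling the ratio between $\mathbb{T}_{s-u}^{-1}$ and $\mathbb{T}_{s-t}^{-1}$, each triple $(i,k,j)$ contributes a factor $(s-t)^{\mathcal{E}(i,k,j)}$ with
\[
\mathcal{E}(i,k,j) := -(1/\alpha + i - 1) + (i-k)^+ + (1/\alpha + j - 1)\beta^j_k + 1.
\]
A straightforward case analysis then shows that $\mathcal{E}(i,k,j) > 0$ throughout: the critical case $i = k = j \ge 2$ produces $\mathcal{E} = (1/\alpha + i - 1)(\beta^i - 1) + 1$, whose strict positivity is precisely equivalent to the threshold $\beta^i > (1+\alpha(i-2))/(1+\alpha(i-1))$ of Theorem \ref{thm:main_result}, while the remaining cases yield strictly larger exponents. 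The final scaling $(s-t)^{1/\alpha \wedge \zeta}$ then captures the minimum of these contributions, with $\zeta$ encoding the strict margin coming from the threshold inequalities.

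The main technical obstacle is the careful application of Lemma \ref{lemma:bilip_control_flow}: because Peano flows are not unique, the auxiliary flow $\check{\theta}$ must be chosen measurably and consistently with the already fixed flow $\theta$, a point selection that ultimately relies on the measurability argument behind Lemma \ref{lemma:measurability_flow}. A secondary delicate point is tracking how powers of $(s-t)$ and $(s-u)$ interact when converting the $\mathbb{T}_{s-u}^{-1}$-norm appearing in the flow control into the $\mathbb{T}_{s-t}^{-1}$-norm demanded by the statement.
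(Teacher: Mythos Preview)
Your variation-of-constants starting point is correct and is a legitimate alternative to the paper's route, but the step you flag as a ``secondary delicate point'' --- reconciling $\mathbb{T}_{s-u}^{-1}$ with $\mathbb{T}_{s-t}^{-1}$ --- is in fact the main obstruction, and as written it fails. Applying Lemma~\ref{lemma:bilip_control_flow} on $[u,s]$ produces the scaling $\mathbb{T}_{s-u}^{-1}$ on the right-hand side. Expanding $|\mathbb{T}_{s-u}^{-1}(\theta_{s,t}(x)-y)|$ component by component introduces factors $(s-u)^{-(1/\alpha+\ell-1)}$ for every $\ell\in\llbracket 1,n\rrbracket$, and these are \emph{not} compensated by your other gains $(s-u)^{(i-k)^+ + (1/\alpha+j-1)\beta^j_k}$ when $\ell$ is large. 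Concretely, for $n\ge 3$, take $i=k=j=2$ and $\ell=n$: integrability in $u$ near $s$ requires $(1/\alpha+1)\beta^2 > 1/\alpha+n-2$, i.e.\ $\beta^2 > \tfrac{1+\alpha(n-2)}{1+\alpha}$, which is strictly stronger than the assumed threshold $\beta^2 > \tfrac{1}{1+\alpha}$ and generically fails. So the integral you need to evaluate diverges, and the claimed exponent $\mathcal{E}(i,k,j)$ never materializes.

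The paper circumvents this by never leaving the fixed scale $\mathbb{T}_{s-t}$. It does \emph{not} use the variation-of-constants formula; instead it writes $\theta_{s,t}(x)-\tilde m^{s,y}_{s,t}(x)$ as the integral of the difference of the right-hand sides of the two ODEs, splitting into an $F$-part $\mathcal{I}^1$ and an $A_u$-part $\mathcal{I}^2$. For $\mathcal{I}^1$ the H\"older terms are linearised via Young's inequality $|z_j|^{\beta^j}\le (s-t)^{-\gamma^j}|z_j|+(s-t)^{\gamma^j\beta^j/(1-\beta^j)}$ with carefully chosen $\gamma^j$, and the intermediate flow difference is controlled by the \emph{fixed-scale} bound $|\mathbb{T}_{s-t}^{-1}(\theta_{u,t}(x)-\theta_{u,s}(y))|\le C(1+|\mathbb{T}_{s-t}^{-1}(\theta_{s,t}(x)-y)|)$, which is a by-product of the Gr\"onwall argument \emph{inside} the proof of Lemma~\ref{lemma:bilip_control_flow} rather than a direct application of its statement on $[u,s]$. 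The residual $\mathcal{I}^2$ (which your resolvent approach absorbs automatically) is then handled by a final Gr\"onwall. If you replace your $(s-u)$-scaled flow bound by this fixed-scale one, your exponent calculation $\mathcal{E}(i,k,j)$ becomes correct and the rest of your argument goes through; but that replacement is exactly the paper's key step.
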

\begin{proof}
We start exploiting the differential dynamics given in Equation \eqref{eq:dif_differential_m_tilde} to write that
\begin{equation}
\label{proof:eq:decomposition1}
\begin{split}
   \mathbb{T}_{s-t}^{-1}\left(\theta_{s,t}(x)-\tilde{m}^{s,y}_{s,t}(x)\right)
\, &= \, \mathbb{T}_{s-t}^{-1}
\int_{t}^{s}  \Bigl{\{}\bigl[ F(u,\theta_{u,t}(x))-F(u,\theta_{u,s}(y)) \bigr] \\
&\qquad \quad\qquad \qquad+\left[A_u(\theta_{u,t}(x)-\tilde{m}^{s,y}_{u,t}(x))\right] \Bigr{\}}\, du  \\
&:= \, (\mathcal{I}_{s,t}^{1}+\mathcal{I}_{s,t}^{2})(x,y).
\end{split}
\end{equation}
We start dealing with $\mathcal{I}_{s,t}^{1}(x,y)$. The key idea is to use the sub-linearity of $F$ and the appropriate H\"older exponents.
Namely, using the Young inequality, we derive that
\[
\begin{split}
|&\mathcal{I}_{s,t}^{1}(x,y)| \, \le \, C\sum_{i=1}^n (s-t)^{-\frac{1+\alpha(i-1)}{\alpha}}\sum_{j=i}^n \int_t^{s}  |(\theta_{u,t}(x)-\theta_{u,s}(y) )_{j}|^{\beta^j} \, du\\
&\,\,\le \, C\biggl{\{}(s-t)^{-\frac{1}{\alpha}}\int_t^{s} \left[|\theta_{u,t}(x)-\theta_{u,s}(y) |+1\right] \, du \\
&\quad\quad +\sum_{i=2}^n (s-t)^{-\frac{1+\alpha(i-1)}{\alpha}} \sum_{j=i}^n \int_t^{s} \Bigl[ (s-t)^{-\gamma^j}|((\theta_{u,t}(x)-\theta_{u,s}(y) )_{j})|+(s-t)^{\gamma^j \frac{\beta^j}{1-\beta^j}} \Bigr] \, du\biggr{\}},
\end{split}
\]
for some parameters $\gamma^j>0$ to be specified below.
Denoting now for simplicity, \[\Gamma_j\,:= \, -\frac{1+\alpha(i-1)}{\alpha}+\gamma^j \frac{\beta^j}{1-\beta^j},\]
we get that
\[
\begin{split}
|\mathcal{I}_{s,t}^{1}(x,
y)|\, &\le \, C\biggl{\{}(s-t)^{\frac{\alpha-1}{\alpha}}+\int_t^{s}|\mathbb{T}_{s-t}^{-1}(\theta_{u,t}(x)-\theta_{u,s}(y) )|\, du \\
& \,\,\quad+\sum_{i=2}^n  \sum_{j=i}^n \int_t^{s} \Bigl[(s-t)^{-i+j -\gamma^j}  \Big(\frac{|((\theta_{u,t}(x)-\theta_{u,s}(y) )_{j})|}{(s-t)^{\frac{1+\alpha(j-1)}{\alpha}}}\Big)+(s-t)^{-\Gamma_j} \Bigr]\, du\biggr{\}}\\
&\le \, C\biggl{\{}(s-t)^{\frac{\alpha-1}{\alpha}}+\int_t^{s}  |\mathbb{T}_(s-t)^{-1}(\theta_{u,t}(x)-\theta_{u,s}(y))|\, du\\
&\quad\,\, +\sum_{i=2}^n  \sum_{j=i}^n \int_t^{s}  \Bigl[(s-t)^{-i+j -\gamma^j}  |\mathbb{T}_{s-t}^{-1}(\theta_{u,t}(x)-\theta_{u,s}(y) )|+(s-t)^{\Gamma_j} \Bigr]\, du\biggr{\}}.
\end{split}
\]
We now use Lemma \ref{lemma:bilip_control_flow} (Approximate Lipschitz condition of the flows) to derive that
\[ |\mathbb{T}_{s-t}^{-1}(\theta_{u,t}(x)-\theta_{u,s}(y) )| \, \le \, C(|\mathbb{T}_{s-t}^{-1}(\theta_{s,t}(x)-y )|+1).\]
We emphasize here that in our current framework we should \textit{a priori} write $\theta_{s,u}(\theta_{u,t}(x)) $ in the above equation since we do not have the flow property. Anyhow, since Lemma \ref{lemma:bilip_control_flow} (Approximate Lipschitz condition of the flows) is valid for any flow starting from $\theta_{u,t}(x) $ at time $u$ associated with the ODE (see Equation \eqref{eq:bilip_control_flow}) we can proceed along the previous one, i.e. $(\theta_{v,t}(x))_{v\in [u,s] }$. The previous reasoning yields that
\begin{equation}
\begin{split}
  \label{PREAL_CTR_I1_EPS}
|\mathcal{I}_{s,t}^{1}(x,y) |\, &\le \,C\biggl{\{}(s-t)^{\frac{\alpha-1}{\alpha}}+(s-t) \bigl[|\mathbb{T}_{s-t}^{-1}(\theta_{s,t}(x)-y)|+1\bigr]\\
& \qquad \qquad \times \Bigl[1+\sum_{i=2}^n  \sum_{j=i}^n  \Bigl((s-t)^{-i+j -\gamma^j}+(s-t)^{-\frac{1+\alpha(i-1)}{\alpha}+\gamma^j \frac{\beta^j}{1-\beta^j}} \Bigr) \Bigr] \biggr{\}}.
\end{split}
\end{equation}
We now choose for $j$ in $\llbracket i,n\rrbracket$,
\[-i+j -\gamma^j \, = \, -\frac{1+\alpha(i-1)}{\alpha}+\gamma^j \frac{\beta^j}{1-\beta^j}\, \Leftrightarrow\, \gamma^j\, = \, \left(j-\frac{\alpha-1}{\alpha}\right)\left(1-\beta^j\right),\]
to balance the two previous contributions associated with the indexes $i,j$. \newline
To obtain a global smoothing effect with respect to ${s-t}$ in \eqref{PREAL_CTR_I1_EPS} we need to impose:
\begin{equation}
\label{eq:natural_threshold}
-i+j-\gamma^j\, >\,-1 \, \Leftrightarrow \, \beta^j\, >\, \frac{1+\alpha(i-2)}{1+\alpha(j-1)}, \quad \forall i \le j.
\end{equation}
Hence, under our assumptions, we have that there exists $\zeta$ in $(0,1)$ depending on $\beta^j$ for any $j\in \llbracket i,n\rrbracket$ such that
\begin{equation}
\label{CTR_I1_EPS}
|\mathcal{I}_{s,t}^{1}(x,y)| \,  \le \, C(s-t)^\zeta\left[1+|\mathbb{T}_{s-t}^{-1}(\theta_{s,t}(x)-y)|\right].
\end{equation}
Recalling from the structure of $A$ that
\[|\mathbb{T}^{-1}_{s-t}A_u\mathbb{T}_{s-t}| \le \, C(s-t)^{-1},\]
Control \eqref{eq:control_error_flow} now follows from
\eqref{proof:eq:decomposition1}, \eqref{CTR_I1_EPS} and the Gr\"onwall lemma.
\end{proof}

Thanks to the Approximate Lipschitz property of the flow presented in Lemma \ref{lemma:bilip_control_flow} above and Corollary \ref{coroll:control_error_flow}, we can now adapt the controls on the derivatives of the frozen density (Proposition \ref{prop:Smoothing_effect}) to the ``density'' $\tilde{p}^{s,y}(t,s,x,y)$. Indeed, we recall again that the function $\tilde{p}^{s,y}(t,s,x,y)$ is not a proper density in $y$ since the integration variable $y$ stands also as freezing parameter. This is one of the main difficulties of the approach.

The following result is the key to our analysis since it precisely quantifies the smoothing effect in time of the proxy we chose.
\begin{corollary}
\label{coroll:Smoothing_effect}
There exists a positive constant $C:=C(N,\alpha)$ such that for any $\gamma$ in $[0,\alpha)$, any $t< s$ in $[0,T]$ and any $x,y$ in $\R^N$,
  \begin{equation}
\label{eq:smoothing_effect_frozen_y}
\int_{\R^N} \frac{|\mathbb{T}^{-1}_{s-t}(\theta_{t,s}(y)-x)|^\gamma}{\det \mathbb{T}_{s-t}}\overline{p}(1,\mathbb{T}^{-1}_{s-t}(\theta_{t,s}(y)-x)) \, dy \, \le \, C.
  \end{equation}
Moreover, if $K>0$ is large enough, it holds that
  \begin{equation}
\label{eq:smoothing_effect_frozen_y_GENERIC_FUNCTION}
\textcolor{black}{\int_{\R^N} \mathds{1}_{|{\mathbb{T}^{-1}_{s-t}(\theta_{t,s}(y)-x)}|\ge K} \frac 1{\det \mathbb{T}_{s-t}}\overline{p}(1,\mathbb{T}^{-1}_{s-t}(\theta_{t,s}(y)-x)) \, dy \,
\le \, C \int_{\R^N} \mathds{1}_{|z|\ge \frac K2}\check p(1,z) dz},
  \end{equation}
  where $\check p $ enjoys the same integrability properties as $\overline{p}$ (stated in Proposition \ref{prop:Smoothing_effect}).
\end{corollary}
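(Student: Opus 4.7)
The key observation is that although $\tilde p^{s,y}(t,s,x,y)$ is not a proper probability density in $y$ (since $y$ acts both as integration variable and as freezing parameter), Proposition \ref{prop:Smoothing_effect} combined with the identity $y-\tilde m^{s,y}_{s,t}(x)=\theta_{t,s}(y)-x$ from Lemma \ref{lemma:identification_theta_m} reduces the problem to integrating $\overline p\bigl(1,\mathbb{T}^{-1}_{s-t}(\theta_{t,s}(y)-x)\bigr)$ in $y$. The plan is to transfer this integration, expressed through the \emph{backward} flow $\theta_{t,s}(y)$, into an integration in the \emph{forward} variable $\theta_{s,t}(x)-y$, for which a clean change of variable $z=\mathbb{T}^{-1}_{s-t}(\theta_{s,t}(x)-y)$ is available.

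First, I apply Lemma \ref{lemma:bilip_control_flow} with $\check\theta=\theta$, yielding the two-sided comparison
\[
C^{-1}|\mathbb{T}^{-1}_{s-t}(\theta_{s,t}(x)-y)|-C' \,\le\, |\mathbb{T}^{-1}_{s-t}(\theta_{t,s}(y)-x)| \,\le\, C\bigl[|\mathbb{T}^{-1}_{s-t}(\theta_{s,t}(x)-y)|+1\bigr].
\]
Next, I invoke the explicit structure of $\overline p$ established in the proof of Proposition \ref{prop:Smoothing_effect}: it is the convolution of $p_{\overline M}$ (of polynomial decay $(1+|\cdot|)^{-(N+2)}$) with the finite measure $\overline P_1$ controlling the large-jump component. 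This structure allows me to exhibit a density $\check p(1,\cdot)$ having the same integrability properties as $\overline p(1,\cdot)$ and satisfying $\overline p(1,v)\le C\,\check p(1,w)$ whenever $|v|\le C|w|+C'$. Applying this with $v=\mathbb{T}^{-1}_{s-t}(\theta_{t,s}(y)-x)$ and $w=\mathbb{T}^{-1}_{s-t}(\theta_{s,t}(x)-y)$, then performing the change of variable $z=\mathbb{T}^{-1}_{s-t}(\theta_{s,t}(x)-y)$ (whose Jacobian $\det\mathbb{T}_{s-t}$ cancels the prefactor $1/\det\mathbb{T}_{s-t}$), reduces the left-hand side of \eqref{eq:smoothing_effect_frozen_y} to $\int_{\R^N}(1+|z|^\gamma)\,\check p(1,z)\,dz$, finite for $\gamma<\alpha$ by the analogue of \eqref{equation:integration_prop_of_q} for $\check p$.

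For the truncated estimate \eqref{eq:smoothing_effect_frozen_y_GENERIC_FUNCTION}, the upper bound in the Approximate Lipschitz comparison gives that $|\mathbb{T}^{-1}_{s-t}(\theta_{t,s}(y)-x)|\ge K$ implies $|\mathbb{T}^{-1}_{s-t}(\theta_{s,t}(x)-y)|\ge K/C-1$, which is at least $K/2$ for $K$ large enough. The same domination of $\overline p$ by $\check p$ followed by the change of variable then yields the claimed bound.

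The main obstacle lies in the construction of the dominating density $\check p$: one must verify that a bounded additive perturbation of the argument of $\overline p$ only modifies its value by a multiplicative constant while preserving the integrability exponents $\gamma<\alpha$. This follows from the polynomial tail behaviour inherited from the large-jump measure bound of Lemma \ref{lemma:Control_P_N} together with the $\alpha$-selfsimilarity and the explicit polynomial decay of $p_{\overline M}$. Once this point is handled, the rest is a routine change of variable.
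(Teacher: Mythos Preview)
Your approach is exactly the ``natural'' one the paper explicitly considers and rejects in Remark \ref{DA_SCRIVERE_VINCOLO_SUL_MODELLO2}. The gap is in your final paragraph: the claimed domination $\overline p(1,v)\le C\,\check p(1,w)$ for $|v|$ and $|w|$ comparable \emph{cannot} be obtained with a $\check p$ that retains the integrability of $\overline p$. The reason is that $\overline p(1,\cdot)$ is highly anisotropic. Its small-jump part $p_{\overline M}$ does decay like $(1+|\cdot|)^{-(N+2)}$, but after convolution with $\overline P_1$ this is lost: the spectral measure underlying $\overline P_1$ is supported on a $d$-dimensional subset of $\mathbb S^{N-1}$ (see the proof of Lemma \ref{lemma:Control_P_N}), and by the Watanabe--Sztonyk bounds quoted in Remark \ref{DA_SCRIVERE_VINCOLO_SUL_MODELLO2} the resulting \emph{radial} upper bound for $\overline p(1,z)$ is only of order $(1+|z|)^{-(d+1+\alpha)}$. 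Since the Approximate Lipschitz comparison only controls $|v|$ in terms of $|w|$ and says nothing about directions, any $\check p$ you build this way inherits at best this radial decay; it is then not even integrable on $\R^N$ once $N>d+1+\alpha$, let alone able to absorb $|z|^\gamma$ for $\gamma$ up to $\alpha$. Your ``routine change of variable'' therefore never gets off the ground.

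The paper's actual proof avoids the pointwise comparison entirely. It writes $\overline p=p_{\overline M}\ast\overline P_1$, fixes $w$, and for each $w$ performs a genuine change of variable in $y$. Since $\theta_{t,s}$ is only H\"older, it first introduces a mollified flow $\theta^\delta_{t,s}$ (with the multiscale mollification parameters of \eqref{Proof:Controls_on_Flows_Choice_delta_LOC_LEMMA}), shows $|\mathbb T_{s-t}^{-1}(\theta_{t,s}(y)-\theta^\delta_{t,s}(y))|\le C_0$ so that $p_{\overline M}$ evaluated at the true flow is dominated by the same polynomial expression at the mollified flow, and then changes variables $\tilde y=\mathbb T_{s-t}^{-1}(x-\theta^\delta_{t,s}(y))-w$. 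The crux is Lemma \ref{LEMMA_FOR_DET}, which shows the Jacobian determinant of $y\mapsto\theta^\delta_{t,s}(y)$ is bounded below uniformly in $\delta$; this is where the strengthened regularity assumption on the $\beta^j$ (uniform in the chain level $i$) is genuinely used. The same mechanism handles \eqref{eq:smoothing_effect_frozen_y_GENERIC_FUNCTION} after restricting the domain via the flow comparison.
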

For the sake of clarity the proof of Corollary \ref{coroll:Smoothing_effect} is postponed to the Appendix.

\begin{remark}[\textcolor{black}{About the uniformity in $x_i$ for the H\"older regularity threshold}]
\label{DA_SCRIVERE_VINCOLO_SUL_MODELLO2}
The strengthened assumptions concerning the integrability thresholds in Theorem \ref{thm:main_result} with respect to the natural ones appearing in \eqref{eq:natural_threshold} might seem awkward at first sight.\newline
Indeed, the natural approach to get rid of the flow involving the integration variable in \eqref{eq:smoothing_effect_frozen_y} would have been to use the approximate Lipschitz property of the flow established in Lemma \ref{lemma:bilip_control_flow}. This indeed readily yields that:
$$ |\mathbb{T}^{-1}_{s-t}(\theta_{t,s}(y)-x)|^\gamma\le C(1+ |\mathbb{T}^{-1}_{s-t}(y-\theta_{s,t}(x))|^\gamma).$$
The main difficulty is that we  do not actually succeed in establishing in whole generality that:
\begin{equation}\label{equiv_density_flows}
\overline{p}(1,\mathbb{T}^{-1}_{s-t}(\theta_{t,s}(y)-x))\le C \check{p}(1,\mathbb{T}^{-1}_{s-t}(y-\theta_{s,t}( x)),
\end{equation}
for a density $\check p $ which shares the same integrability properties as $\bar p$.\newline
Equation \eqref{equiv_density_flows} is absolutely direct in the diffusive setting from the explicit form of the Gaussian density and it has been thoroughly used in \cite{Chaudru:Menozzi17} to derive sharp thresholds for weak uniqueness. It is clear that the above control has to be considered point-wise and one of the huge difficulties with stable type processes consists in describing precisely their tail behavior which is actually very much related to the geometry of their corresponding spectral measure on the sphere. We refer to the seminal work of Watanabe \cite{Watanabe07} for a precise description of the tails in terms of the dimension of the support of the spectral measure, in the stable case, and to the extension by Sztonyk \cite{Sztonyk10} for the tempered stable case. The delicate point comes of course from the behavior of the Poisson measure (large jumps) as illustrated in  the following computation. From \eqref{proof:control_pM_segnato} and \eqref{proof_decomposition_p_segnato}, we write that
\begin{align*}
\overline{p}(1,\mathbb{T}^{-1}_{s-t}(\theta_{t,s}(y)-x))\, &= \, \int_{\R ^N} p_{\bar M}(1,\mathbb{T}^{-1}_{s-t}(\theta_{t,s}(y)-x)-w) \, \overline{P}_1(dw)\notag\\
&\le \, C\int_{\R^N} \frac{1}{(C+|\mathbb{T}_{s-t}^{-1}(\theta_{t,s}(y)-x)-w|)^M} \, \overline{P}_1(dw).
\end{align*}
Let us first emphasize that, when $|\mathbb{T}^{-1}_{s-t}(\theta_{t,s}(y)-x)|\le K $ (\textit{diagonal type regime}) for some fixed $K$, then Control \eqref{equiv_density_flows} holds. Indeed, since from Corollary \ref{coroll:control_error_flow},
\[|\mathbb{T}_{s-t}^{-1}(\tilde m_{s,t}^{t,y}(x)-\theta_{s,t}(x))|\, \le \, \tilde C(s-t)^{\frac{1}{\alpha} \wedge \zeta}\left(1+|\mathbb{T}_{s-t}^{-1}(\theta_{s,t}(x)-y)|\right),\]
we would get, recalling from Lemma \ref{lemma:identification_theta_m}, Equation \eqref{eq:identification_theta_m1} that $\theta_{t,s}(y)-x =y-\tilde{m}^{s,y}_{s,t}(x)$, that
\begin{align*}
\overline{p}(&1,\mathbb{T}^{-1}_{s-t}(\theta_{t,s}(y)-x))\, \le \, C\int_{\R ^N} \frac{1}{(C+|\mathbb{T}_{s-t}^{-1}(\theta_{t,s}(y)-x)-w|)^M} \,\overline{P}_1(dw)\\
 &\le \,C\int_{\R^N}\frac{1}{([C+ |\mathbb{T}_{s-t}^{-1}(y-\theta_{s,t}(x))-w|-(s-t)^{\frac{1}\alpha\wedge \zeta}|\mathbb{T}_{s-t}^{-1}(\theta_{s,t}(x)-y)|]\vee 1)^M} \,\overline{P}_1(dw)\\
 &\le \,C\int_{\R^N}\frac{1}{([\check C+ |\mathbb{T}_{s-t}^{-1}(y-\theta_{s,t}(x))-w|)^M} \, \overline{P}_1(dw)\\
 &=: \, \check p(1,\mathbb{T}^{-1}_{s-t}(y-\theta_{s,t}(x))),
\end{align*}
and $\check p $  plainly satisfies the required integrability conditions.
These computations actually emphasize that \eqref{equiv_density_flows} holds, up to a modification of $\check C $ above, up to the threshold \[|\mathbb{T}^{-1}_{s-t}(\theta_{t,s}(y)-x)|\, \le\, c_0(s-t)^{-(\frac{1}{\alpha}\wedge \zeta)},\]
for some $c_0>0$ small enough with respect to $C$. It would therefore remain to investigate the complementary \textit{very off-diagonal regime}.\newline
Let us now concentrate on the \textit{off-diagonal regime} $|\mathbb{T}^{-1}_{s-t}(\theta_{t,s}(y)-x)|> K  $. In that case, we write:
\begin{align}
\overline{p}(1,\mathbb{T}^{-1}_{s-t}(\theta_{t,s}(y)-x))
&\, \le \, C\int_{\R ^N} \frac{1}{(C+|\mathbb{T}_{s-t}^{-1}(\theta_{t,s}(y)-x)-w|)^M} \overline{P}_{1}(dw)\notag\\
&\le \, C\int_{0}^1\overline{P}_{1}(\{w\in \R^N\colon(1+|\mathbb{T}_{s-t}^{-1}(\theta_{t,s}(y)-x)-w|)^{-M} |>u  \})du\notag\\
&\le \, C\int_{0}^1\overline{P}_{1}(B(\mathbb{T}_{s-t}^{-1}(\theta_{t,s}(y)-x), u^{-1/M})du. \label{eq:spiegazione1}
 \end{align}
 It now follows from the proof of Proposition \ref{prop:Decomposition_Process_X} that the support of the spectral measure on $\mathbb S^{N-1} $ associated with  $\{\tilde{S}^{\tau,\xi,t,s}_{u}\}_{u\ge 0}$ has dimension $d$. The related concentration properties also transmit to $\bar N_1 $ (see the proofs of Proposition \ref{prop:Smoothing_effect}
 and Lemma \ref{lemma:Control_P_N}). Thus, we get from \cite{Watanabe07}, \cite{Sztonyk10} (see respectively Lemma 3.1 and Corollary 6 in those references) that there exists a constant $C>0$ such that for all $z$ $\R^{N}$ and $r>0$:
\begin{equation}
\label{EST_POISSON}
\overline{P}_{1}(B(z,r))\le Cr^{d+1} (1+r^\alpha)|z|^{-(d+1+\alpha)}.
\end{equation}
In other words, the global bound is given by the worst decay deriving from the dimension of the support of the spectral measure.
In the current case $|z|\ge K $, this bound is clearly of interest for \textit{large} values of $z$. Hence, from \eqref{eq:spiegazione1} and \eqref{EST_POISSON}, it holds that
 \[
 \begin{split}
\overline{p}(1,\mathbb{T}^{-1}_{s-t}(\theta_{t,s}(y)&-x)) \, \le \, C\int_0^1u^{-(d+1)/M}
 (1+
 u^{-\alpha/M})\, du
|\mathbb{T}_{s-t}^{-1}(\theta_{t,s}(y)-x)|^{-(d+1+\alpha)}
\\
&\le \, C (1+|\mathbb{T}_{s-t}^{-1}(\theta_{t,s}(y)-x)|)^{-(d+1+\alpha)}
\int_0^1 [u^{-(d+1)/M}+u^{-(d+1+\alpha)/M} du],
\end{split}
\]
Choosing  $M>d+1+\alpha $ then gives that there exists $C\ge 1$ such that
$$
\overline{p}(1,\mathbb{T}^{-1}_{s-t}(\theta_{t,s}(y)-x)) \le  C (1+|\mathbb{T}_{s-t}^{-1}(\theta_{t,s}(y)-x)|)^{-(d+1+\alpha)}
.$$
We thus get from Lemma \ref{lemma:bilip_control_flow}, up to a modification of $C$, that:
\begin{align}\label{BAD_BOUND}
\overline{p}(1,\mathbb{T}^{-1}_{s-t}(\theta_{t,s}(y)-x)) \le  C (1+|\mathbb{T}_{s-t}^{-1}(y-\theta_{s,t}(x))|)^{-(d+1+\alpha)}.
\end{align}
This actually leads to strong dimension constraints for this bound to be integrable. This phenomenon already appeared e.g. in  \cite{Huang:Menozzi16}
and \textcolor{black}{required} therein to consider $d=1,n=3$ at most to address the well posedness of the martingale problem associated with a linear drift and a multiplicative isotropic stable noise. Those thresholds and dimension constraints remain with this approach. Actually, from the threshold appearing in \eqref{eq:thresholds_beta}, we would like to consider the left-hand side of \eqref{eq:smoothing_effect_frozen_y} with $\gamma>\frac{1+\alpha}{1+2\alpha}$ corresponding to $j=3=n$ therein. From Control \eqref{BAD_BOUND}, this would require $-\frac{1+\alpha}{1+2\alpha}+(d+1+\alpha)>3, d=1 \iff \alpha^2-\alpha-1>0$, which in our framework imposes that $\alpha\in (\frac{1+\sqrt 5}2,2)$.\newline
Another possibility would have been, in the tempered case, to keep track of the tempering function, instead of bounding $\tilde p^{\tau,\xi} $ by a self-similar density $\bar p $, in order to benefit from the tempering at infinity to compensate the bad concentration rate in \eqref{BAD_BOUND}. However, see \cite{Huang:Menozzi16} and \cite{Sztonyk10}, we would have obtained bounds of the form
$$\tilde p^{\tau,\xi} (t,s,x,y)\le  C (1+|\mathbb{T}_{s-t}^{-1}(y-\theta_{s,t}(x))|)^{-(d+1+\alpha)}Q\left( |\mathbb M_{s-t}^{-1}(y- \theta_{s,t}(x))|\right).
$$
Such a bound will give space integrability but deteriorates as well the time-integrability.
This difficulty would occur even in the truncated case, thoroughly studied in the non-degenerate case by Chen \textit{et al.} \cite{Chen:Kim:Kumagai08}.\newline
Thus, we will develop here another approach. Namely, we would like to change variable to $\bar y:=\mathbb{T}^{-1}_{s-t}(\theta_{t,s}(y)-x) $ in the left-hand side of Equation \eqref{eq:smoothing_effect_frozen_y}. Of course, this is not bluntly possible since the coefficients at hand are not smooth enough. The point is then to introduce a flow $\theta_{t,s}^\delta(y) $ associated with mollified coefficients (for which the difference with respect to the initial flow will be controlled similarly to what is done to establish the approximate Lipschitz property of the flows in Lemma \ref{lemma:bilip_control_flow}) and then, to control  $\det(\nabla \theta_{t,s}^\delta(y)) $ (see Lemma \ref{LEMMA_FOR_DET} below). Since we do not have here a summation with respect to the single rescaled components as in the previous Lemma \ref{lemma:bilip_control_flow} above or as in Corollary \ref{coroll:control_error_flow}, this will conduct to reinforce our assumptions and suppose that $(F_i)_{i\in \llbracket 2,n\rrbracket} $ has the same regularity with respect to the variable $x_j$, $j\in \llbracket 2,n\rrbracket$, whatever the level of the chain.  This is precisely  what leads to consider the condition
\begin{itemize}
\item $x_j \to F_i(t,x_i,\dots,x_j,\dots,x_n)$ is $\beta^j$-H\"older continuous, uniformly in $t$ and in $x_k$ for $k\neq j$, with
 \begin{equation*}
\beta^j \, > \, \frac{1+\alpha(j-2)}{1+\alpha(j-1)}.
\end{equation*}
\end{itemize}
\end{remark}

Let us introduce now some useful tools for the study of the martingale problem for $\partial_s+L_s$. The first step is to consider a suitable Green-type kernel associated with the frozen density $\tilde{p}^{s,y}$ and establish which Cauchy-like problem it solves. Namely, we define for any function $f\colon [0,T]\times \R^N \to \R$ regular enough, the \textit{pseudo} Green kernel $\tilde{G}_\epsilon$ given by:
\begin{equation}\label{eq:def_Green_Kernel}
\tilde{G}_\epsilon f(t,x) \, := \, \int_{(t+\epsilon)\wedge T}^T \int_{\R^N}  \tilde{p}^{s,y}(t,s,x,y) f(s,y) \, dy ds, \quad (t,x) \in [0,T)\times \R^N,
\end{equation}
where $\epsilon$ is meant to be small.\newline
We only remark that the above Green kernel $\tilde{G}_\epsilon$ is well-defined, since the frozen density $\tilde{p}^{s,y}(t,s,
x,y)$ is measurable in $(s,y)$ thanks to Lemma \ref{lemma:measurability_flow} (measurability of the flow in these parameters).

\begin{prop}
\label{prop:pointwise_green_kernel}
Let $p$, $q$ in $(1,+\infty)$ such that the integrability Condition $(\mathscr{C})$ holds. Then, there exists a positive constant $C:=C(T,p,q)$  such that for any $f$ in
$L^p\big(0,T;L^q(\R^N)\bigr)$,
\[
\Vert \tilde{G}_{\epsilon}f\Vert_\infty \, \le \, C \Vert f \Vert_{L^p_tL^q_x}.\]
Moreover, it holds that $\lim_{T\to 0}C(T,p,q) = 0$.
\end{prop}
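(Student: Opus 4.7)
The plan is to bound $\tilde{G}_\epsilon f(t,x)$ pointwise by separating space and time via H\"older's inequality, using the pointwise density estimate of Proposition~\ref{prop:Smoothing_effect} together with the crucial integrability provided by Corollary~\ref{coroll:Smoothing_effect}. The point is that condition $(\mathscr C)$ will appear naturally as the constraint ensuring integrability in time.

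First I would invoke Proposition~\ref{prop:Smoothing_effect} with $(\tau,\xi)=(s,y)$ and the identification $y-\tilde m_{s,t}^{s,y}(x)=\theta_{t,s}(y)-x$ from Lemma~\ref{lemma:identification_theta_m} to obtain
\[
\tilde p^{s,y}(t,s,x,y)\,\le\,\frac{C}{\det \mathbb T_{s-t}}\,\overline p\bigl(1,\mathbb T_{s-t}^{-1}(\theta_{t,s}(y)-x)\bigr).
\]
Letting $q'$ be the H\"older conjugate of $q$, I would then apply the spatial H\"older inequality to the $dy$-integral to get
\[
\int_{\R^N}\tilde p^{s,y}(t,s,x,y)|f(s,y)|\,dy\,\le\,C\|f(s,\cdot)\|_{L^q}\cdot I(s-t,x)^{1/q'},
\]
where
\[
I(s-t,x)\,:=\,\int_{\R^N}\frac{\bigl[\overline p(1,\mathbb T_{s-t}^{-1}(\theta_{t,s}(y)-x))\bigr]^{q'}}{(\det \mathbb T_{s-t})^{q'}}\,dy.
\]

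To control $I(s-t,x)$ I would use Corollary~\ref{coroll:control_power_q_density} with $u=1$ to reduce the $q'$-th power to a single power of $\overline p$, and then apply Corollary~\ref{coroll:Smoothing_effect} with $\gamma=0$ (which is precisely the tool that deals with the fact that $y$ appears both as integration variable and as freezing parameter). This gives
\[
I(s-t,x)\,\le\,C(\det \mathbb T_{s-t})^{-q'}\int_{\R^N}\overline p\bigl(1,\mathbb T_{s-t}^{-1}(\theta_{t,s}(y)-x)\bigr)\,dy\,\le\,C(\det \mathbb T_{s-t})^{1-q'},
\]
hence $I(s-t,x)^{1/q'}\le C(\det \mathbb T_{s-t})^{-1/q}$. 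Recalling the definitions \eqref{eq:def_matrix_M} and \eqref{eq:def_matrix_T} one computes
\[
\det \mathbb T_{s-t}\,=\,(s-t)^{\kappa},\qquad \kappa\,:=\,\frac{1-\alpha}{\alpha}N+\sum_{i=1}^n i\,d_i,
\]
so altogether
\[
\int_{\R^N}\tilde p^{s,y}(t,s,x,y)|f(s,y)|\,dy\,\le\,C(s-t)^{-\kappa/q}\|f(s,\cdot)\|_{L^q}.
\]

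The final step is a second, temporal, H\"older with exponents $p,p'$:
\[
|\tilde G_\epsilon f(t,x)|\,\le\,C\|f\|_{L^p_tL^q_x}\left(\int_t^T(s-t)^{-p'\kappa/q}\,ds\right)^{1/p'}.
\]
Integrability of the right-hand side requires $p'\kappa/q<1$, which rearranges to $\kappa/q+1/p<1$ --- exactly condition $(\mathscr C)$. Under $(\mathscr C)$, evaluating the integral yields a factor $C\,T^{1/p'-\kappa/q}$ with a strictly positive exponent, and this bound is uniform in $(t,x)$ and in $\epsilon$, giving $\|\tilde G_\epsilon f\|_\infty\le C(T,p,q)\|f\|_{L^p_tL^q_x}$ with $C(T,p,q)\to 0$ as $T\to 0$.

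The main technical leverage comes from Corollary~\ref{coroll:Smoothing_effect}: without its uniform control of the $dy$-integral against $\theta_{t,s}(y)$ this approach would fail, since $y$ is simultaneously the integration variable and the freezing parameter. The remaining computations --- the H\"older splitting and the identification of $\kappa$ with the exponent appearing in $(\mathscr C)$ --- are then essentially bookkeeping.
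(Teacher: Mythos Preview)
Your proof is correct and follows essentially the same route as the paper: apply the pointwise density bound from Proposition~\ref{prop:Smoothing_effect} with $(\tau,\xi)=(s,y)$, split via H\"older in space and time, and reduce the $q'$-th power of $\overline{p}$ using Corollary~\ref{coroll:control_power_q_density} before integrating via Corollary~\ref{coroll:Smoothing_effect}. The only cosmetic differences are that the paper applies H\"older in both variables at once before bounding the density, whereas you bound first and then split spatially and temporally, and that you make the identification $y-\tilde m_{s,t}^{s,y}(x)=\theta_{t,s}(y)-x$ explicit while the paper leaves it implicit; your computation $\kappa=\tfrac{1-\alpha}{\alpha}N+\sum_i i\,d_i$ coincides with the paper's exponent $\sum_i d_i\tfrac{1+\alpha(i-1)}{\alpha}$.
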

\begin{proof}
We start using the H\"older inequality in order to split the component with $f$ and the part with the density $\tilde{p}(t,s,x,y)$:
\[
\begin{split}
|\tilde{G}_\epsilon f(t,x)| \,
&\le \, C \Vert f \Vert_{L^p_tL^q_x} \left(\int_{(t+\epsilon)\wedge T}^T \left (\int_{\R^N} \left|\tilde{p}^{s,y}(t,s,x,y)\right|^{q'} \,dy \right)^{\frac{p'}{q'}} ds \right)^\frac{1}{p'} \\
&=: \, C \Vert f \Vert_{L^p_tL^q_x}|I_\epsilon(t,x)|,
\end{split}
\]
where we have denoted by $p'$, $q'$ the conjugate of $p$ and $q$, respectively.\newline
In order to control the remainder term $I_\epsilon(t,x)$, we now apply \eqref{eq:derivative_prop_of_q} from Proposition \ref{prop:Smoothing_effect} with $k=0$ and $(\tau,\xi)=(s,y)$ to write that
\[
|I_\epsilon(t,x)|^{p'} \, \le \, C\int_{(t+\epsilon)\wedge T}^T \left(\int_{\R^N} \left(\frac{1}{\det \mathbb{T}_{s-t}} \overline{p}\left(1, \mathbb{T}^{-1}_{s-t}(y-\tilde{m}^{s,y}_{s,t}(x))\right)\right)^{q'} \, dy \right)^{\frac{p'}{q'}} ds,
\]
where we recall that $\mathbb{T}_t=t^{1/\alpha}\mathbb{M}_t$ (see \eqref{eq:def_matrix_T} and \eqref{eq:def_matrix_M}).\newline
From Corollaries \ref{coroll:control_power_q_density} and \ref{coroll:Smoothing_effect}, we then write that
\[\begin{split}
|I_\epsilon(t,x)|^{p'} \, &\le \,
C\int_{(t+\epsilon)\wedge T}^T \left(\int_{\R^N} \frac{1}{(\det \mathbb{T}_{s-t})^{q'}} \overline{p}\left(1, \mathbb{T}^{-1}_{s-t}(y-\tilde{m}^{s,y}_{s,t}(x))\right) \, dy \right)^{\frac{p'}{q'}} ds\\
&\le \, C \int_{(t+\epsilon)\wedge T}^T  (\det \mathbb{T}_{s-t})^{\frac{p'}{q'}-p'} ds\, =  \, C \int_{(t+\epsilon)\wedge T}^T   \frac{1}{(\det \mathbb{T}_{s-t})^{\frac{p'}{q}}} ds.
\end{split}
\]
Since by definition of matrix $\mathbb{T}_t$, it holds that
\begin{equation}
\label{eq:control_det_T1}
\det \mathbb{T}_{s-t} \, = \, (s-t)^{\sum_{i=1}^nd_i\frac{1+\alpha(i-1)}{\alpha}},
\end{equation}
we can conclude that under the integrability assumption $(\mathscr{C})$, we have that
\begin{equation}
\label{eq:control_det_T2}
\bigl(\sum_{i=1}^nd_i\frac{1+\alpha(i-1)}{\alpha}\bigr)\frac{p'}{q} \, < \, 1  \, \Leftrightarrow \, \bigl(\sum_{i=1}^nd_i\frac{1+\alpha(i-1)}{\alpha}\bigr)\frac{1}{q}+\frac{1}{p} \, <\,  1.
\end{equation}
The proof is complete.
\end{proof}

We now want to understand which Cauchy-like problem is solved by the \textit{density} $\tilde{p}^{s,y}(t,s,x,y)$ frozen at the terminal point $(s,y)$. We start denoting by $\tilde{L}^{s,y}_t$ the infinitesimal generator of the proxy process $\{\tilde X_{s}^{s,y,t,x}\}_{s\in [t,T]}$. For any smooth function $\phi\colon \R^N\to\R$, it writes:
\begin{equation}
\label{eq:def_frozen_generator}
\begin{split}
 \tilde{L}^{s,y}_t\phi(x)\, &:= \,
\langle A_t x+\tilde{F}^{s, y}_t, D_x \phi(x)\rangle + \tilde{\mathcal{L}}^{s,y}_t \\
&:= \langle A_t x+\tilde{F}^{s, y}_t, D_x \phi(x)\rangle + \int_{\R^d_0}\bigl[\phi(x+B\tilde{\sigma}^{s, y}_t w)-\phi(x) \bigr] \,\nu(dw),
\end{split}
\end{equation}
where, we recall, $\tilde{F}^{s,y}_t:=F(t,\theta_{t,s}(y))$ and $\tilde{\sigma}^{s,y}_t:=\sigma(t,\theta_{t,s}(y))$.\newline
By direct calculation, it is not difficult to check now that for any $(s,x,y)$ in $[0,T]\times \R^{2N}$ it holds that
\begin{equation}\label{eq:differential-eq}
 \left(\partial_t + \tilde{L}^{s,y}_t\right) \tilde{p}^{s,y} (t,s,x,z) \,  = \, 0, \quad (t,z) \in [0,s)\times \R^N.
\end{equation}
However, we carefully point out that some attention is requested to establish the following lemma, which is crucial
to derive which Cauchy-type problem the function $\tilde{G}f := \lim_{\epsilon\to 0}\tilde{G}_\epsilon f$ actually solves. In particular, it is important to highlight that Lemma \ref{convergence_dirac} (Dirac Convergence of frozen density) below cannot be obtained directly from the convergence in law of the frozen process $\tilde{X}_{s}^{s,y,t,x}$ towards the Dirac mass (cf.\ Equation
\eqref{eq:differential-eq}). Indeed, the integration variable $y$ also appears as a freezing parameter which makes the argument more complicated.\newline
The proofs of the following two lemmas is quite involved and technical. For this reason, we decided to postpone them to the Appendix, Section \ref{SEC_TEC_LEMMA_APP}.

\begin{lemma}
\label{convergence_dirac}
Let $(t,x)$ be in $[0,T)\times \R^N$ and $f\colon \R^N\to \R$ a bounded continuous function. Then,
\[
\lim_{\epsilon \to 0}\left| \int_{\R^N} f(y) \tilde{p}^{t+\epsilon,y}(t,t+\epsilon,x,y)\, dy
-f(x) \right| \, = \, 0.
\]
Moreover, the above limit is uniform with respect to $t$ in $[0,T]$.
\end{lemma}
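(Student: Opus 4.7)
The essential obstacle is that $y$ appears both as the integration variable and as the freezing parameter in $\tilde{p}^{t+\epsilon,y}$, so the weak convergence of the proxy process $\tilde X^{t+\epsilon,y,t,x}_{t+\epsilon}$ to a Dirac mass at $x$ cannot be invoked directly. My strategy is to perform a change of variable $z = \theta_{t,t+\epsilon}(y)$ which, by Lemma \ref{lemma:identification_theta_m}, turns the concentrated argument $y - \tilde{m}_{t+\epsilon,t}^{t+\epsilon,y}(x) = \theta_{t,t+\epsilon}(y) - x$ into the simple expression $z-x$, thereby decoupling the two roles of $y$. Since $F$ is only H\"older continuous, this substitution is not bluntly licit, so I mollify.

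Let $\rho_\delta$ be a spatial mollifier and set $F^\delta := F * \rho_\delta$ (preserving the upper-diagonal structure of [\textbf{H}]), with associated objects $\theta^\delta$, $\tilde{m}^{\tau,\xi,\delta}$, $\tilde{p}^{\tau,\xi,\delta}$. For fixed $\delta>0$, $\theta^\delta_{t,s}$ is a $C^1$-diffeomorphism whose Jacobian determinant is bounded above and below through Liouville's formula (cf.\ Lemma \ref{LEMMA_FOR_DET}), and a Gr\"onwall argument analogous to Lemma \ref{lemma:bilip_control_flow} gives
\[
\sup_{y \in \R^N} \bigl|\mathbb{T}_\epsilon^{-1}\bigl(\theta_{t,t+\epsilon}(y) - \theta^\delta_{t,t+\epsilon}(y)\bigr)\bigr| \, \xrightarrow[\delta \to 0]{} \, 0, \quad \text{uniformly in } \epsilon \in (0,T].
\]
I then split the target quantity as
\[
\int_{\R^N} f(y) \tilde{p}^{t+\epsilon,y}(t,t+\epsilon,x,y)\, dy - f(x) \, = \, \mathcal{R}_1(\epsilon,\delta) + \mathcal{R}_2(\epsilon,\delta),
\]
where $\mathcal{R}_1 := \int f(y)[\tilde{p}^{t+\epsilon,y} - \tilde{p}^{t+\epsilon,y,\delta}](t,t+\epsilon,x,y)\, dy$ is the mollification error and $\mathcal{R}_2 := \int f(y) \tilde{p}^{t+\epsilon,y,\delta}(t,t+\epsilon,x,y)\, dy - f(x)$ is the smoothed approximation error.

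For $\mathcal{R}_2$, since $\theta^\delta_{t,t+\epsilon}$ is a diffeomorphism with inverse $\theta^\delta_{t+\epsilon,t}$, the substitution $z = \theta^\delta_{t,t+\epsilon}(y)$ is valid, and the mollified analogue of Lemma \ref{lemma:identification_theta_m} yields
\[
\mathcal{R}_2(\epsilon,\delta) + f(x) \, = \, \int_{\R^N} f(\theta^\delta_{t+\epsilon,t}(z))\, \frac{p_{\tilde{S}^\delta}(\epsilon, \mathbb{M}_\epsilon^{-1}(z - x))}{\det \mathbb{M}_\epsilon}\, \bigl|\det \nabla_z \theta^\delta_{t+\epsilon,t}(z)\bigr|\, dz.
\]
For fixed $\delta$, as $\epsilon \to 0$: $\theta^\delta_{t+\epsilon,t}(z) \to z$ locally uniformly, the Jacobian tends to $1$, and the rescaled density concentrates around $z = x$ (integrating to $1$ with the Schwartz-type tail bounds from Lemma \ref{lemma:Control_p_M}). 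Continuity and boundedness of $f$ together with dominated convergence then yield $\mathcal{R}_2(\epsilon,\delta) \to 0$.

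For $\mathcal{R}_1$, I use the Fourier representation \eqref{eq:representation_density} and the L\'evy symbol \eqref{proof:eq:def_Levy_symbol_S} to reduce the pointwise difference $\tilde{p}^{t+\epsilon,y} - \tilde{p}^{t+\epsilon,y,\delta}$ to a difference of characteristic exponents, controlled via the flow comparison above together with the $|z|^\alpha$ decay of $\Phi_{\tilde{S}}$ from \eqref{eq:control_Levy_symbol_S}. This is expected to give $|\mathcal{R}_1(\epsilon,\delta)| \le C\|f\|_\infty\, \omega(\delta)$ with $\omega(\delta) \to 0$ uniformly in $\epsilon$. A diagonal argument (choose $\delta$ small first so $\mathcal{R}_1$ is small uniformly, then let $\epsilon\to 0$) concludes, and uniformity in $t\in[0,T]$ is inherited from that of all the bounds. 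The main obstacle I anticipate is precisely this $\epsilon$-uniform control of $\mathcal{R}_1$: the proxy density degenerates as $\epsilon \to 0$, so naive $L^1$ bounds on $\tilde{p} - \tilde{p}^\delta$ are far too crude, and one must exploit cancellation through the Fourier inversion together with the Lipschitz regularity of $Q$ from [\textbf{ND}].
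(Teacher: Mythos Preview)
Your decomposition is structurally different from the paper's, and the main gap is precisely the one you flag: the $\epsilon$-uniform control of $\mathcal{R}_1$ does not hold as stated. The key claim
\[
\sup_{y} \bigl|\mathbb{T}_\epsilon^{-1}\bigl(\theta_{t,t+\epsilon}(y) - \theta^\delta_{t,t+\epsilon}(y)\bigr)\bigr| \xrightarrow[\delta\to 0]{} 0 \quad \text{uniformly in } \epsilon \in (0,T]
\]
fails in the degenerate setting $n\ge 2$. For a fixed mollification scale $\delta$, the $i$-th component of the flow difference picks up a contribution of order $\epsilon\,\delta^{\beta}$ from integrating $|F_i - F_i^\delta| \le C\delta^{\beta}$ over $[t,t+\epsilon]$, so the rescaled quantity behaves like $\epsilon^{(\alpha-1)/\alpha - (i-1)}\delta^{\beta}$, which blows up as $\epsilon\to 0$ for every $i\ge 2$. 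The multi-scale mollification behind Lemma~\ref{lemma:bilip_control_flow} avoids this only by tying the $\delta_{ij}$ to $\epsilon$, and even then it yields a \emph{bounded} control (see the estimate \eqref{SENSI_DELTA_THETA}), not a vanishing one; this is incompatible with your diagonal argument, which requires $\delta$ to be fixed first and held while $\epsilon\to 0$. A secondary issue: in your treatment of $\mathcal{R}_2$ you write $p_{\tilde S^\delta}$ as if it were independent of the freezing point, but in the multiplicative case the law of $\tilde S$ depends on $y$ through $\tilde\sigma_u^{t+\epsilon,y}=\sigma(u,\theta_{u,t+\epsilon}(y))$, so after the substitution the integrand still carries a $z$-dependence and is not a density in $z$.

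The paper sidesteps both issues by a different comparison: it adds and subtracts $\tilde p^{t,x}$ (freezing at the \emph{initial} point), which is a genuine transition density, so $\int f(y)\,\tilde p^{t,x}(t,t+\epsilon,x,y)\,dy \to f(x)$ is immediate. The remaining error $\int f(y)\,[\tilde p^{t+\epsilon,y}-\tilde p^{t,x}]\,dy$ is split into a diagonal region $\{|\mathbb{T}_\epsilon^{-1}(y-\theta_{t+\epsilon,t}(x))|\le \epsilon^{-\beta}\}$ and its complement. Off-diagonal, tail estimates for both densities suffice. On the diagonal, the difference is handled via the Fourier representation \eqref{eq:representation_density}: one term ($\mathcal{P}_1$) measures the sensitivity of the L\'evy exponent to the freezing parameter and is controlled through the H\"older regularity of $\sigma$ and the Lipschitz property of $Q$ in [\textbf{ND}]; the other ($\mathcal{P}_2$) measures the shift discrepancy $\tilde m^{t,x}-\tilde m^{t+\epsilon,y}$ and is handled by Corollary~\ref{coroll:control_error_flow}. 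Both carry an explicit positive power of $\epsilon$ after integration over the diagonal ball, and no auxiliary parameter $\delta$ is introduced.
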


A similar result involving the $L^p_tL^q_x$-norm can also be obtained. For notational simplicity, let us set
\begin{equation}
\label{eq:def_f_epsilon}
I_\epsilon f(t,x) \, := \, \int_{\R^N} f(t+\epsilon,y)\mathds{1}_{[0,T-\epsilon]}(t)
\tilde{p}^{t+\epsilon,y}(t,t+\epsilon,x,y) \, dy
\end{equation}
for any sufficiently regular function $f\colon [0,T]\times \R^N \to \R$.

\begin{lemma}
\label{prop:convergence_LpLq}
Let $p>1$, $q>1$ and $f$ in $C_c^{1,2}([0,T)\times \R^N)$. Then,
\[
\lim_{\epsilon \to 0} \Vert
I_\epsilon f -f \Vert_{L^{p}_tL^q_x} \, = \, 0.
\]
\end{lemma}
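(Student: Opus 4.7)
The approach combines three ingredients: (i) pointwise convergence $I_\epsilon f(t,x) \to f(t,x)$, reduced to Lemma \ref{convergence_dirac}; (ii) a uniform $L^p_t L^q_x$-operator bound on $I_\epsilon$, in the spirit of Proposition \ref{prop:pointwise_green_kernel}; and (iii) the compact support of $f$, which provides both $t$-localization and $x$-equi-integrability, ultimately allowing us to invoke dominated convergence.

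For the uniform bound, I would apply Hölder's inequality to obtain
\[
|I_\epsilon f(t,x)|^q \le \Bigl(\int \tilde p^{t+\epsilon,y}(t,t+\epsilon,x,y)\,dy\Bigr)^{q/q'}\int |f(t+\epsilon,y)|^q \tilde p^{t+\epsilon,y}(t,t+\epsilon,x,y)\,dy.
\]
The first factor is uniformly bounded via Proposition \ref{prop:Smoothing_effect} and Corollary \ref{coroll:Smoothing_effect}, using the identity $y - \tilde m^{s,y}_{s,t}(x) = \theta_{t,s}(y) - x$ from Lemma \ref{lemma:identification_theta_m}. Integrating in $x$ through the affine change of variables $u = \mathbb{M}_{s-t}^{-1}(y - \tilde m^{s,y}_{s,t}(x))$ yields $\int \tilde p^{t+\epsilon,y}(t,t+\epsilon,x,y)\,dx \le |\det \mathcal{R}_{t+\epsilon,t}|^{-1} \le C$. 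Fubini then delivers $\|I_\epsilon f(t,\cdot)\|_{L^q_x} \le C\|f(t+\epsilon,\cdot)\|_{L^q_x}$ with $C$ independent of $\epsilon$ and $t$, which integrates to a uniform $L^p_t L^q_x$-bound on $I_\epsilon$.

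For the pointwise convergence, I would decompose
\[
I_\epsilon f(t,x) - f(t,x) = I_\epsilon[f(t+\epsilon,\cdot) - f(t,\cdot)](t,x) + \bigl[I_\epsilon[f(t,\cdot)](t,x) - f(t,x)\bigr] - \mathds 1_{(T-\epsilon,T]}(t)\, f(t,x).
\]
Letting $T' := \sup\{t : f(t,\cdot)\not\equiv 0\}<T$, the last term vanishes for $\epsilon < T - T'$. The first bracket is bounded by $C\sup_y |f(t+\epsilon,y) - f(t,y)|$, which tends to $0$ by uniform continuity of $f$; the second bracket tends to $0$ pointwise by Lemma \ref{convergence_dirac} applied to the bounded continuous function $f(t,\cdot)$.

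To upgrade pointwise to $L^p_t L^q_x$ convergence, note first that the $t$-support of $I_\epsilon f - f$ lies in $[0,T']$ for small $\epsilon$, so the $t$-integration is over a bounded interval. For fixed $t$, using $\mathrm{supp}(f) \subset [0,T']\times B(0,R)$, we have $|I_\epsilon f(t,x)| \le \|f\|_\infty \int_{B(0,R)} \tilde p^{t+\epsilon,y}\,dy$; combining Proposition \ref{prop:Smoothing_effect} with the tail estimate of Corollary \ref{coroll:Smoothing_effect}, for $|x|>M$ this quantity is bounded by $C\|f\|_\infty \int_{|z|\ge K_M} \check p(1,z)\,dz$ with $K_M \to \infty$ as $M \to \infty$, uniformly in $\epsilon \in (0,T)$ and $t$. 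Pointwise convergence, the uniform $L^q_x$-bound, and uniform tail decay deliver $L^q_x$-convergence for each $t$ by dominated convergence; a second application of dominated convergence in $t$ concludes. The main obstacle is precisely this uniform equi-integrability in $x$: since $\tilde p^{t+\epsilon,y}$ depends on $y$ as a freezing parameter and is not a genuine density in $y$, standard Markov-type concentration is unavailable, and one must carefully exploit the pointwise upper bound by $\bar p$ together with the approximate Lipschitz property of the flow (Lemma \ref{lemma:bilip_control_flow}) to convert large $|x|$ into large arguments of $\bar p$ uniformly in $\epsilon$.
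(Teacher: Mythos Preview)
Your approach is correct and genuinely different from the paper's. The paper does not pass through a soft ``pointwise convergence + Vitali'' argument; instead it inserts the intermediate point $f(t+\epsilon,\theta_{t+\epsilon,t}(x))$ and estimates the $L^q_x$-norm of each piece directly, obtaining explicit rates. Concretely, writing
\[
I_\epsilon f(t,x)-f(t,x) = \bigl[I_\epsilon f(t,x)-f(t+\epsilon,\theta_{t+\epsilon,t}(x))\bigr]+\bigl[f(t+\epsilon,\theta_{t+\epsilon,t}(x))-f(t,x)\bigr],
\]
the paper further splits the first bracket into a ``difference of $f$'' term $\mathcal I_1$ and a ``difference of frozen densities'' term $\mathcal I_2$ (the latter handled by the sensitivity analysis already carried out in Lemma~\ref{convergence_dirac}). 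For $\mathcal I_1$ the $x$-integration is split into $\Delta_1=\{x:|\theta_{t+\epsilon,t}(x)-\mathrm{supp}\,f|\le 1\}$ and its complement; on $\Delta_1$ the H\"older regularity of $f$ together with Corollary~\ref{coroll:Smoothing_effect} yields the rate $\epsilon^{p\gamma/\alpha}$, while on $\Delta_2$ one gets $\epsilon^{1/\alpha}$ via the moment bound of $\bar p$.

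What each buys: your route is more modular --- it black-boxes Lemma~\ref{convergence_dirac} and avoids repeating the density-sensitivity computation, at the price of losing any quantitative rate. Your Schur-type bound $\|I_\epsilon f(t,\cdot)\|_{L^q_x}\le C\|f(t+\epsilon,\cdot)\|_{L^q_x}$ (from $\int \tilde p^{t+\epsilon,y}\,dy\le C$ via Corollary~\ref{coroll:Smoothing_effect} and $\int \tilde p^{t+\epsilon,y}\,dx\le |\det\mathcal R_{t+\epsilon,t}|^{-1}$ by the affine change of variable) is clean and correct. One small wording point: ``dominated convergence'' in $x$ is really a Vitali argument --- you combine the uniform pointwise bound $|I_\epsilon f|\le C\|f\|_\infty$ on compacts with the uniform tail estimate $\int_{|x|>M}|I_\epsilon f(t,x)|^q\,dx\le C\int_{|z|\ge (M-C_R)} \bar p(1,z)\,dz$ (obtained exactly as you indicate, using $|\mathbb T_\epsilon^{-1}v|\ge \epsilon^{-1/\alpha}|v|$ and Corollary~\ref{coroll:Smoothing_effect}); a single $L^q$-dominating function would require $q>N/\alpha$, which is not assumed. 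With that adjustment the argument is complete.
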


We want now to understand which Cauchy-like problem is solved by our frozen Green kernel $\tilde{G}_\epsilon f(t,x)$. For this reason, we introduce for any function $f$ in $C^{1,2}_0([0,T)\times \R^N,\R)$ the following quantity:
\begin{equation}
\label{eq:def_Mepsilon}
\tilde{M}_\epsilon f(t,x)\, := \, \int_{t+\epsilon}^T \int_{\R^N}  \tilde{L}^{s,y}_t \tilde{p}^{s,y}(t,s,x,y)f(s,y) \, dy ds, \quad (t,x) \in [0,T)\times\R^N,
\end{equation}
for some fixed $\epsilon>0$ that is assumed to be small enough.
Then, we can derive from Equation \eqref{eq:differential-eq} and Proposition \ref{prop:Smoothing_effect}
that the following equality holds:
\begin{equation}\label{eq:differential_eq2}
\partial_t \tilde{G}_\epsilon f(t,x)+ \tilde{M}_\epsilon f(t,x) \, = \, -I_\epsilon f(t,x),\quad (t,x)\in [0,T)\times \R^N,
\end{equation}
where we used the same notation in \eqref{eq:def_f_epsilon} for $I_\epsilon f$. We point out that the localization with respect to $\epsilon$ is precisely needed to exploit directly \eqref{eq:differential-eq} and thus, to derive
\eqref{eq:differential_eq2} for any fixed $\epsilon>0$, by usual dominated convergence arguments. In particular, we point out that in the limit case ($\epsilon \to 0$), the smoothness on $f$ is not a sufficient condition to derive the smoothness of $\tilde{G} f$. This is again due to the dependence of the proxy upon the integration variable.
\setcounter{equation}{0}
\section{Well-Posedness of the Martingale Problem}

This section is devoted to the proof of the well-posedness of the martingale problem for $\partial_s+L_s$ with initial condition $(t,x)$, under the assumptions of Theorem
\ref{thm:main_result}.

Since by definition the paths of any solution $\{X_t\}_{t\ge 0}$ of the martingale problem for $\partial_s+L_s$ are c\`adl\`ag, it will be convenient
afterwards to give an alternative definition. We denote by $\mathcal{D}[0,\infty)$ the family of all the c\`adl\`ag paths from $[0,\infty)$ to $\R^N$, equipped
with the ``standard'' Skorokhod topology. For further details, we suggest the interested reader to see \cite{book:Bass11}, \cite{book:Ethier:Kurtz86} or \cite{book:Jacod:Shiryaev87}.\newline
Fixed a starting point $(t,x)$ in $[0,\infty)\times \R^N$, we will say that \textcolor{black}{a} probability measure $\mathbb{P}$ on $\mathcal{D}[0,\infty)$ is a solution of the martingale problem for
$\partial_t+L_t$ starting at $(t,x)$ if the coordinate process $\{y_t\}_{t\ge 0}$ on $\mathcal{D}[0,\infty)$, defined by
\[y_t(\omega) \, = \, \omega(t), \quad \omega \in  \mathcal{D}[0,\infty)\]
is a solution (in the previous sense) of the martingale problem for $\partial_s+L_s$.\newline
Similarly, we will say that uniqueness holds for the martingale problem for $\partial_s+L_s$ with starting point $(t,x)$ if
\[\mathbb{P}\circ y^{-1} \, = \,\tilde{\mathbb{P}}\circ y^{-1},\]
for any two solutions $\mathbb{P}$, $\tilde{\mathbb{P}}$ of the martingale problem for $\partial_s+L_s$ starting at $(t,x)$.

The existence of a solution $\mathbb{P}$ of the martingale problem for $\partial_s+L_s$ can be obtained adapting the proof of Theorem $2.2$ in \cite{Stroock75} exploiting the sublinear structure of the drift $F$ and localization arguments in order to deal with possibly unbounded coefficients.

\begin{prop}[existence]
\label{prop:exist_MP_in_x}
Under the assumptions of Theorem \ref{thm:main_result}, let $(t,x)$ be in $[0,\infty)\times\R^N$. Then, there exists a solution $\mathbb{P}$ of the martingale problem for
$\partial_s+L_s$ starting at $(t,x)$.
\end{prop}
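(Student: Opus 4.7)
The plan is to follow the classical Stroock--Varadhan compactness scheme, adapting it to the pure-jump, unbounded, H\"older setting. The key observation is that, since the test functions in the martingale problem lie in $C_c^\infty$, the low regularity of the coefficients and the degenerate structure of $L_s$ pose no obstacle at the level of \emph{existence}: one only needs enough control to construct and compactify a sequence of approximating measures.

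First, to handle the possible unboundedness of $F$, I would introduce a truncation. Fix a smooth cutoff $\chi_R\in C_c^\infty(\R^N,[0,1])$ with $\chi_R\equiv 1$ on $B(0,R)$, and set $F^R:=F\chi_R$ and $G^R(t,x):=A_tx\chi_R(x)+F^R(t,x)$. Since $F$ is H\"older continuous with $|F_i(t,0)|\le K$, we have sublinear growth $|F(t,x)|\le C(1+|x|)$, and $F^R$ is therefore bounded uniformly in $(t,x)$. I would then convolve $F^R$ and $\sigma$ in space with a mollifier $\rho_n$ to obtain smooth, bounded, Lipschitz approximations $F^{R,n}, \sigma^{R,n}$ preserving the uniform ellipticity constant of $\sigma$ and converging locally uniformly to $F^R,\sigma$ as $n\to\infty$. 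For such smooth coefficients, the SDE \eqref{eq:SDE} with drift $G^{R,n}=A_t x\chi_R(x)+F^{R,n}$ admits a (unique strong, hence) weak solution $\{X^{R,n}_s\}_{s\ge t}$ starting from $x$ at time $t$, yielding a law $\mathbb{P}^{R,n}$ on $\mathcal D[0,\infty)$ solving the martingale problem for the associated operator $\partial_s+L^{R,n}_s$.

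Second, I would establish tightness of $\{\mathbb{P}^{R,n}\}_n$ in $\mathcal D[0,\infty)$ for fixed $R$. Since $G^{R,n},\sigma^{R,n}$ are uniformly bounded and the L\'evy process $Z$ has finite moments of any order $\gamma\in[0,\alpha)$ (by the analogue of Proposition \ref{prop:Smoothing_effect}, or directly from the small/large jump decomposition used in \eqref{DEF_TILDE_N}), standard estimates yield, for $\gamma\in(1,\alpha)$,
\[
\mathbb{E}\!\left[|X^{R,n}_s-X^{R,n}_u|^{\gamma}\right]\,\le\, C(R)(s-u)^{\gamma/\alpha},\qquad t\le u\le s\le T.
\]
Together with the uniform bound $\sup_n \mathbb E[\sup_{s\in[t,T]}|X^{R,n}_s|^\gamma]<\infty$, Aldous' criterion gives tightness of $\{\mathbb P^{R,n}\}_n$ in the Skorokhod topology. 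Extracting a subsequential limit $\mathbb{P}^{R,n_k}\Rightarrow \mathbb P^R$, I would pass to the limit in the martingale identity
\[
\mathbb E^{\mathbb P^{R,n_k}}\!\!\left[\phi(s,y_s)-\phi(u,y_u)-\int_u^s(\partial_r+L^{R,n_k}_r)\phi(r,y_r)\,dr\right]=0
\]
for $\phi\in C_c^\infty([0,\infty)\times\R^N)$. The boundedness of $\nabla\phi, D^2\phi$ and the dominated convergence theorem (using $\int(1\wedge|z|^2)\,\nu(dz)<\infty$ to control the nonlocal part of $L^{R,n_k}\phi$ uniformly in $n_k$, and the local uniform convergence of the coefficients) yield that $\mathbb P^R$ solves the martingale problem associated with the truncated generator $\partial_s+L^R_s$.

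Finally, I would remove the truncation via a standard non-explosion argument. Define the stopping time $\tau_R:=\inf\{s\ge t\colon|y_s|\ge R\}$ on $\mathcal D[0,\infty)$. Since $G^R$ and $G$ coincide on $B(0,R)$, the restrictions of $\mathbb P^R$ and $\mathbb P^{R'}$ to $\mathcal F_{\tau_R\wedge T}$ coincide for $R'>R$ (uniqueness of the stopped martingale problem with bounded Lipschitz-mollified data, which can itself be approximated and matched). The sublinear growth of $F$ together with the moment bound for $Z$ and Gr\"onwall yield $\sup_R\mathbb E^{\mathbb P^R}[\sup_{s\in[t,T]}|y_s|^\gamma]<\infty$, so $\mathbb P^R(\tau_R\le T)\to 0$ as $R\to\infty$ by Chebyshev. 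The consistent family $\{\mathbb P^R\}$ then defines a probability measure $\mathbb P$ on $\mathcal D[0,\infty)$ such that $\mathbb P=\mathbb P^R$ on $\mathcal F_{\tau_R}$, and one checks that $\mathbb P$ solves the martingale problem for $\partial_s+L_s$ starting at $(t,x)$ by localizing the martingale identity along $\tau_R$ and letting $R\to\infty$. The only nontrivial step is the tightness/limit identification in the jump setting; however, since only smooth compactly supported test functions intervene, the argument is routine and does not rely on any of the finer analytical tools (Propositions \ref{prop:Decomposition_Process_X}--\ref{prop:Smoothing_effect}) developed for uniqueness.
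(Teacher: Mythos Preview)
Your proposal is correct and follows essentially the same route the paper indicates: the paper itself gives no detailed proof but simply refers to Theorem~2.2 in \cite{Stroock75}, noting that the sub-linear growth of $F$ together with localization handles the unbounded coefficients --- precisely the compactness-plus-truncation scheme you outline. One small point worth tightening: your consistency claim that $\mathbb{P}^R$ and $\mathbb{P}^{R'}$ agree on $\mathcal{F}_{\tau_R}$ implicitly uses uniqueness for the truncated (non-mollified) problem, which you have not established; it is cleaner either to take a diagonal subsequence in $(R,n)$ simultaneously, or to show tightness of $\{\mathbb{P}^R\}_R$ directly from the uniform sublinear moment bound and pass to a further limit.
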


We move to the question of uniqueness for the martingale problem associated with $\partial_s+L_s$. As shown already in the introduction of
Section $3$, the analytical properties on the frozen process $(\tilde{X}^{s,y,t,x}_{u})_{u\in [t,T]}$ we presented there will be the crucial tools for the reasoning in the following section.\newline
We will start proving directly that the Krylov-type estimates \eqref{eq:Krylov_Estimates} holds but first for $p$, $q$ big enough (but finite). It will imply in particular the existence of a density for the canonical process associated with any solution of the martingale problem. As a consequence, the weak well-posedness of SDE \eqref{eq:SDE} under our assumptions can be shown to hold.\newline
Only in a second moment, we will then show that the Krylov estimates holds for \emph{any} $p$, $q$ satisfying condition ($\mathscr{C}$) through a regularization technique. Namely, we regularize the driving noise $Z_t$ by introducing an additional isotropic $\alpha$-stable process depending from a regularizing parameter. Following the previous arguments for the regularized dynamics, we will then prove that the solution process satisfies again the Krylov-type estimates for any $p$, $q$ in the considered range, \emph{uniformly} with respect to the regularizing parameter.\newline
Letting the regularizing parameter go to zero, we will then conclude the proof of Corollary \ref{coroll:Krylov_Estimates}.

\subsection{Proof of Krylov-type Estimates for large enough $p,q$}
The first step in proving the uniqueness of the Martingale problem for $\partial_s+L_s$ is to show that any solution to the martingale problem satisfies the Krylov-like estimates in Equation \eqref{eq:Krylov_Estimates}. To do so, we prove that the difference operator between the genuine generator $L_t$ and a suitable associated perturbation (associated with the frozen generator $\tilde{L}^{s,y}_t$ given in \eqref{eq:def_frozen_generator}) has small  $L^p_tL^q_x$-norm when considering a sufficiently small final horizon $T$. Namely, we introduce the following remainder:
\begin{equation}\label{eq:def_R}
\tilde{R}_\epsilon f(t,x) \, := \, (L_t\tilde{G}_\epsilon f-\tilde{M}_\epsilon f)(t,x) \, = \, \int_{t+\epsilon}^T  \int_{\R^N}(L_t -\tilde{L}^{s,y}_t)\tilde{p}^{s,y}(t,s,x,y) f(s,y) \, dy ds,
\end{equation}
for some $\epsilon$ to be small enough. We recall that $\tilde{ G}_\epsilon f, \tilde{M}_\epsilon f $ and $\tilde p^{s,y}(t,s,x,y)$ were defined in \eqref{eq:def_Green_Kernel}, \eqref{eq:def_Mepsilon} and \eqref{eq:representation_density}, respectively.
\newline
We firstly present a point-wise control for the remainder term $\tilde{R}_\epsilon f$. Importantly, the constant $C$ below does not depend on $\epsilon$, allowing to pass to the limit in Equation \eqref{eq:def_R}. This will be discussed at the end of the present section.

\begin{prop}
\label{prop:control_tildeR}
There exist  $q_0>1$, $p_0>1$ and $C:=C(T,p_0,q_0)$ such that for any $q\ge q_0$, $p\ge p_0$ and any $f$ in $L^p([0,T];L^q(\R^N))$, it holds that
\begin{equation}
\label{eq:control_infty_R}
\Vert\tilde{R}_\epsilon f\Vert_\infty\, \le \, C \Vert f \Vert_{L^p_tL^q_x}.
\end{equation}
\end{prop}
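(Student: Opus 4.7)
The plan is to split the remainder into its deterministic and non-local parts,
\begin{equation*}
\tilde R_\epsilon f(t,x) \, = \, \tilde R^F_\epsilon f(t,x) + \tilde R^J_\epsilon f(t,x),
\end{equation*}
where $\tilde R^F_\epsilon$ collects the $\langle F(t,x)-F(t,\theta_{t,s}(y)),D_x\tilde p^{s,y}(t,s,x,y)\rangle$ contribution and $\tilde R^J_\epsilon$ collects $(\mathcal L_t - \tilde{\mathcal L}^{s,y}_t)\tilde p^{s,y}(t,s,x,y)$. For each piece I will establish a pointwise bound of the form
\begin{equation*}
|\mathrm{integrand}(t,s,x,y)| \, \le \, C\sum_{\mathrm{indices}} (s-t)^{\gamma-1}\,\frac{1}{\det \mathbb T_{s-t}}\,\bar p\!\left(1,\mathbb T^{-1}_{s-t}(\theta_{t,s}(y)-x)\right)
\end{equation*}
with $\gamma>0$, and then close the argument by the same H\"older inequality plus Corollary \ref{coroll:Smoothing_effect} scheme as in the proof of Proposition \ref{prop:pointwise_green_kernel}, choosing $p_0,q_0$ large enough (so that $p',q'$ are close to $1$) to make the time integrand summable.

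For $\tilde R^F_\epsilon$ I would estimate $|F_i(t,x)-F_i(t,\theta_{t,s}(y))|$ coordinate-wise using the assumed H\"older regularity (exponent $\beta^j$ in $x_j$ for $j\ge i\ge 2$, and $\beta^1$ for $i=1$), combined with the $k=1$ gradient bound of Proposition \ref{prop:Smoothing_effect}. Invoking the key identity $y-\tilde m^{s,y}_{s,t}(x) = \theta_{t,s}(y)-x$ from Lemma \ref{lemma:identification_theta_m}, and the scaling $|(\mathbb T^{-1}_{s-t})_{j}|\sim (s-t)^{-(1+\alpha(j-1))/\alpha}$, each $(i,j)$-summand produces a time singularity $(s-t)^{\gamma_{i,j}-1}$ with
\begin{equation*}
\gamma_{i,j} \, = \, \frac{\beta^j(1+\alpha(j-1))-(1+\alpha(i-1))}{\alpha}+1 \, > \, 0,
\end{equation*}
the positivity being exactly the threshold $\beta^j > (1+\alpha(i-2))/(1+\alpha(j-1))$ (automatic for $i=1$ since $\alpha>1$). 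The remaining factor $|(\mathbb T^{-1}_{s-t}(x-\theta_{t,s}(y)))_j|^{\beta^j}$ is absorbed by the $|\cdot|^{\gamma}$-integrability of $\bar p$ in Corollary \ref{coroll:Smoothing_effect}.

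For $\tilde R^J_\epsilon$, by symmetry of $\nu$ I can freely insert the usual linear compensator at any threshold $\kappa>0$, so that
\begin{equation*}
(\mathcal L_t - \tilde{\mathcal L}^{s,y}_t)\tilde p^{s,y}(t,s,x,y) \, = \, \int_{\R^d_0}\bigl[\Delta_\sigma - \Delta_{\tilde\sigma}\bigr](t,s,x,y,z)\,\nu(dz),
\end{equation*}
where $\Delta_a := \tilde p^{s,y}(t,s,x+Baz,y) - \tilde p^{s,y}(t,s,x,y) - \langle Baz, D_x\tilde p^{s,y}(t,s,x,y)\rangle\mathds 1_{B(0,\kappa)}(z)$, and I would choose $\kappa=(s-t)^{1/\alpha}$. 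On $\{|z|\le \kappa\}$ a second-order Taylor expansion yields terms with a factor $|\sigma(t,x)-\sigma(t,\theta_{t,s}(y))|\le C\sum_j|x_j-\theta_{t,s}(y)_j|^{\beta^1}$ (from the hypothesis on $\sigma$) times $|z|^2\,|D^2_x\tilde p^{s,y}|$ bounded by Proposition \ref{prop:Smoothing_effect} with $k=2$; integrating $|z|^2$ against $\nu$ up to $\kappa$ contributes $\kappa^{2-\alpha}=(s-t)^{(2-\alpha)/\alpha}$, which combined with the $(s-t)^{-2/\alpha}$ from $D^2_x\tilde p^{s,y}$ and the $\beta^1$-H\"older factor leads to the same kind of time singularity $(s-t)^{\gamma-1}$ as above. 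On $\{|z|>\kappa\}$ a first-order Taylor expansion together with $\int_{|z|>\kappa}|z|\,\nu(dz)\le C\kappa^{1-\alpha}$ (licit since $\alpha>1$) plays the analogous role.

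The main obstacle is the large-jump contribution to $\tilde R^J_\epsilon$: in that regime $\tilde p^{s,y}$ is evaluated at the shifted argument $x+B\sigma z$, which does not directly fit the bound $\bar p(1,\mathbb T^{-1}_{s-t}(\theta_{t,s}(y)-x))$ needed to apply Corollary \ref{coroll:Smoothing_effect}. To deal with this I would use the convolution structure of $\tilde p^{s,y}$ coming from the small/large jumps decomposition in the proof of Proposition \ref{prop:Smoothing_effect} to absorb the shift $B\sigma z$ into the large-jump component $\overline P_u$, obtaining a majorant in the same class $\check p$ of Corollary \ref{coroll:Smoothing_effect} (in the spirit of the discussion in Remark \ref{DA_SCRIVERE_VINCOLO_SUL_MODELLO2}, where here the dimensional constraints affect only the tail decay and not the final integrability because of the strengthened thresholds). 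Once all pointwise bounds are in place, one H\"older inequality with exponents $(p',q')$ combined with Corollary \ref{coroll:Smoothing_effect} and the explicit form $\det\mathbb T_{s-t}=(s-t)^{\sum_i d_i(1+\alpha(i-1))/\alpha}$ produces $\|\tilde R_\epsilon f\|_\infty\le C\|f\|_{L^p_t L^q_x}$, with $C$ independent of $\epsilon$ and small when $T$ is small.
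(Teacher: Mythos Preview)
Your handling of the drift contribution $\tilde R^F_\epsilon$ is correct and coincides with the paper's argument for $\tilde R^1_\epsilon$: componentwise H\"older bounds on $F$, the gradient estimate of Proposition~\ref{prop:Smoothing_effect}, and then H\"older plus Corollary~\ref{coroll:Smoothing_effect}, with the threshold \eqref{eq:thresholds_beta} ensuring the time exponents are integrable.

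For the non-local piece $\tilde R^J_\epsilon$ your route differs from the paper's. The paper does \emph{not} compare $\Delta_\sigma$ and $\Delta_{\tilde\sigma}$ directly; instead it exploits assumption [\textbf{AC}] to change variables $\tilde z=\sigma(t,x)z$ (resp.\ $\tilde\sigma^{s,y}_t z$) inside each of $\mathcal L_t$, $\tilde{\mathcal L}^{s,y}_t$, so that both involve the \emph{same} shift $x+B\tilde z$ and the whole coefficient dependence is carried by a scalar weight $\tilde H^{s,y}_{t,x}(\tilde z)$. It then cuts at the slightly off-scale threshold $(s-t)^{(1-\eta)/\alpha}$ for a small $\eta>0$. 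On small jumps it bounds $|\tilde H|\lesssim \sum_j|(x-\theta_{t,s}(y))_j|^{\beta^1}$ (Lipschitz $Q$ near $0$ + H\"older $\sigma$) and does a second-order Taylor expansion, close to what you describe. On large jumps it uses only the crude bound $|\tilde H|\le C$: the $\eta$-shifted cutoff yields $\int_{|z|>(s-t)^{(1-\eta)/\alpha}}\nu(dz)\asymp(s-t)^{\eta-1}$, which is time-integrable once $p',q'$ are close enough to $1$. No H\"older regularity of $\sigma$ and no Taylor expansion are invoked in that regime.

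Your large-jump argument has a gap. The obstacle you name --- that $\bar p$ is centred at the shifted point rather than at $x$ --- is not the real problem: Corollary~\ref{coroll:Smoothing_effect} is uniform in the spatial base point, so after H\"older in $y$ and Fubini in $(z,\lambda)$ one simply applies it at the shifted point for each fixed $(z,\lambda)$, exactly as the paper does for $\mathcal I_{01}$ and $\mathcal I_{02}$. The genuine difficulty, which your proposed ``absorb into $\overline P_u$'' fix does not address, is the mismatch between the H\"older factor $|\sigma(t,x)-\sigma(t,\theta_{t,s}(y))|$ (centred at $x$) and the density (centred at the shifted point). Realigning via the triangle inequality produces an extra term of order $|z|^{\beta^1}$, and since your first-order Taylor already carries a factor $|z|$, you are left with $\int_{|z|>\kappa}|z|^{1+\beta^1}\nu(dz)$, which diverges whenever $\beta^1\ge\alpha-1$. (For small jumps this extra term is harmless because $|z|\le\kappa$; cf.\ the $|z_1|^{\beta^1}$ contribution in \eqref{proof:remainder:control_H2}.) A correct repair is either to interpolate with the boundedness of $\sigma$ (work with exponent $\theta\beta^1<\alpha-1$, which still gives a positive time margin), or --- more simply --- to adopt the paper's $\eta$-shifted cutoff, which removes the need for the H\"older factor on the large-jump region altogether.
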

\begin{proof}
We start recalling from \eqref{eq:def_generator}-\eqref{eq:def_frozen_generator} (exploiting also the change of truncation in \eqref{eq:change_of_truncation})) that we can decompose $\tilde{R}_\epsilon f$ in the following way:
\begin{equation}
\label{proof:remainder_decomposition}
\begin{split}
   \tilde{R}_\epsilon f(t,x) \, &= \, \int_{t+\epsilon}^T  \int_{\R^N}(\mathcal{L}_t -\tilde{\mathcal{L}}^{s,y}_t)\tilde{p}^{s,y}(t,s,x,y) f(s,y) \, dy ds \\
   &\qquad\qquad + \int_{t+\epsilon}^T  \int_{\R^N}\langle F(t,x)-\tilde{F}^{s,y}_t,D_x\tilde{p}^{s,y}(t,s,x,y)\rangle f(s,y) \, dy ds \\
   &=: \,  \tilde{R}^0_\epsilon f(t,x)+ \tilde{R}^1_\epsilon f(t,x)
\end{split}
\end{equation}
where the operators $\mathcal{L}_t$ and $\tilde{\mathcal{L}^{s,y}_t}$ have been defined in \eqref{eq:def_generator} and \eqref{eq:def_frozen_generator}, respectively.
 \newline
Since by assumptions, $x_j\to F_i(t,x)$ is $\beta^j$-H\"older continuous, we can control the second term $\tilde{R}^1_\epsilon f$, associated with the difference of the drifts, using Proposition \ref{prop:Smoothing_effect} with $(\tau,\xi)=(s,y)$:
\[
\begin{split}
\bigl|\langle F(t,x)-\tilde{F}^{s,y}_t,D_x&\tilde{p}^{s,y}(t,s,x,y)\rangle \bigr| \, \le \, \sum_{i=1}^n\left|F_i(t,x)-F_i(t,\theta_{t,s}(y))\right|\,  |D_{x_i}\tilde{p}^{s,y}(t,s,x,y)| \\
&\le \, C\sum_{i=1}^n(s-t)^{-\frac{1+\alpha(i-1)}{\alpha}}\frac{\overline{p}(1,\mathbb{T}^{-1}_{s-t}(y-\tilde{m}^{s,y}_{s,t}(x)))}{\det \mathbb{T}_{s-t}}\sum_{j=i}^n\left|(x-\theta_{t,s}(y))_j\right|^{\beta^j} \\
&\le \, C\sum_{i=1}^n\sum_{j=i}^n(s-t)^{{\zeta^j_i}}\left|\mathbb{T}^{-1}_{s-t}(x-\theta_{t,s}(y))\right|^{\beta^j}\frac{\overline{p}(1,\mathbb{T}^{-1}_{s-t}(y-\tilde{m}^{s,y}_{s,t}(x)))}{{\det \mathbb{T}_{s-t}}},
\end{split}
\]
with the following notation at hand:
\[\zeta^j_i \,:= \, -\frac{1+\alpha(i-1)}{\alpha}+\beta^j\frac{1+\alpha(j-1)}{\alpha}.\]
Then, we write with the notations of \eqref{proof:remainder_decomposition} that
\begin{align}
\notag
\left|\tilde{R}^1_\epsilon f(t,x)\right| \, &\le \, C\sum_{i=1}^n\sum_{j=i}^n \int_t^T \int_{\R^N} |f(s,y)|\frac{\overline{p}(1,\mathbb{T}^{-1}_{s-t}(y-\tilde{m}^{s,y}_{s,t}(x)))}{\det \mathbb{T}_{s-t}}\frac{\left|\mathbb{T}^{-1}_{s-t}(x-\theta_{t,s}(y))\right|^{\beta^j}}{(s-t)^{-\zeta^j_i}} \, dyds \\
 &=: \, C\sum_{i=1}^n\sum_{j=i}^n \int_t^T \int_{\R^N} |f(s,y)|\mathcal{I}_{ij}(t,s,x,y) \, dyds.\label{proof:remainder:decomposition_R1}
\end{align}
Then, from the H\"older inequality,
\begin{equation}
\label{proof:control_remainder}
\left|\tilde{R}^1_\epsilon f(t,x)\right| \, \le \, C\Vert f \Vert_{L^p_tL^q_x}\sum_{i=1}^n\sum_{j=i}^n \left(\int_t^T \left(\int_{\R^N} \left[\mathcal{I}_{ij}(t,s,x,y) \right]^{q'} dy\right)^{\frac{p'}{q'}}ds\right)^{\frac{1}{p'}},
\end{equation}
where $q'$ and $p'$ are the conjugate exponents of $q$ and $p$, respectively.\newline
Now, the integrals with respect to $y$ can be easily controlled by  Corollary \ref{coroll:control_power_q_density}. Indeed,
\begin{align}
\label{proof:remainder_control_Jij}
\int_{\R^N} \bigl[\mathcal{I}_{ij}(t,s,x,\,&y) \bigr]^{q'} dy  \\\notag
&\le \, C \left(\frac{(s-t)^{\zeta^j_i}}{\det \mathbb{T}_{s-t}}\right)^{q'} \int_{\R^N}
\left|\mathbb{T}^{-1}_{s-t}(x-\theta_{t,s}(y))\right|^{\beta^jq'}\overline{p}(1,\mathbb{T}^{-1}_{s-t}(y-\tilde{m}^{s,y}_{s,t}(x)))
\, dy.
\end{align}
Choosing $q_0>1$ big enough so that $\beta^j q'<\alpha$ for any $ j$ in $\llbracket 1,n\rrbracket$ and any $q\ge q_0$, we can use Corollary \ref{coroll:Smoothing_effect} to show that
\begin{equation}
\label{proof:remainder_control_Jij2}
\int_{\R^N} \left[\mathcal{I}_{ij}(t,s,x,y) \right]^{q'} dy\, \le \, C  (s-t)^{\zeta^j_iq'}\left(\det \mathbb{T}_{s-t}\right)^{1-q'}.
\end{equation}
Going back to Equation \eqref{proof:control_remainder}, we can thus write that
\[
\left|\tilde{R}^1_\epsilon f(t,x)\right| \, \le \, C\Vert f \Vert_{L^p_tL^q_x} \sum_{i=1}^n\sum_{j=i}^n\left(\int_t^T (s-t)^{\zeta^j_i p'}\left(\det \mathbb{T}_{s-t}\right)^{\frac{p'}{q'}-p'} ds\right)^{\frac{1}{p'}}.
\]
Noticing now that for any $i\le j$ in $\llbracket 1, n\rrbracket$
\begin{equation}
\label{proof:eq:control_zeta}
\zeta^j_i >-1 \, \Leftrightarrow \, -\frac{1+\alpha(i-1)}{\alpha}+\beta^j\frac{1+\alpha(j-1)}{\alpha} >-1 \, \Leftrightarrow \, \beta^j >\frac{1+\alpha(i-2)}{1+\alpha(j-1)},
\end{equation}
we can choose $q_0>1$, $p_0>1$ large enough so that $p'$, $q'$ are sufficiently close to $1$ in order to conclude that
\begin{equation}
\label{proof:remainder:final_control_R^1}
\left|\tilde{R}^1_\epsilon f(t,x)\right| \, \le \, C\Vert f \Vert_{L^p_tL^q_x}.
\end{equation}
We can now focus on the control for the first remainder term $\tilde{R}^0_\epsilon f$. Since clearly $\tilde{R}^0_\epsilon f=0$ if $\sigma(t,x)$ is constant in space, we can assume without loss of generality that $\nu$ is absolutely continuous with respect to the Lebesgue measure on $\R^d$ (cf. assumption [\textbf{AC}]). In particular, we know that it can be decomposed as in \eqref{eq:decomposition_nu_AC}:
\[\nu(dz) \, = \ Q(z)\frac{g(\frac{z}{|z|})}{|z|^{d+\alpha}}dz.\]
Given now a smooth enough function $\phi\colon \R^N\to \R$, we start noticing that
\[
\begin{split}
\mathcal{L}_t\phi(x) \, &= \, \int_{\R^d_0}\left[\phi(x+B\sigma(t,x)z)-\phi(x)\right]\, \nu(dz) \\
 &= \, \int_{\R^d_0}\left[\phi(x+B\sigma(t,x)z)-\phi(x) \right]\, Q(z)g\left(\frac{z}{|z|}\right)\frac{dz}{|z|^{d+\alpha}} \\
 &= \, \int_{\R^d_0}\left[\phi(x+B\tilde{z})-\phi(x) \right] Q(\sigma^{-1}(t,x)\tilde{z})\frac{g\left(\frac{\sigma^{-1}(t,x)\tilde{z}}{|\sigma^{-1}(t,x)\tilde{z}|}\right)}{\det \sigma(t,x)}\frac{d\tilde{z}}{|\sigma^{-1}(t,x)\tilde{z}|^{d+\alpha}},
\end{split}
\]
where we assumed, without loss of generality from \textbf{[UE]}, that $ \det \sigma(t,x)>0$. A similar representation holds for $\tilde{\mathcal{L}}^{s,y}_t\phi(x)$, too.
Now, let us introduce for any $z$ in $\R^d$, the following quantity:
\begin{multline*}
\tilde{H}^{s,y}_{t,x}(z) \\
:= \,  Q(\sigma^{-1}(t,x)z)\frac{g\left(\frac{\sigma^{-1}(t,x)z}{|\sigma^{-1}(t,x)z|}\right)}{\det \sigma(t,x)|\sigma^{-1}(t,x)\frac{z}{|z|}|^{d+\alpha}}- Q((\tilde{\sigma}^{s,y}_t)^{-1}z)\frac{g\left(\frac{(\tilde{\sigma}^{s,y}_t)^{-1}z}{|(\tilde{\sigma}^{s,y}_t)^{-1}z|}\right)}{\det \tilde{\sigma}^{s,y}_t|(\tilde{\sigma}^{s,y}_t)^{-1}\frac{z}{|z|}|^{d+\alpha}},
\end{multline*}
where we have normalized $z$ above in order to make the usual isotropic stable L\'evy measure appear. \newline
Fixed $\eta>0$, local to this section, meant to be small and to be chosen later (and not to be confused with the ellipticity constant in assumption \textbf{[UE]}), we then define
\begin{equation}\label{DEF_ALPHA_ETA}
\alpha_\eta= \alpha/(1-\eta),
\end{equation}
and we decompose the integral in the difference of the generators in the following way:
\[
\begin{split}
 \left(\mathcal{L}_t-\tilde{\mathcal{L}}^{s,y}_t \right)\phi(x) \, &= \, \int_{\R^d_0}\left[\phi(x+Bz)-\phi(x)\right]\tilde{H}^{s,y}_{t,x}(z)\,s \frac{dz}{|z|^{d+\alpha}} \\
 &= \, \int_{\Delta_\eta}\left[\phi(x+Bz)-\phi(x) -\langle D_x\phi(x),Bz\rangle\right]\tilde{H}^{s,y}_{t,x}(z)\, \frac{dz}{|z|^{d+\alpha}} \\
 &\qquad \qquad + \int_{\Delta^c_\eta}\left[\phi(x+Bz)-\phi(x)\right]\tilde{H}^{s,y}_{t,x}(z)\, \frac{dz}{|z|^{d+\alpha}} \\
 &=:\, \sum_{i=1}^2 \left[\Delta_{i}\phi(t,s,\cdot,y)\right](x),
\end{split}
\]
where we have denoted, for simplicity,
\[
\begin{split}
\Delta_\eta \, &:= \, B(0,(s-t)^{\frac{1}{\alpha_\eta}});\\
\Delta^c_\eta \, &:= \, B^c(0,(s-t)^{\frac{1}{\alpha_\eta}}).
\end{split}
\]
We highlight in particular that it is precisely the symmetry of $\nu$ that ensures that the function $\tilde{H}^{s,y}_{t,x}$ is even and that allow us to introduce the odd first order term $\langle D_x\phi(x),Bz\rangle$ in the first integral above on the simmetric space $\Delta_\eta$.\newline
Noticing from Proposition \ref{prop:Decomposition_Process_X} that the frozen ``density'' $\tilde{p}^{s,y}$ is regular enough in $x$, we can now replace $\phi$ in the above decomposition with $\tilde{p}^{s,y}(t,s,\cdot,y)$. Going back to $\tilde{R}^0_\epsilon f$ given in \eqref{proof:remainder_decomposition}, we start rewriting it as
\begin{equation}
\label{proof:remainder:decomposition_R0}
\begin{split}
|\tilde{R}^0_\epsilon f(t,x)| \, &\le \, C\sum_{i=1}^2\int_t^T \int_{\R^N}|f(s,y)|\,\left|\left[\Delta_{i}\tilde{p}^{s,y}(t,s,\cdot,y)\right](x)\right| \, dy ds \\
&=: \, \sum_{i=1}^2\int_t^T \int_{\R^N}|f(s,y)|\mathcal{I}_{0i}(t,s,x,y) \, dy ds.
\end{split}
\end{equation}
As before, we can then apply H\"older inequality to show that
\begin{equation}
\label{proof:control_remainder1}
\left|\tilde{R}^0_\epsilon f(t,x)\right| \, \le \, C\Vert f \Vert_{L^p_tL^q_x}\sum_{i=1}^2 \left(\int_t^T \left(\int_{\R^N} \left[\mathcal{I}_{0i}(t,s,x,y) \right]^{q'} dy\right)^{\frac{p'}{q'}}ds\right)^{\frac{1}{p'}},
\end{equation}
where $q'$ and $p'$ are again the conjugate exponents of $q$ and $p$, respectively.\newline
To control the second term involving $\mathcal{I}_{02}$, we start noticing that
\begin{equation}
\label{proof:control_remainder_H}
    |\tilde{H}^{s,y}_{t,x}(z)| \, \le \, C
\end{equation}
for some constant $C$ independent from the parameters, thanks to assumption [\textbf{UE}] for $\sigma$ and the boundedness of $g$ and $Q$.\newline
Then, we can use Control \eqref{proof:control_remainder_H}, Corollary \ref{coroll:control_power_q_density} and the H\"older inequality to write that
\[
\begin{split}
  |\mathcal{I}_{02}&(t,s,x,y)|^{q'} \, \le \, C\Bigl[\int_{\Delta^c_\eta}|\tilde{p}^{s,y}(t,s,x+Bz,y)-\tilde{p}^{s,y}(t,s,x,y)|\frac{dz}{|z|^{d+\alpha}}\Bigr]^{q'} \\
&\le \, C\left(\int_{\Delta^c_\eta} \frac{dz}{|z|^{d+\alpha}}\right)^{\frac{q'}{q}}\int_{\Delta^c_\eta}|\tilde{p}^{s,y}(t,s,x+Bz,y)-\tilde{p}^{s,y}(t,s,x,y)|^{q'}\frac{dz}{|z|^{d+\alpha}}\\
&\le \, C\frac{(s-t)^{ (\eta-1)\frac{q'}{q}}}{(\det \mathbb{T}_{s-t})^{q'}}\int_{\Delta^c_\eta}\left[\overline{p}(1,\mathbb{T}^{-1}_{s-t}(y-\theta_{s,t}(x+Bz)))+\overline{p}(1,\mathbb{T}^{-1}_{s-t}(y-\theta_{s,t}(x)))\right]\frac{dz}{|z|^{d+\alpha}},
\end{split}
\]
recalling from \eqref{DEF_ALPHA_ETA} that $\alpha_\eta=\alpha/(1-\eta) $ for the last inequality.
The Fubini theorem and the change of variables $\tilde{y}=y-\theta_{s,t}(x+Bz)$ now show that
\begin{equation}
\label{proof:remainder:control_I0j}
\begin{split}
\int_{\R^N} |\mathcal{I}_{02}(t,s,x,y)|^{q'} \, dy
\, &\le \, 2C\frac{(s-t)^{ (\eta-1)\frac{q'}{q}}}{(\det \mathbb{T}_{s-t})^{q'-1}}\int_{B^c(0,(s-t)^{\frac{1}{\alpha_\eta}})}\int_{\R^N}\overline{p}(1,\tilde{y})\, d\tilde{y}\frac{dz}{|z|^{d+\alpha}}
\\
&\le \, C(\det \mathbb{T}_{s-t})^{1-q'}(s-t)^{  (\eta-1)\frac{q'}{q}}\int_{\Delta^c_\eta}\frac{dz}{|z|^{d+\alpha}}
\\
&\le \, C(\det \mathbb{T}_{s-t})^{1-q'}(s-t)^{q' (\eta-1)}.
\end{split}
\end{equation}
Going back to Equation \eqref{proof:control_remainder1}, we can then conclude that
\[
\begin{split}
\int_t^T\left(\int_{\R^N} |\mathcal{I}_{02}(t,s,x,y)|^{q'} \, dy\right)^{\frac{p'}{q'}} ds \,
&\le \, C \int_t^T (\det \mathbb{T}_{s-t})^{-\frac{p'}{q}}(s-t)^{p' (\eta-1)} \, ds  \\
&\le \, C \int_t^T (s-t)^{-p'( 1-\eta+\frac{1}{q}\sum_{i=1}^nd_i\frac{1+\alpha(i-1)}{\alpha})} \, ds,
\end{split}
\]
where in the last step we also exploited \eqref{eq:control_det_T1}.\newline
Assuming now that $\eta<1$ and $p$, $q$ are big enough so that
\[p'( 1-\eta+\frac{1}{q}\sum_{i=1}^nd_i\frac{1+\alpha(i-1)}{\alpha}) \, < \,1,\]
we immediately obtain that
\begin{equation}
\label{proof:control_for_I_02}
\left(\int_t^T\left(\int_{\R^N} |\mathcal{I}_{02}(t,s,x,y)|^{q'} \, dy\right)^{\frac{p'}{q'}} ds\right)^{\frac{1}{p'}} \,
\le \, C_T.
\end{equation}
We can now focus on the integral with respect to $y$ of the first term $\mathcal{I}_{01}$ in Equation \eqref{proof:control_remainder1}. Using the Lipschitz continuity of $Q$ in a neighborhood of zero and the H\"older regularity of the diffusion matrix $\sigma$, it is not difficult to check that
\[ |\tilde{H}^{s,y}_{t,x}(z)| \, \le \, C\sum_{j=1}^n\left|(x-\theta_{t,s}(y))_j\right|^{ \beta^1}.\]
Thanks to the above estimate, we exploit a Taylor expansion on the density $\tilde{p}^{s,y}$ and Proposition \ref{prop:Smoothing_effect} with $k=2$ and $(\tau,\xi)=(s,y)$ to show that
\begin{align}
\Theta(t,s,x,y,z)\, &:= \,  \Bigl|\bigl[\tilde{p}^{s,y}(t,s,x+ Bz,y)-\tilde{p}^{s,y}(t,s,x,y)
-\langle D_x\tilde{p}^{s,y}(t,s,x,y),Bz\rangle\bigr]\tilde{H}^{s
,y}_{t,x}(z) \Bigr|\notag \\
&\le \, \sum_{j=1}^n\int_0^1\left|(x-\theta_{t,s}(y))_j\right|^{ \beta^1}\,  |D^2_{x_1}\tilde{p}^{s,y}(t,s,x+\lambda Bz,y)| |z|^2 \,
d\lambda \notag\\
&\le \, \frac{C}{\det
\mathbb{T}_{s-t}}\int_0^1|z|^2\frac{\overline{p}(1,\mathbb{T}^{-1}_{s-t}(y-\tilde{m}^{s,y}_{s,t}(x+\lambda Bz)))}{(s-t)^{\frac{2}{\alpha}}}\notag\\
&\qquad\qquad\qquad\qquad\times\Bigl[\sum_{j=1}^n\left|(x+\lambda Bz-\theta_{t,s}(y))_j\right|^{\beta^1}+|\lambda B z|^{\beta^1}\Bigr] \, d\lambda \notag \\
\label{proof:remainder:control_H2}
&\le \, \frac{C}{\det
\mathbb{T}_{s-t}}\int_0^1|z|^2\overline{p}(1,\mathbb{T}^{-1}_{s-t}(y-\tilde{m}^{s,y}_{s,t}(x+\lambda Bz)))\\
&\qquad\qquad\qquad\qquad\times\Bigl[\sum_{j=1}^n\frac{\left|\mathbb{T}^{-1
}_{s-t}(x+\lambda Bz-\theta_{t,s}(y))\right|^{\beta^1}}{(s-t)^{{\zeta^j_0}}}+\frac{|z_1|^{\beta^1}}{(s-t)^{\frac{2}{\alpha}}}\Bigr]
 \, d\lambda, \notag
\end{align}
where, similarly to above, we have denoted:
\begin{equation}
\label{proof:def_eta_0}
\zeta^j_0 \,:= \, \frac{2}{\alpha}-\beta^1\frac{1+\alpha(j-1)}{\alpha}.
\end{equation}
It then follows from the H\"older inequality and Corollary \ref{coroll:control_power_q_density} that
\[
\begin{split}
|\mathcal{I}_{01}&(t,s,x,y)|^{q'} \,\le \, \left[\int_{\Delta_\eta}\Theta(t,s, x,y,z)\,\frac{dz}{|z|^{d+\alpha}}\right]^{q'} \\
&\le \, \frac{C}{(\det
\mathbb{T}_{s-t})^{q'}}\left(\int_{\Delta_\eta}1\, dz\right)^{\frac{q'}{q}}\int_0^1\int_{\Delta_\eta}\overline{p}(1,\mathbb{T}^{-1}_{s-t}(y-\tilde{m}^{s,y}_{s,t}(x+\lambda Bz)))\\
&\qquad\qquad\quad\quad \qquad \times \Bigl[\sum_{j=1}^n\frac{\left|\mathbb{T}^{-1
}_{s-t}(x+\lambda Bz-\theta_{t,s}(y))\right|^{\beta^1}}{(s-t)^{{\zeta^j_0}}}+\frac{|z_1|^{\beta^1}}{(s-t)^{\frac{2}{\alpha}}}\Bigr]^{q'}
 \, \frac{dz}{|z|^{q'(d+\alpha-2)}}d\lambda.
\end{split}
\]
If we now add the integral with respect to $y$, Fubini Theorem readily implies that
\[
\begin{split}
\int_{\R^N}|\mathcal{I}_{01}&(t,s,x,y)|^{q'} \, dy \\
&\le \,C\frac{(s-t)^{\frac{d}{\alpha_\eta}(q'-1)}}{(\det
\mathbb{T}_{s-t})^{q'}} \int_0^1\int_{\Delta_\eta}\int_{\R^N}\overline{p}(1,\mathbb{T}^{-1}_{s-t}(y-\tilde{m}^{s,y}_{s,t}(x+\lambda Bz)))\\
&\qquad \qquad \times \Bigl[\sum_{j=1}^n\frac{\left|\mathbb{T}^{-1
}_{s-t}(x+\lambda Bz-\theta_{t,s}(y))\right|^{\beta^1q'}}{(s-t)^{{\zeta^j_0q'}}}+\frac{|z_1|^{\beta^1q'}}{(s-t)^{\frac{2}{\alpha}q'}}\Bigr]
 \, dy\frac{dz}{|z|^{q'(d+\alpha-2)}}d\lambda.
\end{split}
\]
If we assume to have taken $q'$ close enough to $1$ so that $\beta^1q'<\alpha$, we can use Corollary \ref{coroll:Smoothing_effect} to show that
\[
\begin{split}
\int_{\R^N}|\mathcal{I}_{01}(t,s,\,&x,y)|^{q'} \, dy \\
&\le \, C\frac{(s-t)^{\frac{d}{\alpha_\eta}(q'-1)}}{(\det
\mathbb{T}_{s-t})^{q'-1}}\int_{B(0,(s-t)^{\frac{1}{\alpha_\eta}})}\left[\sum_{j=1}^n\frac{1}{(s-t)^{{q'\zeta^j_0}}}+\frac{|z_1|^{q'\beta^1}}{(s-t)^{q'\frac{2}{\alpha}}}\right]
 \,\frac{dz}{|z|^{q'(d+\alpha-2)}}\\
&\le \, C\frac{(s-t)^{\frac{d}{\alpha_\eta}(q'-1)}}{(\det\mathbb{T}_{s-t})^{q'-1}}\int_0^{(s-t)^{\frac{1}{\alpha_\eta}}}\left[\sum_{j=1}^n\frac{r^{d-1-(d+\alpha-2)q'}}{(s-t)^{{q'\zeta^j_0}}}+\frac{r^{d-1-(d+\alpha-2-\beta^1)q'}}{(s-t)^{\frac{2}{\alpha}q'}}\right]
 \,dr.
\end{split}
\]
Similarly, if $q$ is big enough (so that $q'$ is close to $1$), it holds that
\[d-1-q'(d+\alpha-2) > -1 \, \Leftrightarrow \, q' < \frac{d}{d+\alpha-2}\]
 and we can integrate with respect to $r$:
\begin{align}\notag
\int_{\R^N}|\mathcal{I}_{01}(t,\,&s,x,y)|^{q'} \, dy \,
\le \, C\frac{(s-t)^{\frac{d}{\alpha_\eta}(q'-1)}}{(\det\mathbb{T}_{s-t})^{q'-1}}\left[\sum_{j=1}^n\frac{r^{d-q'(d+\alpha-2)}}{(s-t)^{{q'\zeta^j_0}}}+\frac{r^{d-q'(d+\alpha-2-\beta^1)}}{(s-t)^{q'\frac{2}{\alpha}}}\right]\Bigg{|}_0^{(s-t)^{\frac{1}{\alpha_\eta}}} \\
&\le \, C (\det\mathbb{T}_{s-t})^{1-q'}(s-t)^{\frac{q'}{\alpha_\eta}(2-\alpha)}\left[\sum_{j=1}^n(s-t)^{-q'\zeta^j_0}+(s-t)^{q'(\frac{\beta^1}{\alpha_\eta}-\frac{2}{\alpha})}\right].
\label{proof:remainder:control_I0j2}
\end{align}
Hence, it follows from Equation \eqref{eq:control_det_T1} that
\[
\begin{split}
\int_t^T\Bigl(\int_{\R^N}|&\mathcal{I}_{01}(t,s,x,y)|^{q'} \, dy\Bigr)^{\frac{p'}{q'}}ds \\
&\le \, C \int_t^T(\det\mathbb{T}_{s-t})^{-\frac{p'}{q}}(s-t)^{\frac{p'}{\alpha_\eta}(2-\alpha)}\left[\sum_{j=1}^n(s-t)^{-p'\zeta^j_0}+(s-t)^{p'(\frac{\beta^1}{\alpha_\eta}-\frac{2}{\alpha})}\right]\, ds\\
&\le \, C \int_t^T(s-t)^{p'\left( \frac{(2-\alpha)}{\alpha_\eta}-\frac{1}{q}\sum_{i=1}^nd_i\frac{1+\alpha(i-1)}{\alpha}\right)}\left[\sum_{j=1}^n(s-t)^{-p'\zeta^j_0}+(s-t)^{p'(\frac{\beta^1}{\alpha_\eta}-\frac{2}{\alpha})}\right]\, ds
\end{split}
\]
To conclude, we need to show that the two terms above are integrable with respect to $s$. Namely,
\begin{align*}
p'\left(\frac{(2-\alpha)}{\alpha_\eta}-\frac{1}{q}\sum_{i=1}^nd_i\frac{1+\alpha(i-1)}{\alpha}-\zeta^j_0\right) \, &> \, -1, \quad \forall\, j \in \llbracket 1,n \rrbracket;\\
p'\left(\frac{(2-\alpha)}{\alpha_\eta}-\frac{1}{q}\sum_{i=1}^nd_i\frac{1+\alpha(i-1)}{\alpha}+\frac{\beta^1}{\alpha_\eta}-\frac{2}{\alpha}\right) \, &> \, -1.
\end{align*}
Recalling again that we can choose $p$, $q$ big enough as we want, so that Equation \eqref{eq:control_det_T2} holds, it is now sufficient to take $\eta$ in $(0,1)$ in order to have:
\begin{align}
\frac{(2-\alpha)}{\alpha_\eta}-\zeta^j_0 \, = \, \frac{(2-\alpha)}{\alpha_\eta}-\frac{2}{\alpha}+\beta^1\frac{1+\alpha(j-1)}{\alpha} \, &> \, -1,\quad \forall\, j \in \llbracket 1,n \rrbracket; \label{proof:condition1}\\
\frac{(2-\alpha)}{\alpha_\eta}+\frac{\beta^1}{\alpha_\eta}-\frac{2}{\alpha} \, &> \, -1.\label{proof:condition2}
\end{align}
By direct calculations, recalling from \eqref{DEF_ALPHA_ETA} that $\alpha_\eta=\alpha/(1-\eta) $, we now notice that Conditions \eqref{proof:condition1}-\eqref{proof:condition2} can be rewritten as follows
\begin{align*}
\eta \, &< \, \frac{\beta^1(1+\alpha(j-1)}{2-\alpha}, \quad \forall\, j \in \llbracket 1,n \rrbracket;\\
\eta \, &< \, \frac{\beta^1}{2+\beta^1-\alpha}.
\end{align*}
Choosing $\epsilon>0$ so that the above conditions holds, we have that
\begin{equation}
\label{proof:control_for_I_01}
\left(\int_t^T\Bigl(\int_{\R^N}|\mathcal{I}_{01}(t,s,x,y)|^{q'} \, dy\Bigr)^{\frac{p'}{q'}}ds\right)^{\frac{1}{p'}} \, \le \, C_T.
\end{equation}
Going back to Equation \eqref{proof:control_remainder1}, we   use Controls \eqref{proof:control_for_I_02}-\eqref{proof:control_for_I_01} to write that
\begin{equation}
\label{proof:remainder:final_control_R^0}
\left|\tilde{R}^0_\epsilon f(t,x)\right| \, \le \, C\Vert f \Vert_{L^p_tL^q_x}.
\end{equation}
Exploiting Controls \eqref{proof:remainder:final_control_R^1} and \eqref{proof:remainder:final_control_R^0} in Equation \eqref{proof:remainder_decomposition}, we have concluded our proof.
\end{proof}

Let us fix now a function $f$ in $ C_c^{1,2}([0,T)\times \R^{N})$. The first step of our method consists in applying the It\^o formula on the Green kernel $\tilde{G}_\epsilon f$ and the process $\{X^{t,x}_s\}_{s\in[t,T]}$, solution of the martingale problem with starting point $(t,x)$:
\[
\mathbb{E}\left[\tilde{G}_\epsilon f(t,x)+ \int_t^{T} (\partial_s+L_s)\tilde{G}_\epsilon f(s,X^{t,x}_s)   ds\right] \, =\, 0.
\]
We then exploit Equation \eqref{eq:differential_eq2} to write that
\[
\tilde{G}_\epsilon f (t,x)- \mathbb{E}\left[\int_t^{T} I_\epsilon f(s,X^{t,x}_s)\,  ds \right]+\mathbb{E}\left[\int_{t}^T\left[L_s\tilde{G}_\epsilon f- \tilde{M}_{\epsilon}f\right] (s,X^{t,x}_s) \, ds\right] \, = \, 0.
\]
Thus, it holds that
\begin{equation}
\label{eq:NUMBER_PREAL_REG_DENS}
\mathbb{E}\Bigl[\int_t^{T} I_\epsilon f(s,X^{t,x}_s) \, ds \Bigr] \,
= \, \tilde{G}_\epsilon f
(t,x)+\mathbb{E}\Bigl[\int_t^T\tilde{R}_\epsilon f(s,X^{t,x}_s)\, ds\Bigr].
\end{equation}

Thanks to Proposition \ref{prop:pointwise_green_kernel}, we know that there exists $C(T):=C(T) \underset{T\rightarrow 0}{\longrightarrow} 0$ such that
\begin{equation}
\label{eq:control_infty_Green}
\Vert \tilde{G}_\epsilon f\Vert_\infty \, \le \,  C \Vert f\Vert_{L^p_tL^q_x}.
\end{equation}

Let us assume for now that $p,q$ are large enough so that the control \eqref{eq:control_infty_R} of Lemma \ref{prop:control_tildeR} (pointwise control of the remainder) holds. From \textcolor{black}{Equations} \eqref{eq:NUMBER_PREAL_REG_DENS}, \eqref{eq:control_infty_Green} and \eqref{eq:control_infty_R}, we readily get that
\[
\left|\mathbb{E}\left[\int_t^T I_\epsilon f(X_s^{t,x})\,  ds \right]\right|\, \le \,  C\Vert f\Vert_{L^p_tL^q_x}.
\]

Letting $\epsilon $ go to zero,  we  thus derive that any solution $\{X^{t,x}_s\}_{s\in [t,T]}$ of the martingale problem for $\partial_s+L_s$ with initial condition $(t,x)$ satisfies
\[
\left|\mathbb{E}\left[\int_{t}^T f(s,X_s^{t,x})\, ds\right]\right|\,\le \, C\Vert f\Vert_{L^p_tL^q_x},\]
for any $f$ in  $f$ in $ C_c^{1,2}([0,T)\times \R^{N})$. \textcolor{black}{Above, we have exploited Lemma \ref{convergence_dirac} for the integral in space and the bounded convergence Theorem  for that in time.}\newline
To show the result for a general $f$ in $L^p\left(0,T;L^q(\R^N)\right)$, we now use a density argument and the Fatou Lemma. Indeed, let $\{f_n\}_{n\in \N}$ a sequence of functions in $C_c^{1,2}([0,T)\times \R^{N})$ such that $\Vert f_n -f \Vert_{L^p_tL^q_x}\to 0$.
We then have that:
\begin{equation}\label{eq:Krilov_partial_estimates}
\begin{split}
\left|\mathbb{E}\left[\int_{t}^T f(s,X_s^{t,x})\, ds\right]\right|\,&\le \, \left|\mathbb{E}\left[\int_{t}^T \liminf_n f_n(s,X_s^{t,x})\, ds\right]\right| \\
&\le \, \liminf_n\left|\mathbb{E}\left[\int_{t}^T  f_n(s,X_s^{t,x})\, ds\right]\right| \\
&\le \, C\liminf_n \Vert f_n\Vert_{L^p_tL^q_x} \\
&=  \,C \Vert f\Vert_{L^p_tL^q_x}.
\end{split}
\end{equation}
This is precisely the Estimate \eqref{eq:Krylov_Estimates} in Corollary \ref{coroll:Krylov_Estimates}, provided that $p,q$ are large enough.

\subsection{Proof of Uniqueness for martingale problem}

We start by noticing that a control, similar to \eqref{prop:control_tildeR}, can be obtained in $L^p_tL^q_x$-norms, too.
In particular, we point out that Equation \eqref{eq:control_LpLq_R} below implies  that the operator $I-\tilde{R}_\epsilon$ is invertible in $L^p\left(0,T;L^q(\R^N)\right)$, provided $T$ is small enough. From Lemma \ref{prop:convergence_LpLq}, the same holds for $I_\epsilon-\tilde{R}_\epsilon$.

\begin{prop}
\label{prop:control_tildeR_LpLq}
Let $q>1$, $p>1 $ be such that Condition ($\mathscr{C}$) holds. Then, there exists $C:=C(T,p,q)>0$ such that for any $f$ in $L^p\left(0,T;L^q(\R^N)\right)$,
\begin{equation}
\label{eq:control_LpLq_R}
\Vert \tilde{R}_\epsilon f\Vert_{L^p_tL^q_x}\, \le \, C \Vert f \Vert_{L^p_tL^q_x}.
\end{equation}
In particular, it holds that $\lim_{T\to 0}C(T,p,q)=0$.
\end{prop}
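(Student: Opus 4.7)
The plan is to follow the decomposition established in the proof of Proposition \ref{prop:control_tildeR}, namely $\tilde R_\epsilon f = \tilde R_\epsilon^0 f + \tilde R_\epsilon^1 f$ as in \eqref{proof:remainder_decomposition}, and keep the same pointwise estimates on the integrand kernels. The key difference is that, instead of passing immediately to $L^\infty_x$ through H\"older in $(s,y)$ (which forced $p,q$ large), we now take the $L^q_x$ norm first, which buys an extra integration in $x$ and allows us to reach the full range $(\mathscr C)$.

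For the drift part $\tilde R_\epsilon^1 f$, from \eqref{proof:remainder:decomposition_R1} we have the pointwise control by $C\sum_{i,j}\int_t^T (s-t)^{\zeta^j_i}\int_{\R^N} |f(s,y)|\mathcal K^j_i(t,s,x,y)\,dy\,ds$, where $\mathcal K^j_i(t,s,x,y):=|\mathbb T_{s-t}^{-1}(x-\theta_{t,s}(y))|^{\beta^j}(\det\mathbb T_{s-t})^{-1}\overline p(1,\mathbb T_{s-t}^{-1}(y-\tilde m^{s,y}_{s,t}(x)))$. Applying H\"older in $y$ with conjugates $(q,q')$ and using Corollary \ref{coroll:Smoothing_effect} (which gives $\int \mathcal K^j_i(t,s,x,y)\,dy\le C$ uniformly in $x$ provided $\beta^j q'<\alpha$, satisfied for $q$ close to $\infty$ — but the dual bound below will work for any $q$), we reduce to
\[ \|\tilde R_\epsilon^1 f(t,\cdot)\|_{L^q_x}^q \, \le\, C\sum_{i,j}\Bigl(\int_t^T (s-t)^{\zeta^j_i}ds\Bigr)^{q-1}\int_t^T (s-t)^{\zeta^j_i}\int_{\R^N} |f(s,y)|^q \Bigl(\int_{\R^N}\mathcal K^j_i(t,s,x,y)\,dx\Bigr)dy\,ds. \]
Using that $x\mapsto \tilde m^{s,y}_{s,t}(x)$ is affine with Jacobian $\mathcal R_{s,t}$ (uniformly non-degenerate and bounded), the change of variables $z=\mathbb T_{s-t}^{-1}(y-\tilde m^{s,y}_{s,t}(x))$ makes the $\det\mathbb T_{s-t}$ disappear; the polynomial factor is absorbed via the approximate Lipschitz property (Lemma \ref{lemma:bilip_control_flow}), which yields $|\mathbb T_{s-t}^{-1}(x-\theta_{t,s}(y))|\le C(1+|z|)$, so $\int \mathcal K^j_i(t,s,x,y)\,dx\le C\int (1+|z|)^{\beta^j}\overline p(1,z)\,dz\le C$ by \eqref{equation:integration_prop_of_q} (since $\beta^j<1<\alpha$).

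Inserting this uniform bound, we obtain the time convolution estimate
\[ \|\tilde R_\epsilon^1 f(t,\cdot)\|_{L^q_x}\, \le \, C\sum_{i,j}\int_t^T (s-t)^{\zeta^j_i}\|f(s,\cdot)\|_{L^q_x}\,ds. \]
Since, under the sharp condition \eqref{eq:thresholds_beta}, all exponents $\zeta^j_i$ satisfy $\zeta^j_i>-1$ (cf. \eqref{proof:eq:control_zeta}), the kernel $(s-t)_+^{\zeta^j_i}\mathds 1_{[0,T]}(s-t)$ lies in $L^1_t$ with norm $O(T^{1+\zeta^j_i})$. Young's inequality $L^1_t*L^p_t\hookrightarrow L^p_t$ then yields $\|\tilde R_\epsilon^1 f\|_{L^p_tL^q_x}\le C(T)\|f\|_{L^p_tL^q_x}$ with $C(T)\to 0$ as $T\to 0$. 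The same scheme is repeated for $\tilde R_\epsilon^0 f$: from \eqref{proof:remainder:decomposition_R0} we split into the two ranges $\Delta_\eta,\Delta_\eta^c$ as in the proof of Proposition \ref{prop:control_tildeR}; H\"older in $y$, Corollary \ref{coroll:Smoothing_effect}, and the affine change of variables in $x$ reduce the $L^q_x$-norm to a time convolution with kernel $(s-t)^{\gamma}$ where $\gamma$ is a linear combination of $\eta, \beta^1, 1/q$; conditions \eqref{proof:condition1}--\eqref{proof:condition2}, tuned by choosing $\eta$ small enough, precisely ensure $\gamma>-1$, so Young's inequality applies as before.

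The main technical obstacle is step 5, exchanging the roles of $x$ and $y$ in the density kernel: the frozen ``density'' $\tilde p^{s,y}$ involves both $y$ as an integration variable and as a freezing parameter. The uniformity of the $x$-integral is salvaged by (i) the explicit affine nature of $\tilde m^{s,y}_{s,t}$ in $x$, giving a harmless Jacobian $\det\mathcal R_{s,t}$, and (ii) the approximate Lipschitz property of the flow, which permits to absorb the polynomial factor $|\mathbb T_{s-t}^{-1}(x-\theta_{t,s}(y))|^{\beta^j}$ against the rescaled density $\overline p$. Once this is in place, the time integrability conditions are exactly those of the theorem statement, and the final constant is $O(T^{\kappa})$ for some $\kappa>0$, yielding $\lim_{T\to 0}C(T,p,q)=0$ as claimed.
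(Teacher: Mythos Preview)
Your approach is the same as the paper's: a Schur-test/interpolation argument using the two ``dual'' kernel bounds $\sup_x\int \mathcal K^j_i\,dy\le C$ (Corollary \ref{coroll:Smoothing_effect}) and $\sup_y\int \mathcal K^j_i\,dx\le C$ (affine change of variable in $x$), followed by Young's convolution inequality in time. One simplification: you do not need Lemma \ref{lemma:bilip_control_flow} for the $x$-integral, since by \eqref{eq:identification_theta_m1} one has $y-\tilde m^{s,y}_{s,t}(x)=\theta_{t,s}(y)-x$, so after the change of variable the polynomial factor is exactly $|z|^{\beta^j}$ and \eqref{equation:integration_prop_of_q} applies directly.

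There is, however, a logical slip in the order of operations. Your displayed inequality is obtained by applying Jensen/H\"older jointly in $(s,y)$ with the kernel as measure; after inserting $\int\mathcal K\,dx\le C$ it gives
\[
\|\tilde R^1_\epsilon f(t,\cdot)\|_{L^q_x}\ \le\ C\Bigl(\int_t^T(s-t)^{\zeta^j_i}ds\Bigr)^{1-1/q}\Bigl(\int_t^T(s-t)^{\zeta^j_i}\|f(s,\cdot)\|_{L^q}^q\,ds\Bigr)^{1/q},
\]
which does \emph{not} imply the convolution estimate $\|\tilde R^1_\epsilon f(t,\cdot)\|_{L^q_x}\le C\int_t^T(s-t)^{\zeta^j_i}\|f(s,\cdot)\|_{L^q}\,ds$ (H\"older goes the other way). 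From the form you actually derived, the subsequent Young step would require $L^1_t*L^{p/q}_t\to L^{p/q}_t$, which fails for $p<q$---a case allowed by Condition ($\mathscr C$). The fix is to do exactly what the paper does: apply Minkowski in $s$ \emph{first} to pull $\|\cdot\|_{L^q_x}$ inside $\int_t^T ds$, and only then use the Schur bound at fixed $s$, which yields $\|\int|f(s,y)|\mathcal K(t,s,\cdot,y)\,dy\|_{L^q_x}\le C\|f(s,\cdot)\|_{L^q}$ and hence the convolution estimate directly. With this reordering, Young's $L^1_t*L^p_t\hookrightarrow L^p_t$ applies for every $p>1$ and the rest of your argument goes through.
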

\begin{proof}
We are going to keep the same notations used in the previous proof. In particular, we recall the following decomposition
\[\tilde{R}_\epsilon f(t,x) \, = \, \tilde{R}^0_\epsilon f(t,x)+\tilde{R}^1_\epsilon f(t,x), \]
given in Equation \eqref{proof:remainder_decomposition}. \newline
In order to control the second term $\tilde{R}^1_\epsilon f$ in $L^p_tL^q_x$-norm, we start from Equation \eqref{proof:remainder:decomposition_R1} to write that
\[
    \Vert \tilde{R}^1_\epsilon f(t,\cdot) \Vert_{L^q_x} \, \le \, C\sum_{i=1}^n\sum_{j=i}^n \int_t^T \Bigl{\Vert}\int_{\R^N} |f(s,y)|\mathcal{I}_{ij}(t,s,\cdot,y) \, dy\Bigr{\Vert}_{L^q_x}ds.
\]
The Young inequality now implies that
\begin{align*}
\Bigl{\Vert}\int_{\R^N} |f(s,y)|\mathcal{I}_{ij}(t,s,\cdot,y) \, dy\Bigr{\Vert}_{L^q_x}^q&\, = \, \int_{\R^N} \Big|\int_{\R^N} |f(s,y)| \mathcal{I}_{ij}(t,s,x,y)\, dy\Big|^q \, dx\\
&\le\, \int_{\R^n}
\!\left(\int_{\R^N} |f(s,y)|^q \mathcal{I}_{ij}(t,s,x,y)\, dy\right) \|\mathcal{I}_{ij}(t,s,x,\cdot)\|_{L^1}^{q/q'}dx\\
&\le\,  C(s-t)^{\zeta_i^j q/q'} \int_{\R^N} |f(s,y)|^q \left(\int_{\R^N} \mathcal{I}_{ij}(t,s,x,y) \,dx\right)dy
\end{align*}
using Control \eqref{proof:remainder_control_Jij2} and the Fubini Theorem for the last inequality. From \eqref{proof:remainder:decomposition_R1}, \eqref{proof:remainder_control_Jij} and the correspondence \eqref{eq:identification_theta_m1} which gives $y-\tilde m_{s,t}^{s,y}(x)=\theta_{t,s}(y)-x $ it is plain to derive that:
$$\int_{\R^N}dx \mathcal{I}_{ij}(t,s,x,y)\le C (s-t)^{\zeta_i^j }.$$
Thus,
\[\Vert \tilde{R}^1_\epsilon f(t,\cdot) \Vert_{L^q_x} \,
\le \, C\sum_{i=1}^n\sum_{j=i}^n\int_t^T (s-t)^{\zeta^j_i}  \Vert f(t,\cdot) \Vert_{L^q_x} \, ds,\]
where, in the last step, we exploited Equation \eqref{proof:remainder_control_Jij2} with $q'=1$, recalling that $\beta^j<1<\alpha$.\newline
We can then use the above control to write that
\[
\begin{split}
\Vert \tilde{R}^1_\epsilon f\Vert^p_{L^p_tL^q_x} \, &\le \, \sum_{i=1}^n\sum_{j=i}^n \int_0^T \Vert \tilde{R}^1_\epsilon f(t,\cdot) \Vert_{L^q_x}^p \, dt \\
&\le \,C \sum_{i=1}^n\sum_{j=i}^n\int_0^T \Vert f(t,\cdot) \Vert^{p}_{L^q_x} \left( \int_t^T (s-t)^{\zeta^j_i} \, ds  \right)^p dt \\
&\le \, C_T \sum_{i=1}^n\sum_{j=i}^n\int_0^T \Vert f(t,\cdot) \Vert^p_{L^q_x} dt\le C_T\|f\|_{L^p_tL^q_x}^p,
\end{split}
\]
where $C_T:=C(p',q',T)$ denotes a positive constant that tends to zero if $T$ goes to zero (recall indeed from \eqref{proof:eq:control_zeta} that $\zeta^j_i>-1$)
. \newline
The control for $\tilde{R}^0_\epsilon f$ can be obtained following the same arguments above, exploiting Equation \eqref{proof:remainder:decomposition_R0} instead of \eqref{proof:remainder:decomposition_R1} and Equations \eqref{proof:remainder:control_I0j}-\eqref{proof:remainder:control_I0j2} with $q'=1$ for the controls of $\Vert \mathcal{I}_{0j}(t,s,x,\cdot) \Vert_{L^1_x}$.
\end{proof}

Thanks to Estimates \eqref{eq:Krilov_partial_estimates}, we know that the process $\{X^{t,x}_s\}_{s\in[t,T]}$ has a density we will denote by $p(t,s,x,y)$. From Equation
\eqref{eq:NUMBER_PREAL_REG_DENS} it now follows that
\begin{align}
\notag
\tilde{G}_\epsilon f(t,x) \, &= \, \mathbb{E}\left[\int_t^{T} I_\epsilon f(s,X_s^{t,x}) \, ds\right]-\mathbb{E}\left[\int_{t}^T \tilde{R}_\epsilon f(s,X_s^{t,x}) \, ds\right]\\\notag
&= \,  \int_t^{T} \int_{\R^N}I_\epsilon f (s,y) p(t,s,x,y) \, dy  ds- \int_{t}^T\int_{\R^N}\tilde{R}_\epsilon f(s,y)
p(t,s,x,y) \, dy  ds\\
&= \, \int_t^{T} \int_{\R^N}(I_\epsilon-\tilde{R}_\epsilon)f(s,y) p(t,s,x,y) \, dy  ds.
\label{REP_PRESQUE_FINI}
\end{align}
Then, Proposition \ref{prop:pointwise_green_kernel}, Lemma \ref{prop:convergence_LpLq} (with an additional approximation argument) and Control \eqref{eq:control_LpLq_R} imply that both sides of the above control are bounded in the $L^p_tL^q_x$-norm, uniformly in $\epsilon>0$. Thus, we can conclude that Equation \eqref{REP_PRESQUE_FINI} holds for any $f$ in $L^p\left(0,T;L^q(\R^N)\right)$. We then conclude from Lemma \ref{prop:control_tildeR} (pointwise control of the remainder) that letting $\epsilon$ go to zero, it holds that
\[
\mathbb{E}\left[\int_t^{T} f(s,X_s^{t,x}) \, ds\right]\, = \, \tilde{G}\circ (I-\tilde{R})^{-1}f(t,x),
\]
which gives uniqueness if the final time $T$ is small enough. Global well-posedness is again derived from a chaining argument in time.

\subsection{Proof of Krylov-type Estimates under condition ($\mathscr{C}$)}
To complete the proof of Corollary \ref{coroll:Krylov_Estimates}, it remains to derive the Krylov estimates \eqref{eq:Krylov_Estimates} under Condition ($\mathscr{C}$) and not only for $p,q$ large enough.\newline
Fixed a parameter $\delta>0$ meant to be small, we consider a ``mollified'' version of the solution process $X^{t,x}_s$, given by
\begin{equation}
\label{eq:decomposition_final_Krylov}
\overline{X}^{t,x,\delta}_s\, := \, X^{t,x}_s+\delta\mathbb{M}_{s-t} \overline{Z}_{s-t},
\end{equation}
where $\{\overline{Z}_s\}_{s\ge0}$ is an isotropic $\alpha$-stable process on $\R^N$. \newline
Let us denote now by $p^\delta(t,s,x,\cdot)$ the density associated with the random variable $\overline{X}^{t,s,\delta}_{s}$.
We notice that Equation \eqref{eq:decomposition_final_Krylov} implies in particular that
\[p^{\delta}(t,s,x,y) \, = \, \left[p(t,s,x,\cdot)\ast q^\delta(s-t,\cdot)\right](y),\]
where $q^\delta(t,\cdot)$ is the density of the process $\delta \mathbb{M}_t\overline{Z}_t$ and thus, under the integrability condition $(\mathscr{C})$ and thanks to the Young inequality, the quantity $\Vert p^\delta\Vert_{L^{p'}_tL^{q'}_x}$, where $p',q'$ are the conjugate exponents of $p,q$, respectively, is finite (possibly explosive with $\delta $).
The point is now to reproduce the previous perturbative analysis in order to prove that the controls on $\Vert p^\delta\Vert_{L^{p'}_tL^{q'}_x}$ actually do no depend on $\delta$.

For this reason, we introduce the mollified ``frozen'' process $\tilde{X}^{s,y,t,x,\delta}_{s}$ along the flow $\theta_{t,s}(y)$ as
\begin{equation}
\label{eq:decomposition_delta_frozen}
\tilde{X}^{s,y,t,x,\delta}_{s}\, := \, \tilde{X}^{s,y,t,x}_{s}+\delta\mathbb{M}_{s-t} \overline{Z}_{s-t}.
\end{equation}

Following the same arguments presented in Propositions \ref{prop:Decomposition_Process_X} and \ref{prop:Smoothing_effect}, it is now possible to show that the process $\tilde{X}^{s,y,t,x,\delta}_{s}$ admits a density $\tilde{p}^{s,y,\delta}(t,s,x,y)$ and that it  enjoys a  multi-scale bound similar to \eqref{eq:smoothing_effect_frozen_y}. Namely,
\begin{prop}
\label{coroll:Smoothing_effect_delta}
There exists a positive constant $C:=C(N,\alpha)$ such that for any $k$ in $\llbracket 0,2 \rrbracket$, any $i$ in $\llbracket 1,n \rrbracket$, any $t< s$ in $[0,T]$ and any $x,y$ in $\R^N$,
  \begin{equation}
  \label{eq:smoothing_effect_frozen_delta}
      \vert D^k_{x_i} \tilde{p}^{s,y,\delta}(t,s,x,y) \vert \, \le \,C\frac{\left((s-t)(1+\delta)\right)^{-k\frac{1+\alpha(i-1)}{\alpha}}}{\det \mathbb{T}_{(s-t)(1+\delta)}}\overline{p}\left(1,\mathbb{T}^{-1}_{(s-t)(1+\delta)}(y-\theta_{s,t}(x))\right).
  \end{equation}
\end{prop}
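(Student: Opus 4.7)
The plan is to follow step by step the proofs of Propositions \ref{prop:Decomposition_Process_X} and \ref{prop:Smoothing_effect}, the key observation being that the addition of the isotropic $\alpha$-stable noise $\delta \mathbb{M}_{s-t}\overline{Z}_{s-t}$ effectively rescales the characteristic time of the combined process from $(s-t)$ to $(s-t)(1+\delta)$. By independence, the definitions \eqref{eq:decomposition_delta_frozen} and \eqref{eq:decomposition_measure_1} immediately give
\begin{equation*}
\tilde{X}^{s,y,t,x,\delta}_s \,\overset{({\rm law})}{=}\, \tilde{m}^{s,y}_{s,t}(x)+\mathbb{M}_{s-t}\tilde{S}^{s,y,t,s,\delta}_{s-t},\qquad \tilde{S}^{s,y,t,s,\delta}:=\tilde{S}^{s,y,t,s}+\delta\overline{Z},
\end{equation*}
whose L\'evy symbol $\Phi_{\tilde{S}^{s,y,t,s}}+\delta^{\alpha}\Phi_{\overline{Z}_1}$ satisfies the bound $\Phi_{\tilde{S}^{s,y,t,s,\delta}}(z)\le C-(C+\delta^{\alpha})|z|^{\alpha}$ thanks to \eqref{eq:control_Levy_symbol_S} and the isotropy of $\overline{Z}$. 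Fourier inversion then produces a density $p_{\tilde{S}^{s,y,t,s,\delta}}(s-t,\cdot)$ as in \eqref{eq:definition_density_q}, and hence a density $\tilde{p}^{s,y,\delta}(t,s,x,\cdot)$ for $\tilde{X}^{s,y,t,x,\delta}_s$.

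Next, I would apply the It\^o-L\'evy decomposition to $\tilde{S}^{s,y,t,s,\delta}$ at the shifted threshold $[(s-t)(1+\delta)]^{1/\alpha}$ rather than at $(s-t)^{1/\alpha}$. The analogues of Lemmas \ref{lemma:Control_p_M} and \ref{lemma:Control_P_N} then deliver a bound
\begin{equation*}
|D_{z}^{k}p_{\tilde{M}^{s,y,t,s,\delta}}(s-t,z)|\le C[(s-t)(1+\delta)]^{-k/\alpha}p_{\bar M^\delta}(s-t,z)
\end{equation*}
for the small-jump part and a stochastic domination of the large-jump part by a probability measure $\bar{P}^{\delta}_{u}$ enjoying the $\alpha$-self-similarity with respect to the effective time $u(1+\delta)$. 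Convolving the two and mimicking \eqref{proof_decomposition_p_segnato}, I would define a base density $\bar{p}(u,\cdot)$ satisfying the scaling and integrability properties of Proposition \ref{prop:Smoothing_effect} with respect to the effective time $u(1+\delta)$. Propagating these bounds to $\tilde{p}^{s,y,\delta}$ and differentiating as in the final lines of the proof of Proposition \ref{prop:Smoothing_effect}---using again that $x\mapsto\tilde{m}^{s,y}_{s,t}(x)$ is affine so that $|D_{x_i}\mathbb{M}^{-1}_{s-t}\tilde{m}^{s,y}_{s,t}(x)|\le C(s-t)^{-(i-1)}$---produces the prefactor $[(s-t)(1+\delta)]^{-k\frac{1+\alpha(i-1)}{\alpha}}/\det\mathbb{T}_{(s-t)(1+\delta)}$ and an evaluation of $\bar{p}$ at $\mathbb{T}^{-1}_{(s-t)(1+\delta)}(y-\tilde{m}^{s,y}_{s,t}(x))$. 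The passage to $\mathbb{T}^{-1}_{(s-t)(1+\delta)}(y-\theta_{s,t}(x))$ in the statement is then handled via Corollary \ref{coroll:control_error_flow}, which controls the rescaled discrepancy by a universal constant times $(1+|\mathbb{T}^{-1}_{s-t}(y-\theta_{s,t}(x))|)$; this error can be absorbed by mildly enlarging the constants inside $\bar{p}$, as in the \emph{diagonal type regime} computation of Remark \ref{DA_SCRIVERE_VINCOLO_SUL_MODELLO2}.

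The main obstacle is to ensure that the estimates inherited from Lemmas \ref{lemma:Control_p_M} and \ref{lemma:Control_P_N} are \emph{genuinely uniform in $\delta$}, so that the comparison density $\bar{p}$ can be taken independent of $\delta$ (or at least be dominated by one such). Isotropy of $\overline{Z}$ helps here crucially, since its perfectly spread spectral measure on $\mathbb{S}^{N-1}$ dominates the potentially very singular spectral measure of $\tilde{S}^{s,y,t,s}$ and simplifies the Watanabe--Sztonyk concentration arguments at the core of the large-jump bound. A secondary point of care concerns the scaling factor itself: the additive structure of L\'evy symbols naturally delivers $(1+\delta^{\alpha})^{1/\alpha}$ rather than $(1+\delta)$, but these two quantities are equivalent up to constants for $\delta$ in any bounded interval (which is the only regime of interest since $\delta$ is a mollification parameter), so the exchange is harmless.
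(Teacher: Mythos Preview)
Your approach is essentially the same as the paper's sketch: both redo Propositions \ref{prop:Decomposition_Process_X}--\ref{prop:Smoothing_effect} and Lemmas \ref{lemma:Control_p_M}--\ref{lemma:Control_P_N} for the augmented L\'evy measure $\nu_{\tilde{S}^{\tau,\xi,t,s,\delta}}=\nu_{\tilde{S}^{\tau,\xi,t,s}}+\delta^\alpha\nu_{\overline{Z}}$, then specialize $(\tau,\xi)=(s,y)$. The paper's sketch is terser---it does not single out a shifted truncation threshold and secures $\delta$-uniformity via the crude domination $\nu_{\tilde{S}^{\tau,\xi,t,s,\delta}}\le \nu_{\tilde{S}^{\tau,\xi,t,s}}+\nu_{\overline{Z}}$ (for $\delta\le 1$) fed directly into Lemma \ref{lemma:Control_P_N}, rather than any Watanabe--Sztonyk refinement---but the route is the same, and your remark on $(1+\delta^\alpha)^{1/\alpha}$ versus $(1+\delta)$ is correct and harmless.
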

A sketch of proof for the above Proposition has been briefly presented in the Appendix section. \textcolor{black}{Importantly, we highlight that the constant $C$ appearing in \eqref{eq:smoothing_effect_frozen_delta} is independent from the ``smoothing'' parameter $\delta$}.\newline
Then, the same arguments leading to \eqref{eq:NUMBER_PREAL_REG_DENS} can be applied here to show that
\begin{equation}
\label{eq:NUMBER_PREAL_REG_DENS1}
\mathbb E\Bigl[\int_t^{T} I_\epsilon f(s,\overline{X}^{t,x,\delta}_s)\, ds\Bigr]  \, = \, \tilde{G}^\delta_{\epsilon} f
(t,x)+\mathbb{E}\Bigl[\int_{t}^T\int_{\R^N}\bigl(L^\delta_t-\tilde{L}^{s,y,\delta}_t\bigr) \tilde{G}^\delta_{\epsilon} f(s,\overline{X}^{t,x,\delta}_s)\, ds\Bigr],
\end{equation}
where $\tilde{G}^\delta_{\epsilon}$ and $\tilde{\mathcal{L}}^{s,y,\delta}$ are the frozen Green kernel and the frozen infinitesimal generator associated with the process $\tilde{X}^{s,y,t,x,\delta}_{s}$, respectively (cf.\ Equations \eqref{eq:def_Green_Kernel} and  \eqref{eq:def_frozen_generator}).
In particular, we point out that the pointwise bound \eqref{eq:control_infty_Green} on the Green kernel and the controls of Proposition \ref{prop:control_tildeR} (pointwise control of the remainder) are uniform with respect to the additional parameter $\delta$, thanks to Proposition \ref{coroll:Smoothing_effect_delta}.\newline
From Equation \eqref{eq:NUMBER_PREAL_REG_DENS1} and Proposition \ref{prop:control_tildeR_LpLq} ($L_t^pL_x^q $ control of the remainder)
we can then deduce that
\[
\left|\int_{t}^{T}\int_{\R^N} I_\epsilon f(s,y) p^\delta(t,s,x,y)\,  dy ds\right| \, \le \, C_T \left(1+\Vert p^\delta\Vert_{L^{p'}_tL^{q'}_x}\right)\Vert f \Vert_{L^p_tL^q_x}.
\]
\newline
From the Riesz representation theorem and the above inequality, we then deduce that  $\Vert p^\delta\Vert_{L^{p'}_tL^{q'}_x}\le C_T$, for $T$ small enough and uniformly in $\delta$. Hence,
\[\left|\int_t^{T} \int_{\R^N} I_\epsilon f (s,y) p^\delta(t,s,x,y)\,   dy ds\right| \, = \, \left|\int_t^{T}\mathbb{E} [I_\epsilon f (s,X_s^{t,x}+\delta\overline{Z}_s)]\,  ds\right| \le C_T\Vert I_\epsilon f\Vert_{L^p_tL^q_x}.\]
The Krylov-type estimate \eqref{eq:Krylov_Estimates} can be then derived exploiting the dominated convergence theorem and Lemma \ref{convergence_dirac} (Dirac Convergence of frozen density), letting firstly $\epsilon$ and then $\delta $ go to zero. We have thus concluded the proof of Corollary \ref{coroll:Krylov_Estimates}.

\setcounter{equation}{0}
\section{A Counter-example to Uniqueness}

In this section, we present a counter-example to the uniqueness in law for the equation \eqref{eq:SDE} when the H\"older regularity in
space of the coefficients is low enough. In particular, we show here the almost sharpness of the thresholds appearing in Theorem
\ref{thm:main_result} for diagonal perturbations, proving also Theorem \ref{thm:counterexample}. In order to test the threshold associated with the critical
H\"older exponent for the $i$-th component of the drift $F$ with respect to the variables $x_j$, we adapt the \emph{ad hoc} Peano
example constructed in \cite{Chaudru:Menozzi17} to our L\'evy framework.\newline
Let us briefly recall it. It is well-known that the following deterministic equation
\begin{equation}
\begin{cases}
\label{eq:deterministic_Peano_example}
dy_t \, = \, \text{sgn}(y_t)\vert y_t\vert^\beta dt, \quad t\ge 0, \\
y_0 \, = \, 0,
\end{cases}
\end{equation}
for some $\beta$ in $(0,1)$, is ill-posed since it admits an infinite number of solutions of the form
\[y_t \, = \, \pm c(t-t_0)^{1/(1-\beta)}\mathds{1}_{[t_0,\infty)}(t), \quad \text{for some $t_0$ in }[0,+\infty).\]
Nevertheless, Bafico and Baldi in \cite{Bafico:Baldi81} proved that the associated SDE, obtained by adding a Brownian Motion
$\{W_t\}_{t\ge 0}$ to the dynamics:
\[\begin{cases}
dX_t \, = \, \text{sgn}(X_t)\vert X_t\vert^\beta dt+\epsilon dW_t, \quad t\ge 0 \\
X_0 \, = \, 0,
\end{cases}\]
is well-posed for any $\epsilon>0$ in a strong (probabilistic) sense. Furthermore, they showed that, letting $\epsilon$ goes to zero, the limit law
concentrates around the two extremal solutions $\pm ct^{1/(1-\beta)}$ of the deterministic equation \eqref{eq:deterministic_Peano_example}, thus providing a \textcolor{black}{selection}
``criterion'' between the infinite deterministic solutions. \newline
In a subsequent article \cite{Delarue:Flandoli14}, Delarue and Flandoli highlighted the hidden dynamical mechanism behind this counter-intuitive behaviour.
Heuristically, this \emph{regularization by noise} happens since, at least in a small time interval, the mean fluctuations of the Brownian noise are stronger than
the irregularity of the deterministic drift. Indeed, they showed that before some transition time $t_\epsilon$, the dominating noise pushes the solution to leave
the drift singularity at $0$, while afterwards, the deterministic part of the system prevails, constraining the (stochastic) solution to fluctuate around one of
the extremal deterministic solutions, given by $\pm ct^{1/(1-\beta)}$. \newline
More quantitatively, we can compare the fluctuations of the noise, say of order $\gamma>0$ with the fluctuations of the deterministic extremal solutions, giving
that
\[t^\gamma \, > \, t^{1/(1-\beta)}.\]
Since it should happen in small times, we then obtain that
\[\beta \, > \, 1-\frac{1}{\gamma},\]
should be the heuristic relation that guarantees the noise dominates in short time.
Clearly, the above inequality holds for any $\beta$ in $(0,1)$ in the Brownian case ($\gamma=1/2$), which would actually give $\beta>-1 $. We can refer to \cite{Delarue:Diel16} which is the closest  work to this threshold since the authors manage to reach $-2/3^+$.

Let us now consider a simplified version of SDE
\eqref{eq:SDE} where $n=N$, $d_k=d=1$ for any $k$ in $\llbracket 1,n\rrbracket$, the matrix $A$
in $\R^n\otimes \R^n$ given by
\[
A \, := \, \begin{pmatrix}
               0& \dots         & \dots         & \dots     & 0 \\
              1       & 0 & \dots         & \dots     & 0\\
               0 & 1       & \ddots & \ddots     & \vdots \\
               \vdots        & \ddots        & \ddots        & \ddots    & \vdots        \\
               0 & \dots         & 0 & 1 & 0
             \end{pmatrix}.
\]
 and $\{Z_t\}_{t\ge 0}$ is a symmetric, \textcolor{black}{scalar} $\alpha$-stable process such that $\mathbb{E}[\vert Z_1 \vert]$ is finite.
\textcolor{black}{Fixed two indexes $i$, $j$ in $\llbracket 1,n\rrbracket$ such that $j\ge i$ and an index $\beta^j_i$ in $(0,1]$ such that
\[
\beta^j_i \, < \, \frac{1+\alpha(i-2)}{1+\alpha(j-1)},
\]
we want to construct a time homogeneous drift $\bar{F}\colon \R^n\to \R^n$ such that its $i$-th component $F_i$ is $\beta^j_i$-H\"older continuous with respect to $x_j$ and uniqueness in law fails for the following SDE:}
\begin{equation}
\label{eq:SDE_for_counter}
    \begin{cases}
dX_t \, = \, \bigl[A\red{X_t}+\bar{F}(X_t)\bigr] \, dt+ BdZ_t, \quad t\ge 0, \\
X_0 \, = \, 0,
\end{cases}
\end{equation}
where the matrix $B$ in \textcolor{black}{$\R^n\otimes \R$} is now given by $B:=(1,0,\dots, 0)^t$.\newline
\textcolor{black}{The heuristic arguments at the beginning of the section suggests us to choose
\[\bar{F}(x) \, := \, e_i\text{sgn}(x_j)\vert x_j\vert^{\beta^j_i},\]
where $\{e_k\colon k \in \llbracket 1, n \rrbracket\}$ is the canonical orthonormal basis for $\R^n$}.

In particular, we are going to focus on the $i$-th component of the above Equation \eqref{eq:SDE_for_counter} that can be rewritten in integral form as:
\begin{equation}
\label{eq:SDE_for_counter_integral}
X^{\textcolor{black}{i}}_t \, = \, \int_0^t\text{sgn}\bigl(I^{j-i}_t(X^{\textcolor{black}{i}})\bigr)\bigl{\vert}I^{j-i}_t(X^j)\bigr{\vert}^{\beta^j_i} dt + I^{i-1}_t(Z), \quad t\ge 0,
\end{equation}
where we have denoted  by $I^k_t(y)$ the $k$-th \emph{iterated integral} of a c\`adl\`ag path $y\colon [0,\infty)\to \R$ at a time $t$. Namely,
\begin{equation}
\label{eq:iterate_int_operator}
I^k_t(y) \, := \, \int_{0}^{t_k=t}\dots \int_{0}^{t_2} y_{t_0} \, dt_0\dots dt_{k-1}, \quad t\ge0.
\end{equation}

In order to improve the readability of the next part, we are going to present our reasoning in a slightly more general way.
\begin{prop}
\label{prop:counter-example-formal}
Let $k$ be in $\N$, $\beta$ in $(0,1)$, $x$ in $\R$ and $\{\mathcal{Z}_t\}_{t\ge0}$ a continuous process on $\R$ such that
\begin{itemize}
\item $\mathbb{E}\bigl[\sup_{s\in [0,1]}|\mathcal{Z}_s|\bigr]<\infty$;
\item it is symmetric and $\gamma$-self-similar in law for some $\gamma>0$. Namely,
\[\bigl(\mathcal{Z}_t\bigr)_{t\ge 0}\, \overset{(\rm{law})}{=} \,\bigl(-\mathcal{Z}_t\bigr)_{t\ge 0}\,  \text{ and } \,\forall \rho>0,\ \bigl(\mathcal{Z}_{\rho t}\bigr)_{t\ge 0}\,
\overset{(\rm{law})}{=} \,\bigl(\mathcal{Z}_t \rho^\gamma\bigr)_{t\ge 0}.\]
\end{itemize}
Then, uniqueness in law fails for the following SDE:
\begin{equation}\label{eq:Peano_SDE}
\begin{cases}
dX_t \, = \, {\rm{sgn}}\bigl(I^k_t(X)\bigr) \bigl{\vert}I^k_t(X)\bigr{\vert}^\beta dt +d\mathcal{Z}_t, \quad t\ge 0 \\
X_0\, = \, x,
\end{cases}
\end{equation}
if $x=0$ and $\beta <\frac{\gamma-1}{\gamma+k}$.
\end{prop}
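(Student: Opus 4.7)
The plan exploits the self-similarity of the noise combined with a Bafico-Baldi-type selection principle. Set
\begin{equation*}
a \, := \, \frac{1+k\beta}{1-\beta},
\end{equation*}
which is precisely the exponent of the two nontrivial Peano solutions $y^{\pm}_t = \pm c^\ast t^a$ of the deterministic ODE $\dot y_t = \mathrm{sgn}(I^k_t(y))|I^k_t(y)|^\beta$, $y_0 = 0$ (here $c^\ast > 0$ is determined by direct substitution). One readily checks the equivalence
\begin{equation*}
\beta \, < \, \frac{\gamma-1}{\gamma+k} \quad \Longleftrightarrow \quad a \, < \, \gamma,
\end{equation*}
and this is the key inequality for everything that follows.

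\emph{Step 1 (Scaling).} If $X$ solves \eqref{eq:Peano_SDE} with $x=0$, then for $\rho \in (0,1]$ the rescaled process $Y^\rho_t := \rho^{-a}X_{\rho t}$ satisfies, via a change of variables in the iterated integral (which yields $I^k_{\rho u}(X) = \rho^{k+a}I^k_u(Y^\rho)$) together with the $\gamma$-self-similarity of $\mathcal{Z}$,
\begin{equation*}
Y^\rho_t \, = \, \int_0^t \mathrm{sgn}(I^k_s(Y^\rho))|I^k_s(Y^\rho)|^\beta \, ds + \rho^{\gamma-a}\,\tilde{\mathcal{Z}}^{(\rho)}_t,
\end{equation*}
where $\tilde{\mathcal{Z}}^{(\rho)}_t := \rho^{-\gamma}\mathcal{Z}_{\rho t}$ has the same law as $\mathcal{Z}$. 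Since $\gamma-a > 0$ by the assumption on $\beta$, the rescaled noise amplitude vanishes as $\rho \to 0^+$, so the asymptotic behaviour of $Y^\rho$ is governed by the (multi-valued) Peano ODE.

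\emph{Step 2 (Two distinct weak solutions).} Existence of at least one weak solution is standard: mollify the drift by $u \mapsto \mathrm{sgn}(u)(|u|+\varepsilon)^\beta$, which is Lipschitz; invoke the Skorokhod representation theorem together with the moment bound $\mathbb{E}[\sup_{s\in[0,1]}|\mathcal{Z}_s|]<\infty$ and the sublinearity of the drift to produce a tight family; then pass to the limit $\varepsilon \to 0^+$. To construct two solutions with distinct laws, I would consider the biased problems
\begin{equation*}
dX^\eta_t \, = \, \mathrm{sgn}(I^k_t(X^\eta))|I^k_t(X^\eta)|^\beta\, dt + d\mathcal{Z}_t,\qquad X^\eta_0 = \eta \in \R\setminus\{0\}.
\end{equation*}
For $\eta > 0$, $I^k_t(X^\eta)\sim \eta t^k/k!>0$ for small $t>0$, and the drift is locally Lipschitz around the trajectory until $I^k_t$ returns to zero. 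A Gronwall argument based on the sublinearity of the drift yields tightness of $\{X^\eta\}$ on $C([0,1])$ uniformly in $\eta$, so by extracting weak limits one obtains $X^+$ as $\eta \to 0^+$ and $X^-$ as $\eta \to 0^-$, each being a weak solution of \eqref{eq:Peano_SDE} started from $0$. Using the scaling of Step 1 applied to $X^\eta$ one shows that the initial bias is preserved in the limit: $I^k_t(X^+) > 0$ and $I^k_t(X^-) < 0$ almost surely for all $t>0$. Therefore $X^+$ and $X^-$ have distinct laws, so weak uniqueness fails.

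\emph{Main obstacle.} The delicate point is the branch-preservation claim in Step 2. Heuristically, on a time scale $\rho \ll 1$ the extremal deterministic branch has spatial size $c^\ast \rho^a$ while typical noise fluctuations are of order $\rho^\gamma$; the inequality $a < \gamma$ precisely ensures that the deterministic branch dominates, so that once $I^k_t(X^\eta)$ lies on one side of zero, the probability of crossing back before the Peano growth takes over vanishes as $\eta \to 0$. Turning this heuristic into a rigorous statement—in particular for $k \geq 1$, where the iterated integral brings extra temporal regularization and the naive exit-time arguments for $k=0$ do not directly transfer—requires a quantitative concentration estimate for $\mathcal{Z}$ at all scales. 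This is essentially the Bafico-Baldi / Delarue-Flandoli noise-selection principle adapted to the iterated-integral setting, and it is the core technical work of the proof.
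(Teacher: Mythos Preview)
Your scaling observation in Step~1 is correct and is indeed the heart of the matter: the equivalence $\beta<\frac{\gamma-1}{\gamma+k}\iff a<\gamma$ with $a=\frac{1+k\beta}{1-\beta}$ is exactly what the paper exploits. However, your packaging in Step~2 is more demanding than necessary, and the gap you flag as the ``Main obstacle'' is real but avoidable.

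The paper does \emph{not} attempt to show that two explicitly constructed limit solutions $X^+,X^-$ satisfy $I^k_t(X^\pm)\gtrless 0$ almost surely for all $t>0$. Instead it argues by contradiction using symmetry. Assume uniqueness in law holds for $x=0$; then since $-\mathcal Z\overset{\rm law}{=}\mathcal Z$, the processes $X$ and $-X$ have the same law. The key quantitative input (Lemma~\ref{lemma:Peano_example}) is that for any solution started from $x>0$ there are constants $c_0,\rho>0$, \emph{independent of $x$}, such that
\[
\mathbb P\bigl(\tau(X)\ge\rho\bigr)\ge 3/4,\qquad \tau(X):=\inf\{t\ge0:\ X_t\le c_0 t^{a}\}.
\]
Passing to the limit along $x=1/n\to 0$ (tightness via an Aldous-type criterion, and the assumed uniqueness to identify the limit law) transfers this bound to the solution from $0$. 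But then $\mathbb P(\tau(X)\ge\rho)\ge 3/4$ and, by symmetry, $\mathbb P(\tau(-X)\ge\rho)\ge 3/4$; since the events $\{X_t>c_0t^a\ \forall t\in(0,\rho)\}$ and $\{-X_t>c_0t^a\ \forall t\in(0,\rho)\}$ are disjoint, this is impossible. So one never needs almost-sure branch preservation---a probability bound strictly above $1/2$ suffices.

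The proof of that probability bound is essentially your scaling heuristic made rigorous: one reduces to showing that the event $\{\hat C t^{a}+\mathcal Z_t>0\ \forall t\in(0,\rho]\}$ has probability $\ge 3/4$ for $\rho$ small. By self-similarity this amounts to $\mathcal R_t:=t^{-a}\mathcal Z_t\to 0$ a.s.\ as $t\to 0^+$, which follows from $a<\gamma$, the moment assumption $\mathbb E[\sup_{[0,1]}|\mathcal Z|]<\infty$, and a Borel--Cantelli argument along the subsequence $t_n=n^{-(1+\eta)/(\gamma-a)}$ together with a control of the oscillation between successive $t_n$. This is precisely the ``quantitative concentration estimate for $\mathcal Z$ at all scales'' you anticipated, but in a much weaker form than what your Step~2 would require.

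In short: your Step~1 is right, your approximation-from-nonzero idea is the right one, but instead of trying to prove a.s.\ sign preservation for the limits $X^\pm$ (hard, and possibly not even true at the level of $I^k_t$ for \emph{all} $t>0$), argue by contradiction with the symmetry of $\mathcal Z$ and a $3/4$-probability lower-envelope estimate that is uniform in the initial point $x>0$.
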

It is not difficult to check that
Equation \eqref{eq:SDE_for_counter_integral} satisfies the assumptions of the above proposition. Indeed, we can apply Proposition \ref{prop:counter-example-formal} to Equation \eqref{eq:SDE_for_counter_integral} taking $\gamma=i-1+\frac{1}{\alpha}$,
$k=j-i$ and $\beta=\beta^j_i=\beta$. It implies in particular that SDE \eqref{eq:SDE_for_counter} lacks of uniqueness in law if
\[\beta^j_i \, < \,\frac{\gamma-1}{\gamma+k} \, = \, \frac{1+\alpha(i-2)}{1+\alpha(j-1)}.\]
Hence, to complete the proof of Theorem \ref{thm:counterexample}, it suffices to establish Proposition \ref{prop:counter-example-formal}.

Before proving Proposition \ref{prop:counter-example-formal}, we need however an auxiliary result. It roughly states that any solution of SDE \eqref{eq:Peano_SDE}
starting outside zero cannot immediately reach the extremal solutions of the associated deterministic Peano example. Importantly, the constant $\rho$ appearing below does not depend on the starting point $x$.

\begin{lemma}
\label{lemma:Peano_example}
Fixed $x>0$ and $\beta<\frac{\gamma-1}{\gamma+k}$, let $\{X_t\}_{t\ge 0}$ be a solution of Equation \eqref{eq:Peano_SDE} starting from $x$. Then, there exist two
positive constants $\rho:=\rho(k,\beta,\gamma,\mathbb{E}[\sup_{s\in [0,1]}|\mathcal{Z}_s|])$ and $c_0:=c_0(k,\beta)$ such that
\begin{equation}\label{eq:control_on_tau}
\mathbb{P}\bigl(\tau(X)\ge \rho\bigr) \, \ge \, 3/4,
\end{equation}
where $\tau(X)$ is the stopping time on $\Omega$ given by
\begin{equation}\label{eq:Peano_random_time}
\tau(X) \, = \, \inf\{t \ge 0 \colon X_t \, \le \, c_0 t^{\frac{k\beta+1}{1-\beta}}\}.
\end{equation}
\end{lemma}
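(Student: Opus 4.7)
The plan is to turn the statement into a pathwise comparison between $X$ and a suitably chosen sub-solution of the underlying deterministic Peano ODE, and then to close the argument with a first-moment bound on $\sup|\mathcal{Z}|$ coming from the $\gamma$-self-similarity of the noise.

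First I would introduce the relevant Peano ansatz. Set $p := (k\beta+1)/(1-\beta)$ and let $K_0>0$ be the combinatorial constant such that $I^k_t(s \mapsto s^{p}) = K_0\,t^{p+k}$; the key relation is $(p+k)\beta = p-1$, so for $\phi_c(t):=c\,t^{p}$ one has $\dot\phi_c(t) = c\,p\,t^{p-1}$ and $|I^k_t(\phi_c)|^\beta = (c\,K_0)^\beta t^{p-1}$. The extremal Peano solution of $\dot Y = |I^k_t(Y)|^\beta$ starting from zero corresponds to the constant $c_{\rm ext}$ determined by $c_{\rm ext}\,p = (c_{\rm ext}K_0)^\beta$. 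I would then fix $c_0\in(0,c_{\rm ext})$, depending only on $k$ and $\beta$, small enough so that $\phi_{c_0}$ is a strict sub-solution: $\dot\phi_{c_0}(t) < |I^k_t(\phi_{c_0})|^\beta$ for every $t>0$.

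Next I would run the pathwise comparison on the stochastic trajectory. Right-continuity of $X$ and continuity of $\phi_{c_0}$ (together with $\phi_{c_0}(0)=0<x$) give $\tau(X)>0$. On $\{s<\tau(X)\}$ one has $X_s > \phi_{c_0}(s) > 0$, whence $I^k_s(X) \ge I^k_s(\phi_{c_0}) > 0$, and therefore
\begin{equation*}
{\rm sgn}(I^k_s(X))\,|I^k_s(X)|^\beta \,\ge\, |I^k_s(\phi_{c_0})|^\beta \,>\, \dot\phi_{c_0}(s).
\end{equation*}
Integrating \eqref{eq:Peano_SDE} and subtracting the ODE for $\phi_{c_0}$ yields the key bound $X_t - \phi_{c_0}(t) \ge x + \mathcal{Z}_t$ for every $t\in[0,\tau(X))$. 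Passing to the left-limit at $\tau:=\tau(X)$ and using that the drift is continuous while $X$ inherits only the jumps of $\mathcal{Z}$ (so $X_\tau = X_{\tau^-}+\Delta \mathcal{Z}_\tau$), the constraint $X_\tau \le \phi_{c_0}(\tau)$ forces $\mathcal{Z}_\tau \le -x$. Consequently $\{\tau(X)\le \rho\} \subseteq \{\sup_{s\in[0,\rho]}|\mathcal{Z}_s|\ge x\}$, and Markov's inequality combined with $\gamma$-self-similarity gives
\begin{equation*}
\mathbb{P}\bigl(\tau(X)\le \rho\bigr) \,\le\, \frac{\rho^\gamma}{x}\,\mathbb{E}\bigl[\sup_{s\in[0,1]}|\mathcal{Z}_s|\bigr].
\end{equation*}

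The main obstacle is to turn this bound, which a priori forces $\rho$ to vanish with $x$, into the $x$-independent threshold claimed in the statement. To this end I would exploit the self-similarity of the equation via the rescaling $U_s := X_{\mu s}/x$ with $\mu := x^{1/p}$: a direct computation using the $\gamma$-self-similarity of $\mathcal{Z}$ shows that $U$ solves an equation of the same form with $U_0=1$, a unit drift coefficient and a driving noise of amplitude $x^{\delta}$, $\delta := \gamma/p - 1 > 0$, where positivity of $\delta$ is precisely the condition $\beta<(\gamma-1)/(\gamma+k)$. Since $\tau(X) = \mu\,\tau(U)$, it suffices to prove $\tau(U) \ge \rho'$ with probability $\ge 3/4$ for some $\rho'$ independent of $x$: the sub-solution comparison and the Markov bound of the previous paragraph, applied to $U$ and the shrinking noise $x^\delta \mathcal{Z}^\ast$, give exactly this, provided one controls the deterministic flow $\dot U = |I^k(U)|^\beta$ with $U_0=1$ uniformly in time. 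The delicate point lies at intermediate rescaled times, where one has to propagate $U_s > c_0 s^p$ by a Gr\"onwall-type comparison with the extremal envelope $\phi_{c_{\rm ext}}$, since the obvious bounds $U_s \ge 1$ (for small $s$) and $U_s \sim c_{\rm ext} s^p$ (for large $s$) do not immediately concatenate.
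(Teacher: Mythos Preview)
Your rescaling step contains a genuine gap. You correctly derive $\tau(X)=\mu\,\tau(U)$ with $\mu=x^{1/p}$, but then claim that ``it suffices to prove $\tau(U)\ge\rho'$ with probability $\ge 3/4$ for some $\rho'$ independent of $x$''. This does not follow: from $\tau(U)\ge\rho'$ you only get $\tau(X)\ge x^{1/p}\rho'$, which vanishes as $x\downarrow 0$ and therefore cannot produce the $x$-independent threshold the lemma requires (and which is essential for the later limiting argument along $x=1/n$). Choosing $\rho'$ to depend on $x$ so as to compensate the factor $x^{1/p}$ brings you straight back to the original $\rho^\gamma/x$ bound, so the rescaling buys nothing.

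The missing idea is already implicit in your set-up, and it is exactly what the paper exploits. You chose $c_0<c_{\rm ext}$ so that $\phi_{c_0}$ is a \emph{strict} sub-solution, but in the comparison you only record $X_t-\phi_{c_0}(t)\ge x+\mathcal{Z}_t$, discarding the positive drift gap. If instead you keep it, the monotonicity of $I^k$ and of $r\mapsto r^\beta$ give, on $(0,\tau(X)]$,
\[
X_t-\phi_{c_0}(t)\ \ge\ x+\int_0^t\bigl[|I^k_s(\phi_{c_0})|^\beta-\dot\phi_{c_0}(s)\bigr]\,ds+\mathcal{Z}_t
\ =\ x+\hat C\,t^{p}+\mathcal{Z}_t\ >\ \hat C\,t^{p}+\mathcal{Z}_t,
\]
with $\hat C=\hat C(k,\beta)>0$. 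The event $\mathcal A:=\{\hat C\,t^{p}+\mathcal{Z}_t>0\ \forall\,t\in(0,\rho]\}$ is \emph{independent of $x$} and forces $\tau(X)\ge\rho$. The paper then shows $\mathbb{P}(\mathcal A)\to 1$ as $\rho\to 0$ by self-similarity: since $p<\gamma$ (your condition $\beta<(\gamma-1)/(\gamma+k)$), one proves via a Borel--Cantelli argument that $t^{-p}\mathcal{Z}_t\to 0$ a.s.\ as $t\to 0$, so the rescaled noise is a.s.\ bounded on $(0,1]$ while the rescaled buffer $\hat C\rho^{\,p-\gamma}\to\infty$. This is where the finite-expectation hypothesis on $\sup_{[0,1]}|\mathcal Z|$ is actually used. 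Your Markov-inequality bound, by contrast, pits the noise against the initial height $x$ rather than against the dynamical buffer $\hat C t^p$, which is why it degenerates.
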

\begin{proof}
We start noticing that the process $\{X_t\}_{t\ge0}$ is continuous in $0$, since it is c\`adl\`ag. Fixed $c_0>0$ to be chosen later, it implies that $\tau(X)>0$, almost surely. In particular, it makes sense to consider the random interval $(0,\tau(X)]$.\newline
Fixed $t$ in $(0,\tau(X)]$, it holds, by definition of $\tau(X)$, that $X_t>c_0t^{\frac{k\beta+1}{1-\beta}}$. It follows then that
\[\int_{0}^{t}\bigl{\vert} I^k_s(X)\bigr{\vert}^\beta \, ds \, > \, \tilde{C}c^\beta_0t^{\frac{k\beta+1}{1-\beta}} \,\, \text{ where } \,\,
\tilde{C} \,:= \, \bigl(\prod_{i=1}^{k}\frac{k\beta+1}{1-\beta}+(i-1)\bigr)^{-\beta}
.\]
Since $x>0$ by assumption and $X>0$ on $(0,\tau(X)]$, we can now show that
\[X_t \, = \, x + \int_{0}^{t}\text{sgn}\bigl(I^k_s(X)\bigr)\bigl{\vert} I^k_s(X)\bigr{\vert}^\beta \, ds + \mathcal{Z}_t \, > \, \tilde{C}c^\beta_0
t^{\frac{k\beta+1}{1-\beta}}+ \mathcal{Z}_t.\]
The next step is to write $\tilde{C}c^\beta_0=c_0+\hat{C}$ for some constant $\hat{C}>0$. To do so, we need to choose carefully $c_0$. In particular, the
condition above is equivalent to the following
\[\hat C=\tilde{C}c^\beta_0-c_0\, > \, 0 \Leftrightarrow c_0 \, < \, \tilde{C}^{\frac{1}{1-\beta}}.\]
Fixed $c_0=\tilde{C}^{\frac{1}{1-\beta}}/2$, it then holds that
\[X_t \, > \, c_0t^{\frac{k\beta+1}{1-\beta}}+\hat{C}t^{\frac{k\beta+1}{1-\beta}}+ \mathcal{Z}_t\]
for any $t$ in $(0,\tau(X)]$. Fixed $\rho>0$ to be chosen later, we can now define the event $\mathcal A$ in $\Omega$ as
\[\mathcal A \, := \, \{\omega \in \Omega \colon \hat{C}t^{\frac{k\beta+1}{1-\beta}}+ \mathcal{Z}_t>0, \,\, \forall \, t \in (0,\rho]\}.\]
On $\mathcal A$ and for any $t$ in $(0,\tau(X)]$, it then holds that
\[X_t \, > \, c_0t^{\frac{k\beta+1}{1-\beta}}.\]
In particular, we have that $\ \tau(X)\ge \rho$  on $\mathcal A$  and thus, $\mathcal A\subseteq\{\tau(X)\ge \rho\}$ on $\Omega$. It immediately implies that
\[\mathbb{P}\bigl(\tau(X)\ge \rho\bigr) \, \ge \, \mathbb{P}(\mathcal A).\]
It remains to choose $\rho>0$ such that $\mathbb{P}(\mathcal A)\ge 3/4$. Write:
\begin{align*}
\mathbb{P}(\mathcal A)=&\mathbb P[\forall t\in (0,\rho],\  \hat{C}t^{\frac{k\beta+1}{1-\beta}}+\mathcal{Z}_t>0]=\mathbb P[\forall t\in (0,1],\  \hat{C}(\rho t)^{\frac{k\beta+1}{1-\beta}}+\mathcal{Z}_{\rho t}>0]\\
=&\mathbb P[\forall t\in (0,1],\  \hat{C}(\rho t)^{\frac{k\beta+1}{1-\beta}}+\rho^\gamma \mathcal{Z}_{t}>0]=\mathbb P[\forall t\in (0,1],\  \hat{C}\rho^{\frac{k\beta+1}{1-\beta}-\gamma}+t^{-\frac{k\beta+1}{1-\beta}} \mathcal{Z}_{t}>0],
\end{align*}
from the self-similarity assumption on $\mathcal Z$. Since by assumption $\beta<\frac{\gamma-1}{\gamma+k}\iff\frac{k\beta+1}{1-\beta}-\gamma<0 $, the statement will follow taking $\rho $ small enough as soon as we prove the process $\mathcal R_t:=t^{-\frac{k\beta+1}{1-\beta}} \mathcal{Z}_{t},\ t\in (0,1] $, which is continuous on the open set $(0,1]$, can be extended by continuity in $0$ with $\mathcal R_0=0$. Observe that $\mathbb E[|\mathcal R_t|]=t^{\gamma-\frac{k\beta+1}{1-\beta}} \mathbb E[|\mathcal Z_1|]\underset{t\rightarrow 0}{\longrightarrow} 0$. Setting $\delta:= \gamma-\frac{k\beta+1}{1-\beta}>0$ and introducing $ t_n:=n^{-1/\delta(1+\eta)},\eta>0$, we get that for all $\varepsilon>0 $,
$$\mathbb P[|\mathcal R_{t_n}|\ge \varepsilon]\le \varepsilon^{-1}\mathbb E[|\mathcal R_{t_n}|]=\varepsilon^{-1}t_n^\delta \mathbb E[|\mathcal Z_1|]=\varepsilon^{-1}n^{-(1+\eta)}\mathbb E[|\mathcal Z_1|].$$
We thus get from the Borel-Cantelli lemma that $\mathcal R_{t_n}\underset{n,\ a.s.}{\longrightarrow} 0 $. Namely, we have almost sure convergence along the subsequence $t_n$ going to zero with $n$. It now remains to prove that the process $\mathcal R_{t} $ does not fluctuate much between two successive times $t_n $ and $t_{n+1}$. Write for $t\in [t_{n+1},t_n]$:
\begin{align}
|R_t|:=|t^{-\frac{k\beta+1}{1-\beta}} \mathcal{Z}_{t}|\le& t_{n+1}^{-\frac{k\beta+1}{1-\beta}}\Big(|\mathcal{Z}_{t_{n+1}}|+ \sup_{s\in [t_{n+1},t_n]}|\mathcal{Z}_{s}-\mathcal{Z}_{t_{n+1}}|\Big)\notag\\
\le &  t_{n+1}^{-\frac{k\beta+1}{1-\beta}}\Big(2|\mathcal{Z}_{t_{n+1}}|+ \sup_{s\in [0,t_n]}|\mathcal{Z}_{s}|\Big)\label{CTR_REMAIN_BC}.
\end{align}
The first term of the above left hand side tends almost surely to zero with $n$. Observe as well that, from the scaling properties of $\mathcal{Z}$, for any $\varepsilon>0 $:
\begin{align*}
 \mathbb P[ t_{n+1}^{-\frac{k\beta+1}{1-\beta}} \sup_{s\in [0,t_n]}|\mathcal{Z}_{s}|\ge \varepsilon]&=\mathbb P[ t_{n+1}^{-\frac{k\beta+1}{1-\beta}}t_n^\gamma \sup_{s\in [0,1]}|\mathcal{Z}_{s}|\ge \varepsilon]\le \varepsilon^{-1}t_n^{\delta} (\frac{t_n}{t_{n+1}})^{\frac{k\beta+1}{1-\beta}} \mathbb E[\sup_{s\in [0,1]}|\mathcal{Z}_{s}|]\\
 &\le C\varepsilon^{-1}n^{-(1+\eta)}\mathbb E[\sup_{s\in [0,1]}|\mathcal{Z}_{s}|],
\end{align*}
which again gives from the Borel-Cantelli lemma the a.s. convergence with $n$ of the second term in the r.h.s of \eqref{CTR_REMAIN_BC}. We eventually derive
that $\mathcal R_t \underset{t\rightarrow 0, a.s.}{\longrightarrow} 0$.
Again, the key point is that we normalize  the process $\mathcal Z$ at a rate, $t^{\frac{k\beta+1}{1-\beta}} $, which is lower than its own characteristic time scale, $t^{\gamma} $. This is precisely what leaves some margin to establish continuity.

\end{proof}

Exploiting the lower bound for the random time $\tau(X)$ given in Lemma \ref{lemma:Peano_example}, we are now ready to show uniqueness in law fails for SDE \eqref{eq:Peano_SDE} when $x=0$
and $\beta<\frac{\gamma-1}{\gamma+k}$.

\emph{Proof of Proposition \ref{prop:counter-example-formal}.}
By contradiction, we start assuming that uniqueness in law holds for SDE \eqref{eq:Peano_SDE} starting at $x=0$. Fixed any solution $\{X_t\}_{t\ge 0}$ of Equation \eqref{eq:Peano_SDE}
starting at zero, it follows by symmetry that $\{-X_t\}_{t\ge 0}$ is also a solution of the same dynamics.
Since by hypothesis, $-\mathcal{Z}_t\overset{(\text{law})}{=}\mathcal{Z}_t$, uniqueness in law for SDE \eqref{eq:Peano_SDE} implies that the laws of $X$ and $-X$ are identical.\newline
Assuming for the moment that Lemma \ref{lemma:Peano_example} is applicable for $x=0$, we easily find a contradiction. Indeed, it follows from Lemma \ref{lemma:Peano_example}
that
\[\mathbb{P}\bigl(\tau(X)\ge \rho\bigr) \, \ge \, 3/4\]
but on the same time, thanks to the uniqueness in law, we have that
\[\mathbb{P}^0\bigl(\tau(-X)\ge \rho\bigr) \, \ge \, 3/4,\]
which is clearly impossible. To show the validity of Lemma \ref{lemma:Peano_example} in $x=0$, we consider a a sequence $\{\{X^n_t\}_{t\ge 0}\colon n \in \N\}$ of solutions of SDE \eqref{eq:Peano_SDE} starting at $1/n$. It is then easy to check that such a sequence satisfies the Aldous criterion:
\[\mathbb{E}[\vert X^n_t-X^n_0\vert^p] \, \le \, ct^{p\gamma}, \quad t\ge 0\]
for some $p>0$ and $c>0$ independent from $t$ and $n$. It follows (Proposition $34.8$ in \cite{book:Bass11}) that the sequence $\{\mathbb{P}^n\}_{n\in \N}$
of the laws of $\{X^n_t\}_{t\ge0}$ is tight.
Prohorov Theorem (cf. Theorem $30.4$ in \cite{book:Bass11}) ensures now the existence of a converging sub-sequence
$\{\mathbb{P}^{n_k}\}_{k\in \N}$. The uniqueness in law then implies that the sequence $\{\mathbb{P}^{n_k}\}_{k\in \N}$ converges, as
expected, to $\mathbb{P}^0$ the law of the solution starting at $0$. Noticing that inequality \eqref{eq:control_on_tau} holds for any solution $\{X^n_t\}_{t\ge 0}$ and moreover, the constant $\rho$ is independent from the starting points $1/n$, we find that
\[\mathbb{P}\bigl(\tau(X)\ge \rho\bigr) \, \ge \, 3/4.\]
The proof of Proposition \ref{prop:counter-example-formal} is thus concluded.

\setcounter{equation}{0}
\appendix
\section{Appendix}

\subsection{Controls on the density of the proxy process}
We present here two useful lemmas needed to complete the proof of Proposition \ref{prop:Smoothing_effect}. We will analyze the behavior of the laws of the independent random variables $ \tilde{M}^{\tau,\xi,t,s}$ and $\tilde{N}^{\tau,\xi,t,s}$ obtained in \eqref{eq:decomposition_S} by truncation of the process $\tilde{S}^{\tau,\xi,t,s}$ at the associated stable time scale $u^{1/\alpha}$.

\begin{lemma}
\label{lemma:Control_p_M}
Let $m$ be in $\N$. Then, there exists a positive constant $C:=C(m,T)$ such that for any $k$ in $\llbracket 0, m \rrbracket$,
\[
\left| D^k_{z} p_{\tilde{M}^{\tau,\xi,t,s}}(u,z) \right| \, \le \, Cu^{-(N+k)/\alpha}\left(1+\frac{|z|}{u^{1/\alpha}}\right)^{-m} \, =: \, Cu^{-k/\alpha}p_{\overline{M}}(u,z),
\]
for any $u>0$, any $z$ in $\R^N$, any $t\le s$ in $[0,T]$ and any $(\tau,\xi)$ in $[0,T]\times \R^N$.
\end{lemma}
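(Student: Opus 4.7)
The plan is to Fourier-invert the characteristic function of $\tilde M^{\tau,\xi,t,s}_u$. Since by construction $\tilde M^{\tau,\xi,t,s}_u$ is an infinitely divisible random variable whose L\'evy measure is the restriction of $\nu_{\tilde S^{\tau,\xi,t,s}}$ to the ball $\{|z|\le u^{1/\alpha}\}$, and $\tilde S^{\tau,\xi,t,s}$ is symmetric by [\textbf{ND}], its characteristic function at time $u$ reads
\[
\mathbb{E}\bigl[e^{i\langle \eta,\tilde M^{\tau,\xi,t,s}_u\rangle}\bigr] \, = \, \exp\bigl(u\,\Psi^{\tau,\xi,t,s}_u(\eta)\bigr), \quad \Psi^{\tau,\xi,t,s}_u(\eta) \, := \, \int_{|z|\le u^{1/\alpha}}\bigl[\cos\langle \eta,z\rangle - 1\bigr] \,\nu_{\tilde S^{\tau,\xi,t,s}}(dz).
\]

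The key first step is to establish a stable-type decay of this exponent which is uniform in $(\tau,\xi,t,s)$, namely
\[
u\,\Psi^{\tau,\xi,t,s}_u(\eta) \, \le \, -C\bigl[(u^{1/\alpha}|\eta|)^2\wedge (u^{1/\alpha}|\eta|)^\alpha\bigr].
\]
For $u^{1/\alpha}|\eta|\ge 1$, this follows exactly as in the derivation of \eqref{eq:control_Levy_symbol_S} in Proposition \ref{prop:Decomposition_Process_X}, restricted to the small jumps (the missing large jump contribution being uniformly $O(1)$); for $u^{1/\alpha}|\eta|\le 1$, a second order Taylor expansion of $\cos\langle\eta,z\rangle-1$ on the truncation region, together with [\textbf{UE}] and the non-degeneracy condition \eqref{eq:non_deg_measure} on $\mu$, yields the quadratic estimate. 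The uniformity in $(\tau,\xi,t,s)$ hinges on the parameter-free lower bound of Lemma $5.4$ in \cite{Huang:Menozzi16} for $\int_0^1|(\widehat{\mathcal R}_v B\tilde\sigma^{\tau,\xi}_{u(v)})^*\eta|^\alpha dv$.

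Once this bound is available, the density exists by Fourier inversion,
\[
p_{\tilde M^{\tau,\xi,t,s}}(u,z) \, = \, (2\pi)^{-N}\int_{\R^N} e^{-i\langle z,\eta\rangle}\,e^{u\Psi^{\tau,\xi,t,s}_u(\eta)}\,d\eta,
\]
and differentiating under the integral yields $|D^k_z p_{\tilde M^{\tau,\xi,t,s}}(u,z)|\le C\int|\eta|^k e^{u\Psi^{\tau,\xi,t,s}_u(\eta)}d\eta \le Cu^{-(N+k)/\alpha}$ after the rescaling $\eta = u^{-1/\alpha}\hat\eta$, which is the statement for $m=0$. For the polynomial decay in $|z|$, I would integrate by parts $|\beta|$ times using $z^\beta e^{-i\langle z,\eta\rangle} = i^{|\beta|}D^\beta_\eta e^{-i\langle z,\eta\rangle}$ and estimate $|D^\beta_\eta[\eta^k e^{u\Psi^{\tau,\xi,t,s}_u(\eta)}]|$ via Fa\`a di Bruno, using the direct controls $|D^\gamma_\eta(u\Psi^{\tau,\xi,t,s}_u(\eta))|\le Cu^{|\gamma|/\alpha}$ for $|\gamma|\ge 2$ (obtained by moving the derivative inside the integral and bounding $|z|^{|\gamma|}$ times the stable-like L\'evy measure on $\{|z|\le u^{1/\alpha}\}$) together with the analogous control for $|\gamma|=1$ that exploits the symmetry of $\nu_{\tilde S^{\tau,\xi,t,s}}$ and $\alpha>1$. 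In the rescaled variable $\hat\eta=u^{1/\alpha}\eta$, the integrand becomes a Schwartz-type function bounded uniformly in $u$, and one obtains $|z^\beta D^k_z p_{\tilde M^{\tau,\xi,t,s}}(u,z)|\le Cu^{(|\beta|-N-k)/\alpha}$; combining this with the $m=0$ estimate gives the claimed $(1+|z|/u^{1/\alpha})^{-m}$ decay.

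The main obstacle will be tracking the uniformity of every constant with respect to the freezing parameters $(\tau,\xi)$ and the times $t\le s\in[0,T]$; this is delicate in the degenerate setting since $\nu_{\tilde S^{\tau,\xi,t,s}}$ depends on these parameters through the pushforward by $\widehat{\mathcal R}_v B\tilde\sigma^{\tau,\xi}_{u(v)}$. As in Lemma B.$2$ of \cite{Huang:Menozzi16}, the uniformity is ensured by combining [\textbf{UE}], the non-degeneracy \eqref{eq:non_deg_measure} of $\mu$, and the parameter-free lower bound from Lemma $5.4$ therein, which simultaneously control the stable-like regime $|\eta|\ge u^{-1/\alpha}$ and the Gaussian-like regime $|\eta|\le u^{-1/\alpha}$.
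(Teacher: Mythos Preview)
Your proposal is correct and follows essentially the same route as the paper's proof. Both arguments Fourier-invert the characteristic function of the small-jump part, use the stable-type lower bound on the symbol derived in Proposition \ref{prop:Decomposition_Process_X} (your two-regime estimate is a slightly finer version of what the paper needs), control the derivatives of the truncated exponent via $|\sin x|\le |x|$ together with the fact that $\int_{|z|\le u^{1/\alpha}}|z|^{|\gamma|}\nu_{\tilde S}(dz)$ is finite for $|\gamma|\ge 2$ (and for $|\gamma|=1$ after multiplying by $|\eta|$), and then conclude by a Schwartz-type argument. The only cosmetic difference is that the paper rescales first, setting $\tilde y=u^{1/\alpha}\eta$, and then shows directly that the rescaled integrand $\hat f^{\tau,\xi,t,s}_u$ lies in $\mathcal S(\R^N)$ uniformly in the parameters, whereas you integrate by parts in the original variable and rescale at the end; these are the same computation. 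One small remark: your appeal to ``$\alpha>1$'' for the first-derivative bound is not quite the right justification---what is really used (and what the paper uses) is $\alpha<2$, so that $\int_{|z|\le u^{1/\alpha}}|z|^2\,\nu_{\tilde S}(dz)<\infty$; the symmetry of $\nu_{\tilde S}$ is what allows the symbol to be written with $\cos$ only, removing the divergent first-moment compensation.
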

\begin{proof}
Similarly to the proof of Proposition \ref{prop:Decomposition_Process_X} (see in particular Equation \eqref{eq:definition_density_q}), we start writing
\[
p_{\tilde{M}^{\tau,\xi,t,s}}(u,z) \, = \,\frac{1}{(2\pi)^N}\int_{\R^N}e^{-i\langle z,y\rangle}\text{exp}\left(u\int_{|p|\le u^{1/\alpha}}\left[\cos(\langle y,p \rangle)-1\right]\, \nu_{\tilde{S}^{\tau,\xi,t,s}}(dp)\right)\, dy,
\]
\textcolor{black}{where, we recall, $\nu_{\tilde{S}}^{\tau,\xi,t,s}$ is the L\'evy measure associated with the process $\{\tilde{S}^{\tau,\xi,t,s}_{u}\}_{u\ge0}$ in Proposition \ref{prop:Decomposition_Process_X}}. Setting $u^{1/\alpha}y =\tilde{y}$ then yields
\begin{align}\notag
p_{\tilde{M}^{\tau,\xi,t,s}}(u,z) \, &= \, \frac{u^{-N/\alpha}}{(2\pi)^N}\int_{\R^N}  e^{-i\langle z, \frac{\tilde{y}}{u^{1/\alpha}}\rangle}\text{exp}\left(u\int_{|p|\le u^{1/\alpha}}\left[\cos(\langle \tilde{y},\frac{p}{u^{1/\alpha}} \rangle)-1\right] \nu_{\tilde{S}^{\tau,\xi,t,s}}(dp)\right) d\tilde{y} \\
&=: \, \frac{u^{-N/\alpha}}{(2\pi)^N}\int_{\R^N}  e^{-i\langle \frac{z}{u^{1/\alpha}},\tilde{y}\rangle}\hat{f}^{\tau,\xi,t,s}_u(\tilde{y})\, d\tilde{y}
\label{EXP_DENS_M}
\end{align}
Since the L\'evy measure $\nu_{\tilde{S}}^{\tau,\xi,t,s}$ in the expression above has finite support, Theorem $3.7.13$ in Jacob \cite{book:Jacob05} implies that $\hat{f}^{\tau,\xi,t,s}_u$ is infinitely differentiable in $\tilde{y}$. We can thus calculate
\[
\begin{split}
|\partial_{\tilde{y}} \hat{f}^{\tau,\xi,t,s}_u(\tilde{y})| \, &\le \, u\int_{|p|\le u^{1/\alpha}}\frac{|p|}{u^{1/\alpha}}\left|\sin\left(\bigl{\langle} \tilde{y},\frac{p}{u^{1/\alpha}} \bigr{\rangle}\right)\right|\, \nu_{\tilde{S}^{\tau,\xi,t,s}}(dp) \\
&\qquad\qquad\qquad\qquad \times \text{exp}\left(u\int_{|p|\le u^{1/\alpha}}\left[\cos\left(\bigl{\langle} \frac{\tilde{y}}{u^{1/\alpha}},p \bigr{\rangle}\right)-1\right]\, \nu_{\tilde{S}^{\tau,\xi,t,s}}(dp)\right).
\end{split}
\]
Recalling that $\alpha>1$, we can now write that
\[
\begin{split}
 u\int_{|p|\le u^{1/\alpha}}\frac{|p|}{u^{1/\alpha}}\left|\sin\left(\bigl{\langle} \tilde{y},\frac{p}{u^{1/\alpha}} \bigr{\rangle}\right)\right|\, \nu_{\tilde{S}^{\tau,\xi,t,s}}(dp)\, &\le \, Cu\int_{r\le u^{1/\alpha}}  \frac{r}{u^{1/\alpha}}    \frac{|\tilde{y}|r}{u^{1/\alpha}} \frac{dr}{r^{1+\alpha}} \\
&\le \, Cu\int_{r\le u^{1/\alpha}} |\tilde{y}| \frac{r^{1-\alpha}}{u^{2/\alpha}} \, dr \\
&\le \,  C(1+|\tilde{y}|).
\end{split}
\]
It then follows that
\[\begin{split}
&|\partial_{\tilde{y}} \hat{f}^{\tau,\xi,t,s}_u(\tilde{y})|
\\
&\,\,\qquad\le \, C(1+|\tilde{y}|) \text{exp}\left(u\int_{\R^N}\left[\cos\left(\bigl{\langle} \frac{\tilde{y}}{u^{1/\alpha}},p \bigr{\rangle}\right)-1\right]\, \nu_{\tilde{S}^{\tau,\xi,t,s}}(dp)\right)e^{2u\nu_{\tilde{S}^{\tau,\xi,t,s}}(B^c(0,u^{1/\alpha}))}\\
&\,\,\qquad\le \,
C (1+|\tilde{y}|)\exp(-C^{-1}|\tilde{y}|^\alpha),
\end{split}\]
where in second inequality we exploited Control \eqref{eq:control_Levy_symbol_S} and
\begin{equation}
\label{FINITE_MEAS}
\nu_{\tilde{S}^{\tau,\xi,t,s}}(B^c(0,u^{1/\alpha})) \le C/u.
\end{equation}
Iterating the above reasoning, we can then show that for any $l$ in $\N$,
\[
|\partial^l_{\tilde{y}}\hat{f}^{\tau,\xi,t,s}_u(\tilde{y})| \, \le\,
C_l (1+|\tilde{y}|^l)\exp(-C^{-1}|{\tilde{y}}|^\alpha),
\]
for some positive constant $C:=C(l)$.
It implies in particular that $\hat{f}^{\tau,\xi,t,s}_u(\tilde{y})$ is a Schwartz test function.
Denoting by $f^{\tau,\xi,t,s}_u$ its inverse Fourier transform, we thus have
that for any $m$ in $\N$, there exists a positive constant $C:=C(m)$ such that
\[
 |f^{\tau,\xi,t,s}_u(y)| \le C_m (1+|y|)^{-m}, \quad y \in \R^N.
\]
The result for $k=0$ now follows immediately noticing that
\[p_{\overline{M}}(t-s,y)\,  = \, (t-s)^{-\frac d\alpha} f_{s,t}(y/(t-s)^{\frac{1}{\alpha}}).\]
The controls on the derivatives can be derived analogously.
\end{proof}

We can now show a similar control on the law of the process $\tilde{N}^{\tau,\xi,t,s}$.

\begin{lemma}
\label{lemma:Control_P_N}
There exists a family $\{\overline{P}_u\}_{u\ge 0}$ of Poisson measures and a positive constant $C:=C(T,N)$ such that for any $\mathcal A$ in $\mathcal{B}(\R^N)$ and $\tilde{N}^{\tau,\xi,t,s}$ as in \eqref{DEF_TILDE_N},
\begin{equation}\label{DEF_POISSON_QUI_DOMINE}
P_{\tilde{N}^{\tau,\xi,t,s}_u}(\mathcal A) \, \le \, C\overline{P}_u(\mathcal A).
\end{equation}
\end{lemma}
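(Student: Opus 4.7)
The starting point is the compound Poisson structure of $\tilde N^{\tau,\xi,t,s}_u$. Since it is built in \eqref{DEF_TILDE_N} as the integral of the Poisson random measure $P_{\tilde S^{\tau,\xi,t,s}}$ restricted to $\{|z|>u^{1/\alpha}\}$, its law reads
\[
P_{\tilde N^{\tau,\xi,t,s}_u}\,=\,e^{-u\lambda_u^{\tau,\xi,t,s}}\sum_{k\ge 0}\frac{u^k}{k!}\,\bigl(\nu_u^{\tau,\xi,t,s}\bigr)^{*k},\qquad \nu_u^{\tau,\xi,t,s}:=\nu_{\tilde S^{\tau,\xi,t,s}}\bigl|_{B^c(0,u^{1/\alpha})},
\]
with total mass $\lambda_u^{\tau,\xi,t,s}=\nu_u^{\tau,\xi,t,s}(\mathbb R^N)$ satisfying $u\lambda_u^{\tau,\xi,t,s}\le C$ by \eqref{FINITE_MEAS}. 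Hence it is enough to exhibit a family $\{\bar\nu_u\}_{u>0}$ of sub-probability measures, independent of $(\tau,\xi,t,s)$, such that $u\nu_u^{\tau,\xi,t,s}\le C\,\bar\nu_u$ pointwise on Borel sets; the dominating compound Poisson $\overline P_u:=e^{-C}\sum_{k\ge 0}(C\bar\nu_u)^{*k}/k!$ will then satisfy \eqref{DEF_POISSON_QUI_DOMINE} by termwise comparison of the convolution series.

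The core of the argument is the uniform control of $\nu_u^{\tau,\xi,t,s}$. From \eqref{proof:eq:def_Levy_symbol_S}, $\nu_{\tilde S^{\tau,\xi,t,s}}$ is the $v$-average over $[0,1]$ of the pushforward of $\nu$ by the map $M_v^{\tau,\xi,t,s}:=\widehat{\mathcal R}_v B\tilde\sigma^{\tau,\xi}_{u(v)}$. Using \textbf{[UE]} together with the uniform non-degeneracy of $\widehat{\mathcal R}_v$, we get constants $0<c_1<c_2$ with $c_1|p|\le |M_v^{\tau,\xi,t,s}p|\le c_2|p|$. Changing variable $p=u^{1/\alpha}p'$ and using the homogeneity of $\nu_\alpha$ as well as the boundedness of $Q$ in \textbf{[ND]}, one obtains, for any Borel $\mathcal A$,
\[
u\,\nu_u^{\tau,\xi,t,s}(\mathcal A)\,\le\, C\int_0^1\!\int_{|p'|\ge c}\mathds 1_{\mathcal A}\bigl(u^{1/\alpha}M_v^{\tau,\xi,t,s}p'\bigr)\,\nu_\alpha(dp')\,dv,
\]
which already exhibits the intrinsic $u^{1/\alpha}$ scaling that must appear in $\bar\nu_u$.

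It remains to remove the dependence on $(\tau,\xi,t,s,v)$ from the inner integral, and this splits according to whether the diffusion coefficient is constant in space or not. In the additive case the matrices $M_v^{\tau,\xi,t,s}=\widehat{\mathcal R}_vB\sigma(u(v))$ are independent of $(\tau,\xi)$ and the pushforward of $\nu_\alpha\mathds 1_{|p'|\ge c}$ by $\widehat{\mathcal R}_vB\sigma(u(v))$, averaged over $v$, is a fixed finite measure on $\mathbb R^N$ supported on the cone generated by the first $d$ coordinate subspaces; this measure serves as $\bar\nu_1$, rescaled via $\bar\nu_u(\mathcal A)=\bar\nu_1(u^{-1/\alpha}\mathcal A)$. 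In the multiplicative case we invoke \textbf{[AC]}: then $\nu(dp)=h(p)\,dp$ with $h$ admitting the decomposition \eqref{eq:decomposition_nu_AC}, so the inverse change of variable $q=M_v^{\tau,\xi,t,s}p'$ (legitimate since $M_v^{\tau,\xi,t,s}$ is invertible on the range of $B$, i.e.\ on the relevant $d$-dimensional subspace, with Jacobian controlled by $\det\sigma(u(v),\theta_{u(v),\tau}(\xi))^{-1}$ which is bounded by \textbf{[UE]}) produces a density uniformly bounded in the freezing parameters. We then take $\bar\nu_u$ to be this uniform upper-bounding density, again rescaled by $u^{1/\alpha}$.

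The main obstacle is the multiplicative step: one needs the pushforward densities to admit a common upper bound that is independent of $(\tau,\xi,t,s,v)$. This is precisely where the absolute continuity from \textbf{[AC]} and the uniform ellipticity from \textbf{[UE]} combine, and is the reason the multiplicative case requires the structural restriction explained in Remark \ref{DA_SCRIVERE_VINCOLO_SUL_MODELLO}; in particular, singular spectral measures such as the cylindrical one are excluded in the multiplicative setting. Once $\bar\nu_u$ is constructed, conclusion follows immediately from the compound Poisson comparison and the uniform bound on $u\lambda_u^{\tau,\xi,t,s}$.
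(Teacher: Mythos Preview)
Your proposal is correct and follows essentially the same route as the paper: both start from the compound Poisson representation of $P_{\tilde N^{\tau,\xi,t,s}_u}$, reduce the problem to a parameter-free upper bound on $u\nu_u^{\tau,\xi,t,s}$, and obtain that bound by a change of variable that uses \textbf{[UE]} (and, in the multiplicative case, \textbf{[AC]}) to absorb the dependence on $\sigma(u(v),\theta_{u(v),\tau}(\xi))$. One small phrasing point: rather than inverting the full rectangular map $M_v^{\tau,\xi,t,s}=\widehat{\mathcal R}_vB\tilde\sigma^{\tau,\xi}_{u(v)}:\R^d\to\R^N$, it is cleaner (and this is what the paper does) to invert only the $\tilde\sigma^{\tau,\xi}_{u(v)}$ factor on $\R^d$, leaving $\widehat{\mathcal R}_vB$ untouched since it is already independent of $(\tau,\xi)$; your Jacobian control through $\det\sigma^{-1}$ is then exactly right.
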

\begin{proof}
For notational simplicity, we start introducing \textcolor{black}{the truncated L\'evy measure  associated with the big jumps of the process} $\{\tilde{S}^{\tau,\xi,t,s}_u\}_{u\ge 0}$:
\[\nu_{\text{tr}}^{\tau,\xi,t,s}(dp)\, = \, \mathds{1}_{|p|\ge u^{1/\alpha}}(p)\nu_{\tilde{S}}^{\tau,\xi,t,s}(dp). \]
It follows immediately that $\nu_{\text{tr}}^{\tau,\xi,t,s}$ is a finite measure (see \eqref{FINITE_MEAS} above). With this notation at hand, we can write:
\[
\begin{split}
\widehat {P_{\tilde{N}^{\tau,\xi,t,s}_u}}(y) \, &= \, \exp\left(u \int_{|p|>u^{\frac{1}{\alpha}}}\left[\cos(\langle y,p\rangle)-1\right] \, \nu_{\tilde{S}}^{\tau,\xi,t,s}(dp)\right) \\
&= \, \exp\left( u \widehat{\nu_{\text{tr}}^{\tau,\xi,t,s}}(y)- u \nu_{\text{tr}}^{\tau,\xi,t,s}(\R^N)\right),
\end{split}
\]
where $\widehat{\nu}$ denotes the Fourier-Stieltjes transform of the considered measure $\nu$. Let us introduce then the following measure:
\[\zeta^{\tau,\xi,t,s}\, := \, u\nu_{\text{tr}}^{\tau,\xi,t,s}.\]
Expanding the previous exponential and by termwise Fourier inversion, we now find that
\begin{equation}
\begin{split}
\label{proof:appendix_decomposition_P_N}
P_{\tilde{N}^{\tau,\xi,t,s}_u} (\mathcal{A}) \, &= \, \exp\left(\zeta^{\tau,\xi,t,s} (\mathcal{A}) - u \nu_{\text{tr}}^{\tau,\xi,t,s}(\R^N)\right) \\
&\textcolor{black}{:=} \, \exp\left(-u\nu_{\text{tr}}^{\tau,\xi,t,s}(\R^N)\right)\sum_{n \in \N}\frac{\left( \zeta^{\tau,\xi,t,s}
\right)^{\star n}(\mathcal{A})}{n!},
\end{split}
\end{equation}
where, for a finite measure $\rho$ on $\R^N$, $(\rho)^{\star n}:= \rho\star\cdots \star \rho$ denotes its $n^{{\rm th}}$ fold convolution.\newline
For now, let us assume that $\sigma(t,x)$ is non-constant in space, so that
\[B\tilde{\sigma}^{\tau,\xi}_{u(v)}\, = \, B\sigma\left(u(v),\theta_{u(v),\tau}(\xi)\right)\]
\textcolor{black}{appearing in the definition of $\nu_{\tilde{S}}^{\tau,\xi,t,s}$}, truly depends on the parameters $\tau,\xi$. Assumption [\textbf{AC}] then ensures the existence of a bounded function $g\colon \mathbb{S}^{d-1}\to \R$ such that
\[\nu(dp) \, = \, Q(p)\frac{g(\frac{p}{|p|})}{|p|^{d+\alpha}}dp.\]
From Equation \eqref{proof:appendix_decomposition_P_N}, it is clear that we need to control the measure $\zeta^{\tau,\xi,t,s}$, uniformly in the parameters $\tau,\xi,t,s$. Namely, for any $\mathcal A$ in $\mathcal{B}(\R^N)$, we write from \eqref{proof:eq:def_Levy_symbol_S} that
\[
\begin{split}
\zeta^{\tau,\xi,t,s}(\mathcal A) \, &= \, u\int_{|p|>u^{\frac{1}{\alpha}}}\mathds{1}_{\mathcal A}(p)\, \nu_{\tilde{S}}^{\tau,\xi,t,s}(dp)\, = \, u\int_0^1\int_{|\widehat{\mathcal{R}}_vB\tilde{\sigma}^{\tau,\xi}_{u(v)}p|>u^{\frac{1}{\alpha}}}\mathds{1}_{\mathcal A}(\widehat{\mathcal{R}}_vB\tilde{\sigma}^{\tau,\xi}_{u(v)}p)\, \nu(dp)dv\\
&= u\int_0^1\int_{|\widehat{\mathcal{R}}_vB\tilde{\sigma}^{\tau,\xi}_{u(v)}p|>u^{\frac{1}{\alpha}}}\mathds{1}_{\mathcal A}(\widehat{\mathcal{R}}_vB\tilde{\sigma}^{\tau,\xi}_{u(v)}p)\frac{g(\frac{p}{|p|})}{|p|^{d+\alpha}}Q(p)\, dpdv\\
&\le \, u\int_0^1 \int_{|\widehat{\mathcal{R}}_vB\tilde{\sigma}^{\tau,\xi}_{u(v)}p|>u^{\frac{1}{\alpha}}}\mathds{1}_{\mathcal A}(\widehat{\mathcal{R}}_vB\tilde{\sigma}^{\tau,\xi}_{u(v)}p)\frac{dp}{|p|^{d+\alpha}}dv
\end{split}
\]
We can then exploit assumption [\textbf{UE}] on $\sigma$ to conclude that
\[
\begin{split}
\zeta^{\tau,\xi,t,s}(\mathcal A) \, &\le \,  u\int_0^1\int_{|\widehat{\mathcal{R}}_vBq|>u^{\frac{1}{\alpha}}}\mathds{1}_{\mathcal A}(\widehat{\mathcal{R}}_vB q)\frac{1}{\det (\tilde{\sigma}^{\tau,\xi}_{u(v)})}\frac{dq}{|(\tilde{\sigma}^{\tau,\xi}_{u(v)})^{-1}q|^{d+\alpha}}dv\\
&\le \, Cu\int_0^1\int_{|\widehat{\mathcal{R}}_vBq|>u^{\frac{1}{\alpha}}}\mathds{1}_{\mathcal A}(\widehat{\mathcal{R}}_vBq)\frac{dq}{|q|^{d+\alpha}}dv.
\end{split}
\]
Denoting now by $\Lambda_{\text{tr}}:=c\mathds{1}_{p>u^{1/\alpha}}\frac{dp}{p^{d+\alpha}}$ the truncated L\'evy measure of the isotropic $\alpha$-stable process and by $\overline{\nu}_{\text{tr}}$ the following push-forward measure
\[\overline{\nu}_{\text{tr}}(\mathcal A) \, := \, \int_0^1\Lambda_{\text{tr}}\left((\widehat{\mathcal{R}}_vB)^{-1}\mathcal A\right)dv, \quad \mathcal A \in \mathcal{B}(\R^N)\]
we derive that there exists a constant $C$ such that for any $(\tau,\xi)$ in $[0,T]\times \R^N$, $t\le s$ in $[0,T]$,
\begin{equation}
\label{proof:appendix_control_N}
\zeta^{\tau,\xi,t,s}(\mathcal A) \, \le \, C u \int_0^1\Lambda_{\text{tr}}\left((\widehat{\mathcal{R}}_vB)^{-1}\mathcal A\right)dv \, = \, u\overline{\nu}_{\text{tr}}(\mathcal A) \, =: \, \overline{\zeta}(\mathcal A).
\end{equation}

Equation \eqref{DEF_POISSON_QUI_DOMINE} now follows from
the above control, \eqref{FINITE_MEAS} and \eqref{proof:appendix_decomposition_P_N}, denoting
\[\overline{P}_u \, := \, \exp\left(-u\overline{\nu}_{\text{tr}}(\R^N)\right)\sum_{n \in \N}\frac{(\overline{\zeta})^{\star n}}{n!},\]
up to a modification of the constant $C$ in \eqref{proof:appendix_control_N}.
Following backwards the same reasoning presented at the beginning of the proof, we then notice that
\[
\begin{split}
\widehat {\overline{P}_u}(y) \, &= \, \exp\left(u \int_0^1\int_{\R^N}\left[\cos(\langle y,p\rangle)-1\right] \, \overline{\nu}_{\text{tr}}(dp)dv\right) \\
&= \, \exp \left(u\int_0^1 \int_{\R^d}\mathds{1}_{\{|\widehat{\mathcal{R}}_vBp|>u^{\frac{1}{\alpha}}\}}\left[\cos(\langle y,\widehat{\mathcal{R}}_vBp\rangle)-1\right] \, \Lambda(dp)dv\right)\\
&= \, \exp \left(u\int_0^1 \int_0^\infty \int_{\mathbb{S}^{d-1}} \mathds{1}_{\{|\widehat{\mathcal{R}}_vB\theta r|>u^{\frac{1}{\alpha}}\}}\left[\cos(\langle y,\widehat{\mathcal{R}}_vB\theta r\rangle)-1\right] \, \mu_{\text{leb}}(d\theta) \frac{dr}{r^{1+\alpha}}dv\right),
\end{split}
\]
where we used the spherical decomposition \textcolor{black}{for the L\'evy measure $\Lambda$ of an isotropic $\alpha$-stable process:}
\begin{equation}
\Lambda(dp) := \frac{dp}{p^{d+\alpha}} \, = \, C\mu_{\text{leb}}(d\theta)\frac{dr}{r^{1+\alpha}},
\end{equation}
with $p=r \theta$ and $\mu_{\text{leb}}$ Lebesgue measure on the sphere $\mathbb{S}^{d-1}$.\newline
We exploit now the non-degeneracy of $\widehat{\mathcal{R}}_v$ to to define two functions $k\colon [0,1]\times \mathbb{S}^{d-1}\to \R$ and $l\colon [0,1]\times
\mathbb{S}^{d-1}\to \mathbb{S}^{N-1}$, given by
\[k(v,\theta) \,:= \,
\vert \widehat{\mathcal{R}}_{v}B\theta\vert \,\, \text{ and } \,\,  l(v,\theta) \,:= \, \frac{\widehat{\mathcal{R}}_{v}B\theta}{\vert\widehat{\mathcal{R}}_{v}B\theta\vert}.\]
Using the Fubini theorem, we can now write that
\[
\begin{split}
&\widehat{\overline{P}_u}(y) \\
&\,\,= \, \exp\left(u\int_0^1\int_{0}^{\infty}\int_{\mathbb{S}^{d-1}} \mathds{1}_{\{|l(v,\theta)k(v,\theta)r|>u^{\frac{1}{\alpha}}\}}
\left[\cos\left(\langle z,l(v,\theta)k(v,\theta)r\rangle\right)-1\right]\, \mu_{\text{leb}}(d\theta)\frac{dr}{r^{1+\alpha}}dv\right) \\
&\,\,= \, \exp\left(u\int_0^1\int_{0}^{\infty}\int_{\mathbb{S}^{d-1}}\mathds{1}_{\{|l(v,\theta)\tilde{r}|>u^{\frac{1}{\alpha}}\}}
\left[\cos\left(\langle z,l(v,\theta)\tilde{r}\rangle\right)-1\right]\, [k(v,\theta)]^\alpha \mu_{\text{leb}}(d\theta)\frac{d\tilde{r}}{\tilde{r}^{1+\alpha}}dv\right).
\end{split}
\]
Denoting now by $\tilde{k}(dv,d\theta)$ the measure on $[0,1]\times \mathbb{S}^{d-1}$ given by
\[\tilde{k}(dv,d\theta) \, := \, [k(v,\theta)]^{\alpha}\mu_{\text{leb}}(d\theta)dv\]
and by $\tilde{\mu}_{\text{sym}}:=\text{Sym}(l)_{\ast}\tilde{k}$ the symmetrization of the measure $\tilde{k}(dv,d\theta)$ push-forwarded through $l$ on $\mathbb{S}^{N-1}$, we can finally conclude that
\begin{align}
\notag \widehat {\overline{P}_u}(y) \, &=\, \exp\left(u\int_0^\infty
\int_{[0,1]\times \mathbb{S}^{d-1}}\mathds{1}_{\{|l(v,\theta)\tilde{r}|>u^{\frac{1}{\alpha}}\}}
\left[\cos\left(\langle z,l(v,\theta)\tilde{r}\rangle\right)-1\right]\, \tilde{k}(dv,d\theta)\frac{d\tilde{r}}{\tilde{r}^{1+\alpha}}\right) \\
&= \, \exp\left(u\int_{|u|^{\frac{1}{\alpha}}}^{\infty}\int_{\mathbb{S}^{N-1}}\left[\cos\left(\langle z, \tilde{\theta} \tilde{r} \rangle\right)-1\right]\tilde{\mu}_{\text{sym}}(d\tilde{\theta})\frac{d\tilde{r}}{\tilde{r}^{1+\alpha}}\right).\label{eq:representation_P_N_segnato}
\end{align}
It is easy to check now that the measure $\tilde{\mu}_{\text{sym}}$ is
finite and non-degenerate in the sense of \eqref{eq:non_deg_measure}. This \textcolor{black}{concludes} the proof of our result under the additional assumption that $\nu$ is absolutely continuous with respect to \textcolor{black}{the} Lebesgue measure. \newline
If this is not the case, assumption [\textbf{AC}] implies immediately that $\sigma(t,x)=:\sigma_t$ does not depends on $x$. Thus,  the ``frozen'' diffusion $\tilde{\sigma}^{\tau,\xi}_t$ does not depends on the parameters $\tau,\xi$ as well. The same arguments \textcolor{black}{as} above then allow to conclude. 
\end{proof}

\paragraph{Sketch of proof for Proposition \ref{coroll:Smoothing_effect_delta}}
We briefly present here the proof of Proposition \ref{coroll:Smoothing_effect_delta} concerning the existence and the associated controls for the density of the mollified frozen process $\tilde{X}^{\tau,\xi,t,x,\delta}_{s}$.\newline
We start noticing that the reasoning in the proof of Proposition \ref{prop:Decomposition_Process_X} can be similarly applied. Indeed, from the definition in \eqref{eq:decomposition_delta_frozen}, it follows immediately that
\[\tilde{X}^{\tau,\xi,t,x,\delta}_{s} \, = \, \tilde{m}^{\tau,\xi}_{s,t}(x)+\mathbb{M}_{s-t}\left(\tilde{S}^{\tau,\xi,t,s}_{s-t}+\delta\overline{Z}_{s-t}\right),\]
and thus, that there exists a density $\tilde{p}^{\tau,\xi,\delta}(t,s,x,y)$ associated with the frozen process $\tilde{X}^{\tau,\xi,t,x,\delta}_{s}$. Moreover, the representation in \eqref{eq:representation_density} holds again if we change there the L\'evy measure $\nu_{\tilde{S}}^{\tau,\xi,t,s}$ with the one associated with the following L\'evy symbol:
\[\Phi_{\tilde{S}^{\tau,\xi,t,s,\delta}}(z) \, := \,
\Phi_{\tilde{S}^{\tau,\xi,t,s}}(z) + c_\alpha\delta |z|^\alpha\, = \,
\int_{0}^{1}\Phi\bigl((\widehat{\mathcal{R}}_{v}B\tilde{\sigma}^{\tau,\xi}_{u(v)})^*z\bigr)\,dv+c_\alpha\delta |z|^\alpha.\]
Namely, it holds that
\begin{multline*}
\tilde{p}^{\tau,\xi,\delta}(t,s,x,y) \, = \, \frac{\det \mathbb{M}^{-1}_{s-t}}{(2\pi)^N}\int_{\R^N}e^{-i\langle \mathbb{M}^{-1}_{s-t}(y-\tilde{m}^{\tau,\xi}_{s,t}(x)),z\rangle}\\
\times\exp\left((s-t)\int_{\R^N}\left[\cos(\langle z, p \rangle )-1\right]\nu_{\tilde{S}^{\tau,\xi,t,s,\delta}}(dp)\right)\, dz,
\end{multline*}
where the L\'evy measure $\nu_{\tilde{S}^{\tau,\xi,t,s,\delta}}$ is given by
\begin{equation}
\label{eq:decomposition_delta}
\nu_{\tilde{S}^{\tau,\xi,t,s,\delta}}(\mathcal{A}) \, = \, \nu_{\tilde{S}^{\tau,\xi,t,s}}(A)+\delta^\alpha \nu_{\overline{Z}}(\mathcal{A}), \quad \mathcal{A} \in \mathcal{B}(\R^N),
\end{equation}
with $\nu_{\overline{Z}}$ L\'evy measure of the isotropic $\alpha$-stable process $\overline{Z}_t$.
In particular, the L\'evy symbol $\Phi_{\tilde{S}^\delta}$ satisfies Control \eqref{eq:control_Levy_symbol_S} for a constant $C$ independent from $\delta$.\newline
We can now move to show the controls on the derivatives
of the mollified frozen density. It is not difficult to check that the arguments presented in the proofs of Proposition \ref{prop:Smoothing_effect}, Lemmas \ref{lemma:Control_p_M} and \ref{lemma:Control_P_N} can be applied again if we substitute there the L\'evy measure $\nu_{\tilde{S}^{\tau,\xi,t,s}}$ with the mollified one $\nu_{\tilde{S}^{\tau,\xi,t,s,\delta}}$. Indeed, taking into account the decomposition in \eqref{eq:decomposition_delta}, we notice that the L\'evy measure $\nu_{\tilde{S}^{\tau,\xi,t,s,\delta}}$ only considers an additional term ($\delta \nu_{\overline{Z}}$) that has the same $\alpha$-scaling nature considered before (but is however much less singular).\newline
To show instead that the estimates \eqref{eq:smoothing_effect_frozen_delta} are indeed uniform in the parameter $\delta$, it is sufficient to notice from \eqref{eq:decomposition_delta} that we have that
\[\nu_{\tilde{S}^{\tau,\xi,t,s,\delta}}(\mathcal{A}) \, \le \, \nu_{\tilde{S}^{\tau,\xi,t,s}}(\mathcal{A})+ \nu_{\overline{Z}}(\mathcal{A}), \quad \mathcal{A} \in \mathcal{B}(\R^N).\]
To conclude the proof of Proposition \ref{coroll:Smoothing_effect_delta}, it is then enough to take $\xi=y$, $\tau=s$ and to follow the same arguments introduced in the proof of Corollary \ref{coroll:Smoothing_effect}.

\subsection{Proof of the Technical Lemmas}
\label{SEC_TEC_LEMMA_APP}
\paragraph{Proof of Lemma \ref{lemma:bilip_control_flow} (Approximate Lipschitz condition of the flows)}
We start considering two measurable flows $\theta,\check{\theta}$ satisfying dynamics \eqref{eq:measurability_flow}. Recalling the decomposition $G(t,x)=A_tx +F(t,x)$, it follows immediately that:
\begin{equation}
\begin{split}
\label{eq:proof_bilip_control_flow1}
\mathbb{T}_{s-t}^{-1}(x-\theta_{t,s}(y))\, &= \, \mathbb{T}_{s-t}^{-1}\Bigl[\check{\theta}_{s,t}(x)-y-\int_t^s \Bigl(G(u,\check{\theta}_{u,t}(x))-G(u,\theta_{u,s}(y))\Bigr) \, du\Bigr] \\
&= \, \mathbb{T}_{s-t}^{-1}\bigl(\check{\theta}_{s,t}(x)-y\bigr)+ \mathcal{I}_{s,t}(x,y),
\end{split}
\end{equation}
where in the last step, we denoted
\[\mathcal{I}_{s,t}(x,y) \, = \, \mathbb{T}_{s-t}^{-1}\int_{t}^s \left[A_u\left(\theta_{u,s}(y)-\check{\theta}_{u,t}(x)\right)+\left(F(u,\theta_{u,s}(y))-F(u,\check{\theta}_{u,t}(x))\right)\right] \, du.\]
To conclude, we need to show the following bound for $\mathcal{I}_{s,t}(x,y)$:
\begin{equation}
\label{eq:proof_bilip_control_flow2}
|\mathcal{I}_{s,t}(x,y)| \, \le \,  C\left[1+(s-t)^{-1}\int_t^s |\mathbb{T}_{s-t}^{-1}(\check{\theta}_{u,t}(x)-\theta_{u,s}(y))|\,    du\right].
\end{equation}
Indeed, Control \eqref{eq:proof_bilip_control_flow2} together with \eqref{eq:proof_bilip_control_flow1} and the Gronwall lemma imply the right-hand side of Control \eqref{eq:bilip_control_flow}. The left-hand side one can be obtained analogously and we will not show it here.\newline
We start decomposing $\mathcal{I}_{s,t}$ into $\mathcal{I}^1_{s,t}+\mathcal{I}^2_{s,t}$, where we denote
\begin{align*}
\mathcal{I}^1_{s,t}(x,y) \, := \, \mathbb{T}_{s-t}^{-1}\int_{t}^s A_u\left(\theta_{u,s}(y)-\check{\theta}_{u,t}(x)\right) \, du; \\
\mathcal{I}^2_{s,t}(x,y) \, := \, \mathbb{T}_{s-t}^{-1}\int_{t}^s \left[F(u,\theta_{u,s}(y))-F(u,\check{\theta}_{u,t}(x))\right] \, du.
\end{align*}
The first remainder $\mathcal{I}^1_{s,t}$ can be controlled easily, exploiting the linearity of $z\to A_uz$. Indeed, for any $z$,$z'$ in $\R^N$ and any $u$ in $[s,t]$, we have that
\begin{equation}
\label{eq:proof_bilip_control_flow6}
\begin{split}
    \left|\mathbb{T}_{s-t}^{-1} A_u(z-z')\right| \, &\le \, \sum_{i=1}^n \sum_{j=(i-1)\vee 1}^n(s-t)^{-\frac{1+\alpha(i-1)}{\alpha}}|A^{i,j}_u|\ |(z-z')_j| \\
    &\le \, C(s-t)^{-1}|\mathbb{T}_{s-t}^{-1}(z-z')|.
\end{split}
\end{equation}
To control instead the second term $\mathcal{I}^2_{s,t}$, we will need to thoroughly exploit an appropriate smoothing method, due to the low regularity in space of the drift $F$. To overcome this problem, we are going to mollify the function $F$ in the following way. We start fixing a family  $\{\rho_i\colon i \in \llbracket 1,n\rrbracket\}$ of mollifiers on $\R^{D_i}$ where $D_i=N-\sum_{j=1}^{i-1}d_j$, i.e. for any $i$ in $\llbracket 1,n\rrbracket$, $\rho_i$ is a
compactly supported, non-negative, smooth function on $\R^{D_i}$ such that $\Vert \rho_i \Vert_{L^1}=1$, and a family $\{\delta_{ij}\colon i \le j\}$ of positive constants to be
chosen later. Then, the mollified version of the drift is defined by $F^\delta:=(F_1,F^{\delta}_2,\dots,F^{\delta}_n)$
where
\begin{equation}
\label{eq:multi_scale_mollification}
\begin{split}
    F^{\delta}_i(t,z) \, &:= \,  F_i \ast_x \rho^{\delta}_i(t,z) \\
    &:= \, \int_{\R^{D_i}}
 F_i(t, z_i-\omega_i,\dots,z_n-\omega_n)\frac{1}{\prod_{j=i}^n\delta_{ij}^{d_i}} \rho_i(\frac{\omega_i}{\delta_{ii}},\dots,\frac{\omega_n}{\delta_{in}}) \, d\omega.
\end{split}
\end{equation}
\textcolor{black}{Roughly speaking, we have mollified any component $F_i$ by convolution in space with a mollifier with multi-scaled dilations}.
Then, standard results on mollifier theory and our current assumptions on $F$ show us that the following controls hold
\begin{align}
\label{Proof:Controls_on_flows_mollifier} \vert  F_i(u,z) - F^{\delta}_i(u,z)\vert \, &\le \,C\sum_{j=i}^n\delta_{ij}^{\beta^j}, \\
\label{Proof:Controls_on_flows_mollifier1} | F^{\delta}_i(u,z) - F^{\delta}_i(u,z')| \, &\le \,C \sum_{j=i}^{n}\delta_{ij}^{\beta^j-1}|(z-z')_{j}\vert.
\end{align}
We can now pick $\delta_{ij}$ for any $i\le j$ in $\llbracket 2,n\rrbracket$ in order to have any contribution associated with the mollification
appearing in \eqref{Proof:Controls_on_flows_mollifier} at a good current scale time. Namely, we would like $\delta_{ij}$ to satisfy
\begin{equation}\label{proof:bilip_control}
    \left| \mathbb{T}^{-1}_{s-t}\left(F(u,z)-F^\delta(u,z)\right) \right| \,
\le \, C(s-t)^{-1},
\end{equation}
for any $u$ in $[t,s]$ and any $z$ in $\R^N$.
Using the mollifier controls \eqref{Proof:Controls_on_flows_mollifier}, it is enough to ask for
\begin{equation}
\sum_{i=2}^n(s-t)^{-\frac{1+\alpha(i-1)}{\alpha}}\sum_{j=i}^n\delta_{ij}^{\beta^j} \, \le \, C(s-t)^{-1}.\label{proof:SUM_COND_DELTA}
\end{equation}
This is true if we fix for example,
\begin{equation}\label{Proof:Controls_on_Flows_Choice_delta}
\delta_{ij} \, = \, (s-t)^{\frac{1+\alpha(i-2)}{\alpha\beta^j}} \quad \text{for $i\le j$ in $\llbracket 2,n\rrbracket$.}
\end{equation}

Next, we would like to show that, for our choice of the regularization parameter $\delta_{ij}$, the mollified drift $F^\delta$ satisfies an \emph{approximate} Lipschitz condition with a constant that, once the drift is integrated, does not yield any additional singularity. Namely, we want to derive the following control:
\begin{equation}
\label{approximateLippourF}
\left|\mathbb{T}_{s-t}^{-1}\left(F^\delta(u,z)-F^\delta(u,z')\right)\right|
\, \le \, C\left[(s-t)^{-\frac 1\alpha}+ (s-t)^{-1}|\mathbb{T}_{s-t}^{-1}(z-z')| \right] .
\end{equation}
To show it, we start noticing that $F_1 $ is H\"older continuous with H\"older index $\beta^1>0$. By Young inequality, it then yields that there exists a positive constant $C$ possibly depending on $\beta^1$ such that
$|z|^{\beta^1} \le C(1+|z|)$ for any $z$ in $\R^N$. It then follows from Equation \eqref{Proof:Controls_on_flows_mollifier1} that
\[\begin{split}
|\mathbb{T}_{s-t}^{-1}\bigl(F^\delta(u,z)-&F^\delta(u,z
')\bigr)| \\
&\le \,
C\Bigl[(s-t)^{-\frac{1}{\alpha}}(1+|(z-z')
|) + \sum_{i=2}^n\sum_{j=i}^n(s-t)^{-\frac{1+\alpha(i-1)}{\alpha}}\delta_{ij}^{\beta^j-1}|(z-z
')_j| \Bigr]\\
&\le\,  C\biggl[(s-t)^{-\frac{1}{\alpha}}+
|\mathbb{T}_{s-t}^{-1}(z-z')|\bigl(1+\sum_{i=2}^
n\sum_{j=i}^n \frac{(s-t)^{j-i}}{\delta_{ij}^{1-\beta^j}}
\bigr)\biggr].
\end{split}\]
Hence, Control \eqref{approximateLippourF} follows from the fact that, from our previous choice of $\delta_{ij}$, one gets
\begin{equation}\label{equilibri}
\frac{(s-t)^{j-i}}{\delta_{ij}^{1-\beta^j}} \, = \, (s-t)^{(j-i)-\frac{1+\alpha(i-2)}{\alpha\beta^j}(1-\beta^j)}\, \le \,  C (s-t)^{-1},
\end{equation}
\textcolor{black}{recalling that we assumed $s-t$ to be small enough and} since from the assumption \eqref{eq:thresholds_beta} on the indexes of H\"older continuity $\beta^j$ for $F$:
\[\beta^j>\frac{1+\alpha(i-2)}{1+\alpha(j-1)} \Leftrightarrow (j-i)-\frac{1+\alpha(i-2)}{\alpha\beta^j}(1-\beta^j) \, > \, -1.\]
We recall that the above inequality should precisely give the natural threshold, namely an exponent $\beta_i^j $ satisfying this condition. The current choice for $\beta^j $
is sufficient to ensure this bound holds for any $i\le j $ and is \textit{sharp} for $i=j$.
We can finally show the bound for the second remainder $\mathcal{I}^2_{s,t}(x,y)$ as given in \eqref{eq:proof_bilip_control_flow2}. It holds that:
\[\begin{split}
|\mathcal{I}^2_{s,t}(x,y)| \, &\le \, \int_{t}^{s}  \left|\mathbb{T}_{s-t}^{-1} \left(F(u,\theta_{u,s}(y))-F(u,\check{\theta}_{u,t}(x))\right)\right| \, du\\
&\le \, \int_t^s \left|\mathbb{T}_{s-t}^{-1}(F(u,\check{\theta}_{u,t}(x))-F^\delta(u,\check{\theta}_{u,t}(x)))\right|\, du \\
&\qquad\qquad\qquad + \int_t^s \left|\mathbb{T}_{s-t}^{-1}(F^\delta(u,\check{\theta}_{u,t}(x))-F^\delta(u,\theta_{u,s}(y)))\right| \, du \\
& \qquad\qquad\qquad\qquad\qquad + \int_t^s \left|\mathbb{T}_{s-t}^{-1}\left(F^\delta(u,\theta_{u,s}(y))-F(u,\theta_{u,s}(y))\right)\right| \, du\\
&=: \,  \mathcal{I}^{21}_{s,t}(x,y)+\mathcal{I}^{22}_{s,t}(x,y)+\mathcal{I}^{23}_{s,t}(x,y).
\end{split}\]
From Control \eqref{Proof:Controls_on_flows_mollifier} with our choice of $\delta_{ij}$,  we easily obtain \textcolor{black}{from Control \eqref{proof:bilip_control} that} there exists a positive constant $C:=C(T)$ such that
\begin{equation}
\label{eq:proof_bilip_control_flow5}
|\mathcal{I}^{21}_{s,t}(x,y)|+|\mathcal{I}^{23}_{s,t}(x,y)| \, \le \, C,
\end{equation}
for any $t\le s$ in $[0,T]$ and $x,y$ in $\R^N$. On the other hand, we exploit \eqref{approximateLippourF} to derive that
\[
|\mathcal{I}^{22}_{s,t}(x,y)| \, \le \, C\left[1+\int_t^s (s-t)^{-1}|\mathbb{T}_{s-t}^{-1}(\check{\theta}_{u,t}(x)-\theta_{u,s}(y))| \,    du\right]\]
for any $t\le s$ in $[0,T]$ and $x,y$ in $\R^N$.
To conclude, we finally derive \eqref{eq:proof_bilip_control_flow2} from the last inequality together with Controls \eqref{eq:proof_bilip_control_flow6}-\eqref{eq:proof_bilip_control_flow5}.

\paragraph{Proof of Lemma \ref{convergence_dirac} (Dirac Convergence of frozen density).}
Fixed $(t,x)$ in $[0,T]\times \R^N$ and a bounded, continuous function $f\colon \R^N\to \R$, we want to show that the following limit
\[
\lim_{\epsilon \to 0}\left| \int_{\R^N} f(y) \tilde{p}^{t+\epsilon,y}(t,t+\epsilon,x,y)\, dy
-f(x) \right| \, = \, 0
\]
holds, uniformly in $t \in [0,T]$.\newline
We start rewriting the argument of the limit in the following way:
\begin{align}
\label{proof:control_deviation}
\int_{\R^N} f(y) \tilde{p}^{t+\epsilon,y}(t,t+\epsilon,\,&x,y)\, dy
-f(x) \\\notag
&= \, \int_{\R^N} f(y) \left[\tilde{p}^{t+\epsilon,y}(t,t+\epsilon,x,y)-\tilde{p}^{t,x}(t,t+\epsilon,x,y)\right] \, dy\\\notag
&\qquad\quad\qquad\qquad\qquad+ \int_{\R^N} f(y) \tilde{p}^{t,x}(t,t+\epsilon,x,y)\, dy-f(x).
\end{align}
By Proposition \ref{prop:Decomposition_Process_X}, we know that the second term in \eqref{proof:control_deviation} tends to zero, uniformly in $t$ in $[0,T]$ (scaling property of the upper bound for the density), when $\epsilon$ goes to zero. We can then focus on the first one. We start splitting the space $\R^N$ in the diagonal/off-diagonal regime associated with our anisotropic dynamics. Namely, we fix $\beta>0$ to be chosen later and we consider the following subsets:
\begin{align*}
   D_1 \, &:= \, \{y \in \R^N\colon \left|\mathbb{T}^{-1}_{\epsilon}(y-\theta_{t+\epsilon,t}(x))\right| \, \le \, \epsilon^{-\beta}\}; \\
    D_2 \, &:= \, \{y \in \R^N\colon \left|\mathbb{T}^{-1}_{\epsilon}(y-\theta_{t+\epsilon,t}(x))\right| \, > \, \epsilon^{-\beta}\},
\end{align*}
where $\mathbb{T}_\epsilon$ was defined in \eqref{eq:def_matrix_T}.
We can then decompose the first term in \eqref{proof:control_deviation} in the following way:
\begin{align}\notag
\Bigr{|}\int_{\R^N} f(y) \bigl[\tilde{p}^{t+\epsilon,y}(t,t+\epsilon,\,&x,y)-\tilde{p}^{t,x}(t,t+\epsilon,x,y)\bigr] \, dy\Bigr{|} \\
&\le \, \Vert f \Vert_\infty \int_{D_1}\left|\tilde{p}^{t+\epsilon,y}(t,t+\epsilon,x,y)-\tilde{p}^{t,x}(t,t+\epsilon,x,y)\right| dy \notag\\
&\qquad \qquad +\Vert f \Vert_\infty\int_{D_2}\left|\tilde{p}^{t+\epsilon,y}(t,t+\epsilon,x,y)-\tilde{p}^{t,x}(t,t+\epsilon,x,y)\right| dy \notag\\
&=:\, \Vert f \Vert_\infty\left(\mathcal{D}_1+\mathcal{D}_2\right)(t,t+\epsilon,x).\label{proof:control_deviation1}
\end{align}
We will follow different approaches to control the two terms $\mathcal{D}_1$, $\mathcal{D}_2$. In the off-diagonal regime $D_2$, the idea is to exploit tail estimates of the single densities while in the diagonal one $D_1$, a more thorough sensibility analysis between the spectral measures and the Fourier transform is needed. Let us consider first the off-diagonal term $\mathcal{D}_2$. We can write that
\begin{align*}
\mathcal{D}_2(t,t+\epsilon,x,y) \, &\le \, \int_{D_2}\left|\tilde{p}^{t+\epsilon,y}(t,t+\epsilon,x,y)\right| + \left|\tilde{p}^{t,x}(t,t+\epsilon,x,y)\right| \, dy\\
&\le \int_{D_2}\frac{1}{\det\mathbb{T}_{\epsilon}}\Big( {\bar p}(1,\mathbb{T}_{\epsilon}^{-1}(x-\theta_{t,t+\epsilon}(y))) + {\bar p}(1,\mathbb{T}_{\epsilon}^{-1}(\theta_{t+\epsilon,t}(x)-y))\Big)dy
\end{align*}
using Proposition \ref{prop:Smoothing_effect} together with Lemma \ref{lemma:identification_theta_m} for the last inequality.
From Lemma \ref{lemma:bilip_control_flow} (to use the \textit{approximate} Lipschitz property of the flows) and introducing
\[\bar D_2 \, := \, \{y \in \R^N\colon \left|\mathbb{T}^{-1}_{\epsilon}(\theta_{t,t+\epsilon}(y)-x)\right| \, > \, \frac 12 \epsilon^{-\beta}\},\]
we thus deduce that for $\epsilon $ small enough we get:
\begin{multline*}
\mathcal{D}_2(t,t+\epsilon,x,y) \, \le \,  \int_{\bar D_2}\frac{1}{\det\mathbb{T}_{\epsilon}} {\bar p}(1,\mathbb{T}_{\epsilon}^{-1}(x-\theta_{t,t+\epsilon}(y)))\, dy\\
+\int_{D_2} \frac{1}{\det\mathbb{T}_{\epsilon}}{\bar p}(1,\mathbb{T}_{\epsilon}^{-1}(\theta_{t+\epsilon,t}(x)-y))\,dy.
\end{multline*}
Using now Equation \eqref{eq:smoothing_effect_frozen_y_GENERIC_FUNCTION} from Corollary \ref{coroll:Smoothing_effect}  for the first integral and the direct change of variable $z=\mathbb{T}^{-1}_\epsilon (y-\theta_{t+\epsilon,t}(x))$ for the second,  we can conclude that
\[\mathcal{D}_2(t,t+\epsilon,x) \,
\le \, C\int_{\R^N}\mathds{1}_{B^c(0,\frac 12\epsilon^{-\beta})}(z)(\check p+\overline{p})(1,z) \, dz,\]
where  $\check p $ is a density enjoying the same integrability properties as $\overline{p}$.
\newline
By dominated convergence theorem, it is easy to notice that $\mathcal{D}_2(t,t+\epsilon,x)$ tends to zero if $\epsilon$ goes to zero, \textcolor{black}{uniformly in the time variable $t$ in } $[0,T]$.\newline
We can now focus on the diagonal term $\mathcal{D}_1$ appearing in \eqref{proof:control_deviation1}. We start recalling from Equation \eqref{eq:definition_density_q} that the density $\tilde{p}^{\omega}(t,s,x,y)$ (for $\omega \in\{(t,x),(t+\epsilon,y)\}$) can be written as
\[
\tilde{p}^\omega(t,t+\epsilon,x,y) \, = \, \frac{\det \mathbb{M}^{-1}_\epsilon}{(2\pi)^N}\int_{\R^N}e^{\mathcal{F}_{t,t+\epsilon}(z,\omega)} \text{exp}\left(-i\langle \mathbb{M}^{-1}_\epsilon(y-\tilde{m}^{\omega}_{t+\epsilon,t}(x)),z\rangle\right) \, dz,
\]
where we have denoted:
\[\mathcal{F}_\epsilon(t,z,\omega) \, := \, \epsilon \int_{0}^{1}\int_{\R^d}\left[\cos\left(\langle z, \widehat{\mathcal{R}}_{v}B\tilde{\sigma}^{\omega}_{u(v)} p\rangle \right)-1\right] \,\nu(dp)dv,\]
with $u(v)=t+\epsilon v$ \textcolor{black}{(cf. notations in \eqref{proof:ref_notations} of Proposition \ref{prop:Decomposition_Process_X})} and $\Phi(p)$ the L\'evy symbol of the process $\{Z_t\}_{t\ge0}$.
We can now consider the two following terms
\begin{align*}
&\mathcal{P}_1(t,t+\epsilon,x,y) \, := \, \frac{\det \mathbb{M}^{-1}_\epsilon}{(2\pi)^N}\int_{\R^N}\left[e^{\mathcal{F}_\epsilon(t,z,t,x)}-e^{\mathcal{F}_\epsilon(t,z,t+\epsilon,y)}\right]e^{-i\langle \mathbb{M}^{-1}_\epsilon(y-\tilde{m}^{t,x}_{t+\epsilon,t}(x)),z\rangle} \, dz\\
&\mathcal{P}_2(t,t+\epsilon,x,y) \\
&\qquad\quad:= \,
\frac{\det \mathbb{M}^{-1}_\epsilon}{(2\pi)^N}\int_{\R^N}e^{\mathcal{F}_\epsilon(t,z,t+\epsilon,y)}\left[e^{-i\langle \mathbb{M}^{-1}_\epsilon(y-\tilde{m}^{t,x}_{t+\epsilon,t}(x)),z\rangle}-e^{-i\langle \mathbb{M}^{-1}_\epsilon(y-\tilde{m}^{t+\epsilon,y}_{t+\epsilon,t}(x)),z\rangle}\right] \, dz \notag
\end{align*}
and decompose $\mathcal{D}_1$ as follows:
\[\mathcal{D}_1 \, = \, \int_{D_1}\left|\mathcal{P}_1(t,t+\epsilon,x,y)\right|+\left|\mathcal{P}_2(t,t+\epsilon,x,y)\right| \, dy.\]
To control the first term $\mathcal{P}_1$, we can exploit a Taylor expansion. Indeed,
\begin{multline*}
   |\mathcal{P}_1(t,t+\epsilon,x,y)| \\
   \le \, \frac{C}{\det \mathbb{M}_\epsilon}\int_{\R^N}\int_0^1\left|\mathcal{F}_\epsilon(t,z,t+\epsilon,y)-\mathcal{F}_\epsilon(t,z,t,x)\right| e^{\lambda\mathcal{F}_\epsilon(t,z,t+\epsilon,y)+(1-\lambda)\mathcal{F}_\epsilon(t,z,t,x)} \, d\lambda dz.
\end{multline*}
We then notice from \eqref{eq:control_Levy_symbol_S} that
\[\mathcal{F}_\epsilon(t,z,\omega) \, \le \, C\epsilon [1-|z|^\alpha],\]
and thus, we obtain that
\[e^{\lambda\mathcal{F}_\epsilon(t,z,t+\epsilon,y)+(1-\lambda)\mathcal{F}_\epsilon(t,z,t,x)} \, \le \, e^{C\epsilon(1-|z|^\alpha)},\]
for some constant $C$ independent from $\lambda$ in $[0,1]$.
\textcolor{black}{From our non-degenerate structure, any linear combination of the symbols remains homogeneous to a non-degenerate symbol.} Thus, we have that
\begin{equation}
\label{proof:control_deviation2}
    |\mathcal{P}_1(t,t+\epsilon,x,y)| \, \le \, \frac{C}{\det \mathbb{M}_\epsilon}\int_{\R^N}\left|\mathcal{F}_\epsilon(t,z,t+\epsilon,y)-\mathcal{F}_\epsilon(t,z,t,x)\right| e^{C\epsilon(1-|z|^\alpha)} \, dz.
\end{equation}
On the other hand,
we can decompose the difference in absolute value in the following way:
\begin{align}
\notag
|\mathcal{F}_\epsilon(t,z,t+\epsilon,y)-&\mathcal{F}_\epsilon(t,z,t,x)| \\ \notag
&\le \, \epsilon \int_{0}^{1}\Bigl{|}
\int_{\R^d}\left[\cos\left(\langle z, \widehat{\mathcal{R}}_{v}B\tilde{\sigma}^{t+\epsilon,y}_{u(v)} p\rangle\right)-\cos\left(\langle z,  \widehat{\mathcal{R}}_{v}B\tilde{\sigma}^{t,x}_{u(v)} p\rangle\right)\right] \,\nu(dp)\Bigr{|}dv \\
&\le \,  \epsilon \int_{0}^{1}\Bigl{|} \left(\Delta^{t,\epsilon,x,y}_s+\Delta^{t,\epsilon,x,y}_l\right)(v,z)\Bigl{|}\, dv,
\label{proof:A6}
\end{align}
where we denoted
\begin{align*}
\Delta^{t,\epsilon,x,y}_s(v,z)\, &= \, \int_{B(0,r_0)}\left[\cos\left(\langle z, \widehat{\mathcal{R}}_{v}B\tilde{\sigma}^{t+\epsilon,y}_{u(v)} p\rangle\right)-\cos\left(\langle z,  \widehat{\mathcal{R}}_{v}B\tilde{\sigma}^{t,x}_{u(v)} p\rangle\right)\right]
Q(p)\, \nu_\alpha(dy); \\
\Delta^{t,\epsilon,x,y}_l(v,z) \, &= \, \int_{B^c(0,r_0)}\left[\cos\left(\langle z, \widehat{\mathcal{R}}_{v}B\tilde{\sigma}^{t+\epsilon,y}_{u(v)} p\rangle\right)-\cos\left(\langle z,  \widehat{\mathcal{R}}_{v}B\tilde{\sigma}^{t,x}_{u(v)} p\rangle\right)\right]
Q(p)\, \nu_\alpha(dp),
\end{align*}
with $r_0$ defined in assumption [\textbf{ND}]. The term $\Delta^{t,\epsilon,x,y}_l$ involving the large jumps can be easily controlled using that $\sup_{p\in \R^d}Q(p)<\infty$:
\begin{align} \notag
|\Delta^{t,\epsilon,x,y}_l(v,z)| \, &\le \, \int_{B^c(0,r_0)}\left|\cos\left(\langle z, \widehat{\mathcal{R}}_{v}B\tilde{\sigma}^{t+\epsilon,y}_{u(v)} p\rangle \right)-\cos\left(\langle z,  \widehat{\mathcal{R}}_{v}B\tilde{\sigma}^{t,x}_{u(v)} p\rangle \right)\right|
\,\nu_\alpha(dp) \\
&\le \, C. \label{proof:A5}
\end{align}
To bound the term $\Delta^{t,\epsilon,x,y}_s$ associated with the small jumps, we want to exploit instead that $Q$ is Lipschitz continuous on $B(0,r_0)$. For this reason, we write that
\begin{align}
\notag
|\Delta^{t,\epsilon,x,y}_s&(v,z)| \\ \notag
&\le \, \left| \int_{B(0,r_0)}\left[\cos\left(\langle z, \widehat{\mathcal{R}}_{v}B\tilde{\sigma}^{t+\epsilon,y}_{u(v)} p\rangle \right)-\cos\left(\langle z,  \widehat{\mathcal{R}}_{v}B\tilde{\sigma}^{t,x}_{u(v)} p\rangle \right)\right][Q(p)-Q(0)]
\,\nu_\alpha(dp) \right|\\\notag
&\qquad\qquad\,\,+ \left| \int_{B(0,r_0)}\left[\cos\left(\langle z, \widehat{\mathcal{R}}_{v}B\tilde{\sigma}^{t+\epsilon,y}_{u(v)} p\rangle\right)-\cos\left(\langle z,  \widehat{\mathcal{R}}_{v}B\tilde{\sigma}^{t,x}_{u(v)} p\rangle \right)\right]Q(0)
\,\nu_\alpha(dp) \right| \\\label{proof:A1}
&=:\, \left(\Delta^{t,\epsilon,x,y}_{s,1}+\Delta^{t,\epsilon,x,y}_{s,2}\right)(v,z).
\end{align}
Since $Q$ and the cosine function are Lipschitz continuous in a neighborhood of $0$, we have that
\begin{equation}
\label{proof:A2}
\begin{split}
 \Delta^{t,\epsilon,x,y}_{s,1}(v,z)\, &\le \, C \int_{B(0,r_0)}|p||z|\left|\widehat{\mathcal{R}}_{v}B\tilde{\sigma}^{t+\epsilon,y}_{u(v)}p -\widehat{\mathcal{R}}_{v}B\tilde{\sigma}^{t,x}_{u(v)}p\right|
\,\nu_\alpha(dp) \\
&\le \, C \int_{B(0,r_0)}|p||z|\left|\sigma(u(v),\theta_{u(v),t+\epsilon}(y))p - \sigma(u(v),\theta_{u(v),t}(x))p\right|
\,\nu_\alpha(dp)\\
&\le \, C|z| \int_{B(0,r_0)}|p|^2 \,\nu_\alpha(dp) \, \le \, C|z|,
\end{split}
\end{equation}
where in the last step, we used that the diffusion coefficient $\sigma$ is bounded (cf. assumption [\textbf{UE}]).\newline
The control of the other term
$\Delta^{t,\epsilon,x,y}_{s,2}$ now follows from the classical characterization of the L\'evy symbol of a non-degenerate $\alpha$-stable process (see e.g. \cite{book:Sato99}). Indeed,
\[
\begin{split}
\Delta^{t,\epsilon,x,y}_{s,2}(v,z) \, &= \,\left| \int_{\R^d}\left[\cos\left(\langle z,\widehat{\mathcal{R}}_{v}B\tilde{\sigma}^{t+\epsilon,y}_{u(v)}p\rangle\right)-1\right]-\left[\cos\left(\langle z,\widehat{\mathcal{R}}_{v}B\tilde{\sigma}^{t,x}_{u(v)}p\rangle\right)-1\right] \, \nu(dp)\right| \\
&\le \, C\int_{\mathbb{S}^{d-1}}\left|\left|\langle z,\widehat{\mathcal{R}}_{v}B\tilde{\sigma}^{t+\epsilon,y}_{u(v)}s\rangle\right|^\alpha-\left|\langle z,\widehat{\mathcal{R}}_{v}B\tilde{\sigma}^{t,x}_{u(v)}s\rangle\right|^\alpha\right| \, \mu(ds).
\end{split}
\]
We now exploit the $\beta^1$-H\"older regularity in space of the diffusion coefficient $\sigma$ to show that
\begin{equation}
\label{proof:A3}
\begin{split}
    \Delta^{t,\epsilon,x,y}_{s,2}(v,z) \,&\le \, C
|z|^\alpha \left|\theta_{u(v),t+\epsilon}(y)-\theta_{u(v),t}(x)\right|^{\beta^1(\alpha\wedge 1)}  \\
&\le \, C |z|^\alpha \left[|y-\theta_{t+\epsilon,t}(x)|^{\beta^1}+\epsilon^{\beta^1}\right],
\end{split}
\end{equation}
where in the last step we used that $\alpha>1$ and the approximate Lipschitz property of the flow (cf. Lemma \ref{lemma:bilip_control_flow} \textcolor{black}{up to a normalization}, see also Lemma $1.1$ in \cite{Menozzi:Pesce:Zhang21}).\newline
We can now use Controls \eqref{proof:A2}-\eqref{proof:A3} in Equation \eqref{proof:A1} to show that
\begin{equation}
\label{proof:A4}
|\Delta^{t,\epsilon,x,y}_s(v,z)| \, \le \, C\left(|z|+   \epsilon^{\beta^1} |z|^\alpha+ |y-\theta_{t+\epsilon,t}(x)|^{\beta^1}|z|^\alpha\right).
\end{equation}
Similarly, Controls \eqref{proof:A4}-\eqref{proof:A5} with Equation \eqref{proof:A6} allow us to conclude that
\begin{equation}
|\mathcal{F}_\epsilon(t,z,t+\epsilon,y)-\mathcal{F}_\epsilon(t,z,t,x)| \, \le \, C \epsilon \left(1+|z|+   \epsilon^{\beta^1} |z|^\alpha+ |y-\theta_{t+\epsilon,t}(x)|^{\beta^1}|z|^\alpha\right).
\end{equation}
We can now go back to Equation \eqref{proof:control_deviation2}. Changing variable and integrating over $z$, we find that
\[
\begin{split}
 |\mathcal{P}_1(t,t+\epsilon,x,y)| &\, \le \, \frac{C\epsilon}{\det \mathbb{M}_\epsilon}\int_{\R^N}\left(1+|z|+    \epsilon^{\beta^1}|z|^\alpha+ |y-\theta_{t+\epsilon,t}(x)|^{\beta^1}|z|^\alpha\right) e^{C\epsilon(1-|z|^\alpha)} \, dz \\
 &\le \, \frac{C}{\det \mathbb{T}_\epsilon}\int_{\R^N}\left(\epsilon+\epsilon^{\frac{\alpha-1}{\alpha}}|\tilde{z}|+  \epsilon^{\beta^1}  |\tilde{z}|^\alpha+ |y-\theta_{t+\epsilon,t}(x)|^{\beta^1}|\tilde{z}|^\alpha\right) e^{C(1-|\tilde{z}|^\alpha)} \, d\tilde{z}\\
&\le \, \frac{C}{\det \mathbb{T}_\epsilon}\left(\epsilon^{(1-\frac 1\alpha)\wedge \beta^1} +|y-\theta_{t+\epsilon,t}(x)|^{\beta^1}\right)
\end{split}
\]
where we recall that $\mathbb{T}_t=t^{1/\alpha}\mathbb{M}_t$.\newline
To conclude, we apply the change of variable $\tilde{y}=y-\theta_{t+\epsilon,t}(x)$:
\[
\begin{split}
\int_{D_1}|\mathcal{P}_1(t,t+\epsilon,x,y)| \, dy \, &\le \,\frac{C}{\det \mathbb{T}_\epsilon} \int_{D_1}   \left[|y-\theta_{t+\epsilon,t}(x)|^{\beta^1}+\epsilon^{(1-\frac 1\alpha)\wedge \beta^1}\right] \, dy \\
&= \, C\int_{|\tilde{y}|\le \epsilon^{-\beta}}   \left[|\mathbb{T}_\epsilon \tilde{y}|^{\beta^1}+\epsilon^{(1-\frac 1\alpha)\wedge \beta^1}\right] \, d\tilde{y} \\
&\le \, C[\epsilon^{\beta^1/\alpha-\beta(N+\beta^1)}+\epsilon^{(1-\frac 1\alpha)\wedge \beta^1-\beta N}].
\end{split}
\]
The above control then tends to zero letting $\epsilon$ go to zero, if we choose $\beta$ such that
\[0\,<\, \beta \,<\, \frac{\beta^1}{\alpha(N+\beta^1)}\wedge \frac{(1-\frac 1\alpha)\wedge \beta^1}{N}. \]
To control the second term $\mathcal{P}_2$, we use again Control \eqref{eq:control_Levy_symbol_S} and a Taylor expansion to write, similarly to above, that
\begin{align}\notag
|\mathcal{P}_2(t,&t+\epsilon,x,y) | \\ \notag
&\le \,
\frac{C}{\det \mathbb{M}_\epsilon}\int_{\R^N}e^{C\epsilon (1-|z|^\alpha)}\left|\langle \mathbb{M}^{-1}_\epsilon(y-\tilde{m}^{t,x}_{t+\epsilon,t}(x)),z\rangle-\langle \mathbb{M}^{-1}_\epsilon(y-\tilde{m}^{t+\epsilon,y}_{t+\epsilon,t}(x)),z\rangle \right| \, dz   \\
&\le \, \frac{C}{\det \mathbb{T}_\epsilon}\left|\mathbb{T}^{-1}_\epsilon\left(\theta_{t+\epsilon,t}(x)-\tilde{m}^{t+\epsilon,y}_{t+\epsilon,t}(x)\right)\right|,\label{proof:control_P2}
\end{align}
where in the last passage we used Lemma \ref{lemma:identification_theta_m}. To bound the above right-hand side, we now exploit Corollary \ref{coroll:control_error_flow} to show that
\[
\begin{split}
|\mathcal{P}_2(t,t+\epsilon,x,y) | \,
\le \, C \epsilon^{\frac{1}{\alpha} \wedge \zeta}\frac{1}{\det \mathbb{T}_\epsilon}\left(1+|\mathbb{T}_{\epsilon}^{-1}(\theta_{t+\epsilon,t}(x)-y)|\right).
\end{split}
\]
Similarly to above, we can then apply a change of variables:
\[
\int_{D_1}|\mathcal{P}_2(t,t+\epsilon,x,y) | \, dy \, \le \, C \epsilon^{\frac{1}{\alpha} \wedge \zeta}\int_{|z|\le\epsilon^{-\beta}}\left(1+|z| \right) \, dz.
\]
We can then notice again that the above control tends to zero letting $\epsilon$ goes to zero, if we choose $\beta$ small enough.

\paragraph{Proof of Lemma \ref{prop:convergence_LpLq}.}
As in the previous Lemma \ref{convergence_dirac}, we want to show the following limit:
\[
\lim_{\epsilon \to 0} \Vert
I_\epsilon f -f \Vert_{L^{p}_tL^q_x} \, = \, 0,
\]
for some $p\in (1,+\infty)$, $q\in (1,+\infty)$ and $f$ in $C_c^{1,2}([0,T)\times \R^N)$.
We start writing that
\[
\|I_\epsilon f-f\|^p_{L^{p}_tL^q_x} \, = \, \int_0^T \Vert I_\epsilon f(t,\cdot)-f(t,\cdot)\Vert_{L^q}^{p}\, dt.\]
We then notice that, up to a middle point-type argument, the indicator function in the definition \eqref{eq:def_f_epsilon} of $I_\epsilon f$ can be easily controlled. We can now write that
\begin{align}
\notag
\Vert I_\epsilon f(t,\cdot)-f(&t,\cdot)\Vert_{L^p}^{p}\, = \, \int_{\R^N} \left|\int_{\R^N}f(t+\epsilon,y) \tilde{p}^{t+\epsilon,y}(t,t+\epsilon,x,y) \, dy - f(t,x)\right|^{p}  dx \\ \notag
&\le C \Bigl(\int_{\R^N} \left|\int_{\R^N}f(t+\epsilon,y) \tilde{p}^{t+\epsilon,y}(t,t+\epsilon,x,y) \, dy - f(t+\epsilon,\theta_{t+\epsilon,t}(x))\right|^{p} dx\\ \notag
&\,\,\qquad\qquad\qquad\qquad\qquad\qquad\qquad + \int_{\R^N} |f(t+\epsilon,\theta_{t+\epsilon,t}(x))-f(t,x)|^p \, dx\Bigr)\\
&=:\, C\left(\mathcal{I}+\mathcal{I}'\right)(\epsilon,t).\label{proof:eq:LpLq_convergence}
\end{align}
Since $f$ is smooth  and with compact support in time and space, it follows immediately that $\mathcal{I}'(\epsilon,t)$ tends to zero if $\epsilon$ goes to zero, \textcolor{black}{thanks to the bounded convergence Theorem}.\newline
We can then focus on the first term $\mathcal{I}(\epsilon,t) $. We start splitting it in the following way:
\[
\begin{split}
\mathcal{I}(\epsilon,t) \, &\le \,  C\Bigl(\int_{\R^N}\left|\int_{\R^N}\left[f(t+\epsilon,y)-f(t+\epsilon,\theta_{t+\epsilon,t}(x))\right] \tilde{p}^{t+\epsilon,y}(t,t+\epsilon,x,y) \,  dy \right|^p dx\\
&\quad + \int_{\R^N}\left|f(t+\epsilon,\theta_{t+\epsilon,t}(x)) \int_{\R^N} \left[\tilde{p}^{t+\epsilon,y}(t,t+\epsilon,x,y)- \tilde{p}^{t,x}(t,t+\epsilon,x,y)\right]dy\right|^p dx\Bigr)\\
&=: \, C\left(\mathcal{I}_1+\mathcal{I}_2\right)(\epsilon,t),
\end{split}
\]
where we used that $\tilde{p}^{t,x}(t,s,x,y)$ is indeed a \textit{true} density with respect to $y$. The second term $\mathcal{I}_2(\epsilon,t)$ already appeared in the proof of Lemma \ref{convergence_dirac} (Dirac Convergence of frozen density) \textcolor{black}{(cf. term $\mathcal{D}_2$ in \eqref{proof:control_deviation1})} and a similar  analysis readily gives that $\mathcal{I}_2(\epsilon, t)\overset{\epsilon\rightarrow 0}{\longrightarrow}0 $.\newline
To control instead the first term $\mathcal{I}_1(\epsilon,t)$, we decompose the whole space $\R^N$ into $\Delta_1\cup \Delta_2$ given by
\begin{align*}
    \Delta_1 \, &:= \, \{x \in \R^N \colon |\theta_{t+\epsilon, t}(x)-\text{supp}[f(t+\epsilon,\cdot)]|\le 1\}; \\
    \Delta_2 \, &:= \,  \{x \in \R^N \colon |\theta_{t+\epsilon, t}(x)-\text{supp}[f(t+\epsilon,\cdot)]| > 1\}.
\end{align*}
Using Proposition \ref{prop:Smoothing_effect} with $(\tau,\xi)=(t+\epsilon,y)$, we write that
\[\begin{split}
\mathcal{I}_1(\epsilon,t) \, &\le \, \int_{\R^N}\left(\int_{\R^N}|f(t+\epsilon,y)-f(t+\epsilon,\theta_{t+\epsilon,t}(x))| \frac{\overline{p}\left(1,\mathbb{T}^{-1}_\epsilon(y-\tilde{m}^{t+\epsilon,y}_{t+\epsilon,t}(x))\right)}{\det \mathbb{T}_\epsilon} \,  dy  \right)^p dx \\
&\le \,\int_{\Delta_1}\left(\int_{\R^N}|f(t+\epsilon,y)-f(t+\epsilon,\theta_{t+\epsilon,t}(x))| \frac{\overline{p}\left(1,\mathbb{T}^{-1}_\epsilon(y-\tilde{m}^{t+\epsilon,y}_{t+\epsilon,t}(x))\right)}{\det \mathbb{T}_\epsilon} \,  dy  \right)^p dx \\
&\quad + \int_{\Delta_2}\left(\int_{\R^N}|f(t+\epsilon,y)-f(t+\epsilon,\theta_{t+\epsilon,t}(x))| \frac{\overline{p}\left(1,\mathbb{T}^{-1}_\epsilon(y-\tilde{m}^{t+\epsilon,y}_{t+\epsilon,t}(x))\right)}{\det \mathbb{T}_\epsilon} \,  dy  \right)^p dx\\
&=: \, \left(\mathcal{I}_{11}+\mathcal{I}_{12}\right)(\epsilon,t).
\end{split}
\]
To control $\mathcal{I}_{11}$, we start noticing that $f$ is H\"older continuous with a H\"older exponent $\gamma<\alpha$ in $(0,1]$, since it has a compact support. Moreover, $\Delta_1$ is a bounded set (uniformly in $\epsilon$). Then, from Lemma \ref{lemma:identification_theta_m} (cf.\ Equation \eqref{eq:identification_theta_m1}), Lemma \ref{lemma:bilip_control_flow} and Corollary \ref{coroll:Smoothing_effect},
\[
\begin{split}
\mathcal{I}_{11}(\epsilon,t) \, &\le \,C \int_{\Delta_1}\left(\int_{\R^N}|y-\theta_{t+\epsilon,t}(x)|^\gamma \frac{\overline{p}\left(1,\mathbb{T}^{-1}_\epsilon(y-\tilde{m}^{t+\epsilon,y}_{t+\epsilon,t}(x))\right)}{\det \mathbb{T}_\epsilon} \,  dy  \right)^p dx \\
&\le \,C \epsilon^{p\gamma/\alpha}\int_{\Delta_1}\left(\int_{\R^N}|\mathbb{T}^{-1}_\epsilon(y-\theta_{t+\epsilon,t}(x))|^\gamma \frac{\overline{p}\left(1,\mathbb{T}^{-1}_\epsilon(\theta_{t,t+\epsilon}(y)-x)\right)}{\det \mathbb{T}_\epsilon} \,  dy  \right)^p dx \\
&\le \, C \epsilon^{p\gamma/\alpha}\int_{\Delta_1}\left(\int_{\R^N}\left[|\mathbb{T}^{-1}_\epsilon(\theta_{t,t+\epsilon}(y)-x)|^\gamma+1\right] \frac{\overline{p}\left(1,\mathbb{T}^{-1}_\epsilon(\theta_{t,t+\epsilon}(y)-x)\right)}{\det \mathbb{T}_\epsilon} \,  dy  \right)^p dx \\
&\le \, C\epsilon^{p\gamma/\alpha}.
\end{split}
\]
To control instead $\mathcal{I}_{12}$ we firstly notice that if $x$ is in $\Delta_2$, then, $\theta_{t+\epsilon,t}(x)$ is not in the support of $f$. Thus,
\[\begin{split}
\mathcal{I}_{12}(\epsilon,t) \, &= \, \int_{\Delta_2}\left(\int_{\R^N}|f(t+\epsilon,y)-f(t+\epsilon,\theta_{t+\epsilon,t}(x))| \frac{\overline{p}\left(1,\mathbb{T}^{-1}_\epsilon(y-\tilde{m}^{t+\epsilon,y}_{t+\epsilon,t}(x))\right)}{\det \mathbb{T}_\epsilon} \,  dy  \right)^p dx\\
 &\le \, \int_{\Delta_2}\left(\int_{\text{supp}f}|f(t+\epsilon,y)| \frac{\overline{p}\left(1,\mathbb{T}^{-1}_\epsilon(\theta_{t,t+\epsilon}(y)-x)\right)}{\det \mathbb{T}_\epsilon} \,  dy  \right)^p dx\\
 &\le \,   \Vert f\Vert^p_\infty\int_{\Delta_2}\left(\int_{\text{supp}f} \frac{\overline{p}\left(1,\mathbb{T}^{-1}_\epsilon(\theta_{t,t+\epsilon}(y)-x)\right)}{\det \mathbb{T}_\epsilon} \,  dy\right)^{p-1+1} dx \\
  &\le \, C\int_{\text{supp}f}\int_{\Delta_2} \frac{\overline{p}\left(1,\mathbb{T}^{-1}_\epsilon(\theta_{t,t+\epsilon}(y)-x)\right)}{\det \mathbb{T}_\epsilon} \, dxdy,
\end{split}
\]
where in the last step we used that, from Corollary \ref{coroll:Smoothing_effect}
\[\left(\int_{\text{supp}f} \frac{\overline{p}\left(1,\mathbb{T}^{-1}_\epsilon(\theta_{t,t+\epsilon}(y)-x)\right)}{\det \mathbb{T}_\epsilon} \,  dy\right)^{p-1} \, \le \, \left(\int_{\R^N} \frac{\overline{p}\left(1,\mathbb{T}^{-1}_\epsilon(\theta_{t,t+\epsilon}(y)-x)\right)}{\det \mathbb{T}_\epsilon} \,  dy\right)^{p-1} \, \le  \, C_p.\]
We notice now that for any $y$ in $\text{supp}f$ and any $x$ in $\Delta_2$, we have that $|y-\theta_{t+\epsilon,t}(x))|\ge 1$. Exploiting Corollary \ref{coroll:Smoothing_effect} and Lemma \ref{lemma:bilip_control_flow}, we write that
\[\begin{split}
\mathcal{I}_{12}(\epsilon,t) \, &\le \,    \int_{\text{supp}f}\int_{\Delta_2} |y-\theta_{t+\epsilon,t}(x)| \frac{\overline{p}\left(1,\mathbb{T}^{-1}_\epsilon(\theta_{t,t+\epsilon}(y)-x)\right)}{\det \mathbb{T}_\epsilon} \, dxdy \\
&\le \, C\epsilon^{\frac{1}{\alpha}}\int_{\text{supp}f}\int_{\Delta_2}|\mathbb{T}^{-1}_\epsilon(y-\theta_{t+\epsilon,t}(x))| \frac{\overline{p}\left(1,\mathbb{T}^{-1}_\epsilon(\theta_{t,t+\epsilon}(y)-x)\right)}{\det \mathbb{T}_\epsilon} \, dxdy \\
&\le \, C\epsilon^{\frac{1}{\alpha}}\int_{\text{supp}f}\int_{\R^N}\left[|\mathbb{T}^{-1}_\epsilon(\theta_{t,t+\epsilon}(y)-x)|+1\right] \frac{\overline{p}\left(1,\mathbb{T}^{-1}_\epsilon(\theta_{t,t+\epsilon}(y)-x)\right)}{\det \mathbb{T}_\epsilon} \, dxdy \\
&\le \, C\epsilon^{\frac{1}{\alpha}}\int_{\text{supp}f}\int_{\R^N}\left[|z|+1\right] \overline{p}\left(1,z\right) \, dzdy \\
&\le \, C\epsilon^{\frac{1}{\alpha}}.
\end{split}
\]
Knowing the convergence of $\mathcal{I}(\epsilon,t)$ and $\mathcal{I}'(\epsilon,t)$ to zero, we can finally conclude the proof using the dominated convergence theorem in \eqref{proof:eq:LpLq_convergence}.

\subsection{Controls associated with the change of variable}

\subsubsection{Proof of Corollary \ref{coroll:Smoothing_effect}}
We first concentrate on the proof of Control \eqref{eq:smoothing_effect_frozen_y}. We start exploiting the decomposition of $\overline{p}(t,z)$ in terms of small and large jumps, as in \eqref{proof_decomposition_p_segnato}, to rewrite the left-hand side of Equation \eqref{eq:smoothing_effect_frozen_y} in the following way:
\begin{align*}
I(s,t,x)\, &:= \,
\int_{\R^N} \frac{|\mathbb{T}_{s-t}^{-1}(\theta_{t,s}(y)-x)|^\gamma}{\det\mathbb T_{s-t}}\bar p(1, \mathbb{ T}_{s-t}^{-1}(\theta_{t,s}(y)-x))\, dy  \\
&=\, \int_{\R^N} \frac{|\mathbb{T}_{s-t}^ {-1}(\theta_{t,s}(y)-x)|^\gamma}{\det \mathbb T_{s-t}}\int_{\R^N}p_{\overline{M}}(1, \mathbb{ T}_{s-t}^{-1}(\theta_{t,s}(y)-x)-w)\overline{P}_1(dw)dy,
\end{align*}
Then, the Fubini Theorem and the definition of $p_{\overline{M}}$ in \eqref{proof:control_pM_segnato} immediately imply that
\begin{align*}
I(s,t,x)\, &= \, \int_{\R^N} \int_{\R^N} \frac{|\mathbb{T}_{s-t}^ {-1}(\theta_{t,s}(y)-x)|^\gamma}{\det \mathbb T_{s-t}}p_{\overline{M}}(1, \mathbb{ T}_{s-t}^{-1}(\theta_{t,s}(y)-x)-w)\, dy\overline{P}_1(dw)\\
&\le \, C\int_{\R^N} \int_{\R^N}\det \mathbb T_{s-t}^{-1}\frac{ \left[|\mathbb{T}_{s-t}^ {-1}(\theta_{t,s}(y))-x-w|^\gamma+|w|^\gamma\right]}{\left[1+|\mathbb{ T}_{s-t}^{-1}(\theta_{t,s}(y)-x)-w|\right]^{N+2}}\, dy\overline{P}_1(dw).
\end{align*}
To conclude, it is now enough to show that for any $M> N+1$, there exists $C:=C(M)$ such that
\begin{equation}
\label{proof:control_smoothing2}
\int_{\R^N}  \frac{\det\mathbb{T}^{-1}_{s-t}}{\left[1+|\mathbb{T}_{s-t}^{-1}(\theta_{t,s}(y)-x) -w|\right]^M}\, dy\, \le \, C.
\end{equation}
Indeed, it would follow from Control \eqref{proof:control_smoothing2} that
\begin{align*}
    I(t,s,x) \, &\le \, C\int_{\R^N} \int_{\R^N} \frac{\det \mathbb T_{s-t}^{-1}}{\left[1+|\mathbb{ T}_{s-t}^{-1}(\theta_{t,s}(y)-x)-w|\right]^{N+2-\gamma}}\, dy\overline{P}_1(dw) \\
    &\qquad\qquad +\int_{\R^N} \int_{\R^N} \left[1+|w|^\gamma\right]\frac{\det \mathbb T_{s-t}^{-1}}{\left[1+|\mathbb{ T}_{s-t}^{-1}(\theta_{t,s}(y)-x)-w|\right]^{N+2}}\, dy\overline{P}_1(dw) \\
    &\le \,C\int_{\R^N} \left[1+|w|^\gamma\right]\overline{P}_1(dw) \, \le \, C.
\end{align*}
In order to show Control \eqref{proof:control_smoothing2}, we start noticing that it would be enough to apply the change of variable $\tilde{y}=\mathbb{T}_{s-t}^{-1}(x-\theta_{t,s}(y)) -w$ and then, to control the Jacobian matrix of the transformation. Unfortunately, our coefficients are not smooth enough in order to follow this kind of reasoning. Indeed, the drift $F$ is only H\"older continuous.\newline
As done already in in the proof of Lemma \ref{lemma:bilip_control_flow}, we firstly need to regularize $F$ through a multi-scale mollification procedure. Namely, we reintroduce the mollified drift $F^\delta:=(F^\delta_1,\dots,F^\delta_n)$ similarly to what we did in Equation \eqref{eq:multi_scale_mollification}. However we modify a bit the mollifying parameters and set
\begin{equation}\label{Proof:Controls_on_Flows_Choice_delta_LOC_LEMMA}
\delta_{ij} \, = \, \bar C(s-t)^{\frac{1+\alpha(j-2)}{\alpha\beta^j}} \quad \text{for $2\le i\le j\le n$,}
\end{equation}
for a constant $\bar C$ meant to be large enough. We also mollify the first component $ F_1$ at a macro scale, i.e. $\delta_{1j}=C_1$, with $C_1$ large enough as well.

In particular, this choice of parameters gives that the controls
\eqref{eq:proof_bilip_control_flow6}, \eqref{proof:bilip_control} and \eqref{approximateLippourF} hold again.
\newline
We can now define the mollified flow $\theta^\delta_{t,s}(y)$ associated with the drift $F^\delta$ given by
\begin{equation}\label{proof:definition_theta_delta}
\theta_{t,s}^\delta(y) \, =\, y-\int_t^s \left[A_u\theta_{u,s}^\delta+ F^\delta(u,\theta_{u,s}^\delta(y))\right] \,  du.
\end{equation}
Denoting now, for brevity, \[\Delta^\delta\theta_{u,s}(y) \, :=\, \theta_{u,s}(y)-\theta_{u,s}^\delta(y),\]
it is not difficult to check from the Gr\"onwall Lemma and Controls \eqref{eq:proof_bilip_control_flow6}, \eqref{proof:bilip_control} and \eqref{approximateLippourF} that
\begin{align} \label{SENSI_DELTA_THETA}
|\mathbb T_{s-t}^{-1}(&\theta_{t,s}(y)-\theta_{t,s}^\delta(y))| \, \le \, \left|\int_t^s \mathbb{T}_{s-t}^{-1}\left[A_u(\Delta^\delta\theta_{u,s}(y))+ F(u,\theta_{u,s}(y))-F^\delta(u,\theta_{u,s}^\delta(y))\right] \,  du\right| \notag\\
&\le \, \int_t^s \left|\mathbb{T}_{s-t}^{-1}A_u(\Delta^\delta\theta_{u,s}(y))\right|\, du+\int_t^s\left|\mathbb{T}_{s-t}^{-1}\left(F(u,\theta_{u,s}(y))-F^\delta(u,\theta_{u,s}(y))\right)\right| \,  du\notag\\
&\qquad \qquad\qquad \qquad\qquad \qquad\qquad + \int_t^s\left|\mathbb{T}_{s-t}^{-1}\left(F^\delta(u,\theta_{u,s}(y))-F^\delta(u,\theta_{u,s}^\delta(y))\right)\right| \,  du \notag\\
&\le C_0,
\end{align}
for some positive constant $C_0$. \newline
Exploiting now Control \eqref{SENSI_DELTA_THETA}, we firstly notice that for $C\ge 2C_0$,
\[
\begin{split}
C+|\mathbb{T}_{s-t}^{-1}(x-\theta_{t,s}(y)) -w|\, &\ge \, C+|\mathbb{T}_{s-t}^{-1}(x-\theta^\delta_{t,s}(y)) -w| - |\mathbb{T}_{s-t}^{-1}(\theta_{t,s}(y)-\theta_{t,s}^\delta(y))| \\
&\ge \, C_0+|\mathbb{T}_{s-t}^{-1}(x-\theta^\delta_{s,t}(y)) -w|
\end{split}\]
and we then use it to write that
\begin{align}
\int_{\R^N} \notag \frac{\det\mathbb{T}^{-1}_{s-t}}{\left(1+|\mathbb{T}_{s-t}^{-1}(x-\theta_{t,s}(y)) -w|\right)^M}\, dy\, &\le \,C \int_{\R^N}  \frac{\det\mathbb{T}^{-1}_{s-t}}{\left(1+|\mathbb{T}_{s-t}^{-1}(x-\theta^\delta_{t,s}(y)) -w|\right)^M}\, dy  \\
&= \, C\int_{\R^N}  \frac{1}{\left(1+|\tilde{y}|\right)^M}\frac{1}{\det  J_{t,s}^\delta(\tilde{y})}\, dy
\label{proof:sensibility1}
\end{align}
where in the last step we used the change of variables $\tilde{y}=\mathbb{T}_{s-t}^{-1}(x-\theta^\delta_{t,s}(y)) -w$ and denoted by $\textcolor{black}{J_{t,s}^\delta}(\tilde{y})$ the Jacobian matrix of $y\to \theta^\delta_{t,s}(y)$.\newline
It is now clear that the last term  in  \eqref{proof:sensibility1} is indeed controlled by a constant $C$, if we show the existence of a positive constant $c$, independent from $y$ in $\R^N$, $t<s$ in $[0,T]$ and $\delta$ , such that
\begin{equation}
\label{proof:sensibility3}
    |\det J_{t,s}^\delta(y)| \, \ge \, c >0.
\end{equation}
This is precisely the result provided by Lemma \ref{LEMMA_FOR_DET} below.
From the previous computations it is clear that \eqref{eq:smoothing_effect_frozen_y} holds.

Let us now turn to the proof of Control \eqref{eq:smoothing_effect_frozen_y_GENERIC_FUNCTION}. Following the previous approach, we can write
\begin{align*}
&\int_{\{|\mathbb{T}^{-1}_{s-t}(\theta_{t,s}(y)-x)|\ge K\}} \frac 1{\det \mathbb{T}_{s-t}}\overline{p}(1,\mathbb{T}^{-1}_{s-t}(\theta_{t,s}(y)-x)) \, dy \\
 &\le \, C \int_{\{|\mathbb{T}^{-1}_{s-t}(\theta_{t,s}^\delta(y)-x)|\ge K-|\mathbb{T}^{-1}_{s-t}(\Delta^\delta\theta_{u,s}(y))|\}}  \int_{\R^N}  \frac{\det\mathbb{T}^{-1}_{s-t}}{\left(1+|\mathbb{T}_{s-t}^{-1}(x-\theta^\delta_{s,t}(y)) -w|\right)^M}\overline{P}_1(dw) dy\\
 \le& \, C \int_{\{|\mathbb{T}^{-1}_{s-t}(\theta_{t,s}^\delta(y)-x)|\ge K-C_0\}}  \int_{\R^N}  \frac{\det\mathbb{T}^{-1}_{s-t}}{\left(1+|\mathbb{T}_{s-t}^{-1}(x-\theta^\delta_{s,t}(y)) -w|\right)^M}\overline{P}_1(dw) dy,
\end{align*}
exploiting also \eqref{SENSI_DELTA_THETA} for the last inequality. Using now the Fubini Theorem and the change of variables  $z=\mathbb{T}_{s-t}^{-1}(x-\theta^\delta_{s,t}(y))$, we derive from \eqref{proof:sensibility3} that
\begin{align*}
\int_{\{|{\mathbb{T}^{-1}_{s-t}(\theta_{t,s}(y)-x)}|\ge K\}} \frac 1{\det \mathbb{T}_{s-t}}&\overline{p}(1,\mathbb{T}^{-1}_{s-t}(\theta_{t,s}(y)-x)) \, dy \\
&\le \, C \int_{\R^N}   \int_{\{|z|\ge K-C_0\}} \frac{1}{\left(1+|z -w|\right)^M} \, dz \overline{P}_1(dw)\\
&=:\, C\int_{\{|z|\ge \frac K2|\}}\check p(1,z)\, dz,
\end{align*}
where $\check p $ is a density satisfying the same integrability properties as $\bar p $ assuming as well $K $ large enough.
Thus \eqref{eq:smoothing_effect_frozen_y_GENERIC_FUNCTION} holds and the proof of Corollary \ref{coroll:Smoothing_effect} is now complete.

\subsubsection{Jacobian of the mollified system}
This is a technical part dedicated to the proof of control \eqref{proof:sensibility3} appearing in the proof of key Corollary \ref{coroll:Smoothing_effect} which precisely gives the expected smoothing effect of the frozen density where the freezing parameters also correspond to the integration variable.
\begin{lemma}[Control of the determinant for the change of variable]\label{LEMMA_FOR_DET}
Let  $\theta^\delta_{t,s}(y)$ denote the mollified flow associated with the drift $F^\delta$ where the mollifying parameter $\delta$ has the form \eqref{Proof:Controls_on_Flows_Choice_delta_LOC_LEMMA}. Its dynamics writes:
\begin{equation*}
\theta_{t,s}^\delta(y) \, =\, y-\int_t^s \left[A_u\theta_{u,s}^\delta+ F^\delta(u,\theta_{u,s}^\delta(y))\right] \,  du.
\end{equation*}
Then, there exists a constants $c_0:=c_0(T)>0$ s.t., denoting for $0\le t\le s\le T  $ by $J_{t,s}^\delta (y)$ the  Jacobian matrix associated with the mapping $y\mapsto \theta_{s,t}^\delta (y) $
 $$ {\rm det}(J_{t,s}^{\delta}(y))\ge c_0 .$$
 Importantly, $c_0$ does not depend on $\delta$.
\end{lemma}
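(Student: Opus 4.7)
The plan is to derive a Jacobi-type formula for the determinant and then control the resulting trace integral. Since $F^\delta$ is smooth in space, differentiating the backward flow equation \eqref{proof:definition_theta_delta} with respect to $y$ yields that the Jacobian $J_{t,s}^\delta(y):=\nabla_y\theta_{t,s}^\delta(y)$ solves the linear matrix ODE
\[
\partial_t J_{t,s}^\delta(y) \, = \, -\bigl[A_t + D_x F^\delta(t,\theta_{t,s}^\delta(y))\bigr]\,J_{t,s}^\delta(y), \qquad J_{s,s}^\delta(y)\,=\,I_N.
\]
By Liouville's formula,
\[
\det J_{t,s}^\delta(y)\, = \, \exp\!\left(\int_t^s \mathrm{tr}\bigl[A_u+D_xF^\delta(u,\theta_{u,s}^\delta(y))\bigr]\,du\right),
\]
so it suffices to prove that this trace integral is bounded below by a constant depending only on $T$ (and the fixed mollifier parameters $\bar C,C_1$), uniformly in $y$ and in $0\le t\le s\le T$.

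The contribution of $\mathrm{tr}(A_u)$ is immediate: by assumption $[\mathbf{H}]$, $A$ is bounded, so $|\int_t^s\mathrm{tr}(A_u)\,du|\le CT$. The main point is to control $\mathrm{tr}[D_xF^\delta]$. Thanks to the upper-triangular structure $F_i=F_i(t,x_i,\dots,x_n)$, only the diagonal blocks $D_{x_i}F_i^\delta$ contribute. The standard mollifier estimate for a $\beta^i$-H\"older function gives the pointwise bound $\|D_{x_i}F_i^\delta\|_\infty\le C\,\delta_{ii}^{\beta^i-1}$. For $i=1$, the macro-scale choice $\delta_{1j}=C_1$ yields a uniform bound $CC_1^{\beta^1-1}$ which can be made arbitrarily small by taking $C_1$ large. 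For $i\ge 2$, inserting the multi-scale choice \eqref{Proof:Controls_on_Flows_Choice_delta_LOC_LEMMA} gives
\[
\int_t^s \|D_{x_i}F_i^\delta(u,\cdot)\|_\infty\,du\,\le\, C\bar C^{\beta^i-1}(s-t)^{1+(\beta^i-1)\frac{1+\alpha(i-2)}{\alpha\beta^i}}\,=\,C\bar C^{\beta^i-1}(s-t)^{\frac{\beta^i(1+\alpha(i-1))-(1+\alpha(i-2))}{\alpha\beta^i}}.
\]
The final exponent is strictly positive precisely under the main threshold assumption $\beta^i>\frac{1+\alpha(i-2)}{1+\alpha(i-1)}$ of Theorem \ref{thm:main_result}, so this contribution is bounded and in fact vanishes as $s-t\to 0$. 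This is the same sharp algebraic balance already encountered in \eqref{equilibri} during the proof of the approximate Lipschitz property of the flow.

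Combining the two estimates gives $|\int_t^s \mathrm{tr}[A_u+D_xF^\delta(u,\theta_{u,s}^\delta(y))]\,du|\le C_T$ for a constant $C_T$ depending only on $T$ and on the fixed mollifying constants $\bar C,C_1$ (but not on $y$, $t$, $s$, nor on the underlying flow), hence
\[
\det J_{t,s}^\delta(y)\,\ge\,\exp(-C_T)\,=:\,c_0\,>\,0.
\]
The only subtle point, and the one I expect to require the most care in writing out, is the verification that the required upper bound for $\|D_{x_i}F_i^\delta\|_\infty$ via H\"older mollification does produce exactly the same integrability threshold as that appearing in \eqref{eq:thresholds_beta}; everything else is routine Gr\"onwall/Liouville bookkeeping.
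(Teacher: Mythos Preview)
Your proof is correct and is in fact considerably more direct than the paper's own argument. There is a harmless sign slip: since $\partial_t\theta_{t,s}^\delta(y)=A_t\theta_{t,s}^\delta(y)+F^\delta(t,\theta_{t,s}^\delta(y))$ (no minus sign once you differentiate the integral form), the Jacobian satisfies $\partial_t J_{t,s}^\delta=[A_t+D_xF^\delta]J_{t,s}^\delta$ and Liouville gives $\det J_{t,s}^\delta(y)=\exp\bigl(-\int_t^s\mathrm{tr}[\dots]\,du\bigr)$. Since you bound the absolute value of the trace integral anyway, the conclusion $\det J_{t,s}^\delta(y)\ge e^{-C_T}$ is unaffected.

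The paper proceeds differently: it rewrites $J_{t,s}^\delta(y)=R_{t,s}\bigl(I-\int_t^s R_{s,u}D_zF^\delta J_{u,s}^\delta\,du\bigr)$ via the resolvent of $A$, establishes scaling bounds for $R$, proves a uniform bound on the full matrix $|J_{u,s}^\delta(y)|$ by Gr\"onwall, and then argues that the perturbation of the identity is small so that the matrix is diagonally dominant. This route controls all entries of $J^\delta$, and in particular the off-diagonal blocks $D_{x_k}F_\ell^\delta$ for $\ell<k$ have to be handled; this is precisely where the paper's discussion on why the same regularity index $\beta^j$ must be imposed on every $F_i$, $i\le j$, comes in (the terms $\delta_{\ell k}^{\beta^k-1}$ for $\ell\le k$ must all be of the same order). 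Your Liouville route exploits the block upper-triangular structure of $D_xF^\delta$ so that only the diagonal blocks $D_{x_i}F_i^\delta$ contribute to the trace, and the computation reduces exactly to the diagonal balance \eqref{equilibri}. This is shorter and, for this particular lemma, isolates the minimal input needed; the paper's argument yields more (a bound on the full Jacobian) at the cost of a heavier computation.
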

\begin{proof}
Let us first mention that the even though the coefficients $F^\delta$ are smooth, the above control is not direct because there is a subtle balance between the mollifying, matrix valued, parameter $\delta$ and the length of the considered time interval $[t,s] $. We recall indeed that the entries $\delta_{ij} $ given in \eqref{Proof:Controls_on_Flows_Choice_delta_LOC_LEMMA} do depend on $s-t$.

We also recall that, similarly to \eqref{Proof:Controls_on_flows_mollifier1}, it holds that
\begin{equation}
\label{proof:control_Derivative_F}
|D_{x_j}F_1^\delta(t,z)| \, \le \,  C (\delta_{1j})^{\beta^1-1},\ \forall 2\le i\le j\le n,\ |D_{x_j}F_i^\delta(t,z)| \, \le \,  C (\delta_{ij})^{\beta^j-1}.
\end{equation}
To prove the statement, we have thus to justify, somehow similarly to the control for the flows of Lemma \ref{lemma:bilip_control_flow}, that the explosive behavior of the Lipschitz moduli  is indeed well balanced by the time-integration.

Let us now start from the dynamics of $J^\delta (y)$ which writes:
\begin{align*}
J_{t,s}^\delta(y) = \ D_y\theta^\delta_{t,s}(y) &= \mathds{I} - \int_t^s \left[
\left(A_u +D_zF^\delta(u,z)|_{z=\theta^\delta_{u,s}(y)}\right)D_y\theta^\delta_{u,s}(y)\right] \, du\\
&= \mathds{I} - \int_t^s \left[
\left(A_u +D_zF^\delta(u,z)|_{z=\theta^\delta_{u,s}(y)}\right)J^\delta_{u,s}(y)\right] \, du.
\end{align*}
The above equation can be partially integrated using the resolvent $(R_{u,s})_{u\in [t,s]} $ associated with $A$, i.e. the $\R^N\otimes \R^N $ valued function satisfying
\begin{equation}\label{DYN_RES_STRUCT}
\frac{d}{du}R_{u,s}=A_uR_{u,s},\ R_{s,s}=I_{nd\times nd}.
\end{equation}
This yields:
\begin{equation}
\label{LIN_DYN_PARTIALLY_INTEGRATED}
J_{t,s}^\delta(y) =R_{t,s}-\int_t^s R_{t,u}D_zF^\delta(u,z)|_{z=\theta^\delta_{u,s}(y)}J^\delta_{u,s}(y)du.
\end{equation}

We actually have the following important structure property of the resolvent
$(R_{u,s})_{u\in [t,s]} $. There exists a non-degenerate family of  matrices $(\hat{R}_{\frac{u-t}{s-t}}^{t,s})_{u\in [t,s]}$, which  is bounded uniformly on $u\in [t,s] $ with constants depending on $T$ s.t.
\begin{equation}\label{scaling_relation}
R_{u,s}= \mathbb{T}_{s-t} \hat{R}_{\frac{u-t}{s-t}}^{t,s} (\mathbb{T}_{s-t})^{-1}.
\end{equation}
Indeed, setting for all $v\in [0,1],\ \hat R_v^{t,s}:=(\mathbb{T}_{s-t})^{-1}R_{t+v(s-t),s} \mathbb{T}_{s-t}$ and differentiating yields:
\begin{eqnarray*}
\partial_v \hat R_v^{t,s}&=&(s-t)(\mathbb{T}_{s-t})^{-1}A_{t+v(s-t)}R_{t+v(s-t),s} \mathbb{T}_{s-t}\\
&=&\biggl((s-t)(\mathbb{T}_{s-t})^{-1}A_{t+v(s-t)}\mathbb{T}_{s-t}\biggr)\hat R_v^{t,s}:=A_v^{t,s} \hat R_v^{t,s}.
\end{eqnarray*}
The identity \eqref{scaling_relation} then actually follows from the structure of the matrix $A_t$ (see assumption \textbf{[H]} and \eqref{eq:def_matrix_A}) which ensures that $(A_v^{t,s})_{v\in [0,1]} $ has bounded entries.

As a by-product of \eqref{scaling_relation}, we derive that there exists $C\ge 1 $ s.t. for all $(i,j)\in \llbracket 1,n \rrbracket$,
\begin{equation}\label{THE_BOUND_RES}
|(R_{t,u})_{ij}|\le C(\mathds{1}_{j\ge i}+(s-t)^{i-j}\mathds{1}_{i>j}).
\end{equation}
From \eqref{LIN_DYN_PARTIALLY_INTEGRATED} we thus derive
\begin{align}
|J_{t,s}^\delta(y)|\le& C+\int_t^s \sum_{i,j,k=1}^n \big|R_{t,u}D_zF^\delta(u,z)|_{z=\theta^\delta_{u,s}(y)}\big|_{ik} |J^\delta_{u,s}(y)|_{kj}du \notag\\
\le & C+\int_t^s \sum_{i,j,k=1}^n \big|R_{t,u} D_zF^\delta(u,z)|_{z=\theta^\delta_{u,s}(y)}\big|_{ik} |J^\delta_{u,s}(y)|_{kj}du.\label{SEMI_INT_2}
\end{align}
Remember now that $D_z F^\delta(u,z) $ is upper triangular. Then for fixed $(j,k)\in \llbracket 1,n \rrbracket^2 $, using \eqref{THE_BOUND_RES},
\begin{align}
\big|R_{t,u}D_zF^\delta(u,z)|_{z=\theta^\delta_{u,s}(y)}\big|_{ik}\le& \sum_{\ell=1}^k |R_{t,u}|_{i\ell}|DF^\delta_{\ell k}|_\infty\notag\\
\le &C \sum_{\ell=1}^k (\mathds{1}_{\ell\ge i}+(t-s)^{i-l}\mathds{1}_{\ell<i})|D_kF_\ell^{\delta}|_\infty.
\end{align}
It is now clearly seen that, for a fixed line index $i$ and $\ell\ge i $, there is no time regularity, contrarily to what happened with the control of the renormalized flows. Recall that if we had chosen
$F^\delta$ as in the proof of Lemma \ref{lemma:bilip_control_flow} then, for $\ell\ge 2 $ (recall that we regularize at macro scale $C_1$ for $F_1^\delta $) $|D_k F_\ell^\delta|_\infty \le C(\delta_{\ell k})^{-1+\beta_\ell^k}$. It is then clear that the $ \big(\delta_{lk}^{-1+\beta_\ell^k}\big)_{\ell \in \llbracket 2, k\rrbracket}$ must have the same order, which precisely prevents from the choice in \eqref{Proof:Controls_on_Flows_Choice_delta} which allows to consider minimal H\"older regularity exponents distinguishing the regularity with respect to the $k^{{\rm th}} $ variable in function of the level $\ell $ of the chain. We are here  led to consider $\beta_{\ell}^k =\beta_k^k =\beta^k$ (condition \eqref{Proof:Controls_on_Flows_Choice_delta_LOC_LEMMA}), imposing the strongest integrability threshold, associated with the diagonal perturbation at level $k$ all along the previous levels (up to the second one), which in principle lead to less singularity when the corresponding gradients are considered.

Such a phenomenon naturally appears when investigating the strong uniqueness of the SDE because of the Zvonkin approach, see e.g. \cite{Hao:Wu:Zhang19} for the Kinetic case deriving from our framework or \cite{Chaudru17} for the kinetic Brownian case. It was also the case, still for the Brownian kinetic case, in \cite{Chaudru18} where the parametrix approach freezing the initial coefficients was considered. The author had to impose the same regularity for the drift, in the degenerate variable, on the whole $F$. Hence, adapting the work \cite{Marino20} to derive pointwise bound of the gradients, which could have been another approach would have led to the same constraints. Here, we have slightly more freedom since we manage to have arbitrary smoothness indexes for the non-degenerate component of the drift.

We thus derive from \eqref{SEMI_INT_2} and for $\bar C,  C_1$ large enough there exists $c_0>0$ such that
\[\left[ \sum_{k=2}^n\sum_{\ell=2}^k(\delta_{\ell k})^{-1+\beta^k}+\sum_{k=1}^n(\delta_{1k})^{-1+\beta_1^k}\right](s-t)\le c_0\]
meant to be small that, under the current assumptions, there exists $C\ge 1 $ s.t.
\begin{equation*}
|J_{t,s}^\delta(y)|\le C\exp(c_0),
\end{equation*}
and similarly, $\forall u\in [t,s]$,
\begin{equation}\label{CTR_BD_SUP}
|J_{u,s}^\delta(y)|\le C\exp(c_0).
\end{equation}
Rewriting:
\begin{align*}
J_{t,s}^\delta(y) =R_{t,s}\big(I-\int_t^s R_{s,u}D_zF^\delta(u,z)|_{z=\theta^\delta_{u,s}(y)}J^\delta_{u,s}(y)du\big),
\end{align*}
we derive from \eqref{THE_BOUND_RES}, \eqref{CTR_BD_SUP} that the matrix $\big(I-\int_t^s R_{s,u}D_zF^\delta(u,z)|_{z=\theta^\delta_{u,s}(y)}J^\delta_{u,s}(y)du\big)$
 has diagonal dominant and this gives, from the non degeneracy of $R$, the statement concerning the determinant.
\end{proof}

\setcounter{equation}{0}

\section*{Acknowledgment}
For the first author, the work was supported by a public grant ($2018-0024$H) as part of the FMJH project. The research of the second author was funded by the Russian Science Foundation project (project No. $20-11-20119$).

\bibliography{bibli}
\bibliographystyle{abbrv}
\end{document}